\newcommand{\dt}{\partial_t}
\newcommand{\dx}{\partial_x}
\newcommand{\Pint}{\underline{P}_{\rm i}}
\newcommand{\eps}{\varepsilon}
\newcommand{\av}[1]{\langle#1\rangle}
\newcommand{\avi}[1]{\langle#1\rangle}
\newcommand{\jump}[1]{\llbracket#1\rrbracket}
\newcommand{\jumpi}[1]{\llbracket#1\rrbracket}
\newcommand{\abs}[1]{\vert#1\vert}
\newcommand{\uq}{{\underline{q}}}
\newcommand{\uh}{{\underline{h}}}
\newcommand{\uu}{{\underline{u}}}
\newcommand{\udelta}{{\underline{\delta}}}
\newcommand{\cE}{{\mathcal E}}
\newcommand{\cK}{{\mathcal K}}
\newcommand{\dsp}{\displaystyle}
\newcommand{\mfe}{{\mathfrak e}}
\newcommand{\mfE}{{\mathfrak E}}
\newcommand{\mfF}{{\mathfrak F}}
\newcommand{\mfsw}{{\mathfrak f}_{\rm sw}}
\newcommand{\RR}{{\mathbb R}}
\newcommand{\HH}{{\mathbb H}}
\newcommand{\Hardya}{{\mathcal H}^2({\mathbb C}_a)}
\newtheorem{theorem}{Theorem}[section] 
\newtheorem{proposition}{Proposition}[section]
\newtheorem{lemma}{Lemma}[section]
\newtheorem{definition}{Definition}[section]
\newtheorem{corollary}{Corollary}[section]
 \theoremstyle{remark}
\newtheorem{notation}{Notation}
\newtheorem{remark}{Remark}[section]
\numberwithin{equation}{section}
\title[Freely floating object with the Boussinesq equations]{Freely floating objects on a fluid governed by the Boussinesq equations}
\author{G. Beck and  D. Lannes}
\address{D\'epartement de math\'ematiques et applications, \'Ecole normale sup\'erieure, CNRS, PSL University, 5 rue d'Ulm, 75005 Paris, France}
\email{beck@dma.ens.fr}
\thanks{G. B. was supported by the Del Duca fondation}
\address{Institut de Mathématiques de Bordeaux, CNRS UMR 5251 et Université de Bordeaux, 351 Cours de la Libération, 33405 Talence Cedex}
\email{David.Lannes@math.u-bordeaux.fr}
\thanks{D. L is partially supported by the ANR-18-CE40-0027 Singflows, the ANR-17-CE40-0025 NABUCO and the Del Duca fondation}
\begin{document}

\begin{abstract}
We investigate here the interactions of waves governed by a Boussinesq system with a partially immersed body allowed to move freely in the vertical direction. We show that the whole system of equations can be reduced to a transmission problem for the Boussinesq equations with transmission conditions given in terms of the vertical displacement of the object and of the average horizontal discharge beneath it; these two quantities are in turn determined by two nonlinear ODEs with forcing terms coming from the exterior wave-field. Understanding the dispersive contribution to the added mass phenomenon allows us to solve these equations, and a new dispersive hidden regularity effect  is used to derive uniform estimates with respect to the dispersive parameter. We then derive an abstract general Cummins equation describing the motion of the solid in the return to equilibrium problem and show that it takes an explicit simple form in two cases, namely, the nonlinear non dispersive and the linear dispersive cases; we show in particular that the decay rate towards equilibrium is much smaller in the presence of dispersion. The latter situation also involves an initial boundary value problem for a nonlocal scalar equation that has an interest of its own and for which we consequently provide a general analysis.
\end{abstract}

\maketitle

%\keywords[Keywords: ]{Wave-structure interaction, Boussinesq system,  Transmission problem,  Cummins equation, Return to equilibrium, hidden regularity, added mass phenomenon}  

\section{Introduction}

\subsection{General setting}

Water waves have been studied quite intensively in the last decades, from the theoretical, modeling, and numerical viewpoints. Even though considerable progress has been made, the water waves equations (or free surface Euler equations) remain too complex to be used for most applications and reduced asymptotic models are used instead. In coastal regions in particular, {\it shallow water} models are used. These models are simpler because they take advantage of the vertical structure of the velocity field in shallow water to get rid of the dependance on the vertical variable (see the review paper \cite{Lannes_SW}): the equations are therefore $d$ dimensional instead of $d+1$ dimensional for the water waves problem ($d$ is the horizontal dimension), and the problem is no longer a free boundary problem. For instance, in dimension $d=1$ and at first order with respect to the so-called shallowness parameter (see Appendix \ref{appND}), one finds the nonlinear shallow water equations
\begin{equation}\label{intNSW}
\begin{cases}
\dt \zeta+\dx q=0,\\
\dt q +\dx\big( \frac{1}{h} q^2 \big)+g h\dx \zeta=-\frac{1}{\rho}h \dx \underline{P},
\end{cases}
\end{equation}
where $\zeta$ denotes the elevation of the surface with respect to the rest state, $h=h_0+\zeta$ the total height of the water column ($h_0$ is the depth of the fluid at rest), and $q$ is the horizontal discharge, that is, the vertical integral of the horizontal velocity of the fluid; we also denoted by $\rho$ the constant density of the fluid and by $\underline{P}$ the pressure at the surface. (for instance, a constant atmospheric pressure). This system is an hyperbolic quasilinear system.\\
At second order with respect to the shallow water parameter, but under a weak nonlinearity assumption, a popular model is the Boussinesq-Abbott system
\begin{equation}\label{intBA}
\begin{cases}
\dt \zeta+\dx q=0,\\
(1-\frac{h_0^2}{3}\dx^2)\dt q +\dx\big( \frac{1}{h} q^2 \big)+g h\dx \zeta=-\frac{1}{\rho} \dx \underline{P},
\end{cases}
\end{equation}
which can be seen as a dispersive perturbation of the nonlinear shallow water equations \eqref{intNSW}. Other equivalent Boussinesq systems can be derived, and the weak-nonlinearity assumption can be lifted, leading to the more complicated Serre-Green-Naghdi equations; we refer to \cite{Lannes_SW} for more details on these models that will not be addressed in this paper.

\medbreak

Motivated by ship motion and more recently by applications to marine renewable energies (offshore wind energies, or wave energy convertors), several authors addressed the issue of the interaction of waves with a floating object. This problem adds another layer of complexity to the water waves problem because it involves several other free boundary problems (the position of the object, the motion of the contact line with the surface of the fluid) and CFD simulations such as Reynolds Averaged Navier-Stokes (RANS) simulations are far from being able to simulate a full sea state for a single floating object \cite{Claes}. A less precise and less general, but potentially much more efficient alternative is to develop an approach based on the aforementioned reduced models; this study has to be understood as a step in this direction.\\
Early studies considered infinitesimal motions and focused mainly on the stability of the equilibrium of floating bodies \cite{Appell,John1} and engineers use a phenomenological linear integro-differential equation, the so-called Cummins equations  \cite{Cummins,Wamit} to describe the motion of the floating object. In these linear models, the pressure $\Pint$ exerted by the fluid on the object is given by the (linear approximation of the) Bernoulli equation,
$$
-\frac{1}{\rho}\Pint=g \zeta_{\rm w}+\dt \Phi_{\vert_{z=\zeta_{\rm w}}},
$$
where $\zeta_{\rm w}$ is the parametrization of the wetted part of the floating body, and $\Phi$  the velocity potential of the fluid. The first term of the right-hand side is called hydrodynamic pressure, and the second one the dynamic pressure. The velocity potential necessary to compute this latter is found by solving a Poisson equation in the fluid domain with mixed boundary condition at the surface (homogeneous Dirichlet on the free surface, non homogeneous Neumann on the bottom of the object); finally the equations are complemented by the (linearized) kinematic equation $\dt \zeta=\partial_z\Phi_{\vert_{z=0}}$ for the free surface $\zeta$ and by Newton's equation for the motion of the solid. 
A simpler linear shallow water approximation was also proposed in dimension $d=1$ in \cite{John1}, basically consisting in replacing $\partial_z\Phi_{\vert_{z=0}}$ by $-\partial_x^2 \Phi_{\vert_{z=0}}$ in the kinematic equation (see Remark \ref{remJohn} below).

\medbreak
The above formulation of the problem of waves interacting with a floating body can easily be extended to the nonlinear case (see for instance \cite{Daalen} where Zakharov's Hamiltonian formulation of the water waves problem is extended in the presence of a floating object or, for instance \cite{Monteserin,Benoit} for numerical studies). We do not provide too many details here because we shall rather use the approach of \cite{Lannes_float} in which the pressure $\Pint$ exerted by the fluid on the object is understood as the Lagrange multiplier associated to the constraint that, under the object, the surface of the fluid coincides with the bottom of the object. More precisely, a formulation of the water waves equations in term of the surface elevation $\zeta$ and the horizontal discharge $Q\in \RR^d$ was proposed in \cite{Lannes_float} that reads
\begin{equation}\label{WWzetaQ}
\begin{cases}
\dt \zeta+\nabla\cdot Q=0,\\
\dt Q+\nabla\cdot \big( \int_{-h_0}^\zeta V\otimes V \big) +gh \nabla\zeta+\frac{1}{\rho}\int_{-h_0}^\zeta \nabla P_{\rm NH}=-\frac{1}{\rho} h \nabla\underline{P},
\end{cases}
\end{equation}
where $V$ is the horizontal component of the velocity field in the fluid domain, $\underline{P}$ is the pressure at the surface, and $P_{\rm NH}$ is the non hydrostatic pressure in the fluid, and whose exact expression has no importance here.
In the parts where the object is not in contact with the water (the exterior region), $\underline{P}$ is the constant atmospheric pressure and the right-hand side vanishes in the second equation. In the region located under the object (the interior region), one has $\underline{P}=\Pint$, which is the Lagrange multiplier of the aforementioned constraint that can be written, using the first equation, as
$$
\nabla\cdot Q=-\dt \zeta_{\rm w}.
$$
One must therefore handle a system of equations of "compressible" type in the exterior region, with a system of equation of "incompressible" type in the interior region; this coupling is reminiscent of what happens in other contexts for congested flows \cite{Perrin}. Note that there are other ways of exploiting the fact that the pressure is a Lagrange multiplier, as in \cite{Bokhove} for instance where a discrete constrained variational numerical scheme is proposed for the simulation of wave-buoy interactions in shallow water. 

\medbreak

The interest of this approach, whose relevance has been confirmed by comparisons with numerical simulations solving the fully nonlinear equations \cite{Monteserin}, is that it is quite flexible; indeed, instead of the full water waves equations in $(\zeta,Q)$ formulation \eqref{WWzetaQ}, it is possible to implement it with simpler asymptotic models, and even to numerical schemes. In this paper, we shall analyze the equations obtained when this method is applied with the nonlinear shallow water equations \eqref{intNSW} and, more specifically,  with the Boussinesq equations \eqref{intBA}. \\
The equations obtained in the case of the nonlinear shallow water equations have been studied and solved in \cite{IguchiLannes}; the problem is surprisingly difficult because of the dynamics of the contact points at the transition between the interior and the exterior region. The problem can be reduced to a free boundary hyperbolic transmission problem reminiscent of the one obtained for the stability of shocks \cite{Majda,Metivier2001,BenzoniSerre}, but with the Rankine-Hugoniot condition replaced by a fully nonlinear condition.  One way to circumvent this difficulty is to consider floating objects with vertical sidewalls; in this case, the horizontal position of the contact points is no longer a free boundary problem as it is known if the position of the object is known. This situation was considered for a solid allowed to move in vertical translation only in \cite{Lannes_float} in dimension $d=1$, and in \cite{Bocchi} when $d=2$ with a radial symmetry. In \cite{Tucsnak} the same situation was considered with $d=1$ in the presence of viscosity. This approach has also been used to model a wave energy convertor named the oscillating water column in \cite{Bocchi2}. These references deal with the exact nonlinear shallow water equations with a constraint accounting for the presence of the floating body; it can be interesting, especially for numerical simulations, to relax this constraint and use techniques similar to those used to study low-Mach regimes in gases; this is the approach followed in \cite{Parisot1,Parisot2}.\\
The equations obtained in the case of the Boussinesq equations have been much less studied. The reason is that while initial boundary value problems are quite well understood for hyperbolic systems (\cite{Majda,Metivier2001,BenzoniSerre}, as well as \cite{IguchiLannes} for more recent and precise results in the specific case $d=1$), there is no such theory for dispersive perturbations of such systems, such as the Boussinesq-Abbott equations \eqref{intBA}. In \cite{Jiang} and \cite{Bosi}, the authors propose a system of two Boussinesq systems (one in the interior region and one in the exterior region), while $\Pint$ is numerically solved so that these two sets of equations are compatible but the formulation used there does not allow to write the simple explicit elliptic equation on $\Pint$ used here and associated with the constraint on the surface. The approach consisting in using constrained Boussinesq equations to model the presence of a floating object has only been treated in \cite{BLM}, with $d=1$ and with a fixed object. Moreover, the Boussinesq system used there is a variant of \eqref{intBA}, physically less interesting but mathematically more convenient because in the case of a fixed object  the problem can then be reduced to a transmission problem with {\it linear} transmission conditions; as we shall see, working with \eqref{intBA} and/or a non fixed object leads to more complicated {\it nonlinear} transmission conditions. One of the main features of \cite{BLM} is that it shows the role of dispersive boundary layers associated with the dispersive term of the Boussinesq equations and that we will of course have to deal with here. \\
The goal of this paper is to treat the interaction of waves governed by the Boussinesq-Abbott system \eqref{intBA} with an object with vertical sidewalls allowed to move freely in the vertical direction. To this end, we need to address several issues. For the modeling aspects, if the elliptic equation for $\Pint$ is quite straightforward to derive, the boundary conditions necessary to solve it are not clear and require some work; one also needs to understand the coupling with Newton's equations that govern the motion of the solid, and in particular the influence of the dispersive terms on the added mass phenomenon. The formulation of the whole set of equations involved as a quite simple transmission problem for the Boussinesq-Abbott equations is also of particular interest since it is very adapted for efficient numerical simulations (work in progress) and can be used to provide a useful qualitative insight, as shown here for the return to equilibrium problem (also called "decay test", it is a standard benchmark used by engineers in particular to calibrate coefficients in the Cummins equation). For the theoretical aspects, the contribution of the dispersive effects to the added mass phenomenon (that where not treated in \cite{BLM} because the object was fixed) require special attention, and the nonlinear nature of the dispersive contribution in the transmission conditions make the derivation of uniform energy estimate much more complicated than in \cite{BLM}: we have to exploit a new type of hidden regularity at the boundary, granted by the dispersive terms and that is of independent interest for the analysis of initial boundary value problems in the presence of dispersive terms. The dispersive terms induce also nonlocal effects in the analysis of the return to equilibrium problem; this leads us to develop a general study for the analysis of initial boundary value problems for nonlocal scalar equations that exhibits interesting phenomena when the "local" limit is considered and a dispersive smoothing that can also be of interest in other contexts where nonlocal scalar equations are involved.

\subsection{Organization of the paper}

Section \ref{sectderiv} is devoted to the derivation of the wave-structure interaction equations in the framework described above. We briefly describe  (in dimensionless form) in \S \ref{sectfluid} the Boussinesq-Abbott system used to describe the propagation of the waves and provide in \S \ref{sectsolid} the dimensionless version of Newton's equations for a solid allowed to move only in the vertical direction. We also need coupling conditions between the interior and exterior regions; they are described  in \S \ref{sectWS}. The issue mentioned above for the boundary conditions on the interior pressure $\Pint$ is addressed in \S \ref{secteqint}. It is then possible to solve for $\Pint$ and to reduce the equations in the interior region  to a set of two ODEs on the vertical displacement $\delta$ and on the horizontal average discharge $\av{q_{\rm i}}$, with source terms accounting for the coupling with the exterior wave field. \\
These elements are used in Section \ref{sectWSITP} to reduce all the equations involved in the wave-structure interaction problem under consideration to a transmission problem, see \S \ref{sectderivWS}. The mathematical structure of this transmission problem is investigated in  \S \ref{sectStudytoy} and \S \ref{sectReform} where a reformulation of the equations is proposed to exhibit the nontrivial contribution of the dispersive terms to the added mass phenomenon. We then show in \S \ref{sectreducODE} that the whole system can be reduced to an infinite dimensional ODE, allowing us to establish well-posedness. In order to control the existence time as the dispersive parameter $\mu$ goes to zero,  we  address in \S \ref{sectUE} the issue of  uniform estimates and exhibit in particular a new hidden regularity phenomenon associated with the dispersive terms.

In Section \ref{sectreturn} we describe a special configuration, the return to equilibrium (or decay test), which is both of practical interest because it is a classical benchmark for engineers, and theoretically interesting because it allows one to provide details on the qualitative behavior of the solutions. In particular, we want to investigate whether the solid motion is governed by the Cummins equation, an integro-differential equation used by engineers, and whether we are able to generalize this equation to the nonlinear framework. We show in \S \ref{sectgCummins} how to derive an abstract evolution equation of Cummins-type to describe the solid motion. This abstract equation turns out to reduce to a second order nonlinear scalar ODE in the nonlinear non dispersive case and to a linear integro-differential equation in the linear dispersive case, see   \S \ref{sectnondisp} and \S \ref{sectlinear} respectively. The qualitative behavior of the solutions is commented in both cases; in particular we numerically observe and theoretically prove that the presence of dispersion makes the return to equilibrium slower. It is also shown that the waves in the exterior domain can be found by solving an initial boundary value problem for a Burgers equation in the nonlinear case,  and for a nonlocal perturbation of a linear transport equation in the dispersive case.

The nonlocal initial boundary value problem just mentioned does not fit into any general theory, and since similar problems are likely to appear in other contexts where nonlocal equations play a role, we address this issue in Section  \ref{sectNLPB}. We consider a nonlocal perturbation of a scalar transport equation (we consider both positive and negative velocity). We show the well-posedness of these problems, but under one additional compatibility condition on the data. We explain why this additional compatibility condition disappears as the dispersive parameters tends to zero and the nonlocal transport equations formally converges to the standard transport equation. We also exhibit a smoothing effect associated with these nonlocal initial boundary value problems.

Finally, the link between the equations with dimensions and their dimensionless counterparts is made in Appendix \ref{appND}.

\subsection{Notations}

The horizontal axis ${\mathbb R}$ is decomposed throughout this paper into an {\it interior region} ${\mathcal I}=(-\ell,\ell)$ and an {\it exterior region} ${\mathcal E}={\mathcal E}^+\cup {\mathcal E}^-$ with ${\mathcal E}^-=(-\infty,-\ell)$ and ${\mathcal E}^+= (\ell,\infty)$, and two {\it contact points} ${x=\pm \ell}$. For any function $f$ admitting left and right limits at $\pm \ell$, we use the following notations:
\begin{itemize}
\item restriction to the interior domain $f_{\rm i} = f |_\mathcal{I}$,
\item restriction to the exterior domain $f_{\rm e} = f |_\mathcal{E}$,
%\item trace at the interface between the interior and exterior domain $f_\pm = f(\pm \ell)$
\item {\it exterior} jump $ \jump f$ and {\it interior} jump $\jumpi{f_{\rm i}}$ defined as
\begin{equation}\label{jumps}
\jump{f} =f_{\rm e}(\ell)-f_{\rm e}(-\ell), \qquad \jumpi{f_{\rm i}}=f_{\rm i}(\ell)-f_{\rm i}(-\ell),
\end{equation}
\item {\it exterior } average $\av{f}$  and {\it interior} average $\avi{f_{\rm i}}$ defined as
\begin{equation}\label{averages}
\av{f} = \frac{1}{2}\big( f_{\rm e}(\ell)+ f_{\rm e}(-\ell)\big), \qquad
\avi{f_{\rm i}}= \frac{1}{2}\big( f_{\rm i}(\ell)+ f_{\rm i}(-\ell)\big),
\end{equation}
\item {\it exterior} trace at the boundary points of a function $f$,
$$
f_\pm:=(f_{\rm e})_{\vert_{x=\pm\ell}}=\lim_{x\to (\pm \ell)^\pm} f(x).
$$
%\item oscillation from the average $f^\star= f - \ll f \gg$
%\item h-average $<f > = \frac{1 }{\alpha} \int_{\mathcal{I}} \frac{f}{h} $ with $\alpha = \int_{\mathcal{I}} \frac{1}{h}$
%\item oscillation from the h-average $f^\ast= f - <f >$
\item In dimensionless variables, the bottom of the floating object is parametrized by $\eps\zeta_{\rm w}(t,x)$, the water height at equilibrium under the object is $h_{\rm ex}(x)<1$, and $\eps\delta(t)$ denotes at time $t$ the distance of the center of mass to its equilibrium position. These quantities are related by the relation
$$
\eps\zeta_{\rm w}(t,x)=\eps\delta(t)+\big(h_{\rm eq}(x)-1\big).
$$
\end{itemize}

\medbreak

We also need to introduce the following functional spaces and notations:
\begin{itemize}
 \item if $f$ is a function of time, we sometimes use the notations $\dot f=\frac{d}{dt} f$ and $\ddot f=\frac{d^2}{dt^2} f$,
 \item For all $f\in L^2(\cE)$, we simply write $\abs{f}_2$ the associated norm,
\item for all $n\in {\mathbb N}$, we denote by $H^n(\cE)$ the standard Sobolev space on $\cE$, and define
 $$\HH^n:=H^{n+1}(\cE)\times H^{n+2} (\cE),$$
 \item for all $\eta_0 \in \mathbb{R}$, we  denote 
 $$
 \mathbb{C}_{\eta_0} := \{ s \in \mathbb{C} \, | \, \frak{Re} (s) > \eta_0 \},
 $$
 \item for all $s=\eta+i\omega\in {\mathbb C}$ ($\eta,\omega\in \RR$), we denote by $\sqrt{s}$ the square root with positive real part.
\end{itemize}

\section{Derivation and analysis of the wave-structure interaction equations}\label{sectderiv}

This section is devoted to the derivation of wave-structure interaction equations in the case of a floating object allowed to move freely in the vertical direction (see Figure \ref{fig1}), and using a nonlinear dispersive wave model to describe the propagation of the waves. To this end, we follow the strategy of \cite{Lannes_float} where it was proposed to see the pressure  $\underline{P}_{\rm i}$ exerted by the fluid on the object as the Lagrange multiplier associated with the constraint that under the object, the surface of the water coincides with the bottom of the floating object. The first step, considered in \S \ref{sectfluid}, is to choose the model used to describe the propagation of waves; we choose here the Boussinesq-Abbott system which is a nonlinear dispersive set of equations commonly used to model wave propagation. We then write in \S \ref{sectsolid} the dimensionless version of Newton's equations for a solid allowed to move only in the vertical direction.
The way these two systems of equations are coupled is described in \S \ref{sectWS}. One of the coupling conditions turns out to be that the total (fluid+solid) energy of the system has to be conserved at the order of precision of the model; it is shown in \S \ref{secteqint} that this imposes boundary conditions on  the interior pressure  $\underline{P}_{\rm i}$. These boundary conditions allow one to solve the pressure equation in the interior region (under the object); it follows that in this region, all the equations can be reduced to a set of two ODEs on the vertical displacement $\delta$ and on the horizontal average $\av{q_{\rm i}}$ of the horizontal discharge; source terms in these ODEs account for the coupling with the exterior wave field. 
\begin{figure}[h]
\begin{center}
\includegraphics[width=0.9\linewidth]{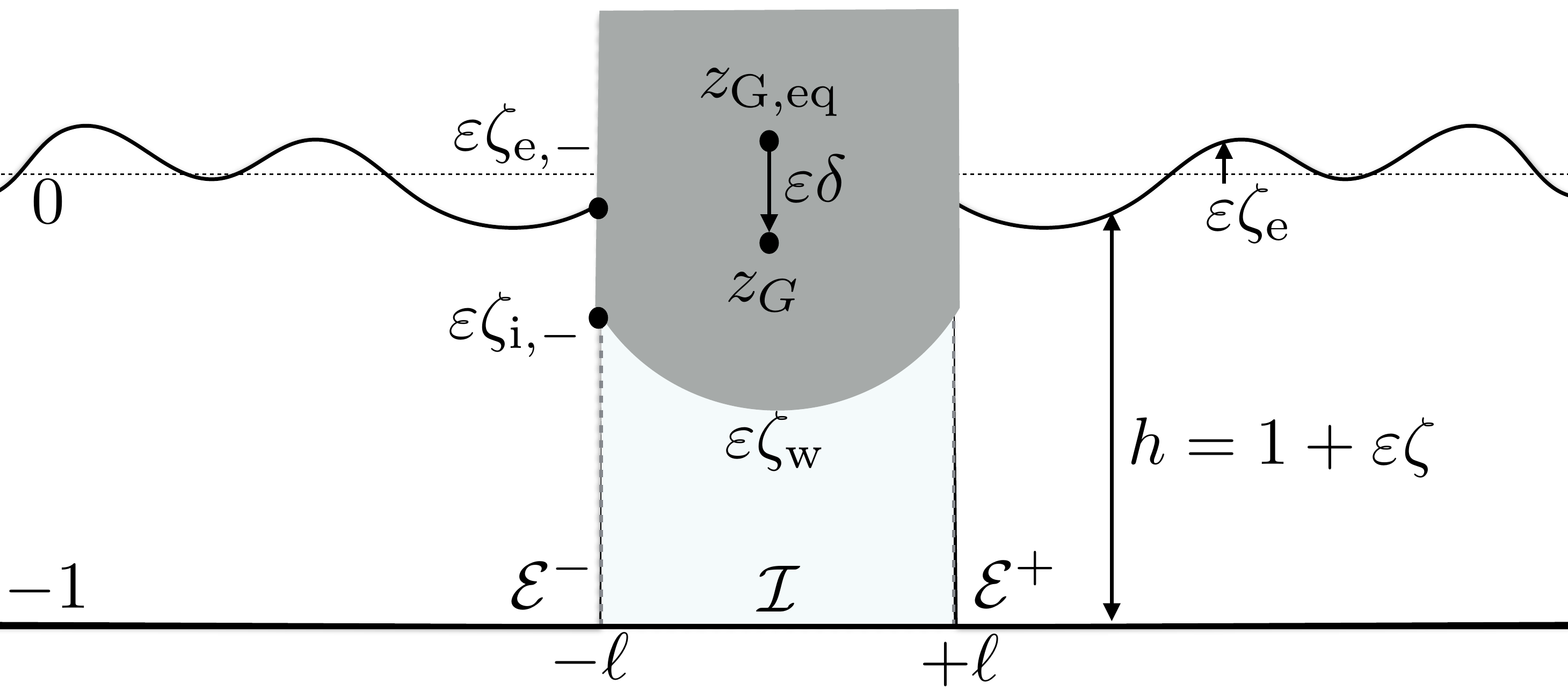}
\end{center}
\caption{The physical configuration (dimensionless variables)}\label{fig1}
\end{figure}

\subsection{The equations for the fluid}\label{sectfluid}

We consider in this paper {\it weakly nonlinear} waves in {\it shallow water}, that are known to be described with a good accuracy by Boussinesq-types systems and that are of interest for a wide range of applications. To be more precise, let us define the dimensionless nonlinearity parameter $\eps$ and the shallowness parameter $\mu$ as
$$
\eps=\frac{a}{H}, \qquad \mu=\frac{H^2}{L^2},
$$
where $a$ is the typical amplitude of the waves, $L$ their typical horizontal scale and $H$ the depth at rest. The {\it shallowness} assumption means that $\mu \ll 1$ and the statement that we have a {\it weak nonlinearity} means that $\eps=O(\mu)$. Under this latter assumption, Boussinesq systems are approximations of the full free surface Euler equations at order $O(\mu^2)$ (see for instance \cite{Lannes_book,Lannes_SW} for more details on the derivation and full justification of these models). There are actually many different Boussinesq systems that are formally equivalent since they differ only one from each other by terms of order $O(\mu^2)$, which do not affect the precision of the model. One of the most popular of these Boussinesq systems is the so-called Boussinesq-Abbott system \cite{Abbott,FBCR}, which reads, in dimensionless variables (see Appendix \ref{appND}) and when a pressure $P_{\rm atm}+\underline{P}$ (where $P_{\rm atm}$ is a constant reference value for the atmospheric pressure) is applied at the surface,
\begin{equation}\label{Abbott}
\begin{cases}
\partial_t \zeta +\partial_x q =0 
\\
(1-\frac{\mu}{3} \partial_x^2) \partial_t q + \varepsilon \partial_x \left( \frac{1}{h} q^2 \right) + h \partial_x \zeta
= -\frac{1}{\eps} h\dx \underline{P}
\end{cases}\qquad (h=1+\eps\zeta).
\end{equation}
Here, $\eps\zeta$ is the dimensionless surface elevation  with respect to its rest position, $h=1+\eps\zeta$ is the dimensionless water height, and $q$ is the horizontal discharge (the vertical integral of the horizontal velocity). In general, the pressure at the surface is a constant atmospheric pressure and  $\dx\underline{P}=0$, so that the right-hand side in the second equation of \eqref{Abbott} vanishes; we present the general system here because it is relevant in the presence of a floating body, under which the pressure is no longer equal to the atmospheric pressure (see \S \ref{secteqint} below). Note that $\underline{P}$ can be nontrivial in other contexts, for instance when one wants to study the impact of atmospheric disturbances on the waves \cite{Melinand}.\\
What makes the Boussinesq-Abbott system an interesting model is that it is a dispersive perturbation of the nonlinear shallow water equations written in conservative form
$$
\begin{cases}
\partial_t \zeta +\partial_x q =0 
\\
 \partial_t q + \varepsilon \partial_x \left( \frac{1}{h} q^2 \right) + h \partial_x \zeta
= -\frac{1}{\eps}h\dx \underline{P};
\end{cases}
$$
its drawback is that local (and global) conservation of energy is only satisfied at order $O(\eps\mu)$. Defining the local energy density $\mfe$ and the local energy flux ${\mfF}$ as
\begin{equation}
\label{defeF}
\begin{cases}
\mfe(\zeta,q)=\frac{1}{2}\zeta^2+\frac{1}{2} \frac{1}{h}q^2 +\mu \frac{1}{6h}(\dx q)^2,\\
\mfF(\zeta,q)=q\big(\zeta+\frac{1}{\eps}\underline{P}+\eps \frac{1}{2}\frac{q^2}{h^2}- \mu \frac{1}{3h} \dx\dt q \big),
\end{cases}
\end{equation}
one has indeed
\begin{equation}\label{eqNRJ}
\dt \mfe+\dx \mfF=\frac{1}{\eps}\underline{P}\dx q+   \eps\mu {\mathfrak R},
\end{equation}
with ${\mathfrak R}$ given by
\begin{equation}\label{defRmu}
{\mathfrak R}= \frac{1}{6h^2} (\dx q)^3+ \frac{1}{3h^2}q (\dt\dx q )\dx\zeta;
% -\frac{1}{3}\mu\eps \frac{\zeta}{1+\eps\zeta} q \dx^2 \dt q;
\end{equation}
the right-hand-side of \eqref{eqNRJ} is formally of size $O(\eps\mu)$ when $\underline{P}=0$ and therefore of size $O(\mu^2)$ in the weakly nonlinear regime $\eps=O(\mu)$. Setting $\mu=0$ in \eqref{defeF} and \eqref{eqNRJ}, one recovers the {\it exact} local conservation of energy associated with the nonlinear shallow water equations.

\begin{remark}
 In the second equation of the Boussinesq-Abbott system, one can replace $\eps\dx (\frac{1}{h}q^2) $ by $\eps \dx (q^2)$ up to a term of order $O(\eps^2)$ which is of order $O(\mu^2)$ in the weakly nonlinear regime. At order $O(\mu^2)$ and when $\underline{P}=0$, the following system is therefore formally equivalent to the Boussinesq-Abbott system \eqref{Abbott},
\begin{equation}\label{varBLM}
\begin{cases}
\partial_t \zeta +\partial_x q =0 ,
\\
(1-\frac{\mu}{3} \partial_x^2) \partial_t q + \varepsilon \partial_x (  q^2 ) + h \partial_x \zeta
= 0.
\end{cases}
\end{equation}
This system is no longer a dispersive perturbation of the nonlinear shallow water equations because the nonlinear terms are not the same, but it was used in \cite{BLM} because it satisfies an {\it exact} local conservation of energy,
\begin{equation}\label{NRJBLM}
\dt \widetilde{\mfe}+\dx \widetilde{\mfF}=0,
\end{equation}
but for a slightly different energy/flux pair,
\begin{align*}
\widetilde{\mfe}&=\frac{1}{2}\zeta^2+\frac{1}{2} q^2 + \frac{\eps}{6}\zeta^3+\frac{1}{6}\mu (\dx q)^2,\\
\widetilde{\mfF}&=q\big(\zeta+\eps \frac{2}{3} q^2+\eps \frac{1}{2}\zeta^2- \mu \frac{1}{3} \dx\dt q \big);
\end{align*}
in spite of this convenient property, we prefer to work with the Boussinesq-Abbott system \eqref{Abbott} rather than \eqref{varBLM} because, contrary to $\widetilde{\mfe}$,  $\mfe$ is an asymptotic expansion of the mechanical energy of the waves associated with the full water waves equations (see for instance \S 6.3.1 in \cite{Lannes_book}).
\end{remark}

%{\textcolor{red}{The energy chosen here correspond to the the first terms of more general models (namely Serre-Green-Nagdhi). We can also considere an other local energy density $\tilde\mfe$ and an other local energy flux $\tilde{\mfF}$
%\begin{equation}
%\label{defeFold}
%\tilde\mfe(\zeta,q)=\frac{1}{2}\zeta^2+\frac{1}{2} \frac{1}{h}q^2 + \frac{\mu}{6}(\dx q)^2,\qquad
%\tilde\mfF(\zeta,q)=q\big(\zeta+\eps \frac{1}{2}\frac{q^2}{h^2}- \frac{\mu}{3} \dx\dt q \big),
%\end{equation}
%one has 
%\begin{equation}\label{eqNRJold}
%\dt \tilde\mfe+\dx \tilde\mfF= -\frac{1}{3}\mu\eps \frac{\zeta}{1+\eps\zeta} q \dx^2 \dt q
%\end{equation}
%and the right-hand-side is of size $O(\eps\mu)$. But we will show that the right-hand-side of \eqref{eqNRJ}, i-e $\varepsilon R_\mu$ with
%$$
%R_\mu:=\mu \frac{1}{6h^2} (\dx q)^3+\eps\mu \frac{1}{3h^2}q (\dt\dx q )\dx\zeta
%$$
%can be control. It is not necessary the case for \eqref{eqNRJold}. That the reason we have chosen \eqref{defeFold} as local energy density.
%}}

\subsection{The equations for the solid}\label{sectsolid}

We refer to Appendix \ref{appND} for the derivation of the dimensionless Newton equations that we state here.  We recall that we consider here a floating object with vertical lateral walls located, in dimensionless coordinates, at $x=\pm \ell$ ($\ell>0$) and allowed to move only vertically (heave motion). At time $t$, the part of the bottom of the object in contact with the fluid is parametrized in dimensionless variables by a function $\eps\zeta_{\rm w}$ (the subscript w stands for the "wetted" part of the object), with
\begin{equation}\label{eqzw}
\zeta_{\rm w}(t,x)=\delta(t)+\frac{1}{\eps}\big(h_{\rm eq}(x)-1\big),
\end{equation}
where $\eps\delta(t)$ measures the vertical deviation of the object from its equilibrium position and $h_{\rm eq}$ the distance at equilibrium between the bottom of the object and the bottom of the fluid layer.\\
Denoting by $\Pint(t,x)$ the pressure exerted by the fluid on the object at the point $(x,\eps\zeta_{\rm w}(t,x))$ of the wetted surface, Newton's equation describing the vertical motion of the floating object under the action of its weight and of the hydrodynamic forces can be written as (see Appendix \ref{appND})
\begin{equation}\label{Newton-z_G-0}
 \tau_{\rm buoy}^2   \ddot{\delta} + \frac{1}{\eps}m= \displaystyle \frac{1}{\eps}\frac{1}{2\ell}\int_{-\ell}^\ell \underline{P}_{\rm i}(t,x){\rm d}x,
\end{equation} 
where $2\pi\tau_{\rm buoy}$ is the dimensionless buoyancy period (see Appendix \ref{appND}) and $m$ the dimensionless mass which, by virtue of Archimedes' principle, satisfies the relation
\begin{equation}\label{eqArchimede}
 m=\frac{1}{2\ell}\int_{-\ell}^\ell (1-h_{\rm eq}).
\end{equation}
A convenient equivalent formulation of Newton's equation is obtained by introducing the hydrodynamic pressure $\Pi_{\rm i}$ defined in  the interior region $(-\ell,\ell)$ as
\begin{equation}\label{defPi}
\Pi_{\rm i}(t,x)=\Pint(t,x)+\eps \zeta_{\rm w}(t,x);
\end{equation}
using \eqref{eqzw}, Newton's equation \eqref{Newton-z_G-0} is then equivalent to
\begin{equation}\label{Newton-z_G-ND}
\tau_{\rm buoy}^2   \ddot{\delta} + \delta= \displaystyle \frac{1}{\eps}\frac{1}{2\ell}\int_{-\ell}^\ell \Pi_{\rm i}(t,x){\rm d}x.
\end{equation} 

\begin{remark}
- One can of course add an external force to the right-hand side of \eqref{Newton-z_G-ND}.\\
- In the case of an object with prescribed motion, the function $\delta$ is known, and there is no need to use \eqref{Newton-z_G-0}.
\end{remark}

One naturally associates its mechanical energy ${\mathfrak E}_{\rm solid}$ with the object, which is the sum of its potential and kinetic energies. In dimensionless form, it is given by
$$
{\mathfrak E}_{\rm solid}=2\ell\big( \frac{1}{\eps}m\delta +\frac{1}{2}\tau_{\rm buoy}^2 \dot\delta^2 \big).
$$
The variations of the mechanical energy of the solid are due to the hydrodynamic forces; more precisely, we have
$$
\frac{d}{dt}{\mathfrak E}_{\rm solid}=2\ell\dot\delta \big( \frac{1}{\eps }m  +\tau_{\rm buoy}^2 \ddot\delta \big)
$$
and therefore, using \eqref{Newton-z_G-0}
\begin{equation}\label{varEsolid}
\frac{d}{dt}{\mathfrak E}_{\rm solid}=\Big(\frac{1}{\eps}\int_{-\ell}^\ell\Pint\Big) \dot\delta.
\end{equation}
\subsection{The wave-structure equations}\label{sectWS}

We recall that the Boussinesq-Abbott equations with a relative pressure $\underline{P}$ exerted at the surface are given by
\begin{equation}\label{Boussinesq}
\begin{cases}
\partial_t \zeta +\partial_x q =0 
\\
(1-\frac{\mu}{3} \partial_x^2) \partial_t q + \varepsilon \partial_x \left( \frac{1}{h} q^2 \right) + h \partial_x \zeta
= - \frac{1}{\eps} h \partial_x \underline{P};
\end{cases}
\end{equation}
among the three quantities involved in \eqref{Boussinesq}, namely, $\zeta$, $q$ and $\underline{P}$, only two are not constrained, but not always the same ones. To make a more precise statement, we must  distinguish between the {\it interior} domain ${\mathcal I}=(-\ell,\ell)$ which is the projection on the horizontal axis of the region where the surface of the water is in contact with the object, and the {\it exterior} domain ${\mathcal E}={{(-\infty,-\ell) \cup (\ell,\infty)}}$, where it is in contact with the air:
\begin{itemize}
\item In the {\it exterior domain}, the surface of the fluid is free, but the pressure is constrained. In absence of surface tension, the pressure at the surface should match the atmospheric pressure which we assume to be constant. Recalling that if $f$ is a function on $\RR$, we denote by $f_{\rm e}$ its restriction to the exterior domain $\cE$, we have therefore
\begin{equation}\label{contrainte1}
\underline{P}_{\rm e}=0
\end{equation}
while $\zeta_{\rm e}$ and $q_{\rm e}$ must solve  the standard Boussinesq-Abbott system
\begin{equation}\label{Boussinesq_ext}
\begin{cases}
\partial_t \zeta_{\rm e} +\partial_x q_{\rm e} =0 
\\
(1-\frac{\mu}{3} \partial_x^2) \partial_t q_{\rm e} + \varepsilon \partial_x \left( \frac{1}{h_{\rm e}} q_{\rm e}^2 \right) + h_{\rm e} \partial_x \zeta_{\rm e}
= 0,
\end{cases}\quad\mbox{ for }\quad t\geq 0,\quad x\in {\mathcal E}.
\end{equation}
\item In the {\it interior domain}, we have a symmetric situation in the sense that the pressure is free but the surface of the water is constrained: by definition of the interior domain, it should coincide with the bottom of the object which is parametrized by $\eps\zeta_{\rm w}(t,x)$. Recalling that $f_{\rm i}$ denotes the restriction of a function $f$ to the interior domain ${\mathcal I}$, we have therefore
\begin{equation}\label{contrainte2}
\zeta_{\rm i}=\zeta_{\rm w},
\end{equation}
with $\zeta_{\rm w}$ given by \eqref{eqzw} while $q_{\rm i}$ and $\Pint$ must solve
\begin{equation}\label{Boussinesq_int}
\begin{cases}
\partial_x q_{\rm i} =- \dot\delta
\\
\partial_t q_{\rm i} + \varepsilon \partial_x \left( \frac{1}{h_{\rm w}} q_{\rm i}^2 \right) + h_{\rm w} \partial_x \zeta_{\rm w}
= - \frac{1}{\eps}h_{\rm w} \partial_x {\underline{P}}_{\rm i},
\end{cases}
\quad\mbox{ for }\quad t\geq 0,\quad x\in {\mathcal I},
\end{equation}
with $h_{\rm w}=1+\eps\zeta_{\rm w}$ and where we used the fact $\dt\zeta_{\rm w}=\dot \delta$ and that $\dx^2 q_{\rm i}=0$.
\end{itemize}
The constraints \eqref{contrainte1} and \eqref{contrainte2} together with the systems of equations \eqref{Boussinesq_ext} and \eqref{Boussinesq_int} and Newton's equation \eqref{Newton-z_G-ND}
are not enough to fully determine $(\zeta,q,\underline{P})$ in both regions ${\mathcal E}$ and ${\mathcal I}$, and the position $\delta(t)$ of the object. Indeed coupling conditions between the exterior and interior regions are required:
\begin{itemize}
\item {\it Continuity of the discharge}, namely,
\begin{equation}\label{coupling-q-i-e}
q_{\rm i}(t,\pm \ell)=q_{\rm e}(t,\pm \ell),
\end{equation}
\item {\it Conservation of the total energy} at the order of precision of the model.
 As already noticed in \cite{BLM} in the case of a fixed object, \eqref{coupling-q-i-e} is not enough to obtain a closed set of equations. In \cite{BLM}, where the system \eqref{varBLM} was used, the exact equation \eqref{NRJBLM} for the local conservation of energy was used and a boundary condition for the pressure was derived by imposing the exact conservation of the energy of the fluid, or equivalently, since the solid was considered fixed, of the energy of the fluid+solid system. In the present case where the object is allowed to move, this condition becomes more complex and because the local conservation of the energy \eqref{eqNRJ} is only satisfied at order $O(\eps\mu)$ for the Boussinesq-Abbott system \eqref{Abbott}, the additional conditions must be stated as
\begin{equation}\label{addcoupling}
\mbox{\it The total energy of the fluid+solid system is conserved at order  }O(\eps\mu),
\end{equation}
 We show in \S \ref{sectderCB} below how to derive boundary conditions on $\Pint$ at $\pm \ell$ from this condition.
\end{itemize}

The remaining of this section and Section \ref{sectWSITP} are devoted to the proof of the fact that  the constraints \eqref{contrainte1} and \eqref{contrainte2},  the systems of equations \eqref{Boussinesq_ext} and \eqref{Boussinesq_int}, the coupling conditions \eqref{coupling-q-i-e} and \eqref{addcoupling}, together with Newton's equation \eqref{Newton-z_G-ND} form a well-posed system of equations (in a sense made precise below) that fully determines $(\zeta,q,\underline{P})$ in both regions $\cE$ and ${\mathcal I}$, as well as the position $\delta(t)$ of the floating object.

\subsection{The equations in the interior domain and the solid motion}\label{secteqint}

As said above, in the interior domain ${\mathcal I}=(-\ell,\ell)$, the surface elevation is constrained (one has $\zeta_{\rm i}=\zeta_{\rm w}$ with $\zeta_{\rm w}$ given by \eqref{eqzw}) but the surface pressure is an unknown quantity, denoted by $\underline{P}_{\rm i}$. As seen in \eqref{Boussinesq_int}, the mass conservation equation and the constraint on the free surface also imply that in the interior domain, one has $\dx q_{\rm i}=-\dot\delta$ and therefore 
\begin{equation}\label{exprqi}
q_{\rm i}(t,x)=-x\dot\delta +\av{q_{\rm i}}(t)
\end{equation}
where the {\it mean horizontal discharge} $\av{q_{\rm i}}$ is a function of time that needs to be determined. 

We show in \S \ref{sectderCB} how to derive equations for the interior pressure; these equations can be used to make more explicit the equation for the displacement $\delta$  of the floating object, exhibiting in particular the added mass phenomenon (see \S \ref{sectdelta}); the case of the mean discharge $\av{q_{\rm i}}$ is finally handled in \S \ref{sectavq}.

\subsubsection{The interior pressure}\label{sectderCB}

We first show how the condition \eqref{addcoupling} on the conservation of the total energy  can be used to find the boundary values of the interior pressure at $x=\pm \ell$.
The energy ${\mathfrak E}_{\rm fluid}$ of the fluid  can be decomposed into two parts corresponding to the exterior and interior regions, respectively denoted $\cE=(-\infty,-\ell)\cup (\ell,\infty)$ and ${\mathcal I}=(-\ell,\ell)$, 
\begin{align*}
{\mathfrak E}_{\rm fluid}=\int_{\cE} \mfe_{\rm e} + \int_{\mathcal I} \mfe_{\rm i},
\end{align*}  
with $\mfe$ as in \eqref{defeF},
while we recall that the mechanical energy of the solid is given by 
$$
{\mathfrak E}_{\rm solid}=2\ell\big( \frac{1}{\eps}m\delta +\frac{1}{2}\tau_{\rm buoy}^2 \dot\delta^2 \big);
$$
the total energy of the fluid+solid system is
$$
{\mathfrak E}_{\rm tot}={\mathfrak E}_{\rm fluid}+{\mathfrak E}_{\rm solid}.
$$
The following Proposition shows that if the energy flux ${\mathfrak F}$ introduced in \eqref{defeF}, namely,
$$
\mfF(\zeta,q)=q\big(\zeta+\frac{1}{\eps}\underline{P}+\eps \frac{1}{2}\frac{q^2}{h^2}- \mu \frac{1}{3h} \dx\dt q \big),
$$
is continuous across the contact points $x=\pm\ell$ then the total energy is conserved up to $O(\eps\mu)$ terms.
\begin{proposition}
Any regular enough solution of the  wave-structure equations  \eqref{Newton-z_G-ND} and \eqref{contrainte1}-\eqref{Boussinesq_int} satisfies
$$
\frac{d}{dt}{\mathfrak E}_{\rm tot}=\jump{{\mathfrak F}_{\rm e}-{\mathfrak F}_{\rm i}}+\eps\mu \big(\int_{{\mathcal I}} {\mathfrak R}_{\rm i}+\int_{\cE} {\mathfrak R}_{\rm e} \big),
$$
with ${\mathfrak F}$ as in \eqref{defeF} and ${\mathfrak R}$ as in \eqref{defRmu}.
\end{proposition}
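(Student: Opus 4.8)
The plan is to differentiate the total energy $\mathfrak{E}_{\rm tot} = \mathfrak{E}_{\rm fluid} + \mathfrak{E}_{\rm solid}$ and to apply the local energy balance \eqref{eqNRJ} region by region, the coupling between interior and exterior arising only through the boundary terms produced at $x=\pm\ell$. First I would treat the fluid energy $\mathfrak{E}_{\rm fluid} = \int_{\cE}\mfe_{\rm e} + \int_{\mathcal I}\mfe_{\rm i}$. In the exterior region the constraint \eqref{contrainte1} gives $\underline{P}_{\rm e}=0$, so \eqref{eqNRJ} reduces to $\dt\mfe_{\rm e} + \dx\mfF_{\rm e} = \eps\mu\mathfrak{R}_{\rm e}$; integrating over $\cE = (-\infty,-\ell)\cup(\ell,\infty)$ and using decay of the solution at infinity to discard the contributions at $\pm\infty$, the boundary terms assemble into $\mfF_{\rm e}(\ell) - \mfF_{\rm e}(-\ell) = \jump{\mfF_{\rm e}}$, whence $\frac{d}{dt}\int_{\cE}\mfe_{\rm e} = \jump{\mfF_{\rm e}} + \eps\mu\int_{\cE}\mathfrak{R}_{\rm e}$.

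For the interior region the key preliminary observation is that \eqref{eqNRJ} still applies verbatim. Indeed, although the dispersive term drops out of the momentum equation in \eqref{Boussinesq_int} because $\dx^2 q_{\rm i}=0$ by \eqref{exprqi}, one has $(1-\frac{\mu}{3}\dx^2)\dt q_{\rm i} = \dt q_{\rm i}$ for the same reason, so the interior equations coincide exactly with the full Boussinesq-Abbott system \eqref{Abbott} restricted to ${\mathcal I}$ with surface pressure $\Pint$. Hence $\dt\mfe_{\rm i} + \dx\mfF_{\rm i} = \frac{1}{\eps}\Pint\,\dx q_{\rm i} + \eps\mu\mathfrak{R}_{\rm i}$. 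Integrating over ${\mathcal I}=(-\ell,\ell)$ produces the boundary term $-\jumpi{\mfF_{\rm i}}$, while the pressure source is rewritten with the constraint $\dx q_{\rm i} = -\dot\delta$ of \eqref{exprqi}, giving $\frac{1}{\eps}\int_{-\ell}^{\ell}\Pint\,\dx q_{\rm i} = -\frac{\dot\delta}{\eps}\int_{-\ell}^{\ell}\Pint$.

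It then remains to add the solid energy. From \eqref{varEsolid} one has $\frac{d}{dt}\mathfrak{E}_{\rm solid} = \big(\frac{1}{\eps}\int_{-\ell}^{\ell}\Pint\big)\dot\delta$, which cancels exactly the interior pressure-work term found above. Collecting everything and recognising $\jump{\mfF_{\rm e}} - \jumpi{\mfF_{\rm i}} = \jump{\mfF_{\rm e} - \mfF_{\rm i}}$ yields the announced identity. The computation is essentially routine; the one point genuinely deserving care is this cancellation mechanism, namely that the interior pressure acts on the fluid through $\frac{1}{\eps}\Pint\,\dx q_{\rm i}$ and on the solid through the right-hand side of Newton's equation \eqref{Newton-z_G-ND}, and that the incompressibility-type constraint $\dx q_{\rm i}=-\dot\delta$ makes these two contributions equal and opposite. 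A secondary subtlety is the justification that \eqref{eqNRJ}, derived for \eqref{Abbott}, transfers unchanged to the interior despite the pointwise vanishing of the dispersive term there, together with the sign and orientation bookkeeping for the two jumps and the far-field decay used to drop the flux at $\pm\infty$.
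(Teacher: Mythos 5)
Your proposal is correct and follows essentially the same route as the paper: apply the local balance \eqref{eqNRJ} on $\cE$ (where $\underline{P}_{\rm e}=0$) and on ${\mathcal I}$, integrate to produce the jumps $\jump{\mfF_{\rm e}}$ and $-\jumpi{\mfF_{\rm i}}$, rewrite the interior pressure work using $\dx q_{\rm i}=-\dot\delta$, and cancel it against $\frac{d}{dt}{\mathfrak E}_{\rm solid}$ via \eqref{varEsolid}. Your explicit justification that \eqref{eqNRJ} transfers to the interior because $\dx^2\dt q_{\rm i}=0$ makes \eqref{Boussinesq_int} coincide with \eqref{Abbott} restricted to ${\mathcal I}$ is a point the paper leaves implicit, and it is correct.
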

\begin{proof}
For the sake of clarity, we simply denote by $O(\eps\mu)$ instead of $\eps\mu \big(\int_{{\mathcal I}} {\mathfrak R}_{\rm i}+\int_{\cE} {\mathfrak R}_{\rm e} \big)$ in the computations below.
One computes
\begin{align*}
\frac{d}{dt}{\mathfrak E}_{\rm fluid}&=\int_{\cE} \dt \mfe_{\rm e} + \int_{\mathcal I} \dt \mfe_{\rm i}\\
&=-\int_{\cE} \dx {\mathfrak F}_{\rm e} - \int_{\mathcal I} ( \dx{\mathfrak F}_{\rm i}   - \frac{1}{\eps}\Pint \dx q_{\rm i} \big)  + O(\eps\mu),
\end{align*}
where we used the approximate conservation of local energy \eqref{eqNRJ}. Recalling the definition \eqref{jumps} of the exterior and interior jumps, and since $\dx q_{\rm i}=-\dot\delta$, this yields
\begin{align*}
\frac{d}{dt}{\mathfrak E}_{\rm fluid}&=\jump{{\mathfrak F}_{\rm e}}-\jumpi{{\mathfrak F}_{\rm i}}- \Big(\frac{1}{\eps} \int_{-\ell}^\ell \Pint\Big)\dot\delta +O(\eps\mu).
\end{align*}
Together with \eqref{varEsolid}, this directly gives the result.
\end{proof}
The following corollary shows that if the coupling condition \eqref{coupling-q-i-e} is satisfied then the condition \eqref{addcoupling} on the conservation of the total energy reduces to imposing boundary condition at $x=\pm \ell$ on the interior pressure $\Pint$ or equivalently on the interior hydrodynamic pressure $\Pi_{\rm i}$ given by \eqref{defPi}, namely,
$$
\Pi_{\rm i}=\underline{P}_{\rm i}+\eps \zeta_{\rm w}.
$$
\begin{corollary}\label{coroBC}
Assume that in addition the condition \eqref{coupling-q-i-e} on the continuity of the discharge is satisfied, and  that the traces $\Pi_{\rm i}^\pm$ of the interior hydrodynamic pressure at  $x=\pm \ell$   are given by
\begin{equation}\label{CBpress}
\frac{1}{\eps}\Pi_{\rm i}^\pm=\zeta_{\rm e}^\pm+{\mathfrak G}_{\rm e}^\pm-{\mathfrak G}_{\rm i}^\pm
\end{equation}
where $\Pi_{\rm i}$ is as defined in \eqref{defPi} and
\begin{equation}\label{defG}
{\mathfrak G}=\eps \frac{1}{2}\frac{q^2}{h^2}-\frac{\mu}{3 h}\dx\dt q.
\end{equation}
Then one has 
$$
\frac{d}{dt} {\mathfrak E}_{\rm tot}=\eps\mu \big(\int_{{\mathcal I}} {\mathfrak R}+\int_{\cE} {\mathfrak R} \big).
$$
\end{corollary}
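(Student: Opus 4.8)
The plan is to start from the identity established in the Proposition immediately above, namely
$$
\frac{d}{dt}{\mathfrak E}_{\rm tot}=\jump{{\mathfrak F}_{\rm e}}-\jumpi{{\mathfrak F}_{\rm i}}+\eps\mu \Big(\int_{{\mathcal I}} {\mathfrak R}_{\rm i}+\int_{\cE} {\mathfrak R}_{\rm e} \Big),
$$
so that the corollary reduces to showing that the extra hypotheses force the boundary term $\jump{{\mathfrak F}_{\rm e}}-\jumpi{{\mathfrak F}_{\rm i}}$ to vanish. Since this quantity equals $({\mathfrak F}_{\rm e}^+-{\mathfrak F}_{\rm i}^+)-({\mathfrak F}_{\rm e}^--{\mathfrak F}_{\rm i}^-)$, it is enough to prove the stronger pointwise statement that the flux is continuous across each contact point, i.e. ${\mathfrak F}_{\rm e}^\pm={\mathfrak F}_{\rm i}^\pm$ at $x=\pm\ell$.

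First I would rewrite each flux in terms of the quantity ${\mathfrak G}$ from \eqref{defG}, writing $\mfF(\zeta,q)=q\big(\zeta+\frac{1}{\eps}\underline{P}+{\mathfrak G}\big)$. In the exterior region the constraint \eqref{contrainte1} gives $\underline{P}_{\rm e}=0$, hence ${\mathfrak F}_{\rm e}=q_{\rm e}(\zeta_{\rm e}+{\mathfrak G}_{\rm e})$. In the interior region I would use the constraint $\zeta_{\rm i}=\zeta_{\rm w}$ together with the definition \eqref{defPi} of the hydrodynamic pressure, $\Pi_{\rm i}=\Pint+\eps\zeta_{\rm w}$, to collapse the two hydrostatic contributions: indeed $\zeta_{\rm w}+\frac{1}{\eps}\Pint=\frac{1}{\eps}\Pi_{\rm i}$, so that ${\mathfrak F}_{\rm i}=q_{\rm i}\big(\frac{1}{\eps}\Pi_{\rm i}+{\mathfrak G}_{\rm i}\big)$.

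Finally I would evaluate at $x=\pm\ell$. The continuity of the discharge \eqref{coupling-q-i-e} gives $q_{\rm i}^\pm=q_{\rm e}^\pm=:q^\pm$, so the difference factors as
$$
{\mathfrak F}_{\rm e}^\pm-{\mathfrak F}_{\rm i}^\pm=q^\pm\Big[\big(\zeta_{\rm e}^\pm+{\mathfrak G}_{\rm e}^\pm\big)-\big(\tfrac{1}{\eps}\Pi_{\rm i}^\pm+{\mathfrak G}_{\rm i}^\pm\big)\Big],
$$
and the prescribed boundary value \eqref{CBpress} is exactly the condition that makes the bracket vanish. Thus ${\mathfrak F}_{\rm e}^\pm={\mathfrak F}_{\rm i}^\pm$ at both contact points, the boundary term is zero, and the stated identity follows. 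There is no genuine obstacle here once the Proposition is granted: the computation is elementary substitution, and the only point requiring a little care is the algebraic translation between the interior pressure $\Pint$ and the hydrodynamic pressure $\Pi_{\rm i}$, which is precisely what aligns the interior flux with the boundary condition \eqref{CBpress} stated in terms of $\Pi_{\rm i}$.
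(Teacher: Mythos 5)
Your proposal is correct and follows essentially the same route as the paper: both reduce to showing the flux jump $\jump{{\mathfrak F}_{\rm e}-{\mathfrak F}_{\rm i}}$ vanishes, rewrite the flux as $q\big(\tfrac{1}{\eps}\Pi+{\mathfrak G}\big)$ via the hydrodynamic pressure, and use the continuity of $q$ together with \eqref{CBpress}. The only (harmless) difference is that you verify the pointwise equality ${\mathfrak F}_{\rm e}^\pm={\mathfrak F}_{\rm i}^\pm$ directly, whereas the paper runs the argument through the jump/average product identity because it is simultaneously motivating why \eqref{CBpress} is the natural choice of boundary condition.
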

\begin{remark}\label{remGi}
Recalling that the first equation of \eqref{Boussinesq_int} implies
$$
q_{\rm i}=-x \dot\delta +\avi{q_{\rm i}},\qquad \dx\dt q_{\rm i}=-\ddot\delta,
$$
and that $\zeta_{\rm i}=\zeta_{\rm w}$ with $\zeta_{\rm w}$ given by \eqref{eqzw} (so that $h_{\rm w}=h_{\rm eq}+\eps\delta$), we have
$$
{\mathfrak G}_{\rm i}^\pm=\delta+\eps\frac{1}{2}\frac{(\av{q_{\rm i}}\mp \ell \dot\delta)^2}{h_{\rm w}^2}+\frac{\mu}{3 h_{\rm w}}\ddot\delta,
$$
which does not depend on any other unknown of the problem than the functions $\delta$ and $\av{q_{\rm i}}$.

\end{remark}
\begin{proof}
From the proposition, it is enough to show that under the assumptions of the corollary, one has
$$
\jump{{\mathfrak F}_{\rm e}-{\mathfrak F}_{\rm i}}=0.
$$
Using the hydrodynamic pressure $\Pi=\underline{P}+\eps \zeta$, one can write
$$
{\mathfrak F}=q\big( \frac{1}{\eps}\Pi+{\mathfrak G} \big)
$$
 with ${\mathfrak G}$ as in the statement of the corollary.
Using the identity
$$
\jump{f g}=\jump{f}\av{g}+\av{f}\jump{g},
$$
and remarking that the continuity of $q$ at $\pm\ell$ implies that $\av{q}=\avi{q_{\rm i}}$ and $\jump{q}=\jumpi{q_{\rm i}}$, this yields
$$
\jumpi{q_{\rm i}}\Big[\av{{\mathfrak G}_{\rm e}-{\mathfrak G}_{\rm i}+\frac{1}{\eps}(\Pi_{\rm e} -\Pi_{\rm i}}\Big]
+\avi{q_{\rm i}}\Big[\jump{{\mathfrak G}_{\rm e}-{\mathfrak G}_{\rm i}+\frac{1}{\eps}(\Pi_{\rm e}-\Pi_{\rm i})}\Big]=0.
$$
Since $\jumpi{q_{\rm i}}$ and $\avi{q_{\rm i}}$ are two uncorrelated functions of time, this leads us to impose $\jumpi{\Pi_{\rm i}}$ and $\av{\Pi_{\rm i}}$, and therefore $\Pi_{\rm i}^\pm=\pm\frac{1}{2}\big(\jump{\Pi_{\rm i}}\pm2\av{\Pi_{\rm i}}\big)$,
$$
\frac{1}{\eps}\Pi_{\rm i}^\pm=\zeta_{\rm e}^\pm+{\mathfrak G}_{\rm e}^\pm-{\mathfrak G}_{\rm i}^\pm,
$$
where we also used the fact that $\Pi_{\rm e}=\eps\zeta_{\rm e}$.
\end{proof}

We can rewrite the second equation of the Boussinesq equations \eqref{Boussinesq_int} in the interior domain using the hydrodynamic pressure $\Pi_{\rm i}$ introduced in \eqref{defPi} under the form
\begin{equation}\label{momentumbis}
\dt q_{\rm i}+\eps \dx \big( \frac{1}{h_{\rm w}} q_{\rm i}^2 \big)=-\frac{1}{\eps }h_{\rm w}\dx \Pi_{\rm i};
\end{equation}
differentiating this equation with respect to $x$ and substituting $\dt\dx q_{\rm i}=-\ddot\delta$, one obtains a second order elliptic equation for ${\Pi}_{\rm i}$, while Corollary \ref{coroBC} provides non homogeneous Dirichlet boundary conditions. The resolution of this elliptic boundary value problem is straightforward and fully determines $\Pi_{\rm i}$.
\begin{proposition}\label{propPint}
The hydrodynamic interior pressure $\Pi_{\rm i}$ is the unique solution of the elliptic problem
\begin{equation}
\begin{cases}
-\partial_x ( \frac{1}{\varepsilon} h_{\rm w} \partial_x {\Pi}_{\rm i} ) = - \ddot \delta 
+\varepsilon \partial_x^2 \big( \frac{1}{h_{\rm w}} q_{\rm i}^2 \big) ,
\\
\frac{1}{\eps}{\Pi_{\rm i}}_{\vert_{x=\pm \ell}}=\zeta_{\rm e}^\pm + {\mathfrak G}_{\rm e}^\pm-  {\mathfrak G}_{\rm i}^\pm
\end{cases}
\end{equation}
where $h_{\rm w}(t,x)=h_{\rm eq}(x)+\eps\delta(t)$  and ${\mathfrak G}$ is as in \eqref{defG}.
\end{proposition}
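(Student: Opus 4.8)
The plan is to convert the interior momentum equation \eqref{momentumbis} into a second order ODE in $x$ (at each fixed time $t$) by differentiation, to read off the Dirichlet data from Corollary \ref{coroBC}, and then to invoke a standard one-dimensional elliptic well-posedness argument.

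First I would start from \eqref{momentumbis}, namely $\dt q_{\rm i}+\eps \dx ( \frac{1}{h_{\rm w}} q_{\rm i}^2)=-\frac{1}{\eps}h_{\rm w}\dx \Pi_{\rm i}$, and differentiate both sides with respect to $x$, obtaining $\dx\dt q_{\rm i}+\eps \dx^2 ( \frac{1}{h_{\rm w}} q_{\rm i}^2)=-\dx(\frac{1}{\eps}h_{\rm w}\dx \Pi_{\rm i})$. The key simplification comes from the interior mass conservation equation, i.e.\ the first equation of \eqref{Boussinesq_int}: since $\dx q_{\rm i}=-\dot\delta$ depends only on $t$, differentiating in time gives $\dx\dt q_{\rm i}=\dt\dx q_{\rm i}=-\ddot\delta$. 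Substituting this identity yields exactly the elliptic equation $-\dx(\frac{1}{\eps}h_{\rm w}\dx \Pi_{\rm i})=-\ddot\delta+\eps \dx^2(\frac{1}{h_{\rm w}}q_{\rm i}^2)$ stated in the Proposition. The boundary conditions $\frac{1}{\eps}\Pi_{\rm i}^\pm=\zeta_{\rm e}^\pm+{\mathfrak G}_{\rm e}^\pm-{\mathfrak G}_{\rm i}^\pm$ are precisely \eqref{CBpress}, furnished by Corollary \ref{coroBC} once the continuity of discharge \eqref{coupling-q-i-e} and the energy-conservation requirement \eqref{addcoupling} are imposed.

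It then remains to establish existence and uniqueness for this boundary value problem. The operator $\Pi_{\rm i}\mapsto -\dx(\frac{1}{\eps}h_{\rm w}\dx \Pi_{\rm i})$ is uniformly elliptic because the water height under the object satisfies $h_{\rm w}=h_{\rm eq}+\eps\delta>0$ (the non-cavitation condition), so the coefficient $\frac{1}{\eps}h_{\rm w}$ is bounded from below by a positive constant on $\mathcal I$. After subtracting a smooth lift of the nonhomogeneous Dirichlet data, the associated bilinear form is coercive on $H^1_0(\mathcal I)$, and the Lax--Milgram theorem provides a unique weak solution; alternatively, being one-dimensional, the equation can simply be integrated twice, with $\frac{1}{h_{\rm w}}\in L^\infty(\mathcal I)$ ensuring that both integrations are well defined and that the two constants of integration are fixed uniquely by the two Dirichlet conditions.

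The only point genuinely requiring attention, rather than a real obstacle, is the positivity of $h_{\rm w}$ that guarantees ellipticity: this is the standard physical assumption that the fluid does not detach from the object. Everything else --- the derivation of the equation and the well-posedness --- is a direct computation combined with classical linear elliptic theory, so I expect no substantial difficulty. Note in particular that, by Remark \ref{remGi}, the right-hand side and the boundary data depend only on the exterior traces and on the functions $\delta$ and $\avi{q_{\rm i}}$, so this resolution indeed expresses $\Pi_{\rm i}$ (and hence $\Pint$) explicitly in terms of these quantities.
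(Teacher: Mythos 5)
Your proposal is correct and follows essentially the same route as the paper: the authors obtain the elliptic equation by differentiating \eqref{momentumbis} in $x$ and substituting $\dt\dx q_{\rm i}=-\ddot\delta$, take the Dirichlet data from Corollary \ref{coroBC}, and simply declare the resolution of the resulting one-dimensional boundary value problem "straightforward". Your added remarks on the positivity of $h_{\rm w}$ guaranteeing ellipticity and on Lax--Milgram versus direct double integration only make explicit what the paper leaves implicit.
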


\subsubsection{An equation for $\avi{q_{\rm i}}$}\label{sectavq}
We have already seen that in the interior region, the equation for the conservation of mass in the fluid shows that the discharge is given by $q_{\rm i}=-x\dot\delta+\avi{q_{\rm i}}$. The following proposition shows that $\avi{q_{\rm i}}$ is determined by an ODE with a source term related to the wave field in the exterior domain. Note that ${\mathfrak G}_{\rm e}$ accounts for the contribution of the nonlinear and of the dispersive terms of this exterior wave field.
\begin{proposition}\label{propevolav}
Assume that $h_{\rm eq}$ is an even function. Then if $\Pint$ and $q_{\rm i}$ solve the interior fluid equations \eqref{Boussinesq_int} and if the interior pressure satisfies the boundary conditions given in Corollary \ref{coroBC}, then $\av{q_{\rm i}}$ satisfies the ODE
\begin{equation}\label{ODEavq}
 \alpha(\eps\delta) \frac{d}{dt}\avi{q_{\rm i}}+ \eps \alpha'(\eps\delta) \dot\delta \avi{q_{\rm i}}=
 - \frac{1}{2\ell} \jump{\zeta_{\rm e}+{\mathfrak G}_{\rm e}}
\end{equation}
with ${\mathfrak G}$ as in \eqref{defG} and
\begin{equation}\label{defalpha}
\alpha(\eps\delta)=\frac{1}{2\ell}\int_{-\ell}^\ell \frac{1}{h_{\rm eq}(x)+\eps\delta}{\rm d}x, \quad\mbox{ and }\quad
\alpha'(\eps\delta)=-\frac{1}{2\ell}\int_{-\ell}^\ell \frac{1}{(h_{\rm eq}(x)+\eps\delta)^2}{\rm d}x.
\end{equation}  
\end{proposition}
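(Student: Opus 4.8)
The plan is to derive \eqref{ODEavq} by dividing the interior momentum equation \eqref{momentumbis} by $h_{\rm w}$ and integrating over ${\mathcal I}=(-\ell,\ell)$, so that the pressure contribution becomes an exact $x$-derivative whose integral is dictated by the Dirichlet data of Corollary \ref{coroBC}. Rewriting \eqref{momentumbis} as
\[
\frac{1}{h_{\rm w}}\dt q_{\rm i}+\frac{\eps}{h_{\rm w}}\dx\Big(\frac{1}{h_{\rm w}}q_{\rm i}^2\Big)=-\frac{1}{\eps}\dx \Pi_{\rm i}
\]
and integrating in $x$, the right-hand side yields $-\frac{1}{\eps}\jumpi{\Pi_{\rm i}}$, which by the boundary conditions \eqref{CBpress} equals $-\jump{\zeta_{\rm e}+{\mathfrak G}_{\rm e}}+\jumpi{{\mathfrak G}_{\rm i}}$. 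I would then treat the two left-hand terms separately and match everything against the target, which I note can be written compactly as $\frac{d}{dt}\big(\alpha(\eps\delta)\avi{q_{\rm i}}\big)=-\frac{1}{2\ell}\jump{\zeta_{\rm e}+{\mathfrak G}_{\rm e}}$.

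For the time-derivative term I substitute $q_{\rm i}=-x\dot\delta+\avi{q_{\rm i}}$, so that $\dt q_{\rm i}=-x\ddot\delta+\frac{d}{dt}\avi{q_{\rm i}}$; this is exactly where the hypothesis that $h_{\rm eq}$ is even enters, since it makes $h_{\rm w}=h_{\rm eq}+\eps\delta$ even in $x$, whence the odd integral $\int_{-\ell}^\ell \frac{x}{h_{\rm w}}\,dx$ vanishes and only $2\ell\,\alpha(\eps\delta)\frac{d}{dt}\avi{q_{\rm i}}$ survives. For the nonlinear term I would use the algebraic identity
\[
\frac{1}{h_{\rm w}}\dx\Big(\frac{q_{\rm i}^2}{h_{\rm w}}\Big)=\frac12\dx\Big(\frac{q_{\rm i}^2}{h_{\rm w}^2}\Big)+\frac{q_{\rm i}\,\dx q_{\rm i}}{h_{\rm w}^2},
\]
so that its integral splits into the boundary term $\frac12\big[q_{\rm i}^2/h_{\rm w}^2\big]_{-\ell}^\ell$ and, after using $\dx q_{\rm i}=-\dot\delta$ together with evenness once more (the odd integral $\int_{-\ell}^\ell x/h_{\rm w}^2$ vanishes), a bulk contribution $2\ell\,\alpha'(\eps\delta)\,\dot\delta\,\avi{q_{\rm i}}$ coming from $-\dot\delta\,\avi{q_{\rm i}}\int_{-\ell}^\ell h_{\rm w}^{-2}$.

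The crux, and the step I expect to require the most care, is the cancellation of the boundary terms. Using Remark \ref{remGi} together with the evenness of $h_{\rm w}$ (its traces at $\pm\ell$ coincide, so the $\delta$ and the dispersive $\frac{\mu}{3h_{\rm w}}\ddot\delta$ pieces drop out of $\jumpi{{\mathfrak G}_{\rm i}}$) and the boundary values $q_{\rm i}(t,\pm\ell)=\avi{q_{\rm i}}\mp\ell\dot\delta$, one computes $\jumpi{{\mathfrak G}_{\rm i}}=-\eps\,\dfrac{2\ell\,\dot\delta\,\avi{q_{\rm i}}}{h_{\rm w}(t,\ell)^2}$, which is precisely $\eps$ times the boundary term produced by the nonlinear identity; once $\jumpi{{\mathfrak G}_{\rm i}}$ is brought to the left these two contributions annihilate. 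What remains is exactly $2\ell\,\alpha(\eps\delta)\frac{d}{dt}\avi{q_{\rm i}}+2\ell\,\eps\,\alpha'(\eps\delta)\,\dot\delta\,\avi{q_{\rm i}}=-\jump{\zeta_{\rm e}+{\mathfrak G}_{\rm e}}$, and dividing by $2\ell$ gives \eqref{ODEavq}. The only genuinely delicate point is the bookkeeping of the nonlinear boundary traces and checking that the $h_{\rm w}^{-2}$ terms match with the correct sign; everything else is a direct computation.
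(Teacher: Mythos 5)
Your proof is correct and follows essentially the same route as the paper: divide the momentum equation \eqref{momentumbis} by $h_{\rm w}$, integrate over $(-\ell,\ell)$, use the evenness of $h_{\rm w}$ to kill the odd integrals, and invoke the boundary values of $\Pi_{\rm i}$ from Corollary \ref{coroBC} together with Remark \ref{remGi}. The only difference is cosmetic: your split $\frac{1}{h_{\rm w}}\dx(q_{\rm i}^2/h_{\rm w})=\frac12\dx(q_{\rm i}^2/h_{\rm w}^2)+q_{\rm i}\dx q_{\rm i}/h_{\rm w}^2$ makes the cancellation of $\jumpi{{\mathfrak G}_{\rm i}}$ against the boundary term immediate and produces $\alpha'$ directly, whereas the paper integrates by parts and then recombines via the identity $\alpha'=-\avi{h_{\rm w}^{-2}}-\frac1\ell\int x\,\dx h_{\rm w}/h_{\rm w}^3$.
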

\begin{remark}
The assumption that the bottom parametrization is symmetric with respect to the vertical axis $\{x=0\}$ simplifies the computations but is not necessary. It could be handled as in \cite{Lannes_float} for the hyperbolic ($\mu=0$) case.
\end{remark}
\begin{proof}
Let us first state some relations that will be used throughout this proof and that  can easily be deduced from   the first equation of 
\eqref{Boussinesq_int},
$$
q_{\rm i}=-x \dot\delta +\avi{q_{\rm i}}, \qquad  \dx^2\dt q_{\rm i}=0,\qquad
 \jumpi{q_{\rm i}^2}=-4\ell \dot\delta\avi{q_{\rm i}},\qquad
 \frac{1}{2\ell}\int_{-\ell}^\ell \frac{q_{\rm i}}{h_{\rm w}}=\alpha \avi{q_{\rm i}},
$$
the last relation stemming from the assumption that the bottom of the object is symmetric with respect to the vertical axis $\{x=0\}$. \\
Rewriting the momentum equation as in \eqref{momentumbis}, namely,
$$
\dt q_{\rm i}+\eps \dx \big( \frac{1}{h_{\rm w}} q_{\rm i}^2 \big)=-\frac{1}{\eps }h_{\rm w}\dx \Pi_{\rm i};
$$
dividing by $h_{\rm w}=h_{\rm eq}+\eps\delta$ and integrating between $-\ell$ and $\ell$ one obtains
\begin{align*}
2\ell \alpha \frac{d}{dt} \!\avi{q_{\rm i}}+\eps \jumpi{ \frac{q_{\rm i}^2}{h_{\rm w}^2} }+\eps \int_{-\ell}^\ell \frac{\dx h_{\rm w}}{h_{\rm w}^3}  q_{\rm i}^2 =-\frac{1}{\eps}\jumpi{\Pi_{\rm i}}.
\end{align*}
Using the relations derived above, this gives
$$
2\ell \alpha \frac{d}{dt} \!\avi{q_{\rm i}}- 4\eps \ell \avi{\frac{1}{h_{\rm w}^2}}\dot\delta \avi{q_{\rm i}}-2 \eps \big( \int_{-\ell}^\ell \frac{x\dx h_{\rm w}}{h_{\rm w}^3}\big)\dot\delta \avi{q_{\rm i}}  =-\frac{1}{\eps}\jumpi{\Pi_{\rm i}}.
$$
The result then follows upon remarking that (see Corollary \ref{coroBC}, Remark \ref{remGi} and use the fact that $\zeta_{\rm eq}$ is even)
$$
\jumpi{\frac{1}{\eps}\Pi_{\rm i}}=\jump{\zeta_{\rm e}+{\mathfrak G}_{\rm e} }+ 2\eps\ell  \avi{\frac{1}{h_{\rm w}^2}}\dot\delta\av{q_{\rm i}}
$$
and (after integration by part)
$$
\alpha'(\eps\delta)= -\avi{\frac{1}{h_{\rm w}^2}}-   \frac{1}{\ell}\int_{-\ell}^\ell \frac{x\dx h_{\rm w}}{h_{\rm w}^3}.
$$
%\textcolor{blue}{and that $\beta_0(\eps\delta)$, defined in \eqref{defbeta}, can be also written as
%$$
%\beta_{0}(\eps\delta)= -\frac{1}{2} \avi{\frac{1}{h_{\rm i}^2}}-   \fint_{-\ell}^\ell \frac{x\dx h_{\rm i}}{h_{\rm i}^3}.
%$$ }
\end{proof}

\subsubsection{Reformulation of the equation for the solid motion}\label{sectdelta}

We recall that the solid motion is governed by Newton's equation that can be put under the form \eqref{Newton-z_G-ND}, namely
$$
\tau_{\rm buoy}^2   \ddot{\delta} + \delta= \displaystyle \frac{1}{\eps}\frac{1}{2\ell}\int_{-\ell}^\ell \Pi_{\rm i}(t,x){\rm d}x.
$$
Now that the interior hydrodynamical pressure $\Pi_{\rm i}$ is fully determined by Proposition \ref{propPint}, it is possible to rewrite this equation in a more explicit form, namely, a second order nonlinear ODE on $\delta$ with a source term coming from the exterior wave field.
\begin{proposition}\label{propdelta}
Assume that $h_{\rm eq}$ is an even function. For smooth enough solutions of the wave-structure equations \eqref{Newton-z_G-ND}, \eqref{contrainte1}-\eqref{coupling-q-i-e} and \eqref{CBpress}, the displacement   $\delta$ of the floating object solves the  ODE
\begin{equation}\label{ODEdelta}
{{{\tau}_\mu(\eps\delta)^2}}\ddot\delta+\delta- \eps \beta(\eps\delta)\dot\delta^2 - \frac{\eps}{2}\alpha'(\eps\delta)\avi{q_{\rm i}}^2=\av{\zeta_{\rm e}+{\mathfrak G}_{\rm e}},
\end{equation}
with ${\mathfrak G}$ as in \eqref{defG} and $\alpha'(\eps\delta)$ as in Proposition \ref{propevolav}, and where {${\tau}_\mu(\eps\delta)$ and  $\beta(\eps\delta)$ } are  given by
\begin{align}
\label{defmm}
{{{\tau}_\mu(\eps\delta)^2}}&= \tau_{\rm buoy}^2 +\frac{1}{2\ell}\int_{-\ell}^\ell \frac{x^2}{h_{\rm eq}(x)+\eps\delta}{\rm d}x +\frac{1}{3}\mu \av{\frac{1}{h_{\rm eq}+\eps\delta}}, \\
%\label{defalpha}
%\alpha(\eps\delta)&=\frac{1}{2\ell}\int_{-\ell}^\ell \frac{1}{h_{\rm i}},\\
\label{defbeta}
%\beta_{p}(\eps\delta)&=-\frac{1}{2\ell}\int_{-\ell}^\ell \frac{x^p}{h_{\rm i}}\dx\big(\frac{x}{h_{\rm i}}\big)+ \frac{1}{2}\avi{\frac{x^p}{h_{\rm i}^2}}.\\
\beta(\eps\delta)&= \frac{1}{2} \frac{1}{2\ell}\int_{-\ell}^\ell \frac{x^2}{(h_{\rm eq}(x)+\eps\delta)^2}{\rm d}x.
%=-\frac{1}{2}{\mathfrak m}_\mu'(\eps\delta)-\frac{\mu}{6}\av{\frac{1}{(h_{\rm eq}+\eps\delta)^2}}.
\end{align} 
\end{proposition}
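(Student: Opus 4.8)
The plan is to start from Newton's equation in the form \eqref{Newton-z_G-ND} and to evaluate its right-hand side $\frac{1}{\eps}\frac{1}{2\ell}\int_{-\ell}^\ell \Pi_{\rm i}$ explicitly, now that $\Pi_{\rm i}$ is entirely determined by Proposition \ref{propPint}. Rather than solving the elliptic boundary value problem and integrating its solution, I would integrate by parts once to isolate boundary traces,
\[
\int_{-\ell}^\ell \Pi_{\rm i}\,{\rm d}x=\big[x\,\Pi_{\rm i}\big]_{-\ell}^\ell-\int_{-\ell}^\ell x\,\dx\Pi_{\rm i}\,{\rm d}x
=2\ell\,\avi{\Pi_{\rm i}}-\int_{-\ell}^\ell x\,\dx\Pi_{\rm i}\,{\rm d}x,
\]
and then use the interior momentum equation \eqref{momentumbis}, in the form $\dx\Pi_{\rm i}=-\frac{\eps}{h_{\rm w}}\big(\dt q_{\rm i}+\eps\dx(\frac{1}{h_{\rm w}}q_{\rm i}^2)\big)$, to rewrite the remaining integral as $-\frac1\eps\int_{-\ell}^\ell x\,\dx\Pi_{\rm i}=\int_{-\ell}^\ell\frac{x}{h_{\rm w}}\big(\dt q_{\rm i}+\eps\dx(\frac{1}{h_{\rm w}}q_{\rm i}^2)\big)$. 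In this way the pressure is eliminated and everything is expressed through $\delta$, $\avi{q_{\rm i}}$ and the exterior traces.

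The traces $\avi{\Pi_{\rm i}}$ are supplied by Corollary \ref{coroBC}: one has $\frac1\eps\avi{\Pi_{\rm i}}=\av{\zeta_{\rm e}+{\mathfrak G}_{\rm e}}-\avi{{\mathfrak G}_{\rm i}}$, and the interior traces ${\mathfrak G}_{\rm i}^\pm$ are computed from the definition \eqref{defG} using $q_{\rm i}=-x\dot\delta+\avi{q_{\rm i}}$ and $\dx\dt q_{\rm i}=-\ddot\delta$ (as in Remark \ref{remGi}). In the first integral I would use $\dt q_{\rm i}=-x\ddot\delta+\frac{d}{dt}\avi{q_{\rm i}}$; since $h_{\rm eq}$, hence $h_{\rm w}$, is even, $\int_{-\ell}^\ell \frac{x}{h_{\rm w}}=0$ and only $-\ddot\delta\int_{-\ell}^\ell\frac{x^2}{h_{\rm w}}$ survives. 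The nonlinear integral is treated by one integration by parts, producing a boundary term $\frac{\eps\ell}{h_{\rm w}(\ell)^2}\big(q_{\rm i}(\ell)^2+q_{\rm i}(-\ell)^2\big)$ and a bulk term involving $\int_{-\ell}^\ell\frac{q_{\rm i}^2}{h_{\rm w}^2}$ and $\int_{-\ell}^\ell\frac{x\,\dx h_{\rm w}}{h_{\rm w}^3}q_{\rm i}^2$.

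It then remains to collect terms according to their dependence on $\ddot\delta$, $\dot\delta^2$ and $\avi{q_{\rm i}}^2$. The coefficient of $\ddot\delta$ gathers $\tau_{\rm buoy}^2$ from \eqref{Newton-z_G-ND}, the inertial contribution $\frac{1}{2\ell}\int_{-\ell}^\ell\frac{x^2}{h_{\rm w}}$, and a purely dispersive contribution $\frac{\mu}{3}\av{\frac{1}{h_{\rm w}}}$ coming only from the term $-\frac{\mu}{3h_{\rm w}}\dx\dt q_{\rm i}=\frac{\mu}{3h_{\rm w}}\ddot\delta$ in the boundary trace of ${\mathfrak G}_{\rm i}$; together these reproduce exactly ${\tau}_\mu(\eps\delta)^2$ in \eqref{defmm}. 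Thus the dispersive correction to the added mass enters the ODE solely through the boundary value of the dispersive part of ${\mathfrak G}_{\rm i}$, which is the clean part of the argument.

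The main obstacle is the bookkeeping of the nonlinear terms, i.e.\ showing that the several contributions to $\dot\delta^2$ collapse to $\eps\beta(\eps\delta)\dot\delta^2$ and those to $\avi{q_{\rm i}}^2$ to $\frac{\eps}{2}\alpha'(\eps\delta)\avi{q_{\rm i}}^2$. For this I would expand $q_{\rm i}^2=x^2\dot\delta^2-2x\dot\delta\,\avi{q_{\rm i}}+\avi{q_{\rm i}}^2$, discard the odd-in-$x$ integrands by the evenness of $h_{\rm w}$, and evaluate $\int_{-\ell}^\ell\frac{x\,\dx h_{\rm w}}{h_{\rm w}^3}q_{\rm i}^2$ using $\frac{\dx h_{\rm w}}{h_{\rm w}^3}=-\frac12\dx(h_{\rm w}^{-2})$ together with the identity $\alpha'(\eps\delta)=-\av{\frac{1}{h_{\rm w}^2}}-\frac1\ell\int_{-\ell}^\ell\frac{x\,\dx h_{\rm w}}{h_{\rm w}^3}$ already established in the proof of Proposition \ref{propevolav}. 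The delicate point is that each of $\dot\delta^2$ and $\avi{q_{\rm i}}^2$ receives three separate contributions (from $\avi{\Pi_{\rm i}}$, from the boundary term, and from the bulk term), whose boundary parts proportional to $1/h_{\rm w}(\pm\ell)$ must cancel so as to leave only the integral quantities $\beta(\eps\delta)$ and $\alpha'(\eps\delta)$ defined in \eqref{defbeta} and \eqref{defalpha}; once this cancellation is verified and the nonlinear terms are moved to the left-hand side, one obtains \eqref{ODEdelta}.
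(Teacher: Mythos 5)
Your proposal is correct and follows essentially the same route as the paper's proof: integrate Newton's equation by parts to isolate $\avi{\Pi_{\rm i}}$, eliminate $\dx\Pi_{\rm i}$ via the interior momentum equation (the paper uses the once-integrated elliptic equation of Proposition \ref{propPint}, whose integration constant is exactly your $\frac{1}{h_{\rm w}}\frac{d}{dt}\avi{q_{\rm i}}$ term), take the traces from Corollary \ref{coroBC} and Remark \ref{remGi}, and collect the $\dot\delta^2$ and $\avi{q_{\rm i}}^2$ contributions via identities equivalent to the paper's Lemma \ref{lemma-mbeta}. The only caveat is that the boundary pieces do not fully cancel as you suggest: half of each survives and is precisely what converts the bulk integrals into $\beta(\eps\delta)$ and $\frac{1}{2}\alpha'(\eps\delta)$, which is the content of that lemma.
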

\begin{remark}\label{remaddedmass}
Recalling that $2\pi\tau_{\rm buoy}$ is the dimensionless buoyancy period defined through
$$
\tau_{\rm buoy}^2=\frac{h_0^2}{L^2}m,
$$
where $m$ is the dimensionless mass (see Appendix \ref{appND}), one can write \eqref{defmm} under the form
$$
{\tau}_\mu(\eps\delta)^2=\frac{h_0^2}{L^2}\big(m+m_{\rm a}(\eps\delta) \big)
$$
where $m_{\rm a}(\eps\delta)$ acts as an added mass,
\begin{equation}\label{addedmass}
m_{\rm a}(\eps\delta)=\frac{L^2}{2\ell h_0^2}\int_{-\ell}^\ell \frac{x^2}{h_{\rm eq}(x)+\eps\delta}{\rm d}x+\frac{1}{3}\av{\frac{1}{h_{\rm eq}+\eps\delta}}.
\end{equation}
The buoyancy period is therefore affected by  the {\it added mass phenomenon}, that is, by the fact that when it moves in a fluid, a solid not only has to accelerate its own mass but also the mass of the fluid around it. One can check from \eqref{addedmass} that, in shallow water, the added mass can actually be larger than the proper mass of the solid, a fact that has been noticed in ocean engineering \cite{Zhou}. One deduces from \eqref{defmm} that the added mass effect increases the value of the buoyancy period.\\
Note also that the last term in \eqref{defmm} is due to the presence of the dispersive term in the equations. This is not the only contribution of dispersion to the added mass effect. As we shall see later (see Remark \ref{remark-coupling}), dispersion induces a qualitatively new added mass effect in the form of a coupling with the equation on $\av{q_{\rm i}}$.
\end{remark}

\begin{proof}
For the sake of conciseness, we use here the notation $\fint_{-\ell}^\ell=\frac{1}{2\ell}\int_{-\ell}^\ell f$.  Newton's equation \eqref{Newton-z_G-ND} can be written
\begin{align}
\nonumber
\tau_{\rm buoy}^2  \ddot{\delta} + \delta&= \displaystyle \frac{1}{\eps}\frac{1}{2\ell}\int_{-\ell}^\ell \Pi_{\rm i}(t,x){\rm d}x \\
\label{train}
&=\displaystyle -\frac{1}{\eps}\fint_{-\ell}^\ell x \dx \Pi_{\rm i}(t,x){\rm d}x+ \avi{\frac{1}{\eps}\Pi_{\rm i}},
\end{align}
where we used an integration by parts to derive the second equation. In order to compute the integral in the right-hand-side, let us remark that from Proposition \ref{propPint} we get
$$
-\frac{1}{\eps}\dx \big( h_{\rm w} \dx \Pi_{\rm i}\big)=-\ddot\delta +\eps \dx^2 \big( \frac{1}{h_{\rm w}}q_{\rm i}^2\big).
$$
Integrating this relation, there is a constant $c_0$ such that
$$
-\frac{1}{\eps}\dx \Pi_{\rm i}=- \frac{x}{h_{\rm w}}\ddot\delta+\eps \frac{1}{h_{\rm w}}\dx  \big( \frac{1}{h_{\rm w}}q_{\rm i}^2\big)+\frac{c_0}{h_{\rm w}}.
$$
Recalling that $h_{\rm eq}$ (and therefore $h_{\rm w}=h_{\rm eq}+\eps\delta$) is an even function, and using \eqref{exprqi}, we get
\begin{align*}
-\frac{1}{\eps}\fint_{-\ell}^\ell x \dx \Pi_{\rm i}&=- \Big(\fint_{-\ell}^\ell \frac{x^2}{h_{\rm w}}\Big) \ddot\delta+\eps \fint_{-\ell}^\ell \frac{x}{h_{\rm w}}\dx  \big( \frac{1}{h_{\rm w}}q_{\rm i}^2\big)\\
&=- \Big(\fint_{-\ell}^\ell \frac{x^2}{h_{\rm w}}\Big) \ddot\delta+\eps \Big(- \fint_{-\ell}^\ell \frac{x^2}{h_{\rm w}}\dx\big(\frac{x}{h_{\rm w}}\big)+ \avi{\frac{x^2}{h_{\rm w}^2}}\Big)\dot\delta^2\\
&+\eps \Big(- \fint_{-\ell}^\ell \frac{1}{h_{\rm w}}\dx\big(\frac{x}{h_{\rm w}}\big)+ \avi{\frac{1}{h_{\rm w}^2}}\Big)\avi{q_{\rm i}}^2.
\end{align*}
We now need the following lemma.
\begin{lemma}\label{lemma-mbeta}
The following identities hold (with $h_{\rm w}=h_{\rm eq}+\eps\delta$)
\begin{align*}
- \fint_{-\ell}^\ell \frac{x^2}{h_{\rm w}}\dx\big(\frac{x}{h_{\rm w}}\big){\rm d}x+ \frac{1}{2}\avi{\frac{x^2}{h_{\rm w}^2}}
&=\beta(\eps\delta)\\
- \fint_{-\ell}^\ell \frac{1}{h_{\rm w}}\dx\big(\frac{x}{h_{\rm w}}\big){\rm d}x+\frac{1}{2} \avi{\frac{1}{h_{\rm w}^2}}&=\frac{1}{2}\alpha'(\eps\delta).
\end{align*}
\end{lemma}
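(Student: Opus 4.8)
The plan is to prove both identities by direct integration by parts over $(-\ell,\ell)$, using only that $h_{\rm w}=h_{\rm eq}+\eps\delta$ depends on $\delta$ through an additive constant, so that $\dx h_{\rm w}=\dx h_{\rm eq}$ is a function of $x$ alone. No estimate is involved: each side is an exact quantity, so the whole task reduces to bookkeeping the boundary terms and recognizing the leftover interior integrals as $\beta(\eps\delta)$ and $\alpha'(\eps\delta)$ and the boundary evaluations as the interior averages $\avi{\cdot}$.

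For the first identity, the key observation I would exploit is that $\frac{x}{h_{\rm w}}\dx\big(\frac{x}{h_{\rm w}}\big)=\frac12\dx\big(\frac{x^2}{h_{\rm w}^2}\big)$, whence $\frac{x^2}{h_{\rm w}}\dx\big(\frac{x}{h_{\rm w}}\big)=\frac{x}{2}\dx\big(\frac{x^2}{h_{\rm w}^2}\big)$. Integrating by parts gives $\int_{-\ell}^\ell \frac{x^2}{h_{\rm w}}\dx\big(\frac{x}{h_{\rm w}}\big){\rm d}x=\frac12\big[\frac{x^3}{h_{\rm w}^2}\big]_{-\ell}^\ell-\frac12\int_{-\ell}^\ell\frac{x^2}{h_{\rm w}^2}{\rm d}x$. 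Since $\big[\frac{x^3}{h_{\rm w}^2}\big]_{-\ell}^\ell=2\ell\,\avi{\frac{x^2}{h_{\rm w}^2}}$, dividing by $2\ell$ makes the boundary contribution equal to $\frac12\avi{\frac{x^2}{h_{\rm w}^2}}$, which cancels the term $+\frac12\avi{\frac{x^2}{h_{\rm w}^2}}$ appearing in the statement, leaving exactly $\frac12\fint_{-\ell}^\ell\frac{x^2}{h_{\rm w}^2}=\beta(\eps\delta)$.

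For the second identity I would integrate $\frac{1}{h_{\rm w}}\dx\big(\frac{x}{h_{\rm w}}\big)$ by parts with the factors $u=\frac1{h_{\rm w}}$, $v=\frac{x}{h_{\rm w}}$, which yields $\int_{-\ell}^\ell\frac1{h_{\rm w}}\dx\big(\frac{x}{h_{\rm w}}\big){\rm d}x=\big[\frac{x}{h_{\rm w}^2}\big]_{-\ell}^\ell+\int_{-\ell}^\ell\frac{x\,\dx h_{\rm w}}{h_{\rm w}^3}{\rm d}x$, the boundary term being $2\ell\,\avi{\frac{1}{h_{\rm w}^2}}$. Dividing by $2\ell$ and combining with $\frac12\avi{\frac1{h_{\rm w}^2}}$ leaves $-\frac12\avi{\frac1{h_{\rm w}^2}}-\frac1{2\ell}\int_{-\ell}^\ell\frac{x\,\dx h_{\rm w}}{h_{\rm w}^3}{\rm d}x$. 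I would then invoke the identity $\alpha'(\eps\delta)=-\avi{\frac1{h_{\rm w}^2}}-\frac1\ell\int_{-\ell}^\ell\frac{x\,\dx h_{\rm w}}{h_{\rm w}^3}{\rm d}x$ already established (by the same one-line integration by parts of $\alpha'=-\fint_{-\ell}^\ell\frac1{h_{\rm w}^2}$) at the end of the proof of Proposition \ref{propevolav}; multiplying it by $\frac12$ reproduces exactly the expression obtained, giving $\frac12\alpha'(\eps\delta)$.

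The main obstacle here is not conceptual but purely one of careful bookkeeping: one must keep track of the factor $2$ that relates a boundary evaluation $f(\ell)+f(-\ell)$ (or $f(\ell)-f(-\ell)$ for the odd powers of $x$) to the average $\avi{f}$, and one must resist re-deriving the second identity from scratch. The cleanest route is instead to reuse the formula for $\alpha'(\eps\delta)$ from Proposition \ref{propevolav}, which already packages the awkward term $\int_{-\ell}^\ell\frac{x\,\dx h_{\rm w}}{h_{\rm w}^3}{\rm d}x$ and thereby closes the computation immediately.
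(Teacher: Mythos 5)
Your proof is correct and follows essentially the same route as the paper's: a single integration by parts on $(-\ell,\ell)$ for each identity, with the boundary evaluations producing the interior averages $\avi{\cdot}$ (the paper packages the integrand as $\frac{x^k}{h_{\rm w}^2}+\frac{1}{2}x^{k+1}\dx(h_{\rm w}^{-2})$ before integrating by parts, while you integrate by parts directly and, for the second identity, close by quoting the formula for $\alpha'(\eps\delta)$ from the end of the proof of Proposition \ref{propevolav}, but these are only cosmetic reorganizations of the same computation). All signs and the factors of $2\ell$ relating boundary terms to averages check out.
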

\begin{proof}[Proof of the lemma]
For the first identity, one just has to remark that 
\begin{align*}
- \fint_{-\ell}^\ell \frac{x^2}{h_{\rm w}}\dx\big(\frac{x}{h_{\rm w}}\big)&=-\fint_{-\ell}^\ell \frac{x^2}{h_{\rm w}^2}
-\frac{1}{2}\fint_{-\ell}^\ell x^3 \dx \big( \frac{1}{h_{\rm w}^2}\big)\\
&=\frac{1}{2}\fint_{-\ell}^\ell \frac{x^2}{h_{\rm w}^2}-\frac{1}{2}\av{\frac{x^2}{h_{\rm w}^2}},
\end{align*}
the last identity stemming from an integration by parts. \\
%Remarking further from the definition \eqref{defmm} of ${\mathfrak m}_\mu(\eps\delta)$  that
%$$
%{\mathfrak m}_\mu'(\eps\delta)=-\fint_{-\ell}^\ell \frac{x^2}{h_{\rm w}^2}-\frac{\mu}{3}\av{\frac{1}{h_{\rm w}^2}},
%$$
%the result follows from the definition of $\beta(\eps\delta)$.\\
For the second identity, since $\alpha'(\eps\delta)=-\fint_{-\ell}^\ell \frac{1}{h_{\rm w}^2}$, we just have to remark that
\begin{align*}
- \fint_{-\ell}^\ell \frac{1}{h_{\rm w}}\dx\big(\frac{x}{h_{\rm w}}\big)&=-\fint_{-\ell}^\ell
\frac{1}{h_{\rm w}^2}-\frac{1}{2}\fint_{-\ell}^\ell x \dx \big( \frac{1}{h_{\rm w}^2}\big)\\
&=-\frac{1}{2}\fint_{-\ell}^\ell
\frac{1}{h_{\rm w}^2}-\frac{1}{2}\av{\frac{1}{h_{\rm w}^2}},
\end{align*}
the last line following from an integration by parts.
\end{proof}

Corollary \ref{coroBC} and Remark \ref{remGi} imply that
$$
\avi{\frac{1}{\eps}\Pi_{\rm i}}=\av{\zeta_{\rm e}+{\mathfrak G}_{\rm e}}-\eps \avi{ \frac{1}{2}\frac{\avi{q_{\rm i}}^2+\ell^2 \dot\delta^2}{h_{\rm w}^2}}-\frac{1}{3}\mu\av{\frac{1}{h_{\rm w}}} \ddot\delta,
$$
so that we can deduce from \eqref{train} and the lemma that
\begin{align*}
\big( \tau_{\rm buoy}^2  +\fint_{-\ell}^\ell \frac{x^2}{h_{\rm w}} +\frac{1}{3}\av{\frac{1}{h_{\rm w}}}\mu  \big) \ddot{\delta} +\delta 
=
\eps
\beta \dot\delta^2 +\eps \frac{1}{2} \alpha'\avi{q_{\rm i}}^2 +\av{\zeta_{\rm e}+{\mathfrak G}_{\rm e}},
\end{align*}
which is the result stated in the proposition.
\end{proof}

\section{Wave-structure interaction as a transmission problem}\label{sectWSITP}

Taking advantage of the analysis performed in the previous section, our aim here is to formulate the wave-structure interaction equations under the form of a transmission problem and to study this latter. The transmission problem, formed by the Boussinesq-Abbott equations in both components of the exterior domain coupled with transmission conditions involving forced ODEs on $\delta$ and $\av{q_{\rm i}}$, is made explicit in \S \ref{sectderivWS}. A toy model for this transmission problem (with more standard transmission conditions) if then proposed in \S \ref{sectStudytoy}; based on this analysis, a first reformulation of the wave-structure transmission problem is performed in \S \ref{sectReform}, exhibiting in particular a nontrivial contribution of the dispersive terms to the added mass phenomenon. In \S \ref{sectreducODE}, a second reformulation is proposed, in which we show that the whole system can be recast as an ODE; taking advantage of this structure, we show that the wave-structure equations are well-posed. The existence time thus obtained is however not uniform with respect to the dispersive parameter $\mu$; we therefore address in \S \ref{sectUE} the issue of proving uniform estimates and establish a conditional uniform estimate as well as uniform estimates for equations linearized around non trivial states. To this end, we exhibit a new hidden regularity phenomenon granted by the dispersive terms.

\medbreak

\noindent
We shall use the following notations throughout this section.
\noindent
\begin{notation}\label{notapart3}
\item[ -] For the sake of clarity we simply write $f$ instead of $f_{\rm e}$ when dealing with the restriction of a function $f$ to the exterior domain $\cE$. To avoid any confusion, we still keep the subscript and write $f_{\rm i}$ for the restriction to the interior domain ${\mathcal I}$.
\item[ -] Dispersive boundary layers play a central role in the analysis performed in this section. Since their decay rate is $\sqrt{\mu/3}$, it is convenient to introduce the parameter $\kappa$ as
$$
\kappa=\sqrt{\frac{\mu}{3}}.
$$
\item[ -] We shall denote by ${\mathfrak f}_{\rm sw}$ the momentum flux associated with the shallow water equations, namely,
\begin{equation}\label{defmfsw}
{\mathfrak f}_{\rm sw}=\frac{h^2-1}{2\eps}+\eps \frac{q^2}{h}=\zeta +\eps\big(\frac{1}{2}\zeta^2+\frac{q^2}{h}\big),
\end{equation}
so that the Boussinesq-Abbot equations \eqref{Abbott} in the exterior domain can be written in more compact form
$$
\begin{cases}
\partial_t \zeta +\partial_x q =0 
\\
(1-\kappa^2 \partial_x^2) \partial_t q + \dx \mfsw
= 0,
\end{cases}\quad\mbox{ for }\quad t\geq 0,\quad x\in {\mathcal E}.
$$
\end{notation}

\subsection{Derivation of a wave-structure transmission problem}\label{sectderivWS}

Recalling that the interior discharge is given by $q_{\rm i}(t,x)=-x \dot\delta(t)+\av{q_{\rm i}}(t)$, the continuity condition \eqref{coupling-q-i-e} on the discharge can be equivalently written under the form 
$$
\jump{q}=-2\ell \dot\delta \quad \mbox{ and }\quad \av{q}=\av{q_{\rm i}},
$$
where we recall that the jump $\jump{\cdot}$ and average $\av{\cdot}$ are defined in \eqref{jumps} and \eqref{averages}.
The analysis performed in Section \ref{sectderiv} shows that the wave-structure equations \eqref{Boussinesq}-\eqref{addcoupling} can be reduced to a transmission problem for the Boussinesq-Abbott system written on both components of the exterior domain $\cE$. This is summarized in the following theorem.  
\begin{theorem}\label{theotransm}
Assume that $\zeta_{\rm eq}$ is an even function and let $\mfsw$ be as in \eqref{defmfsw}. For smooth enough solutions, the resolution of the wave-structure equations \eqref{Newton-z_G-ND}, \eqref{contrainte1}-\eqref{coupling-q-i-e} and \eqref{CBpress}  is equivalent to the resolution of the standard Boussinesq-Abbott system
\begin{equation}\label{Boussinesq_ext2}
\begin{cases}
\partial_t \zeta +\partial_x q =0 
\\
(1-\kappa^2 \partial_x^2) \partial_t q +\dx \mfsw
= 0,
\end{cases}\quad\mbox{ for }\quad t\geq 0,\quad x\in {\mathcal E},
\end{equation}
on both components of the exterior domain ${\mathcal E}$ and
with transmission conditions
\begin{equation}
\label{trans1}
\av{q}=\av{q_{\rm i}} \quad \mbox{ and }\quad \jump{q}=-2\ell \dot\delta,
\end{equation}
and where $\av{q_{\rm i}}$ and $\delta$ solve
\begin{align}
\label{trans2}
 \alpha(\eps\delta) \frac{d}{dt}\avi{q_{\rm i}}+ \eps \alpha'(\eps\delta) \dot\delta \avi{q_{\rm i}} &=
 - \frac{1}{2\ell} \jump{\zeta+{\mathfrak G}}
,\\
 \label{trans3}
  {{\tau_\mu(\eps\delta)^2\ddot\delta+\delta-\eps \beta(\eps\delta)\dot\delta^2 -\eps \frac{1}{2}\alpha'(\eps\delta)\avi{q_i}^2 }}&=\av{\zeta+{\mathfrak G}},
\end{align}
where we recall that 
$$
{\mathfrak G}=\eps \frac{1}{2}\frac{q^2}{h^2}-\kappa^2\frac{1}{h}\dx\dt q,
$$
and that $\alpha(\eps\delta)$ is as in Proposition \ref{propevolav}, and ${{\tau_\mu(\eps\delta)}}$ and $\beta(\eps\delta)$ as in Proposition \ref{propdelta}.
\end{theorem}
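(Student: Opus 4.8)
The plan is to prove the asserted equivalence in both directions, the bulk of the analytic content being already contained in Propositions \ref{propPint}, \ref{propevolav} and \ref{propdelta}. First I would dispose of the exterior equations: by the definition \eqref{defmfsw} one computes $\dx\mfsw=h\dx\zeta+\eps\dx(q^2/h)$, and $\kappa^2=\mu/3$ by Notation \ref{notapart3}, so the second equation of \eqref{Boussinesq_ext} and that of \eqref{Boussinesq_ext2} coincide; the exterior constraint \eqref{contrainte1} is just the statement $\underline P_{\rm e}=0$, which is built into the transmission formulation where no surface pressure appears on $\cE$. Thus the exterior evolution transfers verbatim between the two formulations and only the interior region and the coupling require attention.

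For the forward implication, I start from a smooth solution of the wave-structure equations. The explicit form \eqref{exprqi} of the interior discharge evaluated at $x=\pm\ell$ gives $q_{\rm i}(t,\pm\ell)=\mp\ell\dot\delta+\avi{q_{\rm i}}$, so the continuity condition \eqref{coupling-q-i-e} is equivalent to $\jump q=\jumpi{q_{\rm i}}=-2\ell\dot\delta$ together with $\av q=\avi{q_{\rm i}}$, which is exactly \eqref{trans1}. The two ODEs \eqref{trans2} and \eqref{trans3} are then literally the conclusions \eqref{ODEavq} and \eqref{ODEdelta} of Propositions \ref{propevolav} and \ref{propdelta}, the only differences being the cosmetic substitution of $\mu/3$ by $\kappa^2$ in ${\mathfrak G}$ and the convention $f=f_{\rm e}$ on $\cE$. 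This direction therefore amounts to assembling results already established.

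The substance is in the converse. Given $(\zeta,q)$ solving \eqref{Boussinesq_ext2} on $\cE$ and functions $\delta,\avi{q_{\rm i}}$ solving \eqref{trans1}--\eqref{trans3}, I would reconstruct the interior unknowns by setting $\zeta_{\rm i}=\zeta_{\rm w}$ via \eqref{eqzw} and $q_{\rm i}=-x\dot\delta+\avi{q_{\rm i}}$ via \eqref{exprqi}, so that $\dx q_{\rm i}=-\dot\delta$ holds automatically (this is the first equation of \eqref{Boussinesq_int}), and then defining $\Pi_{\rm i}$, hence $\Pint=\Pi_{\rm i}-\eps\zeta_{\rm w}$, as the unique solution of the elliptic boundary value problem of Proposition \ref{propPint}; its Dirichlet data \eqref{CBpress} are meaningful because, by Remark \ref{remGi}, ${\mathfrak G}_{\rm i}^\pm$ depends only on $\delta$ and $\avi{q_{\rm i}}$, while ${\mathfrak G}_{\rm e}^\pm,\zeta_{\rm e}^\pm$ are exterior traces. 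With this reconstruction, continuity of the discharge \eqref{coupling-q-i-e} is equivalent to \eqref{trans1}, the energy condition \eqref{addcoupling} follows from Corollary \ref{coroBC} since both \eqref{coupling-q-i-e} and \eqref{CBpress} hold, and Newton's equation \eqref{Newton-z_G-ND} is equivalent to \eqref{trans3} through the computation of Proposition \ref{propdelta} (in which the evenness of $h_{\rm eq}$ makes the first-integral constant drop out). It then remains only to verify the interior momentum equation \eqref{momentumbis}.

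I expect this last verification to be the main obstacle, since the elliptic equation of Proposition \ref{propPint} is exactly the $x$-derivative of \eqref{momentumbis} after the substitution $\dt\dx q_{\rm i}=-\ddot\delta$. Integrating once in $x$, the reconstructed $\Pi_{\rm i}$ only guarantees
$$
\dt q_{\rm i}+\eps\dx\Big(\frac{1}{h_{\rm w}}q_{\rm i}^2\Big)+\frac{1}{\eps}h_{\rm w}\dx\Pi_{\rm i}=c(t)
$$
for some function $c(t)$ independent of $x$, and the goal is to force $c\equiv0$. To do so I would divide this identity by $h_{\rm w}$ and integrate over $(-\ell,\ell)$: the right-hand side gives $2\ell\,\alpha(\eps\delta)\,c(t)$, while the left-hand side, using $\frac{1}{\eps}\int_{-\ell}^\ell\dx\Pi_{\rm i}=\frac1\eps\jumpi{\Pi_{\rm i}}$ with $\jumpi{\Pi_{\rm i}}$ supplied by \eqref{CBpress} and the evenness of $h_{\rm eq}$, reproduces exactly the computation of Proposition \ref{propevolav} and collapses to $2\ell$ times the left-hand side of \eqref{trans2} minus its right-hand side. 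Hence validity of the imposed ODE \eqref{trans2} is equivalent to $2\ell\,\alpha(\eps\delta)\,c(t)=0$; since $\alpha(\eps\delta)>0$ this yields $c\equiv0$, so \eqref{momentumbis}, i.e. the second equation of \eqref{Boussinesq_int}, holds in full. The only points demanding genuine care are this pinning down of the integration constant and the consistent use of the evenness hypothesis on $h_{\rm eq}$ (equivalently $\zeta_{\rm eq}$) together with the matching of the dispersive boundary term $\tfrac{\mu}{3h_{\rm w}}\ddot\delta$ in ${\mathfrak G}_{\rm i}^\pm$.
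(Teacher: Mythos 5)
Your proposal is correct, and in fact it does more than the paper, which states Theorem \ref{theotransm} as a summary of the analysis of Section \ref{sectderiv} without a written proof: the forward direction is exactly the paper's implicit argument (the trace computation $q_{\rm i}(t,\pm\ell)=\mp\ell\dot\delta+\avi{q_{\rm i}}$ turning \eqref{coupling-q-i-e} into \eqref{trans1}, plus Propositions \ref{propevolav} and \ref{propdelta} for \eqref{trans2}--\eqref{trans3}), while the converse reconstruction is left entirely implicit there. The one genuinely nontrivial point you identify --- that the elliptic problem of Proposition \ref{propPint} only recovers the $x$-derivative of the interior momentum equation \eqref{momentumbis}, leaving an undetermined function $c(t)$ --- is handled correctly: dividing by $h_{\rm w}$ and integrating over $(-\ell,\ell)$ reproduces the computation in the proof of Proposition \ref{propevolav} on the left and yields $2\ell\,\alpha(\eps\delta)\,c(t)$ on the right, so imposing \eqref{trans2} forces $c\equiv 0$ since $\alpha(\eps\delta)=\fint_{-\ell}^{\ell}h_{\rm w}^{-1}>0$ for admissible solutions. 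Two minor remarks: since the theorem's hypothesis list already includes \eqref{CBpress} rather than \eqref{addcoupling}, the appeal to Corollary \ref{coroBC} for the energy condition is a pleasant consistency check but not logically needed for the equivalence; and in the Newton-equation step it is worth saying explicitly that the proof of Proposition \ref{propdelta} uses only the elliptic problem (not the un-differentiated momentum equation), which is why \eqref{Newton-z_G-ND} and \eqref{trans3} are equivalent already before $c(t)$ has been shown to vanish, so there is no circularity between your last two steps.
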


% {\bf Step 1}: local energy/flux in interior domain \\
 
 The energy of the fluid in the exterior domain, associated with \eqref{Boussinesq_ext2} is
 \begin{equation}\label{defEext}
 {\mathfrak E}_{\rm ext} = \frac{1}{2}\int_{\mathcal{E}} \Big( \zeta^2+\frac{1}{h}q^2 +\kappa^2 \frac{1}{h}(\dx q)^2\Big),
 \end{equation}
 and we also introduce an "interior energy" that depends only on ${\mathtt Z}=(\av{q_{\rm i}},\delta,\dot\delta)$,
 \begin{equation}\label{defEint}
 {\mathfrak E}_{\rm int}=\ell\big( \delta^2 + \tau_{\mu}(\eps \delta)^2 \dot\delta^2 + \alpha(\eps \delta) \av{q_{\rm i}}^2 \big).
 \end{equation}
They satisfy the following energy estimate in which we do not seek to close the estimate by providing a control of the residual term; this more delicate issue is addressed in \S \ref{sectUE} below.
 \begin{proposition}\label{prior-conservation-case}
Under the assumptions of Theorem \ref{theotransm}, the following energy estimate holds,
 $$
 \frac{d}{dt}\big[{\mathfrak E}_{\rm ext} +{\mathfrak E}_{\rm int}\big]+
 \eps\kappa^2 \ell \av{ \frac{1}{(h_{\rm eq}+\eps\delta)^2} } \dot\delta^3
 =3\eps\kappa^2 \int_{\cE} {\mathfrak R},
 $$
 where we recall that ${\mathfrak R}= \frac{1}{6h^2}(\dx q)^3 +\frac{1}{3h^2}q (\dt\dx q) \dx \zeta$.
 \end{proposition}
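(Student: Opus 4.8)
The plan is to differentiate ${\mathfrak E}_{\rm ext}$ and ${\mathfrak E}_{\rm int}$ separately and add them, using the local energy balance \eqref{eqNRJ} for the exterior part and the forced ODEs \eqref{trans2}--\eqref{trans3} for the interior part; the boundary fluxes produced by the exterior computation are then converted into interior ODE quantities through the transmission conditions \eqref{trans1}, so that almost everything cancels. First I would observe that, since $\mu=3\kappa^2$ gives $\mu/6=\kappa^2/2$ and since $\underline{P}_{\rm e}=0$ on $\cE$, the exterior energy \eqref{defEext} coincides exactly with $\int_{\cE}\mfe_{\rm e}$, with $\mfe$ as in \eqref{defeF}. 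Applying \eqref{eqNRJ} on each half-line of $\cE$, integrating, and assuming decay at $\pm\infty$, the orientation of the two components meeting at $\pm\ell$ gives
\[
\frac{d}{dt}{\mathfrak E}_{\rm ext}=\jump{\mfF_{\rm e}}+\eps\mu\int_{\cE}{\mathfrak R}.
\]

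Next I would rewrite the boundary flux. Since $\underline{P}_{\rm e}=0$ one has $\mfF_{\rm e}=q(\zeta+{\mathfrak G})$ with ${\mathfrak G}$ as in \eqref{defG}, so the product rule $\jump{fg}=\jump{f}\av{g}+\av{f}\jump{g}$ gives $\jump{\mfF_{\rm e}}=\jump{q}\av{\zeta+{\mathfrak G}}+\av{q}\jump{\zeta+{\mathfrak G}}$. Substituting the transmission conditions \eqref{trans1}, namely $\jump{q}=-2\ell\dot\delta$ and $\av{q}=\avi{q_{\rm i}}$, and then replacing $\av{\zeta+{\mathfrak G}}$ and $\jump{\zeta+{\mathfrak G}}$ by the right-hand sides of \eqref{trans3} and \eqref{trans2} respectively, expresses $\jump{\mfF_{\rm e}}$ purely in terms of $\delta$, $\avi{q_{\rm i}}$ and their time derivatives; after collecting the two $\eps\alpha'(\eps\delta)\dot\delta\avi{q_{\rm i}}^2$ contributions it is ready to be matched against $\frac{d}{dt}{\mathfrak E}_{\rm int}$.

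It then remains to differentiate ${\mathfrak E}_{\rm int}$ in \eqref{defEint} directly, the only delicate point being the $\delta$-dependence of the coefficients: using $\frac{d}{dt}\alpha(\eps\delta)=\eps\dot\delta\,\alpha'(\eps\delta)$ and
\[
\frac{d}{dt}\tau_\mu(\eps\delta)^2=\eps\dot\delta\Big(-2\beta(\eps\delta)-\frac{\mu}{3}\av{\tfrac{1}{(h_{\rm eq}+\eps\delta)^2}}\Big),
\]
which follows by differentiating \eqref{defmm} and recalling (with $h_{\rm w}=h_{\rm eq}+\eps\delta$) that $2\beta(\eps\delta)=\frac{1}{2\ell}\int_{-\ell}^{\ell}x^2/h_{\rm w}^2$ and $\alpha'(\eps\delta)=-\frac{1}{2\ell}\int_{-\ell}^{\ell}1/h_{\rm w}^2$. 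Adding $\frac{d}{dt}{\mathfrak E}_{\rm ext}+\frac{d}{dt}{\mathfrak E}_{\rm int}$, all the $\tau_\mu^2\dot\delta\ddot\delta$, $\delta\dot\delta$, $\beta\dot\delta^3$, $\alpha'\dot\delta\avi{q_{\rm i}}^2$ and $\alpha\avi{q_{\rm i}}\frac{d}{dt}\avi{q_{\rm i}}$ contributions cancel, and the single surviving term is the one coming from the derivative of the \emph{dispersive} part of $\tau_\mu^2$, namely $-\ell\eps\frac{\mu}{3}\av{1/(h_{\rm eq}+\eps\delta)^2}\dot\delta^3$. Since $\frac{\mu}{3}=\kappa^2$ and $\eps\mu=3\eps\kappa^2$, moving this term to the left gives precisely the claimed identity.

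The main obstacle is therefore not analytic but combinatorial: one must be meticulous in the bookkeeping in order to see the perfect cancellation of all non-dispersive contributions against $\jump{\mfF_{\rm e}}$ and to isolate the single dispersive remainder. Conceptually, the key point — and the reason the identity cannot simply be read off from the earlier total-energy computation — is that ${\mathfrak E}_{\rm int}$ is built from $\tau_\mu(\eps\delta)^2$, whose dispersive part encodes the $\delta$-dependent dispersive added mass, so it is exactly the chain-rule derivative of this term that fails to cancel and produces the cubic term $\eps\kappa^2\ell\av{1/(h_{\rm eq}+\eps\delta)^2}\dot\delta^3$.
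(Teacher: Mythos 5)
Your argument is correct, and your bookkeeping checks out: the flux jump $\jump{\mathfrak F_{\rm e}}=\jump{q}\av{\zeta+{\mathfrak G}}+\av{q}\jump{\zeta+{\mathfrak G}}$ combined with \eqref{trans1}--\eqref{trans3} cancels exactly against $\frac{d}{dt}{\mathfrak E}_{\rm int}$ except for the chain-rule derivative of the dispersive part of $\tau_\mu(\eps\delta)^2$, which yields $-\eps\kappa^2\ell\av{1/(h_{\rm eq}+\eps\delta)^2}\dot\delta^3$, and $\eps\mu=3\eps\kappa^2$ gives the right-hand side. However, this is not the route the paper actually carries out: the paper mentions two possible methods and implements the second one, namely starting from the total-energy balance of Corollary \ref{coroBC}, writing ${\mathfrak E}_{\rm tot}={\mathfrak E}_{\rm ext}+\int_{\mathcal I}\mathfrak e_{\rm i}+{\mathfrak E}_{\rm solid}$, and explicitly evaluating the interior integrals $\int_{\mathcal I}\mathfrak e_{\rm i}$ and $\int_{\mathcal I}{\mathfrak R}$ using $q_{\rm i}=-x\dot\delta+\av{q_{\rm i}}$, $\zeta_{\rm i}=\zeta_{\rm w}$ and Archimedes' principle; in that computation the cubic term comes out of $\int_{\mathcal I}{\mathfrak R}$ rather than from the added-mass coefficient. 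Your approach is the ``first method'' the paper alludes to but does not write down: it stays entirely within the reduced transmission-problem formulation of Theorem \ref{theotransm} (exterior equations plus the ODEs on $\delta$ and $\av{q_{\rm i}}$) and never revisits the interior fluid quantities, which makes the origin of the uncancelled cubic term conceptually transparent — it is precisely the $\delta$-dependence of the dispersive added mass. The paper's route, by contrast, recycles Corollary \ref{coroBC} and localizes the same term in the interior residual ${\mathfrak R}_{\rm i}$; the two computations are of comparable length and both are legitimate given the equivalence established in Theorem \ref{theotransm}.
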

 
 \begin{proof}
 There are two ways to derive the energy estimate of the proposition. The first one consists in multiplying the two equations of  \eqref{Boussinesq_ext2} by $\zeta$ and $q$ respectively and integrating by parts, and multiplying \eqref{trans2} and \eqref{trans3} by $\av{q_{\rm i}}$ and $\dot\delta$ respectively, and adding the resulting identities. The second method is to deduce it from the approximate conservation of the total energy established in Corollary \ref{coroBC}, namely,
 \begin{equation}\label{NRJuti}
\frac{d}{dt}{\mathfrak E}_{\rm tot}=3\eps\kappa^2 \big(\int_{{\mathcal I}} {\mathfrak R}+\int_{\cE} {\mathfrak R} \big),
 \end{equation}
 and ${\mathfrak E}_{\rm tot}$ can be written as 
 $$
 {\mathfrak E}_{\rm tot}={\mathfrak E}_{\rm ext}+\int_{\mathcal I}{\mfe}_{\rm i} +{\mathfrak E}_{\rm solid}
 \quad\mbox{ with }\quad
 {\mathfrak E}_{\rm solid}=2\ell\big( \frac{1}{\eps} m \delta +\frac{1}{2}\tau_{\rm buoy}^2\dot\delta^2\big),
 $$
 where we recall that ${\mfe}=\frac{1}{2}\zeta^2 +\frac{1}{2}\frac{1}{h}q^2 +\kappa^2 \frac{1}{2h}(\dx q)^2$. Since in the interior region, one has $q_{\rm i}=-x\dot\delta+\av{q_{\rm i}}$ and $\zeta_{\rm i}=\zeta_{\rm w}$ with $\zeta_{\rm w}$ given by \eqref{eqzw}, namely, $\zeta_{\rm w}=\delta+\frac{1}{\eps}(h_{\rm eq}-1)$, one deduces
 \begin{align*}
 \int_{{\mathcal I}} {\mathfrak R}&=
 %-\int_{\mathcal I}\big(\frac{1}{6h_{\rm i}^2}\big) \dot\delta^3 +\int_{\mathcal I}\big( \frac{x}{3h_{\rm i}^2} \dx \zeta_{\rm eq}\big)  \dot\delta \ddot\delta,\\
 \frac{d}{dt}\big[\big( \int_{-\ell}^\ell \frac{1}{6 \eps h_{\rm w}}- \frac{\ell}{3\eps}\av{\frac{1}{h_{\rm w}}}\big) \dot\delta^2\big]-\frac{\ell}{3}\av{\frac{1}{h_{\rm w}^2}} \dot\delta^3,\\
  \int_{{\mathcal I}}{\mfe}_{\rm i}&= \frac{(h_{\rm eq}-1)^2}{2\eps^2} -2\ell \frac{1}{\eps}m \delta  +\ell \delta^2+\av{q_{\rm i}}^2\int_{-\ell}^\ell \frac{1}{2h_{\rm w}}+\kappa^2 \dot\delta^2 \int_{-\ell}^\ell \frac{1}{2h_{\rm w}},
 \end{align*}
 where we used Archimedes' principle \eqref{eqArchimede} for the second term in the right-hand side of the second identity.
 Plugging these identities into \eqref{NRJuti}  yields the result.
\end{proof}

\subsection{Study of a general transmission problem for the Boussinesq-Abbott system}\label{sectStudytoy}

Before addressing the transmission problem derived in the previous section, where the transmission conditions involve ODEs that are coupled with the solution of the transmission problem itself, it is instructive to study a simpler, yet quite general, transmission problem, where the transmission conditions are given in terms of known functions. More precisely,  we consider in this section the Boussinesq-Abbott equations
\begin{equation}\label{Boussinesq_ext2-presc}
\begin{cases}
\partial_t \zeta +\partial_x q =0 
\\
(1-\kappa^2 \partial_x^2) \partial_t q + \dx \mfsw
= 0,
\end{cases}\quad\mbox{ for }\quad t\geq 0,\quad x\in {\mathcal E}
\end{equation}
(with $\kappa^2=\mu/3$ and $\mfsw$ as in \eqref{defmfsw}) on both components of the exterior domain ${\mathcal E}$ and
with transmission conditions
\begin{equation}\label{trans-presc}
\av{q} =f
\quad \text{and} \quad
\jump{q} = 2g,
\end{equation}
where $f,g \in C^1( \mathbb{R}^+)$ are known functions. 
\begin{remark}\label{remCauchy}
One can see the boundary value problem on the half-line $(\ell,\infty)$ 
$$
\begin{cases}
\partial_t \zeta +\partial_x q =0 
\\
(1-\kappa^2 \partial_x^2) \partial_t q + \dx \mfsw
= 0,
\end{cases}\quad\mbox{ for }\quad t\geq 0,\quad x>\ell,
$$
with boundary condition $q_{\vert_{x=\ell}}=f$ as a particular case of the transmission problem \eqref{Boussinesq_ext2-presc}-\eqref{trans-presc}. Indeed, it suffices to extend $\zeta$ and $q$ as $\zeta(t,-x)=\zeta(t,x)$ and $q(t,-x)=-q(t,x)$ for all $x>\ell$ and to take $g=0$. The associated initial boundary value problem has been considered in \cite{Johnston} in the linear case ($\eps=0$, so that $\mfsw=\zeta$) using Fokas' unified transform method. This initial boundary value problem has also been considered both theoretically and numerically, but with a boundary condition on $\zeta$ rather than $q$, in \cite{Lannes-Weynans}.
\end{remark}

In the case where $\mu =0$ (the shallow-water equation), it is well known that the initial boundary value problem associated with \eqref{Boussinesq_ext2-presc}-\eqref{trans-presc} is locally well-posed in $H^n(\cE)\times H^n(\cE)$ ($n\geq 2$) provided that $n$ compatibility conditions are satisfied (see for instance \cite{IguchiLannes} or the lecture notes \cite{LannesBressanone}). The presence of the dispersive term $- \frac{\mu}{3} \partial_x^2 \partial_t$ makes things different; as observed in  \cite{BLM,Lannes-Weynans} in related situations, a single compatibility condition is enough to obtain a regular solution because dispersion smoothes the solution by  creating a dispersive boundary layer of order $O(\kappa)$.

Indeed, it is possible to reduce \eqref{Boussinesq_ext2-presc}-\eqref{trans-presc} to an ODE. To perform this, it is necessary to introduce the regularizing operators $R_0$ and $R_1$ defined as the inverses of $(1-\kappa^2 \dx^2)$ with homogeneous Dirichlet and Neumann data respectively at $x=\pm \ell$, that is,
\begin{equation}\label{R0R1definition}
 R_0 f= u \quad\mbox{ and }\quad  R_1 f=v,
\end{equation}
where
\begin{equation}\label{R0R1def}
\begin{cases} (1-\kappa^2 \dx^2) u =f,\\ u_{\vert_{x=\pm \ell}}=0, \end{cases}
\quad \mbox{ and }\quad
 \begin{cases} (1-\kappa^2 \dx^2) v =f,\\ (\dx v)_{\vert_{x=\pm \ell}}=0. \end{cases}
\end{equation}
In the statement below, we denote 
$$
\HH^n=H^{n+1}(\cE)\times H^{n+2} (\cE).
$$
\begin{proposition}\label{existence-presc}
Let $f,g\in C^1(\RR^+)$, $n \in {\mathbb N}$, and $U^{\rm in}=(\zeta^{\rm in}, q^{\rm in}) \in \HH^n $ be such that
 $$
  \inf  (1 + \varepsilon \zeta^{\rm in})>0,\qquad 
  \av{q^{\rm in}} =f(0)
\quad \text{and} \quad
\jump{q^{\rm in}} = 2g(0).
  $$
  Then for all $\kappa > 0$, there is $T > 0$ such that the system \eqref{Boussinesq_ext2-presc}-\eqref{trans-presc} has a unique solution $(\zeta,q)\in C^1([0,T[ ; \HH^n)$ with initial data $U^{\rm in}$.
% Moreover, for any $m>0$,  if $\frak{a_i},\frak{q_i} \in C^m( \mathbb{R}^+)$, then $(\zeta,q)\in C^m([0,T[ ; \HH^n)$.
\end{proposition}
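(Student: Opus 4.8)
The plan is to reduce the transmission problem \eqref{Boussinesq_ext2-presc}-\eqref{trans-presc} to a non-autonomous ordinary differential equation in the Banach space $\HH^n$ and to conclude by the Cauchy--Lipschitz theorem, for $\kappa>0$ fixed. The first equation directly gives $\dt\zeta=-\dx q$, so the only real work is to express $\dt q$ as a function of the current state $(\zeta,q)$ and of $t$. For this one must invert the dispersive operator $(1-\kappa^2\dx^2)$ appearing in the second equation while respecting the boundary data carried by the transmission conditions \eqref{trans-presc}: since $q_\pm=f\pm g$, the trace of $\dt q$ at $x=\pm\ell$ must equal $\dot f\pm\dot g$.

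First I would solve, for $(\zeta,q)$ given, the inhomogeneous Dirichlet problem
\[
(1-\kappa^2\dx^2)\,\dt q=-\dx\mfsw,\qquad (\dt q)_{\vert_{x=\pm\ell}}=\dot f\pm\dot g,
\]
by splitting $\dt q=-R_0(\dx\mfsw)+\theta$, where $R_0$ is the homogeneous Dirichlet inverse from \eqref{R0R1def} and $\theta$ is the explicit corrector solving $(1-\kappa^2\dx^2)\theta=0$ with $\theta_{\vert_{x=\pm\ell}}=\dot f\pm\dot g$; on each component of $\cE$ the latter is the decaying exponential $\theta=(\dot f\pm\dot g)\,e^{-\abs{x\mp\ell}/\kappa}$, that is, precisely the dispersive boundary layer of width $O(\kappa)$. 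This yields a vector field
\[
\frac{d}{dt}\begin{pmatrix}\zeta\\ q\end{pmatrix}=\cG\big(t,(\zeta,q)\big):=\begin{pmatrix}-\dx q\\ -R_0(\dx\mfsw)+\theta(t)\end{pmatrix}.
\]
I would then check the mapping properties on the open set $\{\inf(1+\eps\zeta)>0\}\subset\HH^n$: since $H^{n+2}(\cE)$ and $H^{n+1}(\cE)$ are algebras, $\mfsw\in H^{n+1}(\cE)$, hence $\dx\mfsw\in H^n(\cE)$, and the two-derivative elliptic gain of $R_0$ gives $R_0(\dx\mfsw)\in H^{n+2}(\cE)$; together with $\theta\in H^{n+2}(\cE)$ this shows $\dt q\in H^{n+2}(\cE)$, while $\dt\zeta=-\dx q\in H^{n+1}(\cE)$, so $\cG$ maps $\HH^n$ into itself. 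The nonlinearity $(\zeta,q)\mapsto\mfsw$ is smooth on $\{\inf(1+\eps\zeta)>0\}$ and $R_0,\dx$ are bounded linear, so $\cG$ is locally Lipschitz in the state and continuous in $t$ (recall $f,g\in C^1$).

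The Cauchy--Lipschitz theorem in $\HH^n$ then provides, for each admissible datum, a unique maximal solution $(\zeta,q)\in C^1([0,T[;\HH^n)$, the condition $\inf(1+\eps\zeta^{\rm in})>0$ persisting on a possibly smaller interval by continuity. It remains to verify that this solution of the ODE actually solves \eqref{Boussinesq_ext2-presc}-\eqref{trans-presc}, i.e. that the transmission conditions hold for all $t$ and not merely initially: since $q\in C^1([0,T[;H^{n+2})$ has continuous traces, the construction forces $\frac{d}{dt}q_{\vert_{x=\pm\ell}}=\dot f\pm\dot g$, and integrating from the compatibility conditions $\av{q^{\rm in}}=f(0)$, $\jump{q^{\rm in}}=2g(0)$ yields $q_{\vert_{x=\pm\ell}}(t)=f(t)\pm g(t)$, which is exactly \eqref{trans-presc}.

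The delicate point is not the existence argument itself but the fact that it is purely qualitative for fixed $\kappa$: both the smoothing and the corrector carry negative powers of $\kappa$ (the operator $R_0$ gains two derivatives only at the cost of constants blowing up as $\kappa\to0$, and $\dx\theta$ is of size $O(1/\kappa)$). Consequently the Lipschitz constant of $\cG$, and hence the existence time $T$, degenerate as $\mu\to0$; obtaining a time of existence uniform in $\mu$ is a genuinely different problem, which is precisely the object of the uniform estimates of \S\ref{sectUE}.
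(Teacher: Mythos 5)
Your proposal is correct and follows essentially the same route as the paper: reformulation as an ODE in $\HH^n$ by inverting $(1-\kappa^2\dx^2)$ with the explicit exponential boundary-layer corrector $(\dot f\pm\dot g)e^{-\abs{x\mp\ell}/\kappa}$ (the paper writes $-\dx R_1\mfsw$ where you write $-R_0(\dx\mfsw)$, but these coincide), followed by Cauchy--Lipschitz and the observation that $\frac{d}{dt}\av{q}=\dot f$, $\frac{d}{dt}\jump{q}=2\dot g$ propagates the transmission conditions from the initial compatibility. Your closing remark on the $\kappa$-degeneracy of the Lipschitz constant also matches the paper's motivation for \S\ref{sectUE}.
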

\begin{proof}
The key ingredient of the proof is to reformulate the problem as an ODE. 
\begin{lemma}\label{propODE-presc}
Let $T>0$, $(f,g) \in C^1([0,T])$ and $U=(\zeta, q) \in C^1 ([0, T] ; \HH^0)$ be such that $\inf_{[0,T]\times \cE} 1+\eps\zeta>0$, and such that the transmission conditions are initially satisfied 
$$
\av{ q_{ | t = 0} } = f(0)
\quad \text{and} \quad
\jump{ q_{ | t = 0} } = 2 g(0).
$$
Then $U$ solves \eqref{Boussinesq_ext2-presc}-\eqref{trans-presc} if and only it solves
\begin{equation}\label{formODE-presc}
\begin{cases}
\dt \zeta&=-\dx q, \\
\dt q     &=  -  \dx R_1   \mfsw + \big( \dot f \pm \dot g \big) e ^{ -  \frac{1}{\kappa} \abs{x\mp \ell} }.
\end{cases}
\end{equation}
\end{lemma}
\begin{proof}[Proof of the lemma]
Recalling that  $R_0$ is the inverse of $(1 - \kappa^2 \dx^2)$ with Dirichlet boundary conditions on each side of $\mathcal E$, the second equation of \eqref{Boussinesq_ext2-presc} 
%together with the jump condition $\frac{d}{dt}\jump {q} =  -2\ell \dot{\frak{a_i}}$  
is equivalent to 
\begin{equation}\label{altq-presc}
\dt q     =  -  R_0  \dx\mfsw    + (\dot f \pm \dot g)  e ^{ -  \frac{1}{\kappa} \abs{x\mp \ell} } \quad \mbox{ on }\quad \cE^\pm.
\end{equation}
Using the fact that $R_0\dx =\dx R_1$ we obtain the expected equation \eqref{formODE-presc}. The only thing left to prove is therefore that if \eqref{altq-presc} is satisfied, and if the transmission condition \eqref{trans-presc} holds at $t=0$, then it holds for all time. This is obvious after remarking that one readily gets from \eqref{altq-presc} that
$$
\frac{d}{dt}\av{q}=\dot f\quad \mbox{ and } \quad \frac{d}{dt}\jump{q}=2\dot g.
$$
\end{proof}
Proving that \eqref{formODE-presc} is actually an ODE also requires the following lemma which can classically be established by multiplying both equations of \eqref{R0R1def} by $u$ and integrating by parts.
\begin{lemma}\label{lemmaRbounded}
The operators $R_\ell$, $\kappa \partial_x R_\ell$ and $ \frac{\kappa^2}{2} \partial_x^2 R_\ell$ for $\ell=0,1$ are bounded operators on $L^2(\mathcal{E})$, with operator norm smaller than one.
%Moreover, they satisfy $R_0 \partial_x=R_1 \partial_x$.
\end{lemma}  
%
%\begin{proof}
%Integrate the first equation \eqref{R0R1def} over $u$ %and $v$ we have
%$$
%|| u ||_{L^2(\mathcal{E})}^2 + || \kappa^2 \partial_x u ||_{L^2(\mathcal{E})}^2 = || f ||_{L^2(\mathcal{E})} \, || u ||_{L^2(\mathcal{E})},
%%, \quad
%%|| v ||_{L^2(\mathcal{E})}^2 + || \kappa^2 \partial_x v ||_{L^2(\mathcal{E})}^2 = || f ||_{L^2(\mathcal{E})} \, || v ||_{L^2(\mathcal{E})}
%$$
%and thus
%$$
%|| u ||_{L^2(\mathcal{E})} \leq || f ||_{L^2(\mathcal{E})}
%\quad \text{and} \quad
%|| \kappa^2 \partial_x u ||_{L^2(\mathcal{E})} \leq || f ||_{L^2(\mathcal{E})}.
%$$
%Then with the first equation of \eqref{R0R1def} we have
%$$
%|| \kappa^2 \partial_x u ||_{L^2(\mathcal{E})} \leq || f ||_{L^2(\mathcal{E})} + || u ||_{L^2(\mathcal{E})} \leq 2 \, || f ||_{L^2(\mathcal{E})}.
%$$
%Similar reasoning applies to $v$ for operator $R_1$.
%\end{proof}

Let $\mathcal O$ denote the open subset of $\HH^n$ of the $U = (\zeta, q)$ such that $\inf_\cE  (1+ \varepsilon \zeta) >0$. 
%(as already explained in the comments before Proposition \ref{propODE-presc}, this latter is a condition on $\zeta$ in the exterior domain, and on $\delta$ in the interior domain).  
Let us also write \eqref{formODE-presc} in compact form as
 $$
 \frac{d}{dt} U = \Phi (U),
 $$
 where $\Phi=(\phi_1,\phi_2)$ and
 \begin{align*}
 \phi_1= -\dx q,\qquad 
 \phi_2= -  \dx R_1  \mfsw + \big( \dot f\pm \dot g \big) e ^{ -  \frac{1}{\kappa} \abs{x\mp \ell} }.
  \end{align*}
Since $\dx  R_1: H^n \to H^{n+1}$ is a bounded operator {{(as a consequence of Lemma \ref{lemmaRbounded})}}, we can deduce  from standard trace and product estimates in Sobolev spaces that $\Phi$ is a smooth mapping from ${\mathcal O}$ to $\HH^n$ and the local existence follows from Cauchy-Lipschitz theorem.
\end{proof}

\subsection{Reformulation of the wave-structure transmission problem}\label{sectReform}

The quantities $\jump{{\mathfrak G}}$ and $\av{{\mathfrak G}}$ that appear as source terms  in the differential equations \eqref{trans2} and \eqref{trans3} for $\av{q_{\rm i}}$ and $\delta$ depend themselves on these two terms; indeed, in order to compute  $\jump{{\mathfrak G}}$ and $\av{{\mathfrak G}}$, one must solve the transmission problem  \eqref{Boussinesq_ext2}-\eqref{trans1} in which the transmission conditions are given in terms of $\av{q_{\rm i}}$ and $\delta$. \\
 In the nondispersive case ($\mu=0$, shallow water equations), this dependence is of lower order and  $\jump{{\mathfrak G}}$ and $\av{{\mathfrak G}}$ can be treated as source terms in the ODEs for  $\av{q_{\rm i}}$ and $\delta$ (see Remark \ref{remark-coupling} below). A new phenomenon appears in the presence of dispersion: these quantities contain leading order terms in the differential equations for $\delta$ and $\av{q_{\rm i}}$. As for the added mass coefficient, they cannot therefore be treated as source terms, both theoretically and numerically. This issue is addressed in the following theorem where we essentially show that $\jump{{\mathfrak G}}$ and $\av{{\mathfrak G}}$ can be decomposed as the sum of explicit leading order terms and  lower order terms $\jump{{\mathfrak H}}$ and $\av{{\mathfrak H}}$ that can be treated as source terms. We recall that we denote respectively by $R_0$ and $R_1$ the inverses of $(1-\kappa^2 \dx^2)$ with homogeneous Dirichlet and Neumann data  at $x=\pm \ell$ (see (\ref{R0R1definition}, \ref{R0R1def})).

\begin{theorem}\label{theoIBVP}
Assume that $\zeta_{\rm eq}$ is an even function and let $\mfsw$ be as in \eqref{defmfsw}. For smooth enough solutions, the resolution of the wave-structure equations \eqref{Newton-z_G-ND}, \eqref{contrainte1}-\eqref{coupling-q-i-e} and \eqref{CBpress}  is equivalent to the resolution of the standard Boussinesq-Abbott system
\begin{equation}\label{Boussinesq_ext3}
\begin{cases}
\partial_t \zeta +\partial_x q =0 
\\
(1-\kappa^2\partial_x^2) \partial_t q + \dx \mfsw
= 0,
\end{cases}\quad\mbox{ for }\quad t\geq 0,\quad x\in {\mathcal E},
\end{equation}
on both components of the exterior domain ${\mathcal E}$ and
with transmission conditions
\begin{equation}
\label{trans1reform}
\av{q}=\av{q_{\rm i}} \quad \mbox{ and }\quad \jump{q}=-2\ell \dot\delta,
\end{equation}
and where $\av{q_{\rm i}}$ and $\delta$ solve the coupled system of ODEs
\begin{equation}\label{ODEIBVP}
{\mathfrak T}_\mu(\eps\delta,\eps\zeta_\pm)  \frac{d}{dt}\begin{pmatrix} \av{q_{\rm i} }\\ \dot \delta \end{pmatrix}
+
  \begin{pmatrix}
 \eps {{\alpha'(\eps\delta) \dot\delta \avi{q_{\rm i}}}} \\
\delta -\eps\big(  {{ \beta(\eps\delta)\dot\delta^2 +\frac{1}{2}\alpha'(\eps\delta)\avi{q_i}^2}} \big)
\end{pmatrix}
=
\begin{pmatrix}
-  \frac{1}{2\ell}\jump{{\mathfrak H}} \\
\av{{\mathfrak H}}
\end{pmatrix}
\end{equation} 
with $\beta(\eps\delta)$ as defined in \eqref{defbeta}, and with ${\mathfrak H}={\mathfrak H}(\zeta,q)$ given by
\begin{equation} \label{defC}
{\mathfrak H}(\zeta,q)=
\frac{1}{2}\eps \left( \frac{1}{h}\zeta^2- \frac{q^2}{h^2}\right) +\frac{1}{h} R_1 \mfsw,
\end{equation}
while ${\mathfrak T}_\mu(\eps\delta,\eps\zeta_\pm) $ is the invertible matrix given by 
\begin{equation}\label{defM}
 {\mathfrak T}_\mu(\eps\delta,\eps\zeta_\pm)  = \begin{pmatrix} \alpha(\eps\delta) +\frac{1}{\ell}\kappa \av{\frac{1}{h}}  & -\frac{1}{2} \kappa \jump{\frac{1}{h}} \\
- \frac{1}{2}\kappa \jump{\frac{1}{h}}  & {\tau_\mu(\eps\delta)^2}+\ell \kappa \av{\frac{1}{h}}
\end{pmatrix},
\end{equation}
where we recall that $\tau_\mu(\eps\delta)$ and $\alpha(\eps\delta)$ are defined in \eqref{defmm} and \eqref{defalpha} respectively.
\end{theorem}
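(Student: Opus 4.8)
The starting point is Theorem \ref{theotransm}, which already reduces the full wave--structure system to the exterior Boussinesq--Abbott equations \eqref{Boussinesq_ext2} together with the transmission conditions \eqref{trans1} and the ODEs \eqref{trans2}--\eqref{trans3}; the only task left is to rewrite the source terms $\jump{\zeta+{\mathfrak G}}$ and $\av{\zeta+{\mathfrak G}}$ appearing there. The entire difficulty sits in the dispersive part $-\kappa^2\frac1h\dx\dt q$ of ${\mathfrak G}$: evaluated at the contact points it contains $\dx\dt q$, and through the equation this quantity is tied to the very time derivatives $\frac{d}{dt}\av{q_{\rm i}}$ and $\ddot\delta$ that the ODEs are meant to produce. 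The plan is to make this hidden coupling explicit, move it to the left-hand side where it will assemble into ${\mathfrak T}_\mu$, and show that what remains on the right is the lower-order ${\mathfrak H}$.

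First I would use the ODE reformulation of Lemma \ref{propODE-presc} with $f=\av{q_{\rm i}}$ and $2g=-2\ell\dot\delta$: on each component $\cE^\pm$ one has $\dt q = -\dx R_1\mfsw + (\dot f\pm\dot g)e^{-\frac1\kappa\abs{x\mp\ell}}$. Since $\dx R_1$ carries homogeneous Neumann data, $\dx R_1\mfsw$ vanishes at $x=\pm\ell$, so the boundary trace is $\dt q_\pm = \frac{d}{dt}\av{q_{\rm i}}\mp\ell\ddot\delta$ (consistent with $q_\pm=\av{q_{\rm i}}\mp\ell\dot\delta$). Differentiating and using $\kappa^2\dx^2 R_1\mfsw = R_1\mfsw-\mfsw$ together with $\dx(e^{-\frac1\kappa\abs{x\mp\ell}})_{\vert\pm\ell}=\mp\frac1\kappa$, I obtain
\[
\dx\dt q_\pm = \frac1{\kappa^2}(\mfsw-R_1\mfsw)_\pm \mp\frac1\kappa\,\dt q_\pm,
\]
whence the dispersive contribution $-\kappa^2\frac1h\dx\dt q_{\vert\pm} = -\frac1{h_\pm}(\mfsw-R_1\mfsw)_\pm \pm\kappa\frac1{h_\pm}\dt q_\pm$. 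Combining with the algebraic part $\eps\frac12\frac{q^2}{h^2}$ of ${\mathfrak G}$ and using the elementary identity $\zeta-\frac1h\mfsw = \frac\eps2\frac{\zeta^2}{h}-\eps\frac{q^2}{h^2}$ (which follows from $h=1+\eps\zeta$ and the definition \eqref{defmfsw} of $\mfsw$), the algebraic and nonlocal pieces collapse exactly into ${\mathfrak H}$ of \eqref{defC}, leaving $\zeta_\pm + {\mathfrak G}_\pm = {\mathfrak H}_\pm \pm\kappa\frac1{h_\pm}\dt q_\pm$.

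Next I would insert $\dt q_\pm = \frac{d}{dt}\av{q_{\rm i}}\mp\ell\ddot\delta$ and form the jump and average, expanding $h_\pm^{-1}$ through $\av{\frac1h}$ and $\jump{\frac1h}$, which yields
\[
\jump{\zeta+{\mathfrak G}} = \jump{{\mathfrak H}} + \kappa\Big(2\av{\tfrac1h}\tfrac{d}{dt}\av{q_{\rm i}} - \ell\jump{\tfrac1h}\ddot\delta\Big), \qquad
\av{\zeta+{\mathfrak G}} = \av{{\mathfrak H}} + \kappa\Big(\tfrac12\jump{\tfrac1h}\tfrac{d}{dt}\av{q_{\rm i}} - \ell\av{\tfrac1h}\ddot\delta\Big).
\]
Substituting into \eqref{trans2}--\eqref{trans3} and transferring every term carrying $\frac{d}{dt}\av{q_{\rm i}}$ or $\ddot\delta$ to the left, the coefficients assemble precisely into the matrix ${\mathfrak T}_\mu$ of \eqref{defM} acting on $\frac{d}{dt}(\av{q_{\rm i}},\dot\delta)^{\rm T}$, while the right-hand sides reduce to $-\frac1{2\ell}\jump{{\mathfrak H}}$ and $\av{{\mathfrak H}}$; the purely algebraic terms $\eps\alpha'\dot\delta\avi{q_{\rm i}}$, $\delta$, $\eps\beta\dot\delta^2$, $\frac\eps2\alpha'\avi{q_{\rm i}}^2$ are left untouched since they do not involve time derivatives. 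This is exactly \eqref{ODEIBVP}. Finally, for invertibility I would show ${\mathfrak T}_\mu$ is symmetric positive definite: writing $p=1/h_+>0$, $m=1/h_->0$ and splitting ${\mathfrak T}_\mu = \mathrm{diag}(\alpha,\tau_\mu^2) + \kappa M$, the diagonal part is positive, and after the rescaling $a=\sqrt\ell\,u$, $b=v/\sqrt\ell$ the quadratic form of $M$ becomes $\frac12[(p+m)u^2-2(p-m)uv+(p+m)v^2]$, whose matrix has positive trace and determinant $4pm>0$, so $M\geq0$ and ${\mathfrak T}_\mu$ is invertible. The main obstacle is the bookkeeping of the second step: correctly identifying the dispersive boundary-layer trace $\dt q_\pm$ as the carrier of the leading-order coupling and tracking the $\pm$ signs through the jump/average algebra, since an error there would either destroy the cancellation into ${\mathfrak H}$ or misplace the entries of ${\mathfrak T}_\mu$.
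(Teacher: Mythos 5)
Your proof is correct and follows essentially the same route as the paper: both start from the reformulation $\dt q=-\dx R_1\mfsw+(\frac{d}{dt}\av{q_{\rm i}}\mp\ell\ddot\delta)e^{-\frac{1}{\kappa}\abs{x\mp\ell}}$, use $\kappa^2\dx^2R_1=R_1-1$ to convert the dispersive trace $-\kappa^2\frac{1}{h}\dx\dt q$ into ${\mathfrak H}$ plus terms proportional to $\frac{d}{dt}\av{q_{\rm i}}$ and $\ddot\delta$, and move the latter to the left-hand side to assemble ${\mathfrak T}_\mu$. Your explicit verification that ${\mathfrak T}_\mu$ is symmetric positive definite (hence invertible) is a welcome addition, since the paper asserts invertibility without proof.
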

\begin{remark}\label{remark-coupling}
The difference between \eqref{ODEIBVP} and  the evolution equations \eqref{ODEdelta} and \eqref{ODEavq} on $\delta$ and $\av{q_{\rm i}}$, is that a new contribution to the added mass effect (see Remark \ref{remaddedmass}) coming from the dispersive term has been exhibited. It is of interest to note that the dispersive terms not only provide a quantitative contribution to the added mass effect, but also a qualitative one since it induces a new coupling between the equations  on $\av{q_{\rm i}}$ and $\delta$. This is not the case in the non dispersive case where the matrix ${\mathfrak T}_\mu(\eps\delta,\eps\zeta_\pm) $ is diagonal and the equations are only coupled through the nonlinear and source terms, namely,
$$
\begin{cases}
\alpha(\eps\delta)\frac{d}{dt}\av{q_{\rm i}} + \eps \alpha'(\eps\delta)\dot \delta \av{q_{\rm i}}= -\frac{1}{2\ell}\jump{\zeta+\eps\frac{1}{2}\frac{q^2}{h^2} }, \\
\tau_0(\eps\delta)^2\ddot\delta +\delta -\eps\beta(\eps\delta)\dot\delta^2 - \frac{\eps}{2}\alpha'(\eps\delta)\av{q_{\rm i}}^2=\av{\zeta+\eps\frac{1}{2}\frac{q^2}{h^2} }
\end{cases}
$$
(this system can either be derived directly as in \cite{Lannes_float,Tucsnak,Bocchi}, or formally be derived from \eqref{ODEIBVP} by setting $\kappa=0$ and observing that $\lim_{\kappa \to 0} R_1f _{\vert_{x=\pm \ell}}= f(\pm \ell)$).
\end{remark} 

\begin{proof}
Taking into account the transmission conditions \eqref{trans1reform}, the formula \eqref{altq-presc} for $\dt q$ becomes 
\begin{equation}\label{altq}
\dt q     =  -  R_0 \dx \mfsw    + (\frac{d}{dt} \av{q_{\rm i}}\mp\ell\ddot\delta)  e ^{ -  \frac{1}{\kappa} \abs{x\mp \ell} } \quad \mbox{ on }\quad \cE^\pm.
\end{equation}
 We then obtain after differentiating in space, {multiplying by $1/h$} and taking the jump,
$$
\kappa^2\jump{\frac{1}{h}\dt \dx q} = - \kappa^2 \jump{ \frac{1}{h}\dx R_0 \dx\mfsw } - 2 \kappa \av{\frac{1}{h}}\frac{d}{dt}\av{q_{\rm i}} +\ell\kappa \jump{\frac{1}{h}}\ddot\delta. 
$$ 
Therefore the evolution equation \eqref{trans2} on $\av{q_{\rm i}}$ is equivalent to 
  $$
   - \kappa^2 \jump{ \frac{1}{h}\dx R_0 \dx\mfsw } - 2 \kappa \av{\frac{1}{h}}\frac{d}{dt} \av{q_{\rm i}} 
   +\ell\kappa \jump{\frac{1}{h}}\ddot\delta
   = \jump{\zeta+\eps \frac{1}{2} \frac{q^2}{h^2}} +  2\ell \Big( \alpha \frac{d}{dt} \av{q_{\rm i}}+ \eps \alpha' \dot\delta \av{q_{\rm i}}\Big)
  $$
and therefore
$$
 \big( \ell \alpha+\kappa \av{\frac{1}{h}}\big)\frac{d}{dt} \av{q_{\rm i}}
-\frac{1}{2} \ell \kappa \jump{\frac{1}{h}}  \ddot\delta
 +\eps \ell \alpha'(\eps\delta) \dot\delta \av{q_{\rm i}}= - \frac{1}{2}\Big( \kappa^2 \jump{\frac{1}{h} \dx R_0 \dx\mfsw  }  + \jump{ \zeta+\eps \frac{1}{2}\frac{q^2}{h^2} } \Big). 
$$
Remarking further that $R_0\dx=\dx R_1$, where 
$R_1$ is the inverse of $(1 - \kappa^2 \dx^2)$ with Neumann boundary conditions on each side of $\mathcal E$, one can write
\begin{align*}
\kappa^2 \dx R_0 \dx\mfsw&= \frac{\mu}{3}\dx^2 R_1 \mfsw \\
&= -\mfsw  +R_1 \mfsw,
\end{align*}
so that a first  ODE on $\av{q_{\rm i}}$ and $\dot\delta$ is given by
%\begin{equation}
%\label{dtqi}
%\big( \ell \alpha(\eps\delta)+\kappa\big)\frac{d}{dt} \av{q_{\rm i}} =- 2\eps \ell \beta_0(\delta)\dot\delta \av{q} - \frac{1}{2} \Big( \jump{ R_1 \big( \frac{h^2-1}{2\eps}+\eps \frac{q^2}{h} \big)  }  - \eps \jump{  \frac{1}{2}\zeta^2+ \frac{1}{2}\frac{q^2}{h^2}(1+2\eps\zeta) } \Big). 
%\end{equation} 
%or equivalently
\begin{equation}
\label{dtqi}
\big( \ell \alpha(\eps\delta)+\kappa\av{\frac{1}{h}}\big)\frac{d}{dt} \av{q_{\rm i}} -\frac{1}{2} \ell \kappa \jump{\frac{1}{h}}  \ddot\delta=- \eps \ell \alpha'(\eps\delta)\dot\delta \av{q_{\rm i}} - \frac{1}{2} \jump{{\mathfrak H}},
\end{equation}
with ${\mathfrak H}$ given by 
$$
{\mathfrak H}(\zeta,q)=\zeta+\frac{1}{2}\eps \frac{q^2}{h^2}- \frac{1}{h}  \big( 1-R_1\big) \mfsw,
$$
or equivalently, by the formula given in \eqref{defC}.\\

Similarly, differentiating \eqref{altq} with respect to $x$, multiplying by $1/h$ and taking the average yields
$$
\kappa^2\av{\frac{1}{h}\dt\dx q}=-\kappa^2\av{\frac{1}{h}\dx R_0\dx\mfsw}-\frac{1}{2}\kappa\jump{\frac{1}{h}} \frac{d}{dt} \av{q_{\rm i}}+\ell\kappa \av{\frac{1}{h}}\ddot\delta,
$$
which we can plug into \eqref{ODEdelta} to obtain
\begin{equation}
\label{dt2delta}
\big( {{\tau_\mu(\eps\delta)^2}}+ {\ell \kappa\av{\frac{1}{h}}}\big)\ddot\delta {-\frac{1}{2}\kappa \jump{\frac{1}{h}}\frac{d}{dt}\av{q_{\rm i}}}=-\delta+
\eps \beta(\eps\delta)\dot\delta^2 +\frac{\eps}{2} \alpha'(\eps\delta)\avi{q_{\rm i}}^2
+\av{{\mathfrak H}}.
\end{equation}
The result therefore follows from \eqref{altq}, \eqref{dtqi} and \eqref{dt2delta}.

\end{proof}
\subsection{Reduction to an ODE}\label{sectreducODE}

It was remarked in \cite{BLM} in the case of a fixed structure (and for the simpler Boussinesq system \eqref{varBLM}) that the transmission problem could be reduced to an ODE. We show here that this remains true in the case of a freely floating structure and for the Boussinesq-Abbott system. In the statement below, we assume that $(\zeta,q)\in \HH$ with $\HH=H^1(\cE)\times H^2 (\cE)$; this regularity ensures that the traces of $\zeta$, $q$  and $\dx q$ are well defined at $\pm \ell$. Note also that the condition $\inf_{(t,x)\in [0,T]\times \RR} h(t,x)>0$ means that $\inf_{(t,x)\in [0,T]\times \cE} \{1+\eps\zeta(t,x) \}>0$ and $\inf_{(t,x)\in [0,T]\times {\mathcal I}} \{h_{\rm eq}(x)+\eps \delta(t)  \}>0$; this is therefore a condition on $\zeta$ and on $\delta$.
\begin{proposition}\label{propODE}
For  $U=(\zeta, q) \in C^1 ([0, T] ; \HH)$ and ${\mathtt Z}=(\av{q_{\rm i}},\delta,\dot\delta) \in C^1([0,T];\RR^3)$ such  that $\inf_{[0,T]\times \RR} h>0$,
 and 
$$
\jump{ q_{ \vert_{t = 0}} } = -2\ell \dot\delta(0),\qquad \av{q_{\vert_{t=0}}}=\av{q_{\rm i}}(0),
$$ 
the system \eqref{Boussinesq_ext3}--\eqref{ODEIBVP}  is equivalent to
\begin{equation}\label{formODE}
\begin{cases}
\dt \zeta&=-\dx q, \\
\dt q     &=  -  \dx R_1  \mfsw+ \Big( {\mathcal Q} ({\mathtt Z}, \jump{\mathfrak H})\mp \ell {\mathcal D}({\mathtt Z},\av{{\mathfrak H}}) \Big) e ^{ -  \frac{1}{\kappa} \abs{x\mp \ell} }, \\
\frac{d}{dt}{\mathtt Z}&={\mathcal Z}\big({\mathtt Z},\av{{\mathfrak H}}, \jump{\mathfrak H}\big),
\end{cases}
\end{equation}
where the first two equations are cast on $\cE^\pm$, ${\mathfrak H}$ is defined in \eqref{defC} and  where 
${\mathcal Z}({\mathtt Z},\av{{\mathfrak H}}, \jump{\mathfrak H}):=\big( {\mathcal Q}({\mathtt Z}, \jump{\mathfrak H}),\dot\delta,  {\mathcal D}({\mathtt Z},\av{{\mathfrak H}})\big)^{\rm T}$, with
$$
\left(\begin{array}{c}
 {\mathcal Q} ({\mathtt Z},\jump{\mathfrak H}) \\
 {\mathcal D}({\mathtt Z},\av{{\mathfrak H}})
 \end{array}\right)
 ={\mathfrak T}_\mu(\eps\delta,\eps\zeta_\pm) ^{-1}\begin{pmatrix} 
 - \eps \ell  {{\alpha'(\eps\delta) \dot\delta \avi{q_{\rm i}}}}  - \frac{1}{2} \jump{ {\mathfrak H}  }  \\
 -\delta+{{\eps \beta(\eps\delta)\dot\delta^2 +\eps \frac{1}{2}\alpha'(\eps\delta)\avi{q_i}^2}}
+\av{{\mathfrak H}}
\end{pmatrix}.
$$
\end{proposition}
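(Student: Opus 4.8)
The plan is to deduce this equivalence from the two results already established, namely the prescribed-data reduction of Lemma \ref{propODE-presc} and the matrix form of the interior ODEs obtained in Theorem \ref{theoIBVP}: the only new point compared to Lemma \ref{propODE-presc} is that the transmission data are no longer given functions of time but are slaved to $\av{q_{\rm i}}$ and $\delta$ through \eqref{trans1reform}, and these in turn evolve according to \eqref{ODEIBVP}.

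First I would treat the direct implication. The first equation $\dt\zeta=-\dx q$ of \eqref{formODE} is nothing but the first Boussinesq equation. For the momentum equation, I would repeat verbatim the computation of Lemma \ref{propODE-presc}, inverting $(1-\kappa^2\dx^2)$ on each half-line $\cE^\pm$ with homogeneous Dirichlet data; the coefficient of the dispersive boundary layer $e^{-\frac{1}{\kappa}\abs{x\mp\ell}}$ is then fixed by the time derivatives of the transmission conditions \eqref{trans1reform}, which gives exactly \eqref{altq}, and the identity $R_0\dx=\dx R_1$ turns the first term into $-\dx R_1\mfsw$. It then remains to express the boundary-layer coefficient $\frac{d}{dt}\av{q_{\rm i}}\mp\ell\ddot\delta$ in closed form: by Theorem \ref{theoIBVP} the pair $(\av{q_{\rm i}},\delta)$ solves \eqref{ODEIBVP}, and since ${\mathfrak T}_\mu(\eps\delta,\eps\zeta_\pm)$ is invertible one solves for $\frac{d}{dt}(\av{q_{\rm i}},\dot\delta)^{\rm T}$, which by the very definitions of $\cQ$ and $\cD$ reads $\frac{d}{dt}\av{q_{\rm i}}=\cQ$ and $\ddot\delta=\cD$. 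This simultaneously produces the third line $\frac{d}{dt}{\mathtt Z}=\cZ$ of \eqref{formODE} and, after substitution, its second line.

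For the converse I would start from \eqref{formODE} and recover in turn the transmission conditions, the momentum equation and \eqref{ODEIBVP}. The crucial observation is that the trace of $\dx R_1\mfsw$ vanishes at $x=\pm\ell$ because $R_1$ is the inverse of $(1-\kappa^2\dx^2)$ with Neumann data, while each exponential layer equals $1$ at its own endpoint; taking traces of the second line of \eqref{formODE} therefore gives $\frac{d}{dt}\av{q}=\cQ=\frac{d}{dt}\av{q_{\rm i}}$ and $\frac{d}{dt}\jump{q}=-2\ell\cD=-2\ell\ddot\delta$. Since the transmission conditions \eqref{trans1reform} are assumed to hold at $t=0$, an integration in time shows that they hold on all of $[0,T]$. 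Applying $(1-\kappa^2\dx^2)$ to the second line of \eqref{formODE} then recovers the nonlocal momentum equation, since the exponential layers lie in the kernel of this operator on the open intervals and $(1-\kappa^2\dx^2)\dx R_1=\dx$; finally, the now-established transmission conditions combined with the third line of \eqref{formODE} are just a rewriting of \eqref{ODEIBVP}, using again the invertibility of ${\mathfrak T}_\mu(\eps\delta,\eps\zeta_\pm)$.

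The hard part will be the propagation of the transmission conditions in the converse direction, exactly as it was the only nontrivial point in Lemma \ref{propODE-presc}: one must check that the specific Dirichlet/Neumann boundary conditions encoded in $R_0,R_1$, together with the normalization of the two exponential boundary layers, make the traces of $\dt q$ reproduce precisely $\frac{d}{dt}\av{q_{\rm i}}$ and $-2\ell\ddot\delta$, so that imposing \eqref{trans1reform} at the initial time is enough to guarantee it for all times. The remaining steps --- inverting ${\mathfrak T}_\mu(\eps\delta,\eps\zeta_\pm)$ and matching the algebraic definitions of $\cQ$, $\cD$ and $\cZ$ --- are routine linear algebra and I would not detail them.
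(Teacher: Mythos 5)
Your proposal is correct and follows essentially the same route as the paper's proof: reduce the momentum equation to the explicit form \eqref{altq} via $R_0\dx=\dx R_1$ as in Lemma \ref{propODE-presc}, substitute the inverted form of \eqref{ODEIBVP} for the boundary-layer coefficients $\frac{d}{dt}\av{q_{\rm i}}\mp\ell\ddot\delta$, and propagate the transmission conditions from $t=0$ using the traces of the dispersive boundary layers. The paper states this as a terse chain of equivalences, whereas you spell out the converse direction explicitly, but the underlying mechanism is identical.
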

\begin{proof}
Let us remark first that if the initial data satisfy $\jump{q_{\vert_{t=0}}}=-2\ell \dot\delta(0)$ and $\av{q_{\vert_{t=0}}}=\av{q_{\rm i}}(0)$ then the transmission condition \eqref{trans1} is equivalent to
\begin{equation}\label{dertrans}
\frac{d}{dt}\jump{q}=-2\ell \ddot\delta \quad\mbox{ and }\quad \frac{d}{dt}\av{q}=\frac{d}{dt}\av{q_{\rm i}}.
\end{equation}
As already noticed in \eqref{altq}, the second equation of \eqref{Boussinesq_ext3} together with the jump condition $\frac{d}{dt}\jump { q} =  -2\ell \ddot \delta$  is equivalent to 
$$
\dt q     =  -  R_0  \dx \mfsw + (\frac{d}{dt} \av{q_{\rm i}}\mp\ell\ddot\delta)  e ^{ -  \frac{1}{\kappa} \abs{x\mp \ell} } \quad \mbox{ on }\quad \cE^\pm.
$$
Using the fact that $R_0\dx =\dx R_1$ and replacing $\frac{d}{dt} \av{q_{\rm i}}$ and $\ddot\delta$ by the formula provided by \eqref{ODEIBVP}, one obtains the result.
\end{proof}

Using the fact that \eqref{formODE} is an ODE on the space $\HH^n\times \RR^3$, with 
 $\HH^n  
= H^{n+1} (\mathcal E)\times H^{n+2}(\mathcal E)$, we obtain the following well-posedness result for the wave-structure interaction problem.
\begin{theorem} \label{theoexist}
\label{prop1} For $n \ge 0$, consider  initial data $U^{\rm in}=(\zeta^{\rm in}, q^{\rm in}) \in \HH^n $ and ${\mathtt Z}^{\rm in}=(\av{q_{\rm i}}^{\rm in},\delta_0,\delta_1)\in \RR^3$ satisfying  
$ \inf  h^{\rm in}>0$.
 Then for all $\eps \in [0,1]$ and
  $\kappa > 0$, there is $T > 0$ such that 
the system \eqref{formODE} has a unique solution in $(U,{\mathtt Z})\in C^1([0,T[ ; \HH^n\times \RR^3)$ with initial data $(U^{\rm in},{\mathtt Z}^{\rm in})$, which in addition belongs to $C^\infty([0,T[ ; \HH^n\times \RR^ 3)$. Moreover, if $T^*$ denotes the maximal existence time and $T^*<\infty$, one has
$$
\limsup_{ t \to T^*}   \big[ \big\vert (\zeta , q,  \frac{1}{1+\eps\zeta})(t)\big\vert_{L^\infty (\mathcal E)} +\abs{\dot\delta(t)}+\abs{\av{q_{\rm i}}(t)}+ \big\vert \frac{1}{1+\eps \delta(t)+\eps\zeta_{\rm eq}}\big\vert_{L^\infty({\mathcal I})}\big] = + \infty . 
$$
\end{theorem}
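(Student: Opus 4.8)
The plan is to read \eqref{formODE} as an autonomous ordinary differential equation $\frac{d}{dt}(U,{\mathtt Z})=\Phi(U,{\mathtt Z})$ posed on the Banach space $\XX^n:=\HH^n\times\RR^3$, and to solve it by the Cauchy--Lipschitz (Picard--Lindel\"of) theorem in Banach spaces, exactly as was done for the prescribed transmission problem in Proposition~\ref{existence-presc}. The natural phase space is the open set $\mathcal{O}\subset\XX^n$ of those $(U,{\mathtt Z})=((\zeta,q),(\av{q_{\rm i}},\delta,\dot\delta))$ for which $\inf h>0$, i.e. $\inf_\cE(1+\eps\zeta)>0$ and $\inf_{\mathcal I}(h_{\rm eq}+\eps\delta)>0$; this is an open condition in $\XX^n$ since $n\ge 0$ makes $\zeta$ continuous up to the contact points, for every $\eps\in[0,1]$. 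Once $\Phi$ is shown to be smooth on $\mathcal{O}$, local existence, uniqueness and the $C^\infty$-in-time regularity follow by standard arguments, and the blow-up alternative is obtained from the continuation principle refined by an a priori energy estimate.

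First I would check that $\Phi$ is a smooth (hence locally Lipschitz) map from $\mathcal{O}$ into $\XX^n$. For the first component this is immediate, since $q\mapsto-\dx q$ sends $H^{n+2}(\cE)$ into $H^{n+1}(\cE)$. For the second component one combines: (i) the boundedness and smoothing properties of $R_0,R_1$ granted by Lemma~\ref{lemmaRbounded} together with elliptic regularity, so that for fixed $\kappa>0$ the operator $\dx R_1=R_0\dx$ gains one derivative and sends $H^{n+1}(\cE)$ into $H^{n+2}(\cE)$; (ii) Moser-type product and composition estimates, which make $(\zeta,q)\mapsto\mfsw=\zeta+\eps(\frac12\zeta^2+q^2/h)$ a smooth map into $H^{n+1}(\cE)$ on $\mathcal{O}$ (recall $H^{n+1}$ is an algebra for $n\ge0$); (iii) the trace theorem, which makes $(\zeta,q)\mapsto(\jump{{\mathfrak H}},\av{{\mathfrak H}})$ smooth, with ${\mathfrak H}$ as in \eqref{defC}; and (iv) the fact that the boundary-layer profiles $e^{-|x\mp\ell|/\kappa}$ belong to $\HH^n$ for every fixed $\kappa>0$. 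It remains to check that ${\mathfrak T}_\mu(\eps\delta,\eps\zeta_\pm)$ is invertible with smooth inverse, and here I would show that the symmetric matrix \eqref{defM} is positive definite: its determinant contains the term $\kappa^2\big(\av{\frac1h}^2-\tfrac14\jump{\frac1h}^2\big)=\kappa^2/(h_+h_-)>0$, which together with $\alpha,\tau_\mu^2>0$ forces a strictly positive determinant and positive trace. Composing these facts (the ${\mathtt Z}$-dependence entering only through smooth finite-dimensional nonlinearities and ${\mathfrak T}_\mu^{-1}$) gives $\Phi\in C^\infty(\mathcal{O};\XX^n)$; Cauchy--Lipschitz then yields a unique maximal $C^1$ solution, and differentiating $\frac{d}{dt}(U,{\mathtt Z})=\Phi(U,{\mathtt Z})$ and bootstrapping promotes it to $C^\infty([0,T^*[;\XX^n)$.

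For the blow-up alternative I would invoke the continuation principle: since $\Phi$ is Lipschitz on every closed ball of $\mathcal{O}$ that stays at positive distance from $\partial\mathcal{O}$, if $T^*<\infty$ then the solution must either leave every such set, i.e. $\inf h\to0$, or satisfy $\|(\zeta,q)\|_{\HH^n}+\abs{{\mathtt Z}}\to\infty$. The control of $\inf h$ is exactly the control of $\big\vert\frac{1}{1+\eps\zeta}\big\vert_{L^\infty(\cE)}$ and $\big\vert\frac{1}{1+\eps\delta+\eps\zeta_{\rm eq}}\big\vert_{L^\infty({\mathcal I})}$ appearing in the statement, so the only remaining point is to show that these quantities, together with $\abs{\zeta}_{L^\infty},\abs{q}_{L^\infty},\abs{\dot\delta}$ and $\abs{\av{q_{\rm i}}}$, control the full $\HH^n$-norm. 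This is the heart of the matter and amounts to a tame a priori estimate.

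The main obstacle is therefore this energy estimate. I would differentiate the two evolution equations in \eqref{formODE} up to top order, test them in $L^2(\cE)$, and estimate. The point is that for fixed $\kappa$ the operators $R_0,R_1$ are bounded and smoothing, so the worst contributions come from the hyperbolic flux $\mfsw$; Moser and commutator estimates give $\|\mfsw\|_{H^{n+1}}\le C\big(\abs{\zeta}_{L^\infty},\abs{q}_{L^\infty},\abs{1/h}_{L^\infty}\big)\big(1+\|(\zeta,q)\|_{\HH^n}\big)$, that is, a bound \emph{linear} in the highest norm with coefficients depending only on the low-order quantities listed in the statement. The boundary-layer and trace contributions ($\jump{{\mathfrak H}},\av{{\mathfrak H}}$ and the entries of ${\mathfrak T}_\mu^{-1}$) are of lower order and are likewise controlled by these $L^\infty$ quantities, while the finite-dimensional block $\frac{d}{dt}{\mathtt Z}=\mathcal{Z}$ is estimated directly. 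Summing the resulting inequalities produces a differential inequality of the form $\frac{d}{dt}\big(\|(\zeta,q)\|_{\HH^n}^2+\abs{{\mathtt Z}}^2\big)\le C(t)\big(1+\|(\zeta,q)\|_{\HH^n}^2+\abs{{\mathtt Z}}^2\big)$ with $C(t)$ depending only on the bounded $L^\infty$ quantities, and Gr\"onwall's lemma keeps the $\HH^n$-norm finite on $[0,T^*)$, contradicting blow-up and establishing the stated criterion. I emphasize that $C$ is allowed to depend on $\kappa$ (equivalently on $\mu$), which is what makes this step comparatively soft; removing that dependence is precisely the far more delicate problem treated in \S\ref{sectUE}.
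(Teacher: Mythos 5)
Your proposal is correct and takes essentially the same route as the paper: reformulation of \eqref{formODE} as an autonomous ODE on the open set $\{\inf h>0\}$ of $\HH^n\times\RR^3$, smoothness of $\Phi$ via the boundedness of $R_0,R_1$, product/trace estimates and invertibility of ${\mathfrak T}_\mu$, Cauchy--Lipschitz plus bootstrap for existence and $C^\infty$ regularity, and a Moser-type tame bound (linear in the top norm with constants depending only on the low-order $L^\infty$ quantities) combined with Gr\"onwall for the blow-up alternative. The only micro-detail the paper makes explicit that you leave implicit is that $\delta$, which is absent from the blow-up criterion, cannot blow up in finite time without $\dot\delta$ blowing up.
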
 
\begin{remark}
Since the relations \eqref{dertrans} obviously hold for the solution, the transmission condition 
$$
\av{q}=\av{q_{\rm i}} \quad \mbox{ and }\quad \jump{q}=-2\ell \dot\delta
$$
are satisfied for all time if the initial data satisfy
$$
\jump{q^{\rm in} } = -2\ell \delta_1,\qquad \av{q^{\rm in}}=\av{q_{\rm i}}^{\rm in}.
$$
\end{remark}
\begin{proof}
Let $\mathcal O$ denote the open subset of $\HH^n\times \RR^3$ of the $(U,{\mathtt Z}) = (\zeta, q,\av{q_{\rm i}}, \delta,\dot\delta)$ such that 
 $\inf_\RR  h>0$ (as already explained in the comments before Proposition \ref{propODE}, this latter is a condition on $\zeta$ in the exterior domain, and on $\delta$ in the interior domain).  Let us also write \eqref{formODE} in compact form as
 $$
 \frac{d}{dt} (U,{\mathtt Z})= \Phi (U,{\mathtt Z})
 $$
 where $\Phi=(\phi_1,\phi_2,\phi_3,\phi_4,\phi_5)$ and
 \begin{align*}
 \phi_1= -\dx q,\qquad \phi_2= -  \dx R_1 \mfsw + \big( {\mathcal Q} \mp \ell {\mathcal D} \big) e ^{ -  \frac{1}{\kappa} \abs{x\mp \ell} },\\
\phi_3={\mathcal Q}({\mathtt Z},\jump{\mathfrak H}),\qquad  \phi_4=\dot\delta, \qquad \phi_5={\mathcal D}({\mathtt Z},\av{\mathfrak H}).
 \end{align*}
 From standard trace and product estimates in Sobolev spaces, $\Phi$ is a smooth mapping from ${\mathcal O}$ to $\HH^n\times \RR^3$ and the local existence follows from Cauchy-Lipschitz theorem. From Moser type estimates, we also get
 \begin{equation}\label{eqCmu}
 \vert \Phi(U,{\mathtt Z})\vert_{\HH^n\times \RR^3}\leq C_\mu\big(\vert \zeta, q , \frac{1}{h} \vert_{L^\infty(\cE)},\abs{\av{q_{\rm i}},\delta,\dot\delta},\vert \frac{1}{h_{\rm i}}\vert_{L^\infty({\mathcal I})}\big)
 \vert (U,{\mathtt Z})\vert_{\HH^n\times \RR^3},
 \end{equation}
 with $C_\mu$ a smooth non decreasing function of its arguments. Classically, this means that if the maximal existence time is finite, one of the arguments of $C_\mu$ has to blow up. Remarking further that $\delta$ cannot blow up in finite time without $\dot\delta$ also blowing up, one gets the result.
\end{proof} 

\subsection{Uniform estimates}\label{sectUE}

Theorem \ref{theoexist} shows that the equations are locally well-posed, but the existence time is not uniform with respect to $\eps$ and $\mu$ (or equivalently $\kappa$) and may shrink to zero when these parameters become very small.  It is however possible to derive a uniform estimate on a time interval of size $O(\frac{1}{\eps})$ under the assumption that $(\zeta,q)$ remains uniformly bounded in $W^{1,\infty}(\cE)$. This estimate is a generalization of the estimate one can derive for the Boussinesq equations on the full line (see Step 0 of the proof), and implies in particular that for a time scale $O(1/\eps)$ the solid cannot touch the bottom if $\zeta$, $q$ and their first order spatial derivatives remain bounded.
\begin{theorem}\label{theostab}
 Assume that the assumptions of Theorem \ref{theoexist} are satisfied and let  $M_0>0$ be such that
$$
\inf_{\cE\cup{\mathcal I}} \big(\frac{1}{h^{\rm in}}\big)+\abs{(\zeta^{\rm in},q^{\rm in},\kappa\dx q^{\rm in})}_2 +\abs{(\av{q_{\rm i}}^{\rm in},\delta_0,\delta_1)}\leq M_0,
$$
and assume moreover that there are $\underline{T}>0$  and $M>0$ such that the solution provided by Theorem \ref{theoexist} exists on $[0,\underline{T}]$ and that  $ \abs{(\zeta,q)}_{L^\infty([0,T]\times W^{1,\infty}( \cE))}\leq M$. \\
Then there exists  $T_1=T_1(M_0,M)>0$  such that for all $0\leq t \leq \min\{\underline{T}, \frac{1}{\eps} T_1\}$, one has
\begin{align*}
\inf_{\cE\cup{\mathcal I}} \big(\frac{1}{h}\big)+\abs{(\zeta,q,\kappa\dx q )(t)}_2&+ \abs{(\av{q_{\rm i}},\delta,\dot\delta)(t)}
\leq C\big(M_0\big),
\end{align*}
with $C(\cdot)$ a nondecreasing function of its argument.
\end{theorem}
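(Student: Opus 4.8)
The plan is to run a single energy estimate for the coercive functional $\mathfrak E:={\mathfrak E}_{\rm ext}+{\mathfrak E}_{\rm int}$ of \eqref{defEext}--\eqref{defEint} and to check that, thanks to the weak nonlinearity, \emph{every} term in its evolution carries an explicit factor $\eps$; a Gronwall argument then gives a bound on a time interval of length $O(1/\eps)$. The starting point is the exact identity of Proposition \ref{prior-conservation-case},
$$\frac{d}{dt}\mathfrak E + \eps\kappa^2\ell\av{\tfrac{1}{(h_{\rm eq}+\eps\delta)^2}}\dot\delta^3 = 3\eps\kappa^2\int_\cE{\mathfrak R}.$$
As a preliminary I would record the coercivity: as long as $h=1+\eps\zeta$ and $h_{\rm w}=h_{\rm eq}+\eps\delta$ stay bounded above and below (and $\mu\lesssim1$), one has $\tau_\mu(\eps\delta)^2\geq\tau_{\rm buoy}^2>0$ and $\alpha(\eps\delta)\simeq1$, so that $\mathfrak E$ is equivalent to $\abs{(\zeta,q,\kappa\dx q)}_2^2+\abs{(\av{q_{\rm i}},\delta,\dot\delta)}^2$. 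Controlling $\mathfrak E$ and the lower bound of $h$ is therefore equivalent to the conclusion; the upper bounds on $h,h_{\rm w}$ follow from the assumed $W^{1,\infty}$ control and from the bootstrapped bound on $\delta$.

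Next I would bound the right-hand side and the cubic boundary term. The latter is immediately $\lesssim\eps\abs{1/h_{\rm w}}_\infty^2\abs{\dot\delta}\,\mathfrak E\lesssim\eps\,\mathfrak E^{3/2}$. Writing ${\mathfrak R}=\frac{1}{6h^2}(\dx q)^3+\frac{1}{3h^2}q(\dt\dx q)\dx\zeta$, the first contribution is harmless,
$$\eps\kappa^2\int_\cE\tfrac{1}{6h^2}(\dx q)^3 \lesssim \eps\,\abs{\dx q}_\infty\abs{1/h}_\infty^2\,\big(\kappa^2\abs{\dx q}_2^2\big)\lesssim \eps M\,\mathfrak E,$$
where I used the $W^{1,\infty}$ bound together with $\kappa^2\abs{\dx q}_2^2\leq\mathfrak E$.

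The only genuinely delicate term is the one containing $\dt\dx q$, which a priori is of size $O(1/\kappa)$ because of the dispersive boundary layer and would destroy the uniformity in $\mu$. The resolution, which is the generalization of the full-line estimate (Step $0$), is the algebraic identity $\kappa^2\dx^2 R_1=R_1-I$: differentiating \eqref{altq} in space gives, on $\cE^\pm$,
$$\kappa^2\dt\dx q=(I-R_1)\mfsw \mp \kappa\big(\tfrac{d}{dt}\av{q_{\rm i}}\mp\ell\ddot\delta\big)e^{-\frac{1}{\kappa}\abs{x\mp\ell}},$$
so that $\kappa^2\dt\dx q$ is \emph{bounded} in $L^2$, uniformly in $\kappa$: the bulk part obeys $\abs{(I-R_1)\mfsw}_2\leq2\abs{\mfsw}_2\lesssim\sqrt{\mathfrak E}$ by the $L^2$-boundedness of $R_1$ (Lemma \ref{lemmaRbounded}), and the boundary-layer part is concentrated on a layer of width $\kappa$ with prefactor $\kappa$. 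Consequently
$$\eps\kappa^2\int_\cE\tfrac{1}{3h^2}q(\dt\dx q)\dx\zeta=\eps\int_\cE\tfrac{1}{3h^2}q\big[(I-R_1)\mfsw+\text{(b.l.)}\big]\dx\zeta,$$
and using $\abs{\dx\zeta}_\infty\leq M$ and $\abs{1/h}_\infty$ bounded, the bulk part is $\lesssim\eps M\abs{1/h}_\infty^2\abs{q}_2\abs{\mfsw}_2\lesssim\eps\mathfrak E$, while the boundary-layer part is $\lesssim\eps\kappa^2\big(\abs{\tfrac{d}{dt}\av{q_{\rm i}}}+\abs{\ddot\delta}\big)$. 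To close this last piece I would bound $\frac{d}{dt}\av{q_{\rm i}}$ and $\ddot\delta$ by inverting the matrix ${\mathfrak T}_\mu(\eps\delta,\eps\zeta_\pm)$ of \eqref{defM} (uniformly invertible), whose right-hand side involves $\jump{{\mathfrak H}}$ and $\av{{\mathfrak H}}$; these are controlled by trace and product estimates on $R_1\mfsw$ in ${\mathfrak H}$ (see \eqref{defC}) together with the $W^{1,\infty}$ bound.

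Collecting the three bounds yields a differential inequality of the form $\frac{d}{dt}\mathfrak E\leq \eps\, C\big(M,\abs{1/h}_\infty,\abs{1/h_{\rm w}}_\infty\big)\big(\mathfrak E+\mathfrak E^{3/2}\big)$, which I would close by a continuity (bootstrap) argument run simultaneously on $\mathfrak E$ and on the lower bound of $h$. Indeed $\frac{d}{dt}(\eps\delta)=\eps\dot\delta$ with $\abs{\dot\delta}\lesssim\sqrt{\mathfrak E}$, so the interior depth $h_{\rm w}$ moves at speed $O(\eps\sqrt{\mathfrak E})$ and stays within a fixed fraction of its initial value for $\eps t\leq T_1$ (this is precisely the non-touchdown of the solid); in the exterior, $h=1+\eps\zeta$ remains bounded below thanks to the $W^{1,\infty}$ hypothesis. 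On this time set the coefficient $C$ is controlled by $M_0$, and integrating the differential inequality gives $\mathfrak E(t)\leq2\mathfrak E(0)\leq C(M_0)$ for $\eps t\leq T_1(M_0,M)$, which feeds back to keep $h,h_{\rm w}$ in the good range and closes the argument. The single real obstacle is thus the $\dt\dx q$ term: without the identity $\kappa^2\dx^2R_1=R_1-I$ and the $L^2$-boundedness of $R_1$ it would be impossible to trade the apparent $1/\kappa$ singularity of the dispersive boundary layer for a bound uniform in the dispersive parameter.
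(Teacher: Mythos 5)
Your proposal is correct and follows essentially the same route as the paper: the energy identity of Proposition \ref{prior-conservation-case}, the identity $\kappa^2\dx^2 R_1=R_1-I$ together with the uniform $L^2$-boundedness of $R_1$ to tame $\kappa^2\dt\dx q$, the control of $\frac{d}{dt}\av{q_{\rm i}}$ and $\ddot\delta$ through the inversion of ${\mathfrak T}_\mu$ and the trace estimate on $R_1$, and a Gronwall/bootstrap argument combined with the transport of the depth to keep $h$ and $h_{\rm eq}+\eps\delta$ bounded below. The only (harmless) imprecision is in the bookkeeping of the powers of $\kappa$ attached to the boundary-layer contribution, which in the paper is $\eps\kappa^3\abs{(\dot q_-,\dot q_+)}^2$ and is exactly what absorbs the $\kappa^{-1/2}$ loss in the trace estimate \eqref{traceR1}.
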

\begin{remark}
The time $T_1$ and the upper bound $C(M_0)$ only depend on $M_0$ and $M$; in particular they are uniform with respect to $(\eps,\kappa)\in (0,1)^2$.
\end{remark}
\begin{proof}
For the sake of clarity, we generically denote throughout this proof
 by $C(\cdot)$  a nondecreasing function of its arguments {\it that does not depend on $\eps$ nor $\kappa$}, but whose exact expression may differ form one line to another. We also recall that $\kappa^2=\mu/3$.\\
{\bf Step 0.} Energy estimates for the Boussinesq equations on the full line. For the sake of clarity we first explain here how to derive an energy estimate for the Boussinesq equations \eqref{Abbott} when they are cast on the full line $\RR$. More precisely, we show that if $(\zeta,q)$ is a smooth solution on a time interval $[0,T]$ on which $h\geq h_{\rm min}>0$, then
$$
\forall t \in [0,T], \qquad \mfE^{\rm Bouss}(t)\leq \mfE^{\rm Bouss}(0)\exp\Big(\eps  t C\big(\frac{1}{h_{\rm min}},\abs{(\zeta,q)}_{L^\infty([0,T];W^{1,\infty}( \RR))} \big) \Big),
$$
where $\mfE^{\rm Bouss}$ is the energy associated with the Boussinesq system, 
$$
\mfE^{\rm Bouss}=\frac{1}{2}\int_{\RR} \Big( \zeta^2+\frac{1}{h}q^2 +\kappa^2 \frac{1}{h}(\dx q)^2\Big).
$$
Using \eqref{eqNRJ}, one readily gets that
$$
\frac{d}{dt} \mfE^{\rm Bouss}=3\eps \kappa^2 \int_{\mathbb R} {\mathfrak R} \quad\mbox{ with }\quad {\mathfrak R}=  \frac{1}{6h^2}(\dx q)^3 +\frac{1}{3h^2}q (\dt\dx q) \dx \zeta,
$$
so that
\begin{align*}
\kappa^2 \int_{\RR}{\mathfrak R}\leq 
 C\big( \frac{1}{h_{\rm min}},\abs{(\dx \zeta, \dx q)}_{L^\infty(\RR)}\big) \big(\kappa^2 \abs{\frac{1}{\sqrt{h}}\dx q}_{L^2(\RR)}^2 + \abs{\frac{1}{\sqrt{h}}q}_2 \abs{\kappa^2 \dx\dt q}_{L^2(\RR)} \big).
\end{align*}
Now, using the second equation of the Boussinesq system, one has
 \begin{equation}\label{estfull}
 \kappa^2\dx\dt q=-\kappa^2 (1-\kappa^2\dx^2)^{-1}\dx^2 \mfsw,
 \end{equation}
and therefore, 
$$\abs{\kappa^2 \dx\dt q}_{L^2(\RR)} \leq C(\frac{1}{h_{\rm min}},\abs{(\zeta,q)}_{L^\infty(\RR)})\abs{(\zeta,\frac{1}{\sqrt{h}}q)}_{L^2(\RR)}.
$$
It is then straightforward to deduce that
$$
\frac{d}{dt} \mfE^{\rm Bouss} \leq \eps C(\frac{1}{h_{\rm min}},\abs{(\zeta,q)}_{W^{1,\infty}}) \mfE^{\rm Bouss},
$$
which yields the energy estimate stated above.\\
We shall follow the general scheme of this proof for our wave-structure system; the main difference are that some control are needed for the quantities $\delta$, $\dot\delta$ and $\av{q_{\rm i}}$ associated with the interior region, and one has to consider an initial boundary value problem instead of a simple initial value problem for the Boussinesq system; in particular, \eqref{estfull} is no longer valid and  boundary terms make the analysis more delicate.

\noindent
{\bf Step 1.} Adaptation for the wave-structure system, assuming that $h\geq h_{\rm min}$ on $\cE$ for some $h_{\rm min}>0$. We shall work here with the formulation  \eqref{Boussinesq_ext2}-\eqref{trans3} of the problem, as derived in Theorem \ref{theotransm}. The quantity $\mfE^{\rm Bouss}$ used in Step 0 is here replaced by $\mfE^{\rm ext}$, which is  also the integral of the local density of energy ${\mfe }$ but on the exterior region $\cE$ instead of the whole line ${\mathbb R}$, see \eqref{defEext}, and we also need the interior energy $\mfE_{\rm int}$ defined in \eqref{defEint}. 
As shown in Proposition \ref{prior-conservation-case}, we have 
$$
 \frac{d}{dt}\big[\mfE_{\rm ext} +\mfE_{\rm int}\big]+
 \eps\kappa^2 \ell\av{ \frac{1}{h_{\rm i}^2} } \dot\delta^3
 =3\eps\kappa^2 \int_{\cE} {\mathfrak R}.
$$
Controlling ${\mathfrak R}$ as in Step 0, we have
\begin{equation}\label{step1.1}
 \frac{d}{dt}\big[\mfE_{\rm ext} +\mfE_{\rm int}\big]+
 \eps\kappa^2 \ell \av{ \frac{1}{h_{\rm i}^2} } \dot\delta^3
 \leq \eps C\big( \frac{1}{h_{\rm min}},\abs{(\dx \zeta, \dx q)}_\infty \big) \mfE_{\rm ext}+\eps \abs{\kappa^2\dx\dt q}_2^2 
\end{equation}
(recall that the notation $\abs{\cdot}_2$ stands for $\abs{\cdot}_{L^2(\cE)}$), and, as in the previous step, the key point is to control $\abs{\kappa^2 \dx\dt q}_2$. Because of the boundaries, and as shown by Proposition \ref{propODE},  \eqref{estfull} must be replaced by
$$
\dt \dx q=-\dx^2 R_1 {\mathfrak f}_{\rm sw}\mp \frac{1}{\kappa} \dot q_\pm e^{-\frac{1}{\kappa}\abs{x\mp \ell}},
$$
where we recall that $q_\pm=q_{\vert_{x=\pm\ell}}$, so that
$$
\kappa^2 \abs{\dx\dt q}_2\lesssim \abs{\kappa^2 \dx^2 R_1 {\mathfrak f}_{\rm sw}}_2+{{\kappa}}^{3/2} \abs{(\dot q_-,\dot q_+)}.
$$
Since $\kappa^2 \dx^2 R_1: L^2 \to L^2$ is uniformly bounded (with respect to $\kappa$), the first term in the right-hand-side can be controlled exactly as in \eqref{estfull}, so that one gets from \eqref{step1.1} that
\begin{align}
\nonumber
 \frac{d}{dt}\big[\mfE_{\rm ext} +\mfE_{\rm int}\big]&+
 \eps\kappa^2\ell \av{ \frac{1}{h_{\rm i}^2} } \dot\delta^3 \\
 \label{step1.3}
& \leq \eps C\big( \frac{1}{h_{\rm min}},\abs{(\zeta, q)}_{W^{1,\infty}} \big) \mfE^{\rm ext} + \eps \kappa^{3}  \abs{(\dot q_-,\dot q_+)} ^2   .
 \end{align}
To close the estimate, we still need a control  on $  \abs{(\dot q_-,\dot q_+)}$.

\noindent
{\bf Step 2.} Control of $ \abs{(\dot q_-,\dot q_+)}$. According to Proposition \ref{propODE}, one has
 \begin{equation}\label{eqqpmDQ}
  \abs{(\dot q_-,\dot q_+)}\leq  \big(\abs{{\mathcal Q}}+\ell \abs{\mathcal D}\big),
  \end{equation}
  with ${\mathcal Q}$ and ${\mathcal D}$ defined in Proposition \ref{propODE}. We remark first that $\av{\mathfrak H} $ and $\jump{\mathfrak H}$, with ${\mathfrak H}$ as defined in \eqref{defC}, can be controlled as 
$$
\abs{\av{\mathfrak H}}+\abs{\jump{\mathfrak H}}\leq C\big( \frac{1}{h_{\rm min}},\abs{\zeta}_\infty \big) \big[\eps \big( 1+ {\dot\delta}^2+ {\av{q_{\rm i}}}^2\big)+\abs{(R_1 \mfsw)_\pm}\big].
$$
with $(R_1 \mfsw)_\pm=(R_1 \mfsw)_{\vert_{x=\pm \ell}}$. Let us now remark that for all $f\in L^2(\cE)$, one has
$$
(R_1 f)_\pm=\kappa^{-1}\int_{\cE^\pm} \exp(-\frac{1}{\kappa}\vert x\mp \ell \vert) f(x){\rm d}x
$$
so that
\begin{equation}\label{traceR1}
\abs{(R_1 f)_\pm }\leq (2\kappa)^{-1/2}\abs{f}_2.
\end{equation}
It follows from the above that
$$
\abs{\av{\mathfrak H}}+\abs{\jump{\mathfrak H}}\leq C\big( \frac{1}{h_{\rm min}},\abs{(\zeta,q)}_\infty \big) \big[\eps \big( 1+ {\dot\delta}^2+ {\av{q_{\rm i}}}^2\big)+\frac{1}{\kappa^{1/2}}\abs{(\zeta,q)}_2\big].
$$
We directly deduce from the definition of ${\mathcal Q}$ and ${\mathcal D}$ provided in Proposition \ref{propODE} and \eqref{eqqpmDQ} that
\begin{align}
\label{estQD}
 \abs{(\dot q_-,\dot q_+)}\leq& C\big( \frac{1}{h_{\rm min}} ,\abs{(\zeta,q)}_\infty\big)  \big[ \abs{\delta}+\eps \big(1+ {\dot\delta}^2+ {\av{q_{\rm i}}}^2\big)+\frac{1}{\kappa^{1/2}}\abs{(\zeta,q)}_2\big].
\end{align}

\noindent
{\bf Step 3.} We show here that one can choose $T_1$ such that the assumption $h\geq h_{\rm min}>0$ is satisfied on $\min\{\underline{T},\frac{1}{\eps}T_1\}$. Indeed, since by assumption $\inf_\cE h^{\rm in}>0$, there exists $h_{\rm min}>0$ such that $\inf_\cE h^{\rm in}\geq 2 h_{\rm min}$. Since $\dt\zeta=-\dx q$, one can write
$$
h(t,x)=h^{\rm in}(x)-\eps \int_0^t \dx q (s,x){\rm d}s
$$
and choosing $T_1>0$ such that $\eps T_1 M \leq h_{\rm min}$ yields the result.

\noindent
{\bf Step 4.}
 Conclusion. Using \eqref{estQD} in \eqref{step1.3}, and plugging the resulting estimate into \eqref{step1.1}, one obtains that
$$
 \frac{d}{dt}\big[{\mathfrak E}_{\rm ext} + {\mathfrak E}_{\rm int}\big]
 \leq \eps F\big({\mathfrak E}_{\rm ext}+  {\mathfrak E}_{\rm int}\big),
$$
for some smooth function $F$ that does not depend on $\kappa\in (0,1)$ and $\eps\in (0,1)$. From the theorem of comparison for ODEs, one
deduces that is possible to choose $T_1>0$ such that 
${\mathfrak E}_{\rm ext} + {\mathfrak E}_{\rm int}$ is uniformly bounded from above by a constant depending only on $M_0$ on the time interval
$\min\{\underline{T},\frac{1}{\eps}T_1\}$. We have already seen that $h\geq h_{\rm min}>0$ on $\cE$ over this time interval. Taking a smaller $T_1$ if necessary, one gets similarly that $h_{\rm i}\geq h_{\rm min}>0$ on ${\mathcal I}$. The results follows.
\end{proof}

Theorem \ref{theostab} is only a conditional result, since it assumes that the solution remains uniformly bounded in $W^{1,\infty}(\cE)$. This is the equivalent of the basic $L^2$-estimate for hyperbolic initial boundary value problems. In the hyperbolic framework, the next natural steps would be to obtain a similar control on the time derivatives of the solution by the initial value of these time derivatives, to express these latter quantities in terms of spatial derivatives of the initial data, and finally to use some ellipticity property to control space-derivatives in terms of time derivatives. By Sobolev embedding, one could then control the $W^{1,\infty}(\cE)$ by energy norms and obtain an unconditional result (see for instance \cite{IguchiLannes} or the lecture notes \cite{LannesBressanone} for the implementation of this strategy for the shallow water equations). \\
In the presence of dispersion, this strategy is much more delicate to implement; controlling the initial value of the time derivatives in terms of spatial derivatives of the initial data, and recovering information on the space derivatives from the control of the time derivatives is considerably more difficult than in the hyperbolic case. This program has been achieved in \cite{BLM}, where well-posedness is established for a time scale $O(1/\eps)$, uniformly with respect to $\mu$ (or, equivalently, $\kappa$), but  for the formally equivalent Boussinesq system \eqref{varBLM} instead of \eqref{Abbott}, and for a fixed object -- these two conditions made possible the reduction to a transmission problem with {\it linear} transmission conditions. The situation here is made more complicated for at least three reasons:
\begin{itemize}
\item The floating object is not fixed  and one needs to understand its coupling with the exterior wave field and in particular the dispersive contribution to the added mass effect;
\item  The contribution of the dispersive term in the transmission conditions \eqref{trans1}-\eqref{trans3}, namely $-\frac{\mu}{3h}\dx\dt q$, is nonlinear (in \cite{BLM}, it is given by the linear expression  $-\frac{\mu}{3}\dx\dt q$); as shown below, this is why we need the hidden regularity effect exhibited here;
\item  The energy conservation is not exact as in \cite{BLM}. Proposition \ref{prior-conservation-case} shows that the residual is formally small, namely, of order $O(\eps\kappa^2)$ but it is not obvious at all that it can be controled by the natural energy of the system.
\end{itemize}
A full proof of the uniform well-posedness for \eqref{Boussinesq_ext2}-\eqref{trans3} requires considerable work and would probably double the size of this paper; we therefore postpone it for future work. We want however to address here the issue of energy estimates for the linearized equations since this might be where the main difference with respect to \cite{BLM} lays, and because it exhibits a phenomenon of independent interest that can be interpreted as a dispersive equivalent of the trace estimates obtained in the hyperbolic case through Kreiss symmetrizers.

In order to understand where the difficulty comes from, let us remark that when one applies $\dt^j$ ($j\geq 1$) to the linear expression $-\frac{\mu}{3} (\dx\dt q)_\pm$, one finds $-\frac{\mu}{3} (\dx\dt (\dt^j q))_\pm$ which is the same term with $q$ replaced by $\dt^j q$. The transmission conditions one has to deal with  in \cite{BLM} for the time derivatives of the solution have therefore the same structure as the original one, and can be dealt with using the basic $L^2$-estimate (the equivalent of Theorem \ref{theostab}). Now, when applying $\dt^j$ to the {\it nonlinear} term $-\frac{\mu}{3h} (\dx\dt q)_\pm=\frac{\mu}{3h} (\dt^2 \zeta)_\pm$, one finds
\begin{align}
\nonumber
 \dt^j\big( \frac{\mu}{3h} (\dt^2 \zeta)_\pm \big)=&\frac{\mu}{3h} (\dt^2 (\dt^j \zeta))_\pm\\
 \label{expcj}
& - \mu\eps \frac{j}{3}\dt \big( \frac{\dt\zeta}{h^2}\dt^j \zeta\big)_\pm
 +\frac{\mu\eps}{3}\big[ j \dt (\frac{\dt\zeta}{h^2})-\frac{\dt^2\zeta}{h^2}\big]\dt^j \zeta_\pm.
\end{align}
The first term in the right-hand side of this expression is the same as the original one with $\zeta$ replaced by $\dt^j \zeta$, but the other two are new and they involve the trace of $\dt^j \zeta$ and $\dt^{j+1}\zeta$ at $x=\pm \ell$. These quantities cannot be controlled by the energy norms of $(\dt^j \zeta,\dt^j q)$ and require a specific treatment that we now describe and which is based on a hidden regularity effect of a completely different nature as the one, based on Kreiss symmetrizers, that is used in the hyperbolic case get control on the trace of the solution (see for instance \cite{Metivier2012,BenzoniSerre,IguchiLannes}).

We consider a system linearized around a couple of functions $(\underline{\zeta},\uq)$ (typically the exact solution), and with source terms $f$, $g_1$ and $g_2$ in the linearized momentum and transmission conditions equations respectively, namely,
\begin{equation}\label{Boussinesq_lin}
\begin{cases}
\dt \zeta+\dx q=0,\\
(1-\kappa^2\dx^2)\dt q+ (\underline{h}-\eps^2 \frac{\uq^2}{\uh^2})\dx\zeta+2\eps \frac{\uq}{\uh}\dx q= \eps f,
\end{cases}
\end{equation}
with the transmission conditions
\begin{equation}\label{transmlin}
\av{q}=\av{q_{\rm i}}
\quad\mbox{ and }\quad\jump{q}=-2\ell \dot \delta,
\end{equation}
where $\av{q_{\rm i}}$ and $\dot \delta$ are provided by the linear ODEs
\begin{align}
\label{transmlin1}
\alpha(\eps\underline{\delta})\frac{d}{dt}\av{q_{\rm i}}+\eps {\bf b}_1[\underline{\mathtt Z}]\cdot{\mathtt Z}&=-\frac{1}{2\ell}\jump{ \zeta+\underline{{\mathfrak G}}+\eps\kappa^2  \big( {\mathtt c}_2[\underline{\zeta}] \zeta +\dt ({\mathtt c}_1[\underline{\zeta}]\zeta) \big)} +\eps g_1,\\
\label{transmlin2}
\tau_\mu(\eps\udelta)^2 \ddot\delta+\delta +\eps {\bf b}_2[\underline{\mathtt Z}]\cdot{\mathtt Z}&=\av{\zeta+\underline{{\mathfrak G}} +\eps\kappa^2  \big( {\mathtt c}_2[\underline{\zeta}] \zeta +\dt ({\mathtt c}_1[\underline{\zeta}]\zeta) \big)}+\eps g_2,
\end{align}
where ${\mathtt c}_k[\underline{\zeta}]$ ($k=1,2$) is  a smooth function of $\underline{\zeta}, \dt\underline{\zeta},\dots,\dt^k\underline{\zeta}$ and, recalling that ${\mathtt Z}=(\av{q_{\rm i}},\delta,\dot\delta)^{\rm T}$,
\begin{align*}
{\bf b}_1[\underline{\mathtt Z}]&=\Big( \alpha'(\eps\udelta)\dot\udelta, \frac{d}{dt}\big( \alpha'(\eps\udelta)\av{\underline{q}_{\rm i}}\big), \alpha'(\eps\udelta)\av{\uq_{\rm i}}\Big)^{\rm T}\\
{\bf b}_2[\underline{\mathtt Z}]&=\Big(- \alpha'(\eps\udelta)\av{\underline{q}_{\rm i}}, 2\frac{d}{dt}\big( \tau_\mu(\eps\udelta){\tau_\mu}'(\eps\udelta)\dot\udelta\big)+\beta'(\eps\udelta)\dot\udelta^2- \frac{\eps}{2}\alpha''(\eps\udelta)\av{\underline{q}_{\rm i}}^2, -2 \beta(\eps\udelta)\dot\udelta\Big)^{\rm T},
\end{align*}
while $\underline{{\mathfrak G}}$ is given by
\begin{equation}\label{defGlin}
\underline{{\mathfrak G}}=-\eps^2 \frac{\uq^2}{\uh^3} \zeta  +\eps \frac{\uq}{\underline{h}^2}  q  - \kappa^2\frac{1}{\underline{h}}\dx\dt q.
\end{equation}
\begin{remark}\label{remTD}
If $(\zeta_{\rm ex},q_{\rm ex})$ denotes an exact solution to the  wave-structure equations \eqref{Boussinesq_ext2}-\eqref{trans3}, then for all $j\geq 1$, the  time derivatives $(\dt^j\zeta_{\rm ex},\dt^jq_{\rm ex})$ solve a system of the form  \eqref{Boussinesq_lin}-\eqref{transmlin2}, with $(\underline{\zeta},\udelta)=(\zeta_{\rm ex},q_{\rm ex})$, $(\zeta,q)=(\dt^j\zeta_{\rm ex},\dt^jq_{\rm ex})$ and, according to \eqref{expcj},
$$
{\mathtt c}_1[{\zeta}_{\rm ex}]=-  j\frac{\dt{\zeta}_{\rm ex}}{h_{\rm ex}^2}
\quad\mbox{ and } \quad
{\mathtt c}_2[{\zeta}_{\rm ex}]=\big[ j \dt (\frac{\dt{\zeta_{\rm ex}}}{h_{\rm ex}^2})-\frac{\dt^2{\zeta_{\rm ex}}}{h_{\rm ex}^2}\big],
 $$
while $f$, $g_1$ and $g_2$ lower order commutator terms; for instance for $j=1$,
$$
 f=\dt \big(\zeta_{\rm ex}-\eps \frac{q_{\rm ex}^2}{h_{\rm ex}^2}\big) \dx \zeta_{\rm ex}
 \quad\mbox{ and }\quad
 g_1=g_2=0.
$$
\end{remark}

The following theorem shows that the linearized problem \eqref{Boussinesq_lin}-\eqref{transmlin2} is well-posed, and provides a control on the augmented energy $\underline{\mfE}_{\rm augm}$ defined as
\begin{equation}\label{augmNRJ}
\underline{\mfE}_{\rm augm}=\abs{\mathtt Z}^2+\abs{(\zeta,q,\kappa\dx q)}_2^2 +\eps\kappa^3 \abs{(\zeta_-,\zeta_+)}^2+\eps\kappa^5 \abs{(\dot \zeta_-,\dot \zeta_+)}^2;
\end{equation}
this energy contains the energy $\underline{\mfE}_{\rm ext}+\underline{\mfE}_{\rm int}$ used in the proof of Theorem \ref{theostab} but provides in addition a control on the traces of $\zeta_\pm$ and their first time derivative. This is a hidden regularity property granted by the dispersive terms. For the sake of clarity, in the following statement, we simply write $\underline{\mathtt c}_j$ instead of ${\mathtt c}_j[\underline{\zeta}]$.
\begin{theorem}\label{theolin}
Let $(\underline{\zeta},\uq) \in C^1(\RR^+\times\cE)$ and assume that  $(\underline{\mathtt c}_1,\dt \underline{\mathtt c}_1,\underline{\mathtt c}_2)_{\vert_{x=\pm \ell}}$ and $ (\underline{\mathtt Z},\dot{\underline{\mathtt Z}})$ are continuous functions of time. Let also
 $M>0$ be such that
$$
\abs{(\underline{\zeta},\dt \underline{\zeta}, \dx \underline{\zeta}, \uq, \dx \uq)}_{L^\infty(\RR^+\times \cE)}\leq M
\quad\mbox{ and }\quad
\abs{(\underline{\mathtt c}_1,\underline{\mathtt c}_2,\underline{\mathtt Z},\dot{\underline{\mathtt Z}})}_{L^\infty(\RR^+)}\leq M
$$
and assume that there exists $h_{\rm min}>0$ and $c_{\rm min}>0$  such that
$$
 \inf_{\RR^+\times \cE} \uh \geq  h_{\rm min}  \quad \mbox{ and }\quad  \inf_{\RR^+\times \cE}\big(\uh -\eps^2 \frac{\uq^2}{\uh^2}\big)\geq c_{\rm min}.
$$
Then for all $(\zeta^{\rm in},q^{\rm in})\in L^2\times H^1(\cE)$ and all ${\mathtt Z}^{\rm in}\in \RR^3$, there exists a unique solution $(\zeta,q,{\mathtt Z})$ in $C^1(\RR^+;L^2\times H^1(\cE)\times \RR^3)$ to \eqref{Boussinesq_lin}-\eqref{transmlin2} with initial data $(\zeta^{\rm in},q^{\rm in},{\mathtt Z}^{\rm in})$. Moreover, $\zeta_{\vert_{x=\pm \ell}}$ exist in $W^{1,\infty}_{\rm loc}(\RR^+)$ and there are constants $C_0=C_0(\frac{1}{h_{\rm min}},\frac{1}{c_{\rm min}})$ and $\underline{C}=\underline{C}(C_0,M)$ such that if $\eps\kappa \underline{C} <1$, the following estimate holds for all $t>0$,
$$
\underline{\mfE}_{\rm augm}(t)\leq C_0\big[ \underline{\mfE}_{\rm augm}(0)+\eps \underline{C}\int_0^t \big( \abs{f}_2^2+\abs{(g_1,g_2)}^2\big) \big]
\exp(\sqrt{\eps}\underline{C}t).
$$
\end{theorem}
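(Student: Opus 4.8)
The plan is to treat \eqref{Boussinesq_lin}--\eqref{transmlin2} as a linear non-autonomous evolution problem, to prove the energy estimate first for smooth solutions, and to obtain the $L^2\times H^1$ solutions afterwards by approximation and continuation; the point is that the \emph{a priori} control of the traces $\zeta_\pm$ furnished by $\underline{\mfE}_{\rm augm}$ survives the limiting process, which is exactly what makes $\zeta_{\vert_{x=\pm\ell}}$ well defined even though $\zeta$ is only $L^2$ in space. For smooth solutions I would reproduce the reduction of Proposition \ref{propODE}: using $R_0\dx=\dx R_1$ and the boundary-layer representation \eqref{altq}, the system is recast, for each fixed $\kappa>0$, as a linear ODE $\frac{d}{dt}(U,\mathtt Z)=\mathcal L(t)(U,\mathtt Z)+(\text{source})$ on $\HH^n\times\RR^3$ with $\mathcal L(t)$ bounded and continuous in $t$ (with norm blowing up as $\kappa\to0$), so that Cauchy--Lipschitz and Duhamel give a unique global $C^1$ solution. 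The whole difficulty is therefore to make the resulting bound uniform, i.e. to establish the augmented estimate.

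I would first derive the \emph{basic} energy identity exactly as in Proposition \ref{prior-conservation-case}: multiply the two equations of \eqref{Boussinesq_lin} by $\zeta$ and $q$, integrate over $\cE$, and pair \eqref{transmlin1}--\eqref{transmlin2} with $\av{q_{\rm i}}$ and $\dot\delta$. After the integrations by parts, the $\kappa^2\dx^2\dt q$ term produces boundary contributions at $x=\pm\ell$ which, thanks to the transmission conditions and the precise form of $\underline{{\mathfrak G}}$, combine with the interior-energy terms so that the leading boundary terms cancel; this yields control of $\frac{d}{dt}\big(\abs{(\zeta,q,\kappa\dx q)}_2^2+\abs{\mathtt Z}^2\big)$ up to two \emph{residual} boundary terms. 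These come precisely from the genuinely nonlinear piece $-\frac{\mu}{3\uh}\dx\dt q$ of the dispersive transmission condition, that is, from the new coefficients $\eps\kappa^2\big({\mathtt c}_2[\underline{\zeta}]\zeta+\dt({\mathtt c}_1[\underline{\zeta}]\zeta)\big)$ in \eqref{transmlin1}--\eqref{transmlin2}; they are of the form $\eps\kappa^2\times(\text{trace of }\zeta\text{ or }\dt\zeta\text{ at }\pm\ell)\times(\dot\delta\text{ or }\av{q_{\rm i}})$ and \emph{cannot} be bounded by the bulk energy, since $\zeta$ is controlled only in $L^2(\cE)$.

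The heart of the proof, and the main obstacle, is the trace (hidden regularity) estimate used to control these residual terms. Here I would test the momentum equation against the boundary-layer profile $\chi_\pm(x)=e^{-\abs{x\mp\ell}/\kappa}$, which satisfies $(1-\kappa^2\dx^2)\chi_\pm=0$ on the interior of $\cE^\pm$. Integrating by parts twice and using $\dt\zeta=-\dx q$ turns $\int_{\cE^\pm}(1-\kappa^2\dx^2)\dt q\,\chi_\pm$ into boundary data, while on the other side $\int_{\cE^\pm}(-\dx\mfsw+\eps f)\chi_\pm$ produces $\mfsw$ evaluated at $\pm\ell$ together with boundary-layer integrals that Cauchy--Schwarz bounds by $\kappa^{1/2}\abs{(\zeta,q,f)}_2$ (cf. \eqref{traceR1}). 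The outcome is a forced second-order ODE in time for each trace, schematically $\kappa^2\ddot\zeta_\pm+c_\pm(t)\zeta_\pm=\kappa\,\dot q_\pm+(\text{boundary-layer remainder})$, where $c_\pm\ge c_{\rm min}>0$ by hypothesis and $\dot q_\pm=\frac{d}{dt}\av{q_{\rm i}}\mp\ell\ddot\delta$ is controlled by $\dot{\mathtt Z}$. Multiplying this ODE by $\eps\kappa^3\dot\zeta_\pm$ produces exactly the time derivative of $\eps\kappa^3\abs{\zeta_\pm}^2+\eps\kappa^5\abs{\dot\zeta_\pm}^2$, i.e. the extra terms in $\underline{\mfE}_{\rm augm}$, and explains their $\kappa$-weights; the dispersive smoothing is what upgrades the traces to $W^{1,\infty}_{\rm loc}$ in time.

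Finally I would add the basic and the trace estimates. The residual boundary terms, being $\eps\kappa^2\times\zeta_\pm\times(\dot\delta,\av{q_{\rm i}})$, are split by Young's inequality into a part $\eps\kappa^3\abs{\zeta_\pm}^2$ absorbed by the trace energy and a part $\eps\kappa\abs{\mathtt Z}^2$ absorbed into the bulk; the smallness condition $\eps\kappa\underline{C}<1$ is what guarantees that the absorbed coefficients keep the combined quadratic form coercive. One is then left with a differential inequality of the form $\frac{d}{dt}\underline{\mfE}_{\rm augm}\le\sqrt{\eps}\,\underline{C}\,\underline{\mfE}_{\rm augm}+\eps\underline{C}\big(\abs{f}_2^2+\abs{(g_1,g_2)}^2\big)$, where the $\sqrt{\eps}$ rate arises from balancing the $O(\eps)$ residual terms against traces controlled with a $\kappa^{-1/2}$ loss and the $\eps\kappa^3$ weights, and Gronwall yields the stated bound. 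I expect the genuinely delicate points to be the exact cancellation of the leading boundary terms in the basic identity and the careful bookkeeping of $\kappa$-powers in the trace ODE, since it is these powers that dictate the precise form of the augmented energy and the smallness assumption $\eps\kappa\underline{C}<1$.
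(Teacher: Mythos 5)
Your proposal follows essentially the same route as the paper's proof: a weighted basic energy identity whose only uncontrollable residuals are the $\eps\kappa^2\,\underline{\mathtt c}_j$ trace terms, a hidden-regularity trace ODE $\kappa^2\ddot\zeta_\pm+(\uh-\eps^2\uq^2/\uh^2)_\pm\zeta_\pm=\pm\kappa\dot q_\pm+O(\kappa^{-1/2}\abs{(\zeta,q)}_2)$ paired with $\eps\kappa^3\dot\zeta_\pm$ to build the trace part of $\underline{\mfE}_{\rm augm}$, absorption of the non-signed corrections under $\eps\kappa\underline{C}<1$, a Gronwall argument at rate $\sqrt{\eps}$, and a density argument to reach $L^2\times H^1$ data. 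Your derivation of the trace ODE by testing the momentum equation against $e^{-\abs{x\mp\ell}/\kappa}$ is the same computation as the paper's evaluation of the $R_0$, $R_1$ representation formula at $x=\pm\ell$; the only step you leave implicit is the control of $\dot{\mathtt Z}$ (hence of $\dot q_\pm$) by inverting the dispersion-modified added-mass matrix ${\mathfrak T}_\mu$, which is precisely where the $\kappa^{-1/2}$ loss and the traces of $\zeta$, $\dot\zeta$ enter the estimate.
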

\begin{remark}
Without the extra control provided by the theorem on the hidden trace regularity of $\zeta$, one could not close the energy estimate. Hidden regularity at the boundary for hyperbolic systems was already noticed in \cite{Lions} and can be obtained in many cases by using Kreiss symmetrizers that make the boundary condition maximal dissipative. The hidden regularity is granted here by the dispersion (rather than a Kreiss symmetrizer), but it is of a different nature since it provides a control for each time $t$ of the traces, as opposed to an $L^2$-norm in time for maximally dissipative hyperbolic systems (see for instance \cite{BenzoniSerre,Metivier2012} and, more related to the present context, \cite{IguchiLannes,LannesBressanone}, as well as \cite{Audiard} for a generalization of Kreiss' approach to a class of linear dispersive equations that does not cover the linear version of the Boussinesq-Abbott system). Note also that even with this hidden regularity,  the equations \eqref{Boussinesq_lin}-\eqref{transmlin2} do not obviously make sense because \eqref{transmlin1} and \eqref{transmlin2} involve the traces $\dx\dt q_{\vert_{x=\pm}}$. This difficulty is removed if we rather work with the equivalent formulation \eqref{ODEIBVPbis} derived in the proof. 
\end{remark}
\begin{remark}
The constants $C_0$ and $\underline{C}$ involved in the statement of the theorem depend only on $h_{\rm min}$, $c_{\rm min}$ and $M$; in particular, they are uniform with respect to $\eps\in (0,1)$ and $\kappa\in(0,1)$ (equivalently, with respect to $\mu$). The theorem provides therefore uniform estimates over a large time scale, namely, $O(\eps^{-1/2})$, which is however shorter than the $O(\eps^{-1})$ time scale classically associated with the existence time of solutions to Boussinesq system on the full line. This is due to the necessity of controlling the traces of the solution at $x=\pm \ell$. Note that the $O(\eps^{-1/2})$ time scale is the same as the one obtained in \cite{LPS} for the existence of a Boussinesq system on the full line using dispersive methods. Using other methods, it was however later proved \cite{SautXu,Burtea} that the time scale $O(\eps^{-1})$ could be reached. The $O(\eps^{-1})$ time scale was also attained in \cite{BLM} for the Boussinesq system \eqref{varBLM} in the presence of a fixed object, but, as explained above, no control of the traces is needed there. It is therefore an open question to assess whether the shorter time scale $O(\eps^{-1/2})$ of Theorem \ref{theolin} is dictated by the dispersive control of the traces, or wether it is only a technical limitation.
\end{remark}
\begin{remark}
This theorem furnishes uniform bounds for the time derivatives to the solutions of  \eqref{Boussinesq_ext2}-\eqref{trans3} (see Remark \ref{remTD}); as explained above, this is the key step towards well-posedness on a uniform time-scale, and it differs strongly from the linear estimates of \cite{BLM} because of the necessary control of the trace of the solution. The other steps of the proof are expected to be more similar to \cite{BLM} and for the sake of conciseness, we prefer to treat them in a separate work.
\end{remark}
\begin{proof}
Throughout this proof, for the sake of clarity, we  use the same notations $C_0=C_0(\frac{1}{h_{\rm min}},\frac{1}{c_{\rm min}})$ and $\underline{C}=\underline{C}(C_0,M)$ 
for various constants that may differ from one line to another. In the first four steps of the proof, we establish the energy estimate stated in the theorem for smooth solutions of the problem. We then prove existence and uniqueness of regular solutions in Step 5, and extend this result to the regularity considered in the theorem using a density argument and the control of the trace of $\zeta$ at the boundaries furnished by the energy estimate.

\noindent
{\bf Step 1}. Defining the analogous  for the linearized equations of the energies ${\mathfrak E}_{\rm ext}$ and ${\mathfrak E}_{\rm int}$ defined in \eqref{defEext} and \eqref{defEint}, namely,
\begin{align*}
\underline{{\mathfrak E}}_{\rm ext}&=\int_\cE \frac{1}{2\uh} \big( \uh-\eps^2 \frac{\uq^2}{\uh^2}\big) \zeta^2 +\frac{1}{2\uh}q^2 +\frac{\kappa^2}{2\uh}(\dx q)^2\\
\underline{{\mathfrak E}}_{\rm int}&= \ell \alpha(\eps\udelta)\av{q_{\rm i}}^2  + \ell \tau_\mu(\eps\udelta)^2\dot\delta^2 +\ell\delta^2,
\end{align*}
the first step is to prove the following lemma. Note that the inequality stated in the lemma corresponds to \eqref{step1.1} in the proof of Theorem \ref{theostab}. As explained above, the nonlinear structure of the dispersive terms in the transmission conditions makes the analysis of the linearized equations more delicate. The last two terms in the estimate stated in the lemma come from the subprincipal terms involving ${\mathtt c}_1[\underline{\zeta}]$ and ${\mathtt c}_2[\underline{\zeta}]$ in \eqref{transmlin1} and \eqref{transmlin2} and that are not present in the original (nonlinear) equations. Note in particular the appearance of the traces $\zeta_\pm=\zeta_{\vert_{x=\pm \ell}}$ that cannot be controlled by the energy norm $\underline{\mfE}_{\rm ext}$. Another consequence of these subprincipal terms is that, in the left-hand-side of \eqref{ineqstep1}, the energy $  \underline{\mathfrak E}_{\rm ext} +\underline{\mathfrak E}_{\rm int}$ must be modified by adding non signed trace terms.
\begin{lemma}\label{lemmaineq}
The following inequality holds (denoting $\underline{\mathtt c}_j=\mathtt{c}_j[\underline{\zeta}]$),
\begin{align}
\nonumber
\frac{d}{dt}\big[  \underline{\mathfrak E}_{\rm ext} +\underline{\mathfrak E}_{\rm int}&
+\eps \kappa^2  \big( \av{q_{\rm i}}\jump{  \underline{\mathtt c}_1 \zeta  }-2  \ell \dot\delta\av{  \underline{\mathtt c}_1\zeta }\big)\big]
\label{ineqstep1}
\leq \eps \big(\abs{\kappa^2\dt\dx q}_2^2+ \abs{f}_2^2+\abs{(g_1,g_2)}^2\big)\\
&+
 \eps  \underline{C}\big(  \underline{\mathfrak E}_{\rm ext} +\big(1+\frac{\kappa}{\sqrt{\eps}}\big)\underline{\mathfrak E}_{\rm int}
+\kappa^3\eps^{1/2} \abs{(\zeta_-,\zeta_+)}^2\big)+\eps^{1/2}{\kappa}\abs{(\dot q_-,\dot q_+)}^2.
\end{align}
\end{lemma}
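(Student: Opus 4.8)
The plan is to mirror Step~0 and Step~1 of the proof of Theorem~\ref{theostab}: derive an energy identity for the exterior field by testing \eqref{Boussinesq_lin} against the natural symmetrizing multipliers, derive a companion identity for the interior unknowns by testing the ODEs \eqref{transmlin1}--\eqref{transmlin2} against $2\ell\av{q_{\rm i}}$ and $2\ell\dot\delta$, and add the two so that the leading boundary fluxes cancel exactly --- this is the linearized counterpart of the energy--conservation mechanism of Corollary~\ref{coroBC} and Proposition~\ref{prior-conservation-case}. The only genuinely new contributions, relative to \cite{BLM} and to Theorem~\ref{theostab}, are the subprincipal terms carrying $\underline{\mathtt c}_1,\underline{\mathtt c}_2$ in the transmission conditions, and essentially all the work goes into showing that they produce exactly the correction on the left and the trace terms on the right of \eqref{ineqstep1}.

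For the exterior part, I would test the mass equation against $\frac{\underline c}{\uh}\zeta$ (with $\underline c=\uh-\eps^2\uq^2/\uh^2$) and the momentum equation against $\frac1\uh q$, then integrate over $\cE$. This reproduces $\frac{d}{dt}\underline{\mathfrak E}_{\rm ext}$; all terms in which a time- or space-derivative falls on a coefficient $\uh,\uq$ carry a factor $\eps$ and are absorbed into $\eps\underline C\,\underline{\mathfrak E}_{\rm ext}$, using $\uh\ge h_{\rm min}$ and $\underline c\ge c_{\rm min}$ for coercivity. The integration by parts of the dispersive term produces the commutator $-\kappa^2\int_\cE\dx(1/\uh)\,q\,\dx\dt q$; since $\dx(1/\uh)=O(\eps)$, a single Young inequality bounds it by $\eps\underline C\,\underline{\mathfrak E}_{\rm ext}+\eps\abs{\kappa^2\dt\dx q}_2^2$, which is where the term $\eps\abs{\kappa^2\dt\dx q}_2^2$ of \eqref{ineqstep1} comes from (it is left on the right-hand side to be absorbed later, exactly as in Step~0 of Theorem~\ref{theostab}). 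The crucial algebraic point is that the remaining boundary contributions --- the convection flux $\jump{\frac{\underline c}{\uh}\zeta q}$, the quadratic flux $\eps\jump{\frac{\uq}{\uh^2}q^2}$, and the dispersive flux $-\kappa^2\jump{\frac1\uh q\dx\dt q}$ --- recombine, via the definition \eqref{defGlin} of $\underline{{\mathfrak G}}$, into the single linearized flux $\jump{q(\zeta+\underline{{\mathfrak G}})}$.

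For the interior part, testing \eqref{transmlin1} against $2\ell\av{q_{\rm i}}$ and \eqref{transmlin2} against $2\ell\dot\delta$ reproduces $\frac{d}{dt}\underline{\mathfrak E}_{\rm int}$ up to the $O(\eps)$ terms coming from $\dot\alpha$, $\frac{d}{dt}\tau_\mu^2$ and the $\mathbf b_1,\mathbf b_2$ contributions (all bounded by $\eps\underline C\,\underline{\mathfrak E}_{\rm int}$) and the source terms $\eps g_1,\eps g_2$ (bounded by $\eps\underline C(\abs{(g_1,g_2)}^2+\underline{\mathfrak E}_{\rm int})$). Using the identity $\jump{fg}=\jump f\av g+\av f\jump g$ together with the transmission conditions $\av q=\av{q_{\rm i}}$ and $\jump q=-2\ell\dot\delta$, the leading source terms $-\av{q_{\rm i}}\jump{\zeta+\underline{{\mathfrak G}}}+2\ell\dot\delta\av{\zeta+\underline{{\mathfrak G}}}$ collapse to $-\jump{q(\zeta+\underline{{\mathfrak G}})}$, which cancels the exterior flux when the two identities are added. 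What survives is precisely $-\eps\kappa^2\big(\av{q_{\rm i}}\jump{\underline{\mathtt c}_2\zeta+\dt(\underline{\mathtt c}_1\zeta)}-2\ell\dot\delta\av{\underline{\mathtt c}_2\zeta+\dt(\underline{\mathtt c}_1\zeta)}\big)$, plus the $\eps$-bulk and source terms already controlled.

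The heart of the matter is the treatment of this surviving subprincipal term, and I expect it to be the main obstacle. For the piece carrying $\dt(\underline{\mathtt c}_1\zeta)$ I would integrate by parts in time: the total-derivative part is moved to the left and furnishes exactly the (non sign-definite) trace correction $\eps\kappa^2\big(\av{q_{\rm i}}\jump{\underline{\mathtt c}_1\zeta}-2\ell\dot\delta\av{\underline{\mathtt c}_1\zeta}\big)$ appearing inside the bracket on the left of \eqref{ineqstep1}, while the remainder shifts the time derivative onto $\av{q_{\rm i}}$ and $\dot\delta$, i.e.\ produces $\frac{d}{dt}\av{q_{\rm i}}$ and $\ddot\delta$. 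Differentiating the transmission conditions gives $\frac12(\dot q_-+\dot q_+)=\frac{d}{dt}\av{q_{\rm i}}$ and $\dot q_+-\dot q_-=-2\ell\ddot\delta$, so these remainders are controlled by $\abs{(\dot q_-,\dot q_+)}$, whereas $\jump{\underline{\mathtt c}_1\zeta}$ and $\av{\underline{\mathtt c}_1\zeta}$ are controlled by $M\abs{(\zeta_-,\zeta_+)}$; a Young inequality with the balance $\eps^{1/2}\kappa$ then yields exactly the two trace terms $\eps^{3/2}\kappa^3\abs{(\zeta_-,\zeta_+)}^2$ and $\eps^{1/2}\kappa\abs{(\dot q_-,\dot q_+)}^2$ of \eqref{ineqstep1}. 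For the $\underline{\mathtt c}_2\zeta$ piece, which carries no time derivative, I would bound $\av{q_{\rm i}}$ and $\dot\delta$ by $\underline{\mathfrak E}_{\rm int}^{1/2}$ and the jump/average of $\underline{\mathtt c}_2\zeta$ by $M\abs{(\zeta_-,\zeta_+)}$, and apply Young with the same balance $\eps^{1/2}\kappa$; this is what produces the anomalous coefficient $(1+\kappa/\sqrt\eps)$ in front of $\underline{\mathfrak E}_{\rm int}$ (namely the contribution $\eps\cdot\frac{\kappa}{\sqrt\eps}\,\underline{\mathfrak E}_{\rm int}$) together with a further $\eps^{3/2}\kappa^3\abs{(\zeta_-,\zeta_+)}^2$. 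Collecting all contributions gives \eqref{ineqstep1}. The delicate points --- and the reason the estimate does not reduce to the one in \cite{BLM} --- are getting the flux recombination into $\jump{q(\zeta+\underline{{\mathfrak G}})}$ right, and choosing the Young balances so that the trace terms land with the precise powers of $\eps$ and $\kappa$ that can subsequently be absorbed by the hidden-regularity control of $\zeta_\pm$.
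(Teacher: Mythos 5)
Your proposal follows essentially the same route as the paper's proof: the same multipliers $\frac{1}{\uh}(\uh-\eps^2\uq^2/\uh^2)\zeta$ and $\frac{1}{\uh}q$ for the exterior energy identity, the same recombination of the boundary fluxes into $\jump{q(\zeta+\underline{\mathfrak G})}$ via $\jump{fg}=\jump{f}\av{g}+\av{f}\jump{g}$ and the transmission conditions, the same pairing with the ODEs tested against $2\ell\av{q_{\rm i}}$ and $2\ell\dot\delta$, and the same treatment of the surviving subprincipal term (integration by parts in time for $\dt(\underline{\mathtt c}_1\zeta)$, control of $\frac{d}{dt}\av{q_{\rm i}}$ and $\ddot\delta$ by $\abs{(\dot q_-,\dot q_+)}$, and the Young balance $\eps\kappa^2 ab\lesssim \eps^{1/2}\kappa a^2+\eps^{3/2}\kappa^3 b^2$). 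Your accounting of where each term on the right of \eqref{ineqstep1} originates, including the anomalous factor $(1+\kappa/\sqrt{\eps})$ in front of $\underline{\mathfrak E}_{\rm int}$, matches the paper's argument.
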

\begin{proof}[Proof of the lemma]
Multiplying the first equation of \eqref{Boussinesq_lin} by $\frac{1}{\underline{h}}(\underline{h}-\eps^2 \frac{\underline{q}^2}{\underline{h}^2})\zeta$ and the second one by $\frac{1}{\underline{h}}q$ and integrating by parts, one obtains after some computations
\begin{equation}\label{nrjloclin}
\frac{d}{dt}\underline{\mfe}_{\rm ext}+\dx \big( q (\zeta+\underline{{\mathfrak G}})\big)=\eps \underline{r}+3\eps \kappa^2 \underline{\mathfrak R}
\end{equation}
with $\underline{{\mathfrak G}}$ as in \eqref{defGlin} and
\begin{align*}
 \underline{{\mathfrak e}}_{\rm ext}&=\frac{1}{2\uh}  \big( \uh-\eps^2 \frac{\uq^2}{\uh^2}\big)  \zeta^2 +\frac{1}{2\uh}q^2 +\frac{\kappa^2}{2\uh}(\dx q)^2,\\
\underline{r}&=-\eps \dx \big( \frac{\uq^2}{\uh^3} \big)\zeta q+ \big[\dx\big( \frac{\uq}{\uh^2} \big) -\frac{1}{2\uh^2}(\dt\underline{\zeta})   \big]q^2+\frac{1}{\uh} f q,\\
\underline{\mathfrak R}&=-\frac{1}{6\uh^2}(\dt \underline{\zeta})(\dx q)^2+\frac{1}{3\uh^2}(\dx\underline{\zeta}) q (\dt\dx q).
\end{align*}
Integrating \eqref{nrjloclin} over $\cE$ and remarking that   $\jump{q \underline{{\mathfrak G}}} =\av{q}\jump{\underline{{\mathfrak G}}} + \jump{q}\av{\underline{{\mathfrak G}}}$, we get from the transmission conditions \eqref{transmlin} that 
$$
\frac{d}{dt} \underline{\mathfrak E}_{\rm ext}- \av{q_{\rm i}}\jump{\zeta+\underline{\mathfrak G}} +2\ell \dot{\delta}\av{\zeta+\underline{\mathfrak G}}=\eps \int_\cE \underline{r}+3\eps\kappa^2 \int_\cE \underline{R}_\mu.
$$
With \eqref{transmlin1} and \eqref{transmlin2}, this yields
\begin{align*}
\frac{d}{dt}\big(  \underline{\mathfrak E}_{\rm ext} +\underline{\mathfrak E}_{\rm int}\big)
=&-2 \eps \ell \big({\bf b}_1(\underline{\mathtt Z})\cdot {\mathtt Z} \av{q_{\rm i}} + {\bf b}_2(\underline{\mathtt Z})\cdot {\mathtt Z} \dot\delta  \big) \\
&-\eps \kappa^2 \av{q_{\rm i}}\jump{   \underline{\mathtt c}_2  \zeta +\dt (\underline{\mathtt c}_1 \zeta) }+2\eps\kappa^2 \ell \dot\delta\av{   \underline{\mathtt c}_2 \zeta +\dt (\underline{\mathtt c}_1\zeta) }\\
&+\eps \int_\cE \underline{r}+3\eps\kappa^2 \int_\cE \underline{\mathfrak R}+2\eps \ell\big( g_1\av{q_{\rm i}}+g_2 \dot\delta \big).
\end{align*}
Decomposing
\begin{align*}
- \av{q_{\rm i}}\jump{  \dt (\underline{\mathtt c}_1 \zeta) }+2 \ell \dot\delta\av{  \dt (\underline{\mathtt c}_1\zeta) }
=& \dt \big( -\av{q_{\rm i}}\jump{  \underline{\mathtt c}_1 \zeta  }+2  \ell \dot\delta\av{  \underline{\mathtt c}_1\zeta }\big)\\
&- \big( -\frac{d}{dt}\av{q_{\rm i}}\jump{  \underline{\mathtt c}_1 \zeta  }+2  \ell \ddot\delta\av{  \underline{\mathtt c}_1\zeta }\big),
\end{align*}
using that $\eps\kappa^2 ab\lesssim \eps^{1/2}\kappa a^2+\eps^{3/2}\kappa^3 b^3$, and remarking that $\abs{\frac{d}{dt}\av{q_{\rm i}}}+\abs{\ddot \delta} \lesssim \abs{(\dot q_-,\dot q_+)}$, one readily gets the result.
\end{proof}
The next step consists in controlling the term $\abs{\kappa^2\dx\dt q}_2$ that appears in \eqref{ineqstep1}. This step is an adaptation of Step 1 in the proof of Theorem \ref{theostab} that does not require any qualitative change.
Rewriting the second equation of \eqref{Boussinesq_lin} as
$$
(1-\kappa^2\dx^2)\dt q+ \dx \underline{{\mathfrak f}}_{\rm sw}= \eps \tilde f,
$$
with 
\begin{align*}
\underline{{\mathfrak f}}_{\rm sw}&=(\underline{h}-\eps^2 \frac{\uq^2}{\uh^2})\zeta+2\eps \frac{\uq}{\uh} q
\quad\mbox{ and }\quad
\tilde f=f+\dx \big((\underline{\zeta}-\eps \frac{\uq^2}{\uh^2})\big) \zeta+2 \dx\big( \frac{\uq}{\uh}\big) q,
\end{align*}
we get as in Step 1 of the proof of Theorem \ref{theostab} that, on $\cE^\pm$,
\begin{equation}\label{dtdxq0}
\dt\dx q=-\dx^2 R_1 \underline{{\mathfrak f}}_{\rm sw}+\eps \dx R_0 \widetilde{f} \mp \dot{q}_\pm\frac{1}{\kappa}\exp\big( - \frac{1}{\kappa}\abs{x\mp \ell} \big).
\end{equation}
Recalling that $\kappa^2\dx^2 R_1$ and $\kappa\dx R_0$ are uniformly bounded operator on $L^2(\cE)$, we deduce that
$$
\abs{\kappa^2\dx\dt q}_2\leq \underline{C} \,\underline{{\mathfrak E}}_{\rm ext}^{1/2}+\abs{f}_2+\kappa^{3/2}\abs{(\dot q_+,\dot q_-)};
$$
with \eqref{ineqstep1}, this yields, 
\begin{align}
\nonumber
\frac{d}{dt}\big[  \underline{\mathfrak E}_{\rm ext} +\underline{\mathfrak E}_{\rm int}
+\eps \kappa^2  \big( \av{q_{\rm i}}\jump{  \underline{\mathtt c}_1 \zeta  }-2  \ell \dot\delta\av{   \underline{\mathtt c}_1\zeta }\big) \big]&
\leq  \eps \big( \abs{f}_2^2+\abs{(g_1,g_2)}^2\big)\\
\label{eqnrjfirst}
 +\eps  \underline{C}\big(  \underline{\mathfrak E}_{\rm ext} +(1+\frac{\kappa}{\eps^{1/2}})\underline{\mathfrak E}_{\rm int}+&\kappa^3 \eps^{1/2}\abs{(\zeta_-,\zeta_+)}^2\big)+\eps^{1/2}{\kappa}\abs{(\dot q_-,\dot q_+)}^2;
\end{align}
%\begin{equation}\label{eqnrjfirst}
%\frac{d}{dt}\big(  \underline{\mathfrak E}_{\rm ext} +\underline{\mathfrak E}_{\rm int}\big)
%\leq  \eps  \underline{C}\big(  \underline{\mathfrak E}_{\rm ext} +\underline{\mathfrak E}_{\rm int}+\abs{f}_2^2+\abs{(g_1,g_2)}^2
%+\mu^2\abs{(\zeta_-,\zeta_+)}^2
%+\kappa^{3}\abs{(\dot q_+,\dot q_-)}^2 \big),
%\end{equation}
this inequality should be compared with \eqref{step1.3} in the proof of Theorem \ref{theostab}. The coefficient   $\eps^{1/2}{\kappa}$ in front of $\abs{(\dot q_-,\dot q_+)}^2$, inherited from \eqref{ineqstep1},  is much larger than the coefficient $\eps\kappa^{3}$ in \eqref{step1.3}; moreover, a control on the traces of $\zeta$ at the boundary is also needed.

\noindent
{\bf Step 2.} Control on  $\abs{(\dot q_+,\dot q_-)}$. We show here that
\begin{equation}\label{estqpm}
\abs{\dot q_+}+\abs{\dot q_-}\leq \underline{C} 
\big( \abs{{\mathtt Z}} +\eps\kappa^2  \abs{(\zeta_+,\zeta_-,\dot\zeta_+,\dot\zeta_-)}+\frac{1}{\kappa^{1/2}}\abs{(\zeta,q)}_2 +\eps\abs{f}_2+\eps\abs{(g_1,g_2)}\big);
\end{equation}
the main difference with \eqref{estQD} in the proof of Theorem \ref{theostab} is the presence in the right-hand side of a term involving the traces $\zeta_\pm$ and their time derivatives, but the strategy of the proof is quite similar.
Recalling that $q_{\rm i}=-x\dot\delta + \av{q_{\rm i}}$, it suffices to prove that $\abs{\ddot\delta}$ and $\abs{\frac{d}{dt}\av{q_{\rm i}}}$ are bounded from above by the right-hand side of \eqref{estqpm}.  Following a procedure similar to the one used to derive  \eqref{estQD}, we get, using the fact that $-\kappa^2\dx^2 R_1= 1-R_1$ in \eqref{dtdxq0} and with the definition \eqref{defGlin} of $\underline{\mathfrak G}$ that, on $\cE^\pm$,
%$$
%-\uh {\mathfrak G}_{\rm lin}- \eps \frac{\uq}{\uh} q=-R_1 {\underline{\mathfrak f}}_{\rm sw}+\eps\kappa^2  \dx R_0 \widetilde{f} \mp \kappa\dot{q}_\pm\exp\big( - \frac{1}{\kappa}\abs{x\mp \ell} \big),
%$$
%and therefore
$$
 \zeta+\underline{\mathfrak G} +\eps\kappa^2 \big( \underline{\mathtt c}_2\zeta+\dt ( \underline{\mathtt c}_1 \zeta)\big)= \underline{\mathfrak H}-\eps\kappa^2  \frac{1}{\uh}\dx R_0 \widetilde{f} \pm \kappa \frac{1}{\uh}\dot{q}_\pm\exp\big( - \frac{1}{\kappa}\abs{x\mp \ell} \big),
$$
with
$$
\underline{\mathfrak H}=- \eps \frac{\uq}{\uh^2}  q + \eps\kappa^2 \big( \underline{\mathtt c}_2\zeta+\dt ( \underline{\mathtt c}_1 \zeta)\big)+
\frac{1}{\uh}R_1\underline{\mathfrak f}_{\rm sw}.
$$
Replacing $\zeta+ \underline{\mathfrak G} +\eps\kappa^2 \big( \underline{\mathtt c}_2\zeta+\dt ( \underline{\mathtt c}_1 \zeta)\big) $ by the above expression in \eqref{transmlin1} and \eqref{transmlin2} yields the following linearized version of \eqref{ODEIBVP},
\begin{align}
\nonumber
\underline{\mathfrak T}_\mu \frac{d}{dt}\left(\begin{array}{c} \av{q_{\rm i} }\\ \dot \delta \end{array}\right)
&+
\left(\begin{array}{c} 0\\ \delta \end{array}\right)
+
\eps \left( \begin{array}{c}
{\mathbf b}_1[\underline{\mathtt Z}]\cdot {\mathtt Z}\\
{\mathbf b}_2[\underline{\mathtt Z}]\cdot {\mathtt Z}
\end{array}\right)\\
\label{ODEIBVPbis}
&=
\left( \begin{array}{c}
-  \frac{1}{2\ell}\jump{\underline{\mathfrak H}   - \eps \kappa^2  \frac{1}{\uh}\dx R_0 \widetilde{f}      }+ \eps g_1 \\
\av{\underline{\mathfrak H}   - \eps \kappa^2  \frac{1}{\uh}\dx R_0 \widetilde{f}   } +\eps g_2
\end{array}\right),
\end{align}
where $\underline{\mathfrak T}_\mu ={\mathfrak T_\mu}(\eps\udelta,\eps \underline{\zeta}_\pm)$, see \eqref{defM}.
From the above definition of $\underline{\mathfrak H}$, one gets with the trace estimate \eqref{traceR1} that
$$
\abs{\underline{\mathfrak H}_\pm}\leq \underline{C} \times \big( \eps \abs{{\mathtt Z}}+\eps \kappa^2\abs{(\zeta_\pm,\dot\zeta_\pm)}+\frac{1}{\kappa^{1/2}}\abs{(\zeta,q)}_2\big).
$$
Inverting the matrix $\underline{\mathfrak T}_\mu$, we therefore get
$$
\vert  \frac{d}{dt} \av{q_{\rm i}}
 \vert+\abs{\ddot \delta}
\leq  \underline{C} 
\big( \abs{{\mathtt Z}} +\eps\kappa^2  \abs{(\zeta_+,\zeta_-,\dot\zeta_+,\dot\zeta_-)}+\frac{1}{\kappa^{1/2}}\abs{(\zeta,q)}_2 +\eps \abs{f}_2+\eps\abs{(g_1,g_2)}\big),
$$
and we thus obtain \eqref{estqpm}.\\
From \eqref{eqnrjfirst}, we therefore get
\begin{align*}
\frac{d}{dt}\big[  \underline{\mathfrak E}_{\rm ext} +\underline{\mathfrak E}_{\rm int}
+&\eps \kappa^2 \big( \av{q_{\rm i}}\jump{  c_1 \zeta  }-2  \ell \dot\delta\av{  c_1\zeta }\big) \big]
\leq  \eps  \underline{C}\Big(  \frac{1}{\eps^{1/2}}\underline{\mathfrak E}_{\rm ext} +(1+\frac{\kappa}{\eps^{1/2}})\underline{\mathfrak E}_{\rm int}\\
&+\abs{f}_2^2+\abs{(g_1,g_2)}^2
+\kappa^{3}\eps^{1/2}\abs{(\zeta_-,\zeta_+)}^2+\eps^{3/2}\kappa^{5}\abs{(\dot \zeta_-,\dot\zeta_+)}^2\Big).
\end{align*}
In order to control the singular $\eps^{-1/2}$ term in front of $\underline{\mathfrak E}_{\rm ext}$, which is due to the subprincipal terms in the linearized transmission conditions, one has to change the $\varepsilon$ in front of the right-hand side into a $\eps^{1/2}$ (this is the reason why the estimate of the theorem is only valid over a $O(\eps^{-1/2})$ time scale), leading to
\begin{align}
\nonumber
\frac{d}{dt}\big[  \underline{\mathfrak E}_{\rm ext} +\underline{\mathfrak E}_{\rm int}
+&\eps \kappa^2  \big( \av{q_{\rm i}}\jump{  c_1 \zeta  }-2  \ell \dot\delta\av{  c_1\zeta }\big) \big]
\leq  \eps^{1/2}  \underline{C}\Big(  \underline{\mathfrak E}_{\rm ext} +\underline{\mathfrak E}_{\rm int}\\
\label{eqnrjsecond}
&+\eps^{1/2}\abs{f}_2^2+\eps^{1/2}\abs{(g_1,g_2)}^2
+\eps\kappa^{3}\abs{(\zeta_-,\zeta_+)}^2+\eps^{2}\kappa^{5}\abs{(\dot \zeta_-,\dot\zeta_+)}^2\Big);
\end{align}
contrary to Step 4 in the proof of Theorem \ref{theostab}, this inequality is not enough to derive an energy estimate; we still need to  find a control on  the trace terms $\eps^{1/2}\kappa^{3/2} \abs{\zeta_\pm}$ and $\eps\kappa^{5/2}\abs{\dot \zeta_\pm} \lesssim \eps^{1/2}\kappa^{5/2}\abs{\dot \zeta_\pm }$ that appear in the right-hand side of \eqref{eqnrjsecond}; such a control is also necessary to absorb the non signed perturbation of the energy that appears in the left-hand side. 

\noindent
{\bf Step 3}. Control on $\eps^{1/2}\kappa^{3/2} \abs{\zeta_\pm}$ and $\eps^{1/2}\kappa^{5/2}\abs{\dot \zeta_\pm }$. Introducing a trace energy as
$$
\underline{{\mathfrak E}}_{\rm trace}:=\frac{1}{2}\big[  \kappa^2(\eps^{1/2}\kappa^{3}\dt \zeta_\pm)^2+(\uh-\eps^2 \frac{\uq^2}{\uh^2})_\pm(\eps^{1/2}\kappa^{3}\zeta_\pm)^2        \big],
$$
we show here that
\begin{equation}\label{estnrjtrace}
\frac{d}{dt}\underline{{\mathfrak E}}_{\rm trace}\leq \eps^{1/2} \underline{C}
\big( \underline{\mathfrak E}_{\rm ext}+ \underline{\mathfrak E}_{\rm int}+ \underline{\mathfrak E}_{\rm trace}+\eps^{1/2}\abs{f}^2_2+\eps^{1/2}\abs{(g_1,g_2)}^2_2 \big).
\end{equation}
Recalling that $\dt^2\zeta=-\dt\dx q$, one gets, evaluating \eqref{dtdxq0} at $x=\pm \ell$, that
$$
\dt^2 \zeta_\pm+\frac{1}{\kappa^2} \big( \uh-\eps^2 \frac{\uq^2}{\uh^2}\big)_\pm\zeta_\pm=-\frac{\eps}{\kappa^2}(2 \frac{\uq}{\uh})_\pm q_\pm+\frac{1}{\kappa^2} (R_1\underline{\mathfrak f}_{\rm sw})_\pm-\eps (\dx R_0 \tilde f)_\pm \pm \frac{1}{\kappa} \dot q_\pm
$$
Since we want a control on $\eps^{1/2}\kappa^{3/2} \zeta_\pm$, we multiply  both sides of the equation by $\eps \kappa^5 \dt \zeta_\pm$, using the trace estimate \eqref{traceR1} and observing that $\abs{q_\pm}\lesssim \abs{\mathtt Z}$, one readily deduces \eqref{estnrjtrace}.

\noindent
{\bf Step 4.} Conclusion. Summing up \eqref{eqnrjsecond} and \eqref{estnrjtrace}, one obtains
\begin{align}
\nonumber
\frac{d}{dt}\big[  \underline{\mathfrak E}_{\rm ext} +\underline{\mathfrak E}_{\rm int}+ \underline{\mathfrak E}_{\rm trace}
&+\eps \kappa^2  \big( \av{q_{\rm i}}\jump{  \underline{\mathtt c}_1 \zeta  }-2  \ell \dot\delta\av{   \underline{\mathtt c}_1\zeta }\big) \big]\\
\label{eqnrjthird}
&\leq  \eps^{1/2}  \underline{C}\big(  \underline{\mathfrak E}_{\rm ext} +\underline{\mathfrak E}_{\rm int}
 +\underline{\mathfrak E}_{\rm trace}
 +\eps^{1/2}\abs{f}_2^2+\eps^{1/2}\abs{(g_1,g_2)}^2\big).
\end{align}
We can now notice that $\eps \kappa^2  \big( \av{q_{\rm i}}\jump{   \underline{\mathtt c}_1 \zeta  }-2  \ell \dot\delta\av{   \underline{\mathtt c}_1\zeta }\big) $ is a lower order term in the sense that 
$$
\eps \kappa^2  \abs{ \av{q_{\rm i}}\jump{  c_1 \zeta  }-2  \ell \dot\delta\av{  c_1\zeta } } 
\leq
\eps^{1/2}\kappa^{1/2} \underline{C}\big( \underline{{\mathfrak E}}_{\rm int}+\underline{{\mathfrak E}}_{\rm trace} \big),
$$
so that it can be absorbed by the sum of the three energies when $\eps\kappa$ is small enough to have $\eps^{1/2}\kappa^{1/2} \underline{C} <1$. For instance, if $\eps^{1/2}\kappa^{1/2} \underline{C} <1/2$, and denoting
$$
\widetilde{\underline{\mfE}}:=\underline{\mathfrak E}_{\rm ext} +\underline{\mathfrak E}_{\rm int}+ \underline{\mathfrak E}_{\rm trace},
$$
one obtains after a Gronwall estimate
$$
\widetilde{\underline{\mfE}}(t)\leq 3 \big[ \widetilde{\underline{\mfE}}(0)+\eps \underline{C}\int_0^t \big( \abs{f}_2^2+\abs{(g_1,g_2)}^2\big) \big]
\exp(\sqrt{\eps}\underline{C}t).
$$
Since moreover there exists a constant $C_0=C_0(\frac{1}{h_{\rm min}},\frac{1}{c_{\rm min}})$ such that
$$
\widetilde{\underline{\mathfrak E}} \leq C_0 \big( \abs{{\mathtt Z}}^2+\abs{(\zeta,q,\kappa\dx q)}_2^2+\eps\kappa^3\abs{(\zeta_-,\zeta_+)  }^2+
\eps\kappa^5\abs{(\dot \zeta_-,\dot\zeta_+)  }^2
$$
and
$$
 \abs{{\mathtt Z}}^2+\abs{(\zeta,q,\kappa\dx q)}_2^2+\eps\kappa^3\abs{(\zeta_-,\zeta_+)  }^2+
\eps\kappa^5\abs{(\dot \zeta_-,\dot\zeta_+)  }^2 \leq C_0 \widetilde{\underline{\mathfrak E}},
$$
one deduces the estimate stated in the theorem.

\noindent
{\bf Step 5.} Well-posedness. By a straightforward adaptation of the proof of Theorem \ref{theoexist}, one can observe that \eqref{Boussinesq_lin}-\eqref{transmlin2}  can be reformulated as an ODE for $(\zeta,q,{\mathtt Z})\in H^1\times H^2(\cE)\times \RR^3$ and prove existence and uniqueness of a solution in this space by Cauchy-Lipschitz's theorem. For data in $(\zeta,q,{\mathtt Z})\in L^2\times H^1(\cE)\times \RR^3$, this strategy does not work directly because the traces $\zeta_\pm$ that appear in the component of the ODE  \eqref{ODEIBVPbis} for $\av{q_{\rm i}}$ and $\dot{\delta}$ cannot be controlled by the $L^2$ norm of $\zeta$. However, the energy estimate just proved provides such a control and one can obtain the result by a classical density argument (as used for instance in the proof of Theorem 3.1.1 in \cite{Metivier2001}, for hyperbolic initial boundary value problems where the control on the trace is furnished by using a Kreiss symmetrizer).
\end{proof}

\section{Return to equilibrium} \label{sectreturn}

We now deal with a specific kind of wave-structure interaction that was called the  return to equilibrium problem in \cite{Lannes_float} and is commonly referred to as "free decay test" in engineering. This a situation where  the solid is released at zero speed from an out of equilibrium position ($\delta (t=0)\neq 0$), in a fluid that is at  rest. The solid then oscillates vertically and its motion sends waves outwards; by this process, the solid loses energy and its oscillations are damped so that the solid asymptotically stabilizes to its equilibrium position. Engineers use this free decay test because by measuring the oscillations of the object, they deduce some buoyancy properties of the object. More precisely, assuming that the motion of the object satisfies the phenomenological Cummins equation \cite{Cummins,Wamit}
\begin{equation}\label{cummins-ing}
M \ddot \delta  + k \ast \dot\delta + a \delta= 0
\end{equation}
with $ M,a \in \mathbb{R}^+$ and $k \in L^1_{\text{loc}}(\mathbb{R}^+)$, they calibrate these coefficients with experimental measurements. These measurements are also used to propose nonlinear extensions to \eqref{cummins-ing} (by fitting coefficients with ad hoc nonlinear terms) \cite{NLCummins}.

Our goal in this section is to study this problem from a mathematical viewpoint, by proposing a qualitative analysis of the solutions to the transmission problem \eqref{Boussinesq_ext2}-\eqref{trans3} in the particular configuration corresponding  to the return to equilibrium problem. This approach is expected to lead in some cases to an equation of the form  \eqref{cummins-ing}, which would provide an analytic description of the coefficients involved, and also to nonlinear extensions  that could be of interest to engineers.

This program was initiated and achieved in \cite{Lannes_float} for the (non dispersive) nonlinear shallow water equations, where it was found that $\delta$ solves a nonlinear second order ODE without integro-differential term. Still working with the shallow water equations but in horizontal dimension $d=2$, assuming radial symmetry and neglecting the nonlinear effects in the exterior region, it was shown in \cite{Bocchibis} that the equation on $\delta$ should contain an integro-differential term. Such a term is also necessary for the nonlinear shallow water equations in dimension $d=1$ if viscosity is taken into account \cite{Tucsnak}. The goal of this section is to investigate the contribution of the dispersive terms of the Boussinesq system to the equation satisfied by $\delta$ in this specific configuration of the return to equilibrium problem.

From now, we  assume that the initial data correspond to the configuration of the return to equilibrium problem, namely,
\begin{equation}\label{CIRE}
q(t=0)=\zeta(t=0) =0
\quad \text{and} \quad
\delta(t=0)= \delta_0, \quad \dot\delta(t=0)=0.
\end{equation}

\medbreak

\begin{notation}\label{notapart4}
We use throughout this section the same notations as in Section \ref{sectWSITP}, namely, we write $\kappa=\sqrt{\mu/3}$ and denote by $\mfsw$ the momentum flux of the nonlinear shallow water equations,
$$
{\mathfrak f}_{\rm sw}=\frac{h^2-1}{2\eps}+\eps \frac{q^2}{h}=\zeta+\eps\big(\frac{1}{2}\zeta^2+\frac{q^2}{h}\big).
$$
We also recall that the buoyancy frequency $\tau_{\rm buoy}$ is defined in Appendix \ref{appND}.
\end{notation}

We introduce in \S \ref{sectgCummins} two Cummins operators that allow us to derive an abstract evolution equation for the solid. We then investigate two specific cases where it is possible to derive an explicit expression of these operators. The non dispersive case ($\eps\neq 0$, $\mu=0$) is considered in \S \ref{sectnondisp} where it is shown that the motion of the object can be found by solving a simple nonlinear second order scalar ODE. Waves can then be described by solving an initial boundary value problem for a scalar Burgers equation. The opposite case, namely, the linear dispersive case ($\eps=0$, $\mu\neq 0$) is addressed in \S \ref{sectlinear}; here again, it is possible to derive an explicit expression for the Cummins operators leading us to an integro-differential Cummins-type equation for the motion of the solid; qualitative properties of the solutions, such as their decay rate are then investigated. Finally, it is shown that the motion of the waves can be found by solving a nonlocal (in space) perturbation of the transport equation.

\subsection{The general Cummins equation}\label{sectgCummins}

Quite obviously, any smooth solution of the transmission problem  \eqref{Boussinesq_ext2}-\eqref{trans3} with initial condition \eqref{CIRE} is such that $\zeta$ is an even function while $q$ is odd -- {\it such solutions will be called ``symmetric"}. This implies that $\av{q_{\rm i}}=0$ and that the transmission problem can be reduced into a simpler  boundary value problem stated in the following direct corollary of Theorem \ref{theotransm}. 
\begin{corollary}\label{corotransm-ret}
Any smooth symmetric solution to the transmission problem \eqref{Boussinesq_ext2}-\eqref{trans3} solves the following boundary value problem on the half-line $(\ell,\infty)$,    
\begin{equation}\label{IBVP_ext2}
\begin{cases}
\partial_t \zeta +\partial_x q =0 
\\
(1-\kappa^2 \partial_x^2) \partial_t q + \dx \mfsw
= 0,
\end{cases}\quad\mbox{ for }\quad t\geq 0,\quad x\in {\mathcal E}^+,
\end{equation}
with boundary condition
\begin{align}
\label{coro1R}
q_{\vert_{x=\ell}}&=-\ell \dot\delta,
\end{align}
where $\delta$ solves the  ODE
\begin{equation}\label{coroODEdeltaR}
{(\tau_\mu(\eps\delta)^2 + \ell \kappa \frac{1}{h_+})}\ddot\delta+\delta =\eps \beta(\eps\delta)\dot\delta^2 
 + {\mathfrak H}_{+},
\end{equation}
\noindent
where $h_+=h_{\vert_{x=\ell}}$, $ {\mathfrak H}_{+}= {\mathfrak H}_{\vert_{x=\ell}}$ and we recall that ${\mathfrak H}={\mathfrak H}(\zeta,q)$ with
$$
{{\mathfrak H}(\zeta,q)=
\frac{1}{2}\eps \big( \frac{1}{h}\zeta^2- \frac{q^2}{h^2}\big) +\frac{1}{h} R_1\mfsw},
$$
and that $\tau_\mu(\eps\delta)$ and $\beta(\eps\delta)$ are defined in Proposition \ref{propdelta}, namely,
\begin{align*}
{{{\tau}_\mu(\eps\delta)^2}}&= \tau_{\rm buoy}^2 +\frac{1}{\ell}\int_{0}^\ell \frac{x^2}{h_{\rm eq}(x)+\eps\delta}{\rm d}x + {\frac{\kappa^2}{h_{\rm eq}(\ell)+\eps\delta}}, \\
\beta(\eps\delta)&= \frac{1}{2} \frac{1}{\ell}\int_{0}^\ell \frac{x^2}{(h_{\rm eq}(x)+\eps\delta)^2}{\rm d}x.
\end{align*} 
\end{corollary}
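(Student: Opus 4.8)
The plan is to start not from Theorem \ref{theotransm} but from its reformulation in Theorem \ref{theoIBVP}, since the latter has already absorbed the dispersive reworking of ${\mathfrak G}$ into the source ${\mathfrak H}$ and into the added-mass matrix ${\mathfrak T}_\mu$; this turns the reduction into an exercise in exploiting symmetry. First I would record the parities of the various quantities: with $\zeta$ even and $q$ odd, the height $h=1+\eps\zeta$ and the flux $\mfsw$ are even, while $q^2$ and $\dx\dt q$ are even as well. The one point that needs a genuine (if short) argument is that the operator $R_1$, the inverse of $(1-\kappa^2\dx^2)$ with Neumann data on the two-component domain $\cE$, maps even functions to even functions; this follows from the invariance of both the domain $\cE$ and the operator under the reflection $x\mapsto -x$, together with uniqueness for the elliptic problem \eqref{R0R1def}. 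Consequently $R_1\mfsw$, and hence ${\mathfrak H}$ as defined in \eqref{defC}, are even.

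Next I would translate these parities into jumps and averages at the contact points. Evenness of $\zeta$, of $1/h$ and of ${\mathfrak H}$ gives $\jump{\,\cdot\,}=0$ and $\av{\,\cdot\,}=(\,\cdot\,)_{\vert_{x=\ell}}$ for each of them, whereas oddness of $q$ gives $\av{q}=0$ and $\jump{q}=2q_{\vert_{x=\ell}}$. The transmission conditions \eqref{trans1reform} then yield $\av{q_{\rm i}}=0$ (hence this quantity and its time derivative vanish identically) and $q_{\vert_{x=\ell}}=-\ell\dot\delta$, which is exactly the boundary condition \eqref{coro1R}. Restricting the Boussinesq-Abbott system of Theorem \ref{theoIBVP} from $\cE$ to the half-line $\cE^+=(\ell,\infty)$ then produces \eqref{IBVP_ext2}, the values on $\cE^-$ being recovered from the solution on $\cE^+$ by reflection.

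It remains to extract the scalar ODE from the coupled system \eqref{ODEIBVP}. Because $\jump{1/h}=0$, the off-diagonal entries of the matrix ${\mathfrak T}_\mu$ in \eqref{defM} vanish, so ${\mathfrak T}_\mu$ is diagonal with lower-right entry $\tau_\mu(\eps\delta)^2+\ell\kappa\av{1/h}=\tau_\mu(\eps\delta)^2+\ell\kappa/h_+$. With $\av{q_{\rm i}}\equiv 0$ the first line of \eqref{ODEIBVP} collapses to the trivial identity $0=0$, while the second line reads $\big(\tau_\mu(\eps\delta)^2+\ell\kappa/h_+\big)\ddot\delta+\delta-\eps\beta(\eps\delta)\dot\delta^2=\av{{\mathfrak H}}={\mathfrak H}_+$, which is precisely \eqref{coroODEdeltaR}. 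The stated forms of $\tau_\mu$ and $\beta$ then follow from the evenness of $h_{\rm eq}$: each integrand is even, so the integrals over $(-\ell,\ell)$ halve to integrals over $(0,\ell)$, and the dispersive term $\frac{\mu}{3}\av{1/(h_{\rm eq}+\eps\delta)}$ collapses to $\kappa^2/(h_{\rm eq}(\ell)+\eps\delta)$.

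I would flag the evenness of $R_1\mfsw$ as the only step beyond bookkeeping; once it is in place, the vanishing of $\jump{{\mathfrak H}}$ and the diagonalization of ${\mathfrak T}_\mu$---the two facts that decouple the displacement equation from the $\av{q_{\rm i}}$ equation---are immediate. If instead one argued directly from Theorem \ref{theotransm}, the same conclusion would require re-deriving, through the operators $R_0,R_1$ and the dispersive boundary layer $e^{-\abs{x-\ell}/\kappa}$, the identity $(\zeta+{\mathfrak G})_{\vert_{x=\ell}}={\mathfrak H}_+-\ell\kappa\ddot\delta/h_+$; this is exactly what produces the extra added-mass coefficient $\ell\kappa/h_+$, and starting from Theorem \ref{theoIBVP} is what lets us avoid repeating that computation.
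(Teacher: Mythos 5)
Your proof is correct and follows essentially the route the paper intends: the corollary is stated as an immediate consequence of the symmetry ($\zeta$ even, $q$ odd, forcing $\av{q_{\rm i}}=0$ and $q_{\vert_{x=\ell}}=-\ell\dot\delta$), and the ODE \eqref{coroODEdeltaR} is exactly the second line of \eqref{ODEIBVP} once the jumps of the even quantities $1/h$ and ${\mathfrak H}$ vanish and the averages reduce to traces at $x=\ell$. Your identification of the evenness of $R_1\mfsw$ (hence of ${\mathfrak H}$), justified by the reflection invariance of the Neumann problem on $\cE$, as the one step beyond bookkeeping is precisely the detail the paper leaves implicit, and your check that this diagonalizes ${\mathfrak T}_\mu$ and trivializes the $\av{q_{\rm i}}$ equation is accurate.
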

We know by Proposition \ref{existence-presc} that if $f$ is a given $C^1$ function of time then there is a unique solution $(\zeta,q)$ to \eqref{IBVP_ext2} with boundary condition $q_{\vert_{x=\ell}}=-\ell f$ with initial condition corresponding to the return to equilibrium problem, namely, $(\zeta,q)(t=0)=(0,0)$. It is in particular possible to compute the trace of $\zeta$ at $x=\ell$, so that the following definition makes sense.
\begin{definition}[Cummins operators]\label{def-Cummins-operator}
Let $\eps\in \mathbb{R}^+$, $\mu=\kappa^2/3 >0$. Let also $f\in C^1(\RR^+)$ and $T>0$, and  $(\zeta,q )\in C^1\big([0,T;H^1(\cE^+)\times H^2(\cE^+) \big)$ be a solution to \eqref{IBVP_ext2} with boundary condition $q_{\vert_{x=\ell}}=-\ell f$ and initial condition $(\zeta,q)(t=0)=(0,0)$.
We define the {\it Cummins operators} ${\mathfrak c}_{\eps,\mu}$  and ${\mathfrak C}_{\eps,\mu}$ as
$$
{\mathfrak c}_{\eps,\mu}[f]:= \zeta_{\vert_{x=\ell}}
\quad\mbox{ and }\quad
 {\mathfrak C}_{\eps,\mu}[f]:= {{-}} {{\mathfrak H}(\zeta,q)_{\vert_{x=\ell}}}.
$$
\end{definition}
\begin{remark}
The Cummins operators can be defined for more general cases, for instance, the solution $(\zeta,q)$ to the initial boundary value problem needs only to be regular near the boundary $x=\ell$ (regular enough for the trace to make sense). This allows one to extend the definition of the Cummins operators in the case $\mu=0$, as done in \S \ref{sectnondisp} below.
\end{remark}

\begin{corollary}
%The boundary value problem \eqref{IBVP_ext2}-\eqref{coroODEdeltaR} with initial conditions \eqref{CIRE} can be reformulated in a compact form as the Cummins equation
The ODE \eqref{coroODEdeltaR} can be reformulated in a compact form as what we shall refer to as the Cummins equation
\begin{equation}
\label{eqCummins}
{{\big(\tau_\mu(\eps\delta)^2+ \ell \kappa \frac{1}{1+\eps {\mathfrak c}_{\eps,\mu}[\dot\delta]}\big)}}\ddot\delta+\delta {{{+ \mathfrak C}_{\eps,\mu}[\dot\delta]}}=\eps \beta(\eps\delta)\dot\delta^2 ,
\end{equation}
with initial conditions $\delta(0)=\delta_0$ and $\dot\delta(0)=0$. 
\end{corollary}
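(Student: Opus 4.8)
The plan is to show that \eqref{eqCummins} is nothing more than a rewriting of \eqref{coroODEdeltaR} once the two boundary traces appearing in the latter are recognized as the images of $\dot\delta$ under the two Cummins operators of Definition \ref{def-Cummins-operator}. The entire content is thus an identification of terms, the only nontrivial ingredient being the uniqueness statement of Proposition \ref{existence-presc}.

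First I would fix a smooth symmetric solution $\delta$ of \eqref{coroODEdeltaR} together with its associated exterior field $(\zeta,q)$ solving \eqref{IBVP_ext2} with the boundary condition \eqref{coro1R}, namely $q_{\vert_{x=\ell}}=-\ell\dot\delta$, and the return-to-equilibrium data \eqref{CIRE}, i.e. $(\zeta,q)_{\vert_{t=0}}=(0,0)$. Since $\delta$ is smooth, $f:=\dot\delta$ is a $C^1$ function of time, so Definition \ref{def-Cummins-operator} applies with this choice of $f$: the operators ${\mathfrak c}_{\eps,\mu}$ and ${\mathfrak C}_{\eps,\mu}$ are built from the solution of \eqref{IBVP_ext2} driven by the boundary datum $q_{\vert_{x=\ell}}=-\ell f$ and starting from $(0,0)$. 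The key observation is that this defining problem is exactly the one solved by $(\zeta,q)$; by the uniqueness part of Proposition \ref{existence-presc} the two solutions coincide, so that ${\mathfrak c}_{\eps,\mu}[\dot\delta]=\zeta_{\vert_{x=\ell}}$ and ${\mathfrak C}_{\eps,\mu}[\dot\delta]=-{\mathfrak H}(\zeta,q)_{\vert_{x=\ell}}=-{\mathfrak H}_+$.

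It then remains to substitute. Recalling that $h=1+\eps\zeta$ in the exterior region, the trace appearing in the dispersive coefficient of \eqref{coroODEdeltaR} is $h_+=1+\eps\zeta_{\vert_{x=\ell}}=1+\eps{\mathfrak c}_{\eps,\mu}[\dot\delta]$, while the source term is ${\mathfrak H}_+=-{\mathfrak C}_{\eps,\mu}[\dot\delta]$. Inserting these two identities into \eqref{coroODEdeltaR} and moving the ${\mathfrak H}_+$ term to the left-hand side turns it precisely into \eqref{eqCummins}; the converse implication is obtained by reading the same identities backwards. The initial conditions $\delta(0)=\delta_0$, $\dot\delta(0)=0$ are inherited directly from \eqref{CIRE}.

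If anything deserves care it is the apparently circular structure of the statement: the operators ${\mathfrak c}_{\eps,\mu}$ and ${\mathfrak C}_{\eps,\mu}$ are applied to $\dot\delta$, which is itself part of the unknown of \eqref{eqCummins}. I would emphasize that there is in fact no circularity, because for a given $\delta$ the exterior field $(\zeta,q)$ is determined by the boundary datum $-\ell\dot\delta$ alone through the well-posed forced problem of Proposition \ref{existence-presc}; the Cummins operators are therefore genuine causal functionals of the single scalar history $\dot\delta$, and \eqref{eqCummins} is a bona fide closed scalar equation for $\delta$. This is the sole conceptual point; the remainder is the bookkeeping described above.
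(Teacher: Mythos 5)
Your proof is correct and follows the same route the paper intends: the corollary is an immediate substitution, since by Definition \ref{def-Cummins-operator} and the uniqueness in Proposition \ref{existence-presc} one has $h_+=1+\eps\,{\mathfrak c}_{\eps,\mu}[\dot\delta]$ and ${\mathfrak H}_+=-{\mathfrak C}_{\eps,\mu}[\dot\delta]$, which turns \eqref{coroODEdeltaR} into \eqref{eqCummins}. Your explicit remark that the Cummins operators are well-defined causal functionals of the history of $\dot\delta$ alone (so the equation is genuinely closed) is exactly the right point to flag and is consistent with the paper's subsequent discussion.
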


The equation \eqref{eqCummins} is compact but not simple since the Cummins operators are { nonlinear nonlocal operators which require the resolution of the equations for the fluid in the exterior domain. In order to get some qualitative insight on the Cummins equation, we describe it in two limiting cases: in the nonlinear non dispersive case ($\eps> 0$, not necessarily small, and $\mu=0$), and in the linear, dispersive case ($\eps=0$ and $\mu>0$, not necessarily small). Note that in both cases, it is not necessary to compute the first Cummins operator ${\mathfrak c}_{\eps,\mu}[\dot\delta]$ and that it is possible to provide an explicit expression of the second one ${\mathfrak C}_{\eps,\mu}[\dot\delta]$.

%{\color{red}{More precisely, in the two first subsections, we will show that the Cummins operator ${\mathfrak C}_{\eps,\mu}$ can be computing without needing to solve the Boussinesq's equation on the exterior domain. On one hand, in the non linear shallow water case ($\mu=0$), ${\mathfrak C}_{\eps,\mu}$ is an algebraic function. On the other hand,  in the  dispersive linear wave case, ($\eps=0$), ${\mathfrak C}_{\eps,\mu}$ is a pseudo-differential operator. In this two cases, the expressions of ${\mathfrak C}_{\eps,\mu}$ are exact and we don't need to consider $\eps$ or $\mu$ small. Note that the energy is conserved and we have in consequence uniform bounds for $\delta$ and $\dot\delta$ and the thus the global existence. In the two last subsections, we will consider two BBM approximations. Unfortunately, in the first BBM approximation, we cannot express ${\mathfrak C}_{\eps,\mu}$ without solving one equation in the exterior domain. To compensate, we can prove that for initial conditions, the solution exists globally in time (even if we don't have energy conservation). In the second BBM approximation we can write ${\mathfrak C}_{\eps,\mu}[\dot\delta]$ as a sum of an second order polynom of $\dot\delta$ and a pseudo-differential operator of $\dot\delta$.}}

\subsection{The nonlinear non dispersive case}\label{sectnondisp}

Neglecting the dispersive effects is equivalent to setting $\mu=\kappa^2/3=0$ in the equations \eqref{IBVP_ext2}-\eqref{eqCummins} ; in particular, the model considered for the propagation of the waves is now the shallow water equations
\begin{equation}\label{SW}
\begin{cases}
\partial_t \zeta +\partial_x q =0 
\\
 \partial_t q + \varepsilon \partial_x \left( \frac{1}{h} q^2 \right) + h \partial_x \zeta
= 0,
\end{cases}\quad\mbox{ for }\quad t\geq 0,\quad x\in {\mathcal E}^+,
\end{equation}
the boundary condition is unchanged
\begin{align}
\label{CBSW}
q_{\vert_{x=\ell}}&=-\ell \dot\delta,
\end{align}
and the  ODE solved by $\delta$ is simplified into
\begin{equation}\label{ODESW}
{{\tau_0(\eps\delta)^2}}\ddot\delta+\delta{{{+ \mathfrak C}_{\eps,0}[\dot\delta]}} =-\eps  \tau_0(\eps\delta) \tau_0'(\eps\delta)\dot\delta^2,
\end{equation}
where we used the fact that $\beta(\eps\delta)=-2  \tau_0(\eps\delta)\tau_0'(\eps\delta)$ when $\mu=0$ (see \eqref{defmm} and \eqref{defbeta}), and where the definition of the second Cummins operator has been extended to the case $\mu=0$ as
$$
 {\mathfrak C}_{\eps,0}[\dot\delta]:={{-}}\Big(\zeta+\eps\frac{1}{2}\frac{q^2}{h^2}\Big)_{\vert_{x=\ell}};
 $$
 the fact that this definition makes sense follows from the decomposition of the shallow water invariants into Riemann invariants, as shown in the proof of the following theorem where an explicit expression of the Cummins operator is provided. This theorem is a reformulation of Corollary 1 in \cite{Lannes_float}, but with a slight difference in the function $\gamma$, so that we reproduce a sketch of the proof\footnote{The difference comes from the fact that in \cite{Lannes_float}, the choice of the boundary condition for the interior pressure was made by assuming that the jump of pressure at the contact point was purely hydrostatic; as in \cite{Tucsnak,Bocchi}, we rather use here a choice of the boundary condition on the pressure which is consistent with the approach used throughout this paper and motivated by the conservation of total energy, as explained in Corollary \ref{coroBC}. With the choice of \cite{Lannes_float}, one would have ${\mathfrak C}_{\eps,0}[\dot \delta]=-\zeta_{\vert_{x=\ell}}$ and consequently $-\ell \dot\delta - \eps\dot\delta^2 \gamma(\eps\dot \delta)=\frac{1}{\eps}(\sigma_0(\eps\frac{\ell}{2}\dot\delta)^2-1)    $.}.
 \begin{theorem}\label{theoCumminsSW}
Let $T>0$, $\delta\in C^2([0,T])$ and $(\zeta,q)$ be a continuous, piecewise $C^1$ solution of \eqref{SW}-\eqref{ODESW} on $[0,T]\times(\ell,\infty)$ satisfying the non vanishing depth condition
$$
\inf_{[0,T]\times \cE} h>0\quad\mbox{ and }\quad \inf_{[0,T]\times {\mathcal I}} h_{\rm eq}+\eps \delta>0.
$$
If moreover $\ell \, \eps \dot \delta < 2 r_0$, with $r_0:=\frac{4}{27}$, we have $\sqrt{h}= \sigma_0(\eps\frac{\ell}{2}\dot\delta)$ with the real function
$$
\sigma_0(r)=\frac{1}{3}\Big( 1+C_-(r)+C_+(r) \Big), \quad
C_\pm(r)=\frac{3}{2}\Big( -4 r + 2r_0 \pm 4 \sqrt{r(r-r_0)}\Big)^{1/3},
$$
and the Cummins operator ${\mathfrak C}_{\eps,0}$ is given explicitly by
\begin{equation}\label{CumminsopSW}
 {\mathfrak C}_{\eps,0}[\dot\delta]= - \eps^{-1} \Big(\sigma_0(\eps \frac{\ell}{2} \dot\delta) -1\Big) \Big(3 \sigma_0(\eps \frac{\ell}{2} \dot\delta) -1\Big) 
 =:\ell \dot\delta {{+}} \eps\dot\delta^2 \gamma(\eps\dot \delta),
\end{equation}
where $\gamma: (-\infty, 2 r_0)\to {\mathbb R}$ is a smooth function such that $\gamma(0)=\frac{1}{4}\ell^2$ and whose exact expression is given in \eqref{defnu} below.
%In particular, the Cummins equation becomes the simple second order nonlinear ODE
%\begin{equation}\label{ODESW2}
%{\color{red}{{\mathfrak m}_0(\eps\delta)}}\ddot\delta+\ell \dot\delta+\delta +\eps {\color{red}{\left( \frac{{\mathfrak m}'_0(\eps\delta)}{2}
% + \gamma (\eps\dot\delta) \right)}} \dot\delta^2 =0,
%\end{equation}
%and the motion of the floating object is found by solving it with initial conditions $\delta(0)=\delta_0$ and $\dot\delta(0)=0$.
\end{theorem}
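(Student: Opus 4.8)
The plan is to exploit the fact that, in the return to equilibrium configuration, the symmetric solution of the shallow water system \eqref{SW}--\eqref{CBSW} on the half-line $(\ell,\infty)$ is a \emph{simple wave} generated at the boundary and propagating into a fluid at rest; all the boundary traces can then be computed by pure algebra from a single Riemann invariant. This is exactly the mechanism behind Corollary 1 of \cite{Lannes_float}, which I would adapt to the present energy-consistent choice of boundary condition (the only source of the discrepancy in $\gamma$ flagged in the footnote). First I would diagonalise \eqref{SW}: writing $h=1+\eps\zeta$ and $v=q/h$, the system is strictly hyperbolic with characteristic speeds $\eps v\pm\sqrt h$ and Riemann invariants
\[
R_\pm=v\pm\frac{2}{\eps}\big(\sqrt h-1\big),
\]
with $R_\pm$ constant along the characteristics of speed $\eps v\pm\sqrt h$; the rest-state initial data \eqref{CIRE} give $R_\pm\equiv 0$ at $t=0$.

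The key structural step is the constancy of the \emph{incoming} invariant $R_-$. In the subcritical regime guaranteed by the non vanishing depth condition, the $-$ characteristics travel at negative speed $\eps v-\sqrt h$; traced backward from any point of $(\ell,\infty)\times(0,T)$ they move rightward, stay in the domain, and reach $t=0$ at some $x_0>\ell$ where the fluid is at rest. Hence $R_-\equiv 0$ throughout, which yields the pointwise relation $v=\tfrac{2}{\eps}(\sqrt h-1)$ on $(\ell,\infty)$, and in particular at $x=\ell$. This is the only genuinely PDE-theoretic input, and it is where I expect the main (though classical) obstacle to lie: for a merely continuous, piecewise $C^1$ solution one must verify that no incoming shock reaches the boundary, which is precisely what the smallness condition $\ell\,\eps\dot\delta<2r_0$ secures.

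Evaluating $v=\tfrac2\eps(\sqrt h-1)$ at $x=\ell$ and inserting the boundary condition \eqref{CBSW}, namely $v_{\vert x=\ell}=-\ell\dot\delta/h_+$, gives with $s=\sqrt{h_+}$ and $r=\eps\tfrac{\ell}{2}\dot\delta$ the cubic
\[
s^3-s^2+r=0 .
\]
I would solve it by Cardano: the shift $s=y+\tfrac13$ produces the depressed cubic $y^3-\tfrac13 y+(r-\tfrac{2}{27})=0$, whose discriminant simplifies to $\tfrac{r}{4}(r-r_0)$ with $r_0=\tfrac{4}{27}$, recovering exactly the $\sqrt{r(r-r_0)}$ in $C_\pm$. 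The branch satisfying $s=1$ at $r=0$ (i.e.\ $h_+=1$ for the solid at rest) is $s=\sigma_0(r)$; the constraint $r<r_0$ keeps it on the smooth sheet, away from the fold at $s=\tfrac23$ (where $r=r_0$ and $ds/dr$ blows up). This establishes $\sqrt h=\sigma_0(\eps\tfrac{\ell}{2}\dot\delta)$ at the boundary.

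For the Cummins operator I would simply substitute these traces. Using $\zeta_+=(\sigma_0^2-1)/\eps$ and $(q^2/h^2)_{\vert x=\ell}=v_+^2=\tfrac{4}{\eps^2}(\sigma_0-1)^2$, the factorisation $(\sigma_0^2-1)+2(\sigma_0-1)^2=(\sigma_0-1)(3\sigma_0-1)$ gives
\[
\mathfrak C_{\eps,0}[\dot\delta]=-\Big(\zeta+\tfrac\eps2\tfrac{q^2}{h^2}\Big)_{\vert x=\ell}=-\frac1\eps(\sigma_0-1)(3\sigma_0-1),
\]
which is \eqref{CumminsopSW}. Finally, to extract $\gamma$ I would expand the cubic's solution as $\sigma_0(r)=1-r-2r^2+O(r^3)$, whence $(\sigma_0-1)(3\sigma_0-1)=-2r-r^2+O(r^3)$ and, since $\tfrac{2r}{\eps}=\ell\dot\delta$,
\[
\mathfrak C_{\eps,0}[\dot\delta]=\ell\dot\delta+\eps\tfrac{\ell^2}{4}\dot\delta^2+O(\eps^2\dot\delta^3).
\]
Defining $\gamma$ through $\eps\dot\delta^2\gamma(\eps\dot\delta):=\mathfrak C_{\eps,0}[\dot\delta]-\ell\dot\delta$ (the explicit closed form being \eqref{defnu}), its smoothness on the stated interval follows from that of the branch $\sigma_0$ away from the fold, and reading off the quadratic coefficient yields $\gamma(0)=\tfrac14\ell^2$.
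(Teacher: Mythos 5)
Your proposal is correct and follows essentially the same route as the paper: diagonalize the shallow water system, observe that the left-going Riemann invariant vanishes identically on $(\ell,\infty)$ because of the rest-state initial data, solve the resulting cubic $s^3-s^2+r=0$ at the boundary for the branch with $\sigma_0(0)=1$, and substitute the traces $\zeta_+=(\sigma_0^2-1)/\eps$ and $v_+^2=\tfrac{4}{\eps^2}(\sigma_0-1)^2$ into the Cummins operator to get the factorization $(\sigma_0-1)(3\sigma_0-1)$ and the expansion yielding $\gamma(0)=\tfrac14\ell^2$. The only difference is that you spell out the backward-characteristic argument and the Cardano computation, which the paper delegates to Corollary 1 of \cite{Lannes_float}.
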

\begin{proof}
The proof of Corollary 1 of \cite{Lannes_float} is based on the fact that the  shallow water equations can be put in diagonal form,
$$
\dt R+ (\sqrt{h}+\eps \frac{q}{h})\dx R=0 \quad \mbox{ and }\quad \dt L- (\sqrt{h}-\eps \frac{q}{h})\dx L=0 ,
$$
 where $R$ and $L$ are respectively the right and left Riemann invariants
$$
R= \frac{q}{h}+\frac{2}{\eps}(\sqrt{h}-1)\quad\mbox{ and }\quad L= \frac{q}{h}-\frac{2}{\eps}(\sqrt{h}-1).
$$
One then notices that with the initial and boundary conditions considered here, $L$ vanishes identically on $(\ell, \infty)$, which allows one to find $\sqrt{h}$ in terms of $q$ as a root of the third order polynomial equation in $\sigma$,
$$
\sigma^3-\sigma^2 - \eps \frac{1}{2} q=0.
$$
If $-\eps \frac{1}{2} q<r_0$, then, as discussed in \cite{Lannes_float}, the relevant root is $\sigma_0(-\eps \frac{1}{2} q)$. \\
%, where the function $\tau_0$ is given by
%$$
%\tau_0(r)=\frac{1}{3}\Big( 1+C_-(r)+C_+(r) \Big), \quad
%C_\pm(r)=\frac{3}{2}\Big( -4 r + 2r_0 \pm 4 \sqrt{r(r-r_0)}\Big)^{1/3}
%\quad \text{where} \quad r_0= \frac{4}{27}.
%$$
Recalling that $q_{\vert_{x=\ell}}=-\ell \dot \delta$, we have $\sqrt{h}=\sigma_0(\eps \frac{\ell}{2} \dot\delta)$. Moreover, $L=0$ implies
$$
\eps\frac{1}{2}\frac{q^2}{h^2}_{\vert_{x=\ell}}= 2 \, \eps^{-1} \Big( \sigma_0(\eps \frac{\ell}{2} \dot\delta) -1 \Big)^2.
$$
Remarking that $\zeta_{\vert_{x=\ell}}=\frac{1}{\eps}(\sigma_0(\eps \frac{\ell}{2}\dot\delta)^2-1)$, one gets
\begin{align}
\nonumber
 \Big(\zeta+\eps\frac{1}{2}\frac{q^2}{h^2}\Big)_{\vert_{x=\ell}} &=  \eps^{-1} (\sigma_0 -1) (3 \sigma_0 -1)\\
 \label{defnu}
 &=: -\ell \dot\delta {{-}} \eps\dot\delta^2 \gamma(\eps\dot \delta),
\end{align}
%We also have with $\zeta_{\vert_{x=\ell}}=\frac{1}{\eps}(\tau_0(\eps \frac{\ell}{2}\dot\delta)^2-1)$ the following expression
%\begin{align}
%\nonumber
% \Big(\zeta+\eps\frac{1}{2}\frac{q^2}{h^2}\Big)_{\vert_{x=\ell}}&=\frac{1}{\eps}(\tau_0(\eps\frac{\ell}{2}\dot\delta)^2-1)+\eps \frac{1}{2}\ell^2 \frac{\dot\delta^2}{\tau_0(\eps \frac{\ell}{2}\dot\delta)^4}\\
% \label{defnu}
% &=: -\ell \dot\delta {\color{red}{-}} \eps\dot\delta^2 \gamma(\eps\dot \delta),
%\end{align}
where we used the fact that $\sigma_0(0)=-\sigma_0'(0)=1$. The fact that $\gamma(0)=\frac{1}{4}\ell^2$ follows from the observation that $\sigma_0''(0)=-4$.
\end{proof}

A first corollary is that the motion of the solid can be reduced to a simple nonlinear ODE, provided that the initial displacement satisfies an upper bound ensuring that the velocity of the object does not become too big.
\begin{corollary}\label{corocoro}
Under the assumptions of the theorem, and with the same notations, let us assume moreover that
$$
 \eps^2 \delta_0^2 <  {\tau_0(\eps \abs{\delta_0})^2}  \big(\frac{2r_0}{\ell}\big)^2.
$$
Then, using the notations of the theorem, the motion of the solid is found by solving the nonlinear second order ODE 
\begin{equation}\label{ODESW2bis}
{{\tau_0(\eps\delta)^2}}\ddot\delta+\ell \dot\delta+\delta +\eps {{\left( \tau_0(\eps\delta)\tau'_0(\eps\delta)
 + \gamma (\eps\dot\delta) \right)}} \dot\delta^2 =0,
\end{equation}
with initial condition $\delta(0)=\delta_0$, $\dot\delta(0)=0$.
\end{corollary}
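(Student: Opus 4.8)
The reduction to \eqref{ODESW2bis} is, once one a priori bound is secured, purely algebraic: substituting the explicit value of the Cummins operator furnished by \eqref{CumminsopSW} in Theorem \ref{theoCumminsSW}, namely $\mathfrak{C}_{\eps,0}[\dot\delta]=\ell\dot\delta+\eps\dot\delta^2\gamma(\eps\dot\delta)$, into the ODE \eqref{ODESW} and collecting the $\dot\delta^2$ terms yields exactly \eqref{ODESW2bis}. The only point requiring work is that the hypothesis $\ell\eps\dot\delta<2r_0$, under which Theorem \ref{theoCumminsSW} provides this explicit expression, is propagated for all times by the flow; this is precisely where the smallness assumption on $\delta_0$ enters, and I would establish it through an energy estimate combined with a continuation argument.

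The plan is to control $\dot\delta$ by the conserved energy. In the non dispersive case $\kappa=0$, Proposition \ref{prior-conservation-case} gives the exact conservation $\frac{d}{dt}\big[\mathfrak{E}_{\rm ext}+\mathfrak{E}_{\rm int}\big]=0$, since both the right-hand side and the $\dot\delta^3$ term there carry a factor $\kappa^2$. For the symmetric configuration \eqref{CIRE} one has $\av{q_{\rm i}}\equiv 0$, so that $\mathfrak{E}_{\rm int}=\ell\big(\delta^2+\tau_0(\eps\delta)^2\dot\delta^2\big)$ by \eqref{defEint}, while $\mathfrak{E}_{\rm ext}=\frac12\int_\cE\big(\zeta^2+\frac1h q^2\big)\ge 0$ by \eqref{defEext}. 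Evaluating at $t=0$, where the fluid is at rest and $\dot\delta(0)=0$, gives $\mathfrak{E}_{\rm ext}(0)=0$ and $\mathfrak{E}_{\rm int}(0)=\ell\delta_0^2$, whence for all times
$$
\delta(t)^2+\tau_0(\eps\delta(t))^2\dot\delta(t)^2\le \delta_0^2 .
$$

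From this single inequality I would extract the threshold bound in two steps. First, $\delta(t)^2\le\delta_0^2$, i.e. $\vert\delta(t)\vert\le\vert\delta_0\vert$. Second, I would invoke the monotonicity of $\tau_0$: since $\tau_0(\eps\delta)^2=\tau_{\rm buoy}^2+\frac1\ell\int_0^\ell\frac{x^2}{h_{\rm eq}(x)+\eps\delta}\,dx$ is a decreasing function of $\delta$ on the region of positive water height, the bound $\delta\le\vert\delta_0\vert$ forces $\tau_0(\eps\delta)^2\ge\tau_0(\eps\vert\delta_0\vert)^2$. Combining the two gives $\dot\delta^2\le\delta_0^2/\tau_0(\eps\vert\delta_0\vert)^2$, so that $\ell^2\eps^2\dot\delta^2\le \ell^2\eps^2\delta_0^2/\tau_0(\eps\vert\delta_0\vert)^2<4r_0^2$ precisely under the standing assumption $\eps^2\delta_0^2<\tau_0(\eps\vert\delta_0\vert)^2(2r_0/\ell)^2$; hence $\ell\eps\vert\dot\delta\vert<2r_0$.

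To make this rigorous and avoid circularity (the energy identity presupposes a solution, whose existence presupposes $\ell\eps\dot\delta<2r_0$), I would run a standard continuation argument. At $t=0$ one has $\dot\delta=0$, so the strict inequality holds and Theorem \ref{theoCumminsSW} applies; let $T^\star$ be the supremum of times up to which $\ell\eps\dot\delta<2r_0$ holds and the solution of \eqref{ODESW2bis} exists with positive water height. On $[0,T^\star)$ the energy bound above holds and yields the uniform estimate $\ell\eps\vert\dot\delta\vert\le \ell\eps\vert\delta_0\vert/\tau_0(\eps\vert\delta_0\vert)<2r_0$, so $\dot\delta$ stays in a compact subset of the region where the Cummins operator is given by the smooth formula \eqref{CumminsopSW}; since moreover $\vert\delta\vert\le\vert\delta_0\vert$ keeps the depth bounded away from zero (as assumed in the theorem), the right-hand side of \eqref{ODESW2bis} is smooth and the solution can be continued past $T^\star$, contradicting maximality. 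The main obstacle is therefore not any isolated computation but the careful bookkeeping of this bootstrap, namely closing the loop between ``the explicit ODE \eqref{ODESW2bis} holds'' and ``the a priori energy bound keeps $\dot\delta$ strictly below the threshold $2r_0/(\ell\eps)$''. Once this is secured, the algebraic substitution described at the outset delivers \eqref{ODESW2bis} on the whole interval of existence.
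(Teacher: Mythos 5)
Your proposal is correct and follows essentially the same route as the paper: substitute the explicit Cummins operator \eqref{CumminsopSW} into \eqref{ODESW}, then propagate the threshold condition $\ell\eps\dot\delta<2r_0$ for all times by combining the energy decay estimate $\tau_0(\eps\delta)^2\dot\delta^2+\delta^2\le\delta_0^2$ (obtained from Proposition \ref{prior-conservation-case} with $\mu=0$) with $\abs{\delta}\le\abs{\delta_0}$ and the monotonicity of $\tau_0$. The continuation argument you spell out to avoid circularity is left implicit in the paper's proof of the corollary (and is carried out more carefully at the level of the ODE itself in Proposition \ref{propexistODE}), so your write-up is if anything slightly more complete.
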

\begin{remark}\label{remJohn}
In the linear case ($\eps=0$), this equation is almost the same as (3.2.12)  in \cite{John1}, the only difference being that the author neglected the buoyancy frequency $\tau_{\rm buoy}$ in the expression for $\tau_0(0)$.
\end{remark}
\begin{proof}
One just needs to check that the condition $\ell\eps\dot\delta<2r_0$, which ensures by Theorem \ref{theoCumminsSW} that the Cummins operator takes the form \eqref{CumminsopSW}, is satisfied for all times. Since at $t=0$, one has $\dot\delta=0$, we now that this condition is satisfied for small times. Since moreover one can deduce from Proposition \ref{prior-conservation-case} (by setting $\mu=0$) that
\begin{equation}\label{decaynrj}
\tau_0(\eps\delta)^2\dot\delta^2+\delta^2 \leq \delta_0^2,
\end{equation}
one deduces that $\abs{\delta}\leq\abs{\delta_0}$ and therefore that $\dot\delta^2\leq \tau_0(\eps\abs{\delta_0})^{-2}\delta_0^2$. The assumption made in the statement of the corollary therefore grants the result.
\end{proof}

The interest of reducing the motion of the solid to an ODE on  the surface displacement is that it is possible to solve it even in situations when singularity arise in the exterior domain (typically, when shock happen). It is in particular possible to obtain a global existence result for the ODE \eqref{ODESW2bis}, while such a result cannot be expected for strong solutions to the full transmission problem \eqref{SW}-\eqref{ODESW} due to shock formation. Note that the first condition on $\delta_0$ means that at $t=0$, the solid neither touches the bottom nor is lifted from a height greater than the height of the water column under the object when it is at equilibrium.
\begin{proposition}\label{propexistODE}
Let $\delta_0\in \RR$ be such that
$$
\inf_{{\mathcal I}}h_{\rm eq}-\eps \abs{\delta_0}>0
\quad\mbox{ and }\quad  \eps \abs{\delta_0} < \tau_0(\eps \abs{\delta_0})  \frac{2r_0}{\ell}.
$$
Then there exists a unique global solution  $\delta\in C^\infty(\RR^+)$ to the ODE \eqref{ODESW2bis} with initial condition $(\delta,\dot\delta)_{\vert_{t=0}}=(\delta_0,0)$. 
\end{proposition}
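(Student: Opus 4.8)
The plan is to recast \eqref{ODESW2bis} as a first-order autonomous system and combine a local Cauchy--Lipschitz argument with an a priori energy estimate that keeps the trajectory inside the region where the vector field is smooth. Setting $v=\dot\delta$, the equation reads $\frac{d}{dt}(\delta,v)=F(\delta,v)$ with
$$
F(\delta,v)=\Big(v,\ -\frac{1}{\tau_0(\eps\delta)^2}\big[\ell v+\delta+\eps\big(\tau_0(\eps\delta)\tau_0'(\eps\delta)+\gamma(\eps v)\big)v^2\big]\Big).
$$
By Theorem \ref{theoCumminsSW} the function $\gamma$ is smooth on the velocity range dictated by $\ell\eps v<2r_0$, while $\tau_0(\eps\delta)^2$ is smooth and bounded below by a positive constant as long as $h_{\rm eq}+\eps\delta>0$. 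Hence $F$ is $C^\infty$ on the open set $\Omega=\{(\delta,v):\inf_{\mathcal I}(h_{\rm eq}+\eps\delta)>0,\ \ell\eps v<2r_0\}$, which contains $(\delta_0,0)$ thanks to the two hypotheses on $\delta_0$. The Cauchy--Lipschitz theorem then yields a unique maximal solution on some interval $[0,T^*)$, and the standard continuation criterion reduces the proof to showing that the trajectory remains in a fixed compact subset of $\Omega$.

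First I would establish the a priori estimate by multiplying \eqref{ODESW2bis} by $\dot\delta$. Using $\frac{d}{dt}\big[\tfrac12\tau_0(\eps\delta)^2\dot\delta^2\big]=\tau_0(\eps\delta)^2\ddot\delta\dot\delta+\eps\tau_0(\eps\delta)\tau_0'(\eps\delta)\dot\delta^3$ together with $\delta\dot\delta=\frac{d}{dt}\big[\tfrac12\delta^2\big]$, the buoyancy and added-mass nonlinearities combine into an exact derivative and one is left with the dissipation identity
$$
\frac{d}{dt}\Big[\tfrac12\tau_0(\eps\delta)^2\dot\delta^2+\tfrac12\delta^2\Big]=-\dot\delta^2\big(\ell+\eps\dot\delta\,\gamma(\eps\dot\delta)\big).
$$
The crucial point is the sign of the right-hand side. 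I would extract it from the explicit form \eqref{CumminsopSW}: since $q_{\vert_{x=\ell}}=-\ell\dot\delta$ and $\sqrt h=\sigma_0(\eps\tfrac\ell2\dot\delta)$, the dissipation term $\dot\delta^2\big(\ell+\eps\dot\delta\,\gamma(\eps\dot\delta)\big)$ equals $\ell^{-1}$ times the shallow-water energy flux $\mathfrak F_{\vert_{x=\ell}}=q\big(\zeta+\eps\tfrac12 q^2/h^2\big)_{\vert_{x=\ell}}=\tfrac{2}{\eps^2}\sigma_0^2(\sigma_0-1)^2(3\sigma_0-1)$. Because the admissible root of $\sigma^3-\sigma^2+\eps\tfrac\ell2\dot\delta=0$ stays above the local minimum of $\sigma\mapsto\sigma^3-\sigma^2$ at $\sigma=2/3$ on the whole range $\ell\eps\dot\delta<2r_0$, one has $\sigma_0\ge2/3$, hence $3\sigma_0-1\ge1>0$; the flux is therefore outgoing and the energy $E=\tfrac12\tau_0(\eps\delta)^2\dot\delta^2+\tfrac12\delta^2$ is nonincreasing. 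This recovers \eqref{decaynrj} directly from the ODE.

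The bootstrap then closes as follows. As long as $\ell\eps\dot\delta<2r_0$ holds, we have $E(t)\le E(0)=\tfrac12\delta_0^2$, whence $\abs\delta\le\abs{\delta_0}$ and $\tau_0(\eps\delta)^2\dot\delta^2\le\delta_0^2$. Since $\tau_0$ is decreasing in its argument, $\tau_0(\eps\delta)\ge\tau_0(\eps\abs{\delta_0})$ on $\abs\delta\le\abs{\delta_0}$, so $\abs{\dot\delta}\le\abs{\delta_0}/\tau_0(\eps\abs{\delta_0})$ and therefore $\ell\eps\abs{\dot\delta}\le\ell\eps\abs{\delta_0}/\tau_0(\eps\abs{\delta_0})<2r_0$, the last strict inequality being exactly the second hypothesis on $\delta_0$. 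This yields a uniform velocity bound strictly inside the admissible range; simultaneously $\abs\delta\le\abs{\delta_0}$ gives $h_{\rm eq}+\eps\delta\ge\inf_{\mathcal I}h_{\rm eq}-\eps\abs{\delta_0}>0$ by the first hypothesis. Thus $(\delta,\dot\delta)$ stays in a fixed compact subset of $\Omega$ on $[0,T^*)$, which forces $T^*=+\infty$, and $\delta\in C^\infty(\RR^+)$ follows from $F\in C^\infty$ by repeatedly differentiating the equation. The main obstacle is precisely the sign of the dissipation: unlike the full-system computation behind \eqref{decaynrj}, here it cannot be taken for granted and must be read off from the cubic structure of $\sigma_0$; it is the uniform lower bound $\sigma_0\ge2/3$ (equivalently, the radiated energy flux being outgoing) that makes the a priori estimate, and hence the global bootstrap, work.
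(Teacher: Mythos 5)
Your proof is correct and follows essentially the same route as the paper: local existence via Cauchy--Lipschitz, the energy identity obtained by multiplying \eqref{ODESW2bis} by $\dot\delta$, positivity of the dissipation ${\mathfrak C}_{\eps,0}[\dot\delta]\,\dot\delta$, and the bootstrap using both hypotheses on $\delta_0$ to keep $(\delta,\dot\delta)$ in a compact subset of the admissible region. The only (equivalent) difference is presentational: you verify the sign of the dissipation by factoring the radiated flux as $\frac{2}{\eps^2}\sigma_0^2(\sigma_0-1)^2(3\sigma_0-1)$ together with the bound $\sigma_0>2/3$, whereas the paper argues via the monotonicity of $\sigma_0$ and a case distinction on the sign of $\dot\delta$ --- both rest on the same location of $\sigma_0$ relative to the critical point $2/3$ of the cubic.
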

\begin{remark}
A byproduct of the proof is that ${\mathfrak C}_{\eps,0}[\dot\delta] \, \dot\delta \geq 0$ and that this quantity corresponds to the energy transferred at each instant to the exterior fluid domain, that is, with the notations of Proposition \ref{prior-conservation-case}, one has
$$
\frac{d}{dt}{\mathfrak E}_{\rm ext}={\mathfrak C}_{\eps,0}[\dot\delta]\dot\delta\geq 0.
$$
\end{remark}
\begin{remark}
The second condition of the proposition is a smallness condition on $\delta_0$, but this condition is not restrictive as it allows $\delta_0$ to be of size $O(\eps^{-1})$. As communicated to us by the author it is possible, under stricter smallness conditions, to prove exponential decay of the solution of ODEs related to \eqref{ODESW2bis} using techniques developed in \cite{Koike}.
\end{remark}
\begin{proof}
There exists a positive time $T>0$ such that on $[0,T)$, there is a solution $\delta$ such that $\inf_{\mathcal I} h_{\rm eq}+\eps \delta >0$ and $\ell\eps\dot\delta<2 r_0$. We want to show that one can take $T=+\infty$. As in the proof of Corollary \ref{corocoro}, this follows from \eqref{decaynrj}. We therefore need to prove that \eqref{decaynrj} holds, without appealing to Proposition \ref{prior-conservation-case} as in the proof of Corollary \ref{corocoro}, but by direct manipulations on the solution to the ODE \eqref{ODESW2bis}. We need the following two lemmas.
\begin{lemma}
The function $\sigma_0$ is  decreasing on $(-\infty,r_0)$.
%increasing 
%from $\frac{4}{9} \sqrt[3]{3 + \sqrt{2}/2}$ to $1$ 
%on $\mathbb{R}^-$ and decreasing %from $1$ to $1/3$ 
%on $(0,r_0)$. 
\end{lemma}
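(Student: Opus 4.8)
The plan is to avoid the explicit Cardano formula for $\sigma_0$ and instead work directly with the defining cubic. Recall from the proof of Theorem \ref{theoCumminsSW} that, after setting $r=-\eps\frac12 q$, the quantity $\sigma_0(r)$ is characterized as the relevant real root of
$$
\sigma^3-\sigma^2+r=0,
$$
so that $\sigma_0(r)^3-\sigma_0(r)^2+r=0$ identically on $(-\infty,r_0)$. The first step is therefore to pin down \emph{which} branch of this cubic is the relevant one and to locate its range.

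To this end I would study the function $g(\sigma)=\sigma^3-\sigma^2$, whose level set $g(\sigma)=-r$ furnishes the roots of the cubic. Since $g'(\sigma)=\sigma(3\sigma-2)$ vanishes at $\sigma=0$ and $\sigma=\frac23$, the function $g$ has a local maximum $g(0)=0$ and a local minimum $g(\frac23)=-\frac{4}{27}=-r_0$. Consequently, on the half-line $\{\sigma>\frac23\}$ the map $g$ is strictly increasing, running from $-r_0$ (at $\sigma=\frac23$) to $+\infty$. The relevant branch $\sigma_0$, singled out by the normalization $\sigma_0(0)=1$ and by continuity (cf.\ \cite{Lannes_float}), lies precisely on this increasing branch: indeed $g(1)=0$ forces $\sigma_0(0)=1$, while $g(\frac23)=-r_0$ forces $\sigma_0(r_0)=\frac23$. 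Hence for every $r\in(-\infty,r_0)$ one has $\sigma_0(r)\in(\frac23,+\infty)$; in particular $\sigma_0(r)>\frac23>0$.

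With the branch identified, the monotonicity is immediate by implicit differentiation. Differentiating $\sigma_0(r)^3-\sigma_0(r)^2+r=0$ in $r$ yields
$$
\big(3\sigma_0^2-2\sigma_0\big)\sigma_0'(r)+1=0,
\qquad\mbox{i.e.}\qquad
\sigma_0'(r)=-\frac{1}{\sigma_0\,(3\sigma_0-2)}.
$$
Since $\sigma_0>\frac23$ on $(-\infty,r_0)$, both factors $\sigma_0$ and $3\sigma_0-2$ are strictly positive, so the denominator is positive and $\sigma_0'(r)<0$. This shows that $\sigma_0$ is strictly decreasing on $(-\infty,r_0)$; as a consistency check, at $r=0$ this gives $\sigma_0'(0)=-1/(1\cdot 1)=-1$, matching the value $\sigma_0'(0)=-1$ recorded in Theorem \ref{theoCumminsSW}.

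The only genuinely delicate point is the branch selection in the second step: one must ensure that the physically relevant root is the one living on $\{\sigma>\frac23\}$, rather than one of the two other real roots that coexist when $0<r<r_0$, and that this branch does not cross $\sigma=\frac23$ before $r$ reaches $r_0$ (where $\sigma_0'$ blows up). This is exactly the content of the discussion in \cite{Lannes_float} together with the normalization $\sigma_0(0)=1$; since the interval in the statement is open at $r_0$, the degeneracy of the denominator at $\sigma_0=\frac23$ never occurs and the argument remains clean throughout.
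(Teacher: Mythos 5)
Your proof is correct and follows essentially the same route as the paper: implicit differentiation of the cubic $\sigma_0(r)^3-\sigma_0(r)^2+r=0$ reduces the claim to the positivity of $\sigma_0(3\sigma_0-2)$ on $(-\infty,r_0)$. The only difference is cosmetic: you establish $\sigma_0>\tfrac{2}{3}$ by identifying the branch sitting on the increasing part of $g(\sigma)=\sigma^3-\sigma^2$, whereas the paper argues that the continuous quantity $3\sigma_0^2-2\sigma_0$ is positive at $r=0$ and cannot vanish for $r<r_0$ without forcing $\sigma_0\in\{0,\tfrac{2}{3}\}$ and hence $r\in\{0,r_0\}$, both excluded.
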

\begin{proof}[Proof of the lemma]
By construction, one has for all $r<r_0$,
$$
\sigma_0(r)^3-\sigma_0(r)^2+r=0;
$$
differentiating this identity yields
$$
\sigma'_0(r)(3\sigma_0(r)^2-2\sigma_0(r) )=-1,
$$
so that $\sigma_0(r)$ and $3\sigma_0(r)^2-2\sigma_0(r)$ have opposite sign. It is therefore enough to prove that $3\sigma_0(r)^2-2\sigma_0(r)>0$ for all $r<r_0$. Since $\sigma_0(0)=1$, this quantity is positive at $r=0$ and must therefore vanish if it changes sign. This means that for some $r_1<r_0$, one must have $\sigma_0(r_1)=0$ or $\sigma_0(r_1)=\frac{2}{3}$. Using the cubic equation solved by $\sigma_0(r_1)$, this implies that $r_1=0$ or $r_1=r_0$. Both cases have to be excluded because $\sigma_0(0)=1\neq 0$ and $r_1<r_0$ by assumption. The result follows.
\end{proof}
\begin{lemma}\label{lemma-pos-cummins-hyperblolique}
If $\dot\delta\neq 0$ and  $\eps\ell\dot\delta <  2 r_0$, the Cummins operator satisfies
$$
 {\mathfrak C}_{\eps,0}[\dot\delta] \, \dot\delta >0.
$$
%Moreover,
%$$
% {\mathfrak C}_{\eps,0}[\dot\delta] \, \dot\delta  \geq \eta \dot\delta^2.
%$$
%When $\dot\delta  \in (\eps^{-1} \ell^{-1} r_0, \eps^{-1} \ell^{-1} 2 r_0)$, the Cummins' operator satisfy
%$$
% {\mathfrak C}_{\eps,0}[\dot\delta] \, \dot\delta <0.
%$$
\end{lemma}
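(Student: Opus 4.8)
The plan is to substitute the explicit formula for the Cummins operator provided by Theorem \ref{theoCumminsSW} and then to read off the sign of each factor from the monotonicity of $\sigma_0$ established in the previous lemma. Writing $r=\eps\frac{\ell}{2}\dot\delta$, the hypothesis $\eps\ell\dot\delta<2r_0$ becomes $r<r_0$, so $\sigma_0(r)$ is well defined, and since $\eps>0$ and $\ell>0$ the quantities $r$ and $\dot\delta$ have the same sign. Plugging \eqref{CumminsopSW} and $\dot\delta=\frac{2r}{\eps\ell}$ into the product gives
$$
{\mathfrak C}_{\eps,0}[\dot\delta]\,\dot\delta=-\frac{2}{\eps^2\ell}\,(\sigma_0(r)-1)\,(3\sigma_0(r)-1)\,r,
$$
so it suffices to prove that $(\sigma_0(r)-1)(3\sigma_0(r)-1)\,r<0$ whenever $0\neq r<r_0$.

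First I would pin down the two boundary values of $\sigma_0$ that govern the signs. Since $\sigma_0(r)$ is the relevant root of $\sigma^3-\sigma^2+r=0$ selected in the proof of Theorem \ref{theoCumminsSW}, evaluating at $r=0$ gives $\sigma_0(0)=1$, while the double-root condition $3\sigma^2-2\sigma=0$ at $\sigma=\frac{2}{3}$ forces $r=\frac{4}{27}=r_0$, whence $\sigma_0(r_0)=\frac{2}{3}$. Because $\sigma_0$ is decreasing on $(-\infty,r_0)$ by the preceding lemma, every $r<r_0$ satisfies $\sigma_0(r)>\sigma_0(r_0)=\frac{2}{3}$, and therefore $3\sigma_0(r)-1>1>0$ throughout the relevant range.

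It then remains to determine the sign of $(\sigma_0(r)-1)\,r$. Again using monotonicity together with $\sigma_0(0)=1$, one has $\sigma_0(r)<1$ for $0<r<r_0$ and $\sigma_0(r)>1$ for $r<0$; in both cases $\sigma_0(r)-1$ carries the sign opposite to $r$, so $(\sigma_0(r)-1)\,r<0$ for all $r\neq 0$. Combining this with the positivity of $3\sigma_0(r)-1$ yields $(\sigma_0(r)-1)(3\sigma_0(r)-1)\,r<0$, and since the prefactor $-2/(\eps^2\ell)$ is negative we conclude ${\mathfrak C}_{\eps,0}[\dot\delta]\,\dot\delta>0$. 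No step here presents a genuine difficulty; the only point requiring a little care is the sign of the factor $3\sigma_0(r)-1$, which is secured once the value $\sigma_0(r_0)=\frac{2}{3}$ is identified and combined with the monotonicity of $\sigma_0$, everything else being a direct substitution into \eqref{CumminsopSW}.
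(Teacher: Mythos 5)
Your proof is correct and follows essentially the same route as the paper's: substitute the explicit formula \eqref{CumminsopSW}, invoke the monotonicity of $\sigma_0$ from the preceding lemma together with the values $\sigma_0(0)=1$ and $\sigma_0(r_0)=2/3$, and read off the signs of the factors. You merely spell out the sign analysis that the paper leaves implicit.
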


\begin{proof}[Proof of the lemma]
Recalling that from \eqref{CumminsopSW}, one has
$$
 {\mathfrak C}_{\eps,0}[\dot\delta] \, \dot\delta = - \eps^{-1} (\sigma_0(\eps \frac{\ell}{2} \dot\delta) -1) (3 \sigma_0(\eps \frac{\ell}{2} \dot\delta) -1)  \dot \delta,
$$
the conclusion follows from the previous lemma and the observation that
 $ \sigma_0(0)=1 $ and $\sigma_0(r_0)=2/3$.
%
%If $\eta>0$ is such that $\eta r^2 < \frac{1}{3}$ then
%$$
% \Big(\zeta+\eps\frac{1}{2}\frac{q^2}{h^2}\Big)_{\vert_{x=\ell}} + \eps^{-1}  \eta r^2 =  \eps^{-1} (3 \tau_0^2 - 4 \tau_0 + 1 + \eta r^2) 
% = \eps^{-1} ( \tau_0 - \frac{4 + \sqrt{16- 12 (1 + \eta r^2)}}{6} ) ( \tau_0 - \frac{4 - \sqrt{16- 12 (1 + \eta r^2)}}{6} )
%$$
\end{proof}
We can now use the second lemma to conclude: multiplying \eqref{ODESW} by $\dot\delta$ and integrating in time yields
\begin{align*}
\tau_0(\eps\delta)^2\dot\delta^2+\delta^2&=\delta_0^2-\int_0^t {\mathfrak C}_{\eps,0}[\dot\delta]\dot\delta <\delta_0^2,
\end{align*}
which implies \eqref{decaynrj}; the proposition is therefore proved.    \end{proof}

The following Corollary then shows that, once the ODE \eqref{ODESW2bis} has been solved, the solution in the exterior domain reduces to a simple initial boundary value problem for a scalar Burgers-type equation. This is a simple byproduct of the proof of Theorem \ref{theoCumminsSW} where it was shown that the nonlinear shallow water equations were reduced to the scalar equation on the right-going Riemann invariant.
\begin{corollary}\label{coroCumminsSW}
Under the assumptions of Corollary \ref{corocoro},  $q$ is found in the exterior domain by solving the initial boundary value problem
$$
\begin{cases}
%\label{Burg}
\dt q+ \big(- \sigma_0'(-\frac{\eps}{2} q)\sigma_0(-\frac{\eps}{2}q)\big)^{-1}\dx q&=0 \qquad (t>0,\quad x>\ell),\\
%\label{IBBurg}
q_{\vert_{t=0}}&=0,\\
%\label{IBBurg2}
 q_{\vert_{x=\ell}}&=-\ell \dot\delta,
\end{cases}
$$
with $\delta$ furnished by Proposition \ref{propexistODE},
while $\zeta$ is given in terms of $q$ by the algebraic expression
\begin{equation}\label{zetaqBurg}
\zeta=\frac{1}{\eps}(\sigma_0(-\eps q /2)^2 -1).
\end{equation}
\end{corollary}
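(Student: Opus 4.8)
The plan is to read the result off from the proof of Theorem \ref{theoCumminsSW}, using the Riemann-invariant structure exhibited there. Recall that under the return to equilibrium data \eqref{CIRE} the left Riemann invariant $L=\frac{q}{h}-\frac{2}{\eps}(\sqrt h-1)$ vanishes identically on $(\ell,\infty)$, and that this constraint forces $\sqrt h$ to be the relevant root $\sigma_0(-\frac{\eps}{2}q)$ of the cubic $\sigma^3-\sigma^2-\frac{\eps}{2}q=0$. In particular $h=1+\eps\zeta$ becomes a smooth function of $q$ alone, and rewriting $h=\sigma_0(-\frac{\eps}{2}q)^2$ yields precisely the algebraic relation \eqref{zetaqBurg}. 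This is legitimate wherever $-\frac{\eps}{2}q<r_0$, which under the assumptions of Corollary \ref{corocoro} holds for all times by the argument used there: the energy bound \eqref{decaynrj} keeps $\ell\eps\dot\delta$, hence the boundary value $-\frac{\eps}{2}q_{\vert x=\ell}$, below $2r_0$, and the constraint $L\equiv 0$ propagates this bound into the interior of the domain.

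With $\zeta$ expressed through $q$ by \eqref{zetaqBurg}, the only remaining step is to substitute this relation into the mass conservation equation $\dt\zeta+\dx q=0$, the first equation of \eqref{SW}. Writing $r=-\frac{\eps}{2}q$ and applying the chain rule, I would compute $\dt\zeta=\frac{d\zeta}{dq}\dt q$ with $\frac{d\zeta}{dq}=\frac{1}{\eps}\cdot 2\sigma_0(r)\sigma_0'(r)\cdot(-\frac{\eps}{2})=-\sigma_0(r)\sigma_0'(r)$, so that the mass equation becomes $-\sigma_0(r)\sigma_0'(r)\dt q+\dx q=0$, i.e. exactly the transport equation stated in the corollary once one may divide by $\sigma_0'(r)$. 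The required non-degeneracy, as well as the sign of the propagation speed, follows from the monotonicity lemma established in the proof of Proposition \ref{propexistODE}: differentiating the cubic gives $\sigma_0'(r)=-[\sigma_0(r)(3\sigma_0(r)-2)]^{-1}$, whence $\big(-\sigma_0'(r)\sigma_0(r)\big)^{-1}=3\sigma_0(r)-2$, which is strictly positive because $\sigma_0$ is decreasing with $\sigma_0(r_0)=\tfrac23$, so that $\sigma_0(r)>\tfrac23$ for $r<r_0$.

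The initial and boundary data transfer verbatim, $q_{\vert t=0}=0$ from \eqref{CIRE} and $q_{\vert x=\ell}=-\ell\dot\delta$ from \eqref{CBSW}, with $\delta$ supplied by Proposition \ref{propexistODE}; the positivity of the speed $3\sigma_0-2$ makes this a well-posed problem with data prescribed at the left endpoint $x=\ell$ and waves travelling toward $+\infty$. To close the equivalence one checks the converse direction: a solution $q$ of the scalar problem, together with $\zeta$ defined by \eqref{zetaqBurg}, solves the full system \eqref{SW}, since the constraint \eqref{zetaqBurg} is exactly $L\equiv 0$, and on this manifold the momentum equation reduces to the same scalar transport equation (the speed computed from the right Riemann invariant, $\sqrt h+\eps q/h=3\sigma_0-2$, coincides with $1/\zeta'(q)$). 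I expect the only genuinely delicate point to be the a priori justification that $-\frac{\eps}{2}q<r_0$ throughout the evolution, keeping $\sigma_0$ on its smooth, strictly monotone branch; once that is secured, everything else is an exact computation resting on the reduction already carried out in the proof of Theorem \ref{theoCumminsSW}.
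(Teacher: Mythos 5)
Your proposal is correct and rests on exactly the same mechanism as the paper, namely the reduction of Theorem \ref{theoCumminsSW}: the vanishing of the left Riemann invariant $L$ forces $\sqrt h=\sigma_0(-\tfrac{\eps}{2}q)$, which gives \eqref{zetaqBurg}, and the scalar transport equation for $q$ then follows (the paper reads it off the equation for the right-going Riemann invariant, you obtain it by inserting the constraint into the mass equation, but these coincide since $1/\zeta'(q)=(-\sigma_0'\sigma_0)^{-1}=3\sigma_0-2=\sqrt h+\eps q/h$, as you verify). Your identification of the speed and the propagation of the bound $-\tfrac{\eps}{2}q<r_0$ from the data via Corollary \ref{corocoro} are both consistent with the paper.
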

\begin{remark}\label{remforced}
More generally, if one wants to compute the waves created by an object in forced motion, one must solve the same equations as in the corollary, but with $\delta$ corresponding to this forced motion rather than given by Proposition \ref{propexistODE}.
\end{remark}
\subsection{The linear dispersive case}\label{sectlinear}

We have studied in the previous section the situation where dispersive effects could be neglected ($\mu=\kappa^2/3=0$) in front of the nonlinear effects. We consider here the opposite situation where nonlinear effects are negligible ($\eps=0$) but the dispersive effects are taken into account. That is, we consider the linear approximation to  \eqref{IBVP_ext2}-\eqref{coroODEdeltaR}. The model considered for the propagation of the waves is therefore 
\begin{equation}\label{linear}
\begin{cases}
\partial_t \zeta +\partial_x q =0 
\\
(1-\kappa^2 \dx^2) \partial_t q +  \partial_x \zeta
= 0,
\end{cases}\quad\mbox{ for }\quad t\geq 0,\quad x\in {\mathcal E}^+,
\end{equation}
the boundary condition is unchanged
\begin{align}
\label{CBlinear}
q_{\vert_{x=\ell}}&=-\ell \dot\delta,
\end{align}
and the  ODE solved by $\delta$ is simplified into
\begin{equation}\label{ODElinear}
 {(\tau^2_\mu+ \ell \kappa)} \ddot\delta+\delta +
  {\mathfrak C}_{0,\mu}[\dot\delta]=0,
\end{equation}
where we recall that according to the definition of the Cummins operator (see Definition \ref{def-Cummins-operator} and \eqref{defC}), 
$$
 {\mathfrak C}_{0,\mu}[\dot\delta]:= -(R_1 \zeta)_{\vert_{x=\ell}},
 $$
 and where, for the sake of clarity, we simply write throughout this section 
 $$
 \tau_\mu^2 =\tau_\mu(0)^2=\tau_{\rm buoy}^2+\frac{1}{\ell}\int_0^\ell \frac{x^2}{h_{\rm eq}}  +\kappa^2 \frac{1}{h_{\rm eq}}.
 $$
 
We know by Theorem \ref{theoexist} that for all $n\in {\mathbb N}$ and $T>0$, there exists a unique solution $(\zeta,q,\delta) \in C^\infty([0,T];{\mathbb H}^n\times \RR)$ of \eqref{linear}-\eqref{ODElinear} with initial conditions \eqref{CIRE}; we want here to analyze the behavior of  this solution.  As for the nonlinear non dispersive case in the previous section, we first provide an explicit expression for the Cummins operator, from which we are able to derive an uncoupled scalar equation for the evolution of $\delta$, whose solution can be used to find $\zeta$ and $q$ in the exterior domain through the resolution of a simpler scalar initial boundary value problem. All the equations involved in this section are linear, the difficulty coming from their nonlocal nature.

\subsubsection{Preliminary material} 

In order to give an explicit representation of the Cummins operator ${\mathfrak C}_{0,\mu}$, we first need to recall the definition of the Bessel functions $J_n$ (\S 8.41 in \cite{Bessel})
$$
J_n(t)=\frac{1}{\pi}\int_0^\pi \cos\big(n\theta- t \sin\theta\big){\rm d}\theta;
$$
we also define the causal} convolution kernels ${\mathcal K}^0_\mu $ and ${\mathcal K}^1_\mu  $ as
\begin{equation}\label{defK0K1}
{\mathcal K}^0_\mu(t)=\frac{1}{\kappa}J_0(\frac{t}{\kappa})
\quad\mbox{ and }\quad
{\mathcal K}^1_\mu(t)= \frac{1}{t}J_1(\frac{t}{\kappa})
, \quad \mbox{ for all } t  \geq 0,
\end{equation}
and use the following standard notation for the convolution of time causal functions,
$$
\forall t\geq 0, \qquad f*g(t)=\int_0^t f(t-s)g(s) {\rm d}s.
$$

We also need to use  the Laplace transform with respect to the time variable, which we define as 
$$\mathcal{L} : q \mapsto \hat q,$$ where
%\begin{equation}
%\hat{q}(s)= \displaystyle \int_0^{\infty} q(t) e^{-st} dt
%\text{ and }
%\hat{\zeta}(s)= \displaystyle \int_0^{\infty} \zeta(t) e^{-st} dt
%\end{equation}
$$
 \mathcal{L} [q] (s) = \displaystyle \int_0^{\infty} q(t) e^{-st} {\rm d}t
 \quad \text{ with }\quad 
 s \in \mathbb{C}_0 := \{ s \in \mathbb{C} \, | \, \frak{Re} (s) >0 \}.
$$
We shall in particular use the following properties on Bessel functions \cite{Bessel}
 \begin{equation}\label{propLB}
 {\mathcal L}^{-1} \big( \frac{1}{\sqrt{1+\kappa^2s^2}}\big)={\mathcal K}^0_\mu(t)
 \quad \mbox{ and }\quad
 {\mathcal L}^{-1}\big( \frac{1}{\sqrt{1+\kappa^2 s^2} + \kappa s} \big) = {\mathcal K}^1_\mu(t),
 \end{equation}
with ${\mathcal K}^0_\mu$ and ${\mathcal K}^1_\mu$ as defined in \eqref{defK0K1}.

\subsubsection{Analysis of the equations}

Using the linear structure of the equations, one can obtain an explicit expression for the Cummins operator ${\mathfrak C}_{0,\mu}$.
\begin{theorem}\label{theoCumminslinear}
The Cummins operator ${\mathfrak C}_{0,\mu}$ is given explicitly by
$$
 {\mathfrak C}_{0,\mu}[\dot\delta]:=\ell \, {\mathcal K}^1_\mu*\dot\delta,
$$
where ${\mathcal K}^1_\mu$ is defined in \eqref{defK0K1}. 
\end{theorem}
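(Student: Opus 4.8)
The plan is to compute ${\mathfrak C}_{0,\mu}$ explicitly through the Laplace transform in time, exploiting the fact that for $\eps=0$ the exterior problem \eqref{linear}--\eqref{CBlinear} is linear and constant-coefficient in $x$. Recall from Definition \ref{def-Cummins-operator} and \eqref{defC} that, when $\eps=0$ (so that $h=1$ and $\mfsw=\zeta$), one has ${\mathfrak C}_{0,\mu}[f]=-(R_1\zeta)_{\vert_{x=\ell}}$, where $(\zeta,q)$ solves \eqref{linear} on $(\ell,\infty)$ with boundary datum $q_{\vert_{x=\ell}}=-\ell f$ and zero initial data. It therefore suffices to prove that $-(R_1\zeta)_{\vert_{x=\ell}}=\ell\,{\mathcal K}^1_\mu\ast f$, and then to take $f=\dot\delta$.

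First I would apply the Laplace transform in time to \eqref{linear}; using the vanishing initial data this gives $s\hat\zeta+\dx\hat q=0$ and $s(1-\kappa^2\dx^2)\hat q+\dx\hat\zeta=0$. Eliminating $\hat\zeta=-\frac1s\dx\hat q$ yields the constant-coefficient ODE $\dx^2\hat q=\lambda^2\hat q$ on $(\ell,\infty)$, with $\lambda=s/\sqrt{1+\kappa^2 s^2}$ and $\sqrt{\cdot}$ the branch with positive real part. The transformed boundary condition is $\hat q_{\vert_{x=\ell}}=-\ell\hat f$, and selecting the mode decaying as $x\to+\infty$ gives $\hat q(s,x)=-\ell\hat f\,e^{-\lambda(x-\ell)}$, whence, using $\lambda/s=1/\sqrt{1+\kappa^2 s^2}$, $\hat\zeta(s,x)=-\frac{\ell\hat f}{\sqrt{1+\kappa^2 s^2}}\,e^{-\lambda(x-\ell)}$.

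Next I would evaluate the trace of $R_1\zeta$ using the explicit half-line representation of $R_1$ recalled just before \eqref{traceR1}, namely $(R_1 g)_{+}=\kappa^{-1}\int_\ell^\infty e^{-(x-\ell)/\kappa}g(x)\,\mathrm{d}x$. Substituting $\hat\zeta$ and computing the elementary integral $\int_0^\infty e^{-(1/\kappa+\lambda)u}\,\mathrm{d}u=\kappa/(1+\kappa\lambda)$, together with the identity $\sqrt{1+\kappa^2 s^2}\,(1+\kappa\lambda)=\sqrt{1+\kappa^2 s^2}+\kappa s$, gives $\widehat{(R_1\zeta)_+}=-\ell\hat f/(\sqrt{1+\kappa^2 s^2}+\kappa s)$. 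Hence $\widehat{{\mathfrak C}_{0,\mu}[f]}=\ell\,\hat f/(\sqrt{1+\kappa^2 s^2}+\kappa s)$, and invoking the inverse Laplace formula \eqref{propLB} for ${\mathcal K}^1_\mu$, the convolution theorem, and injectivity of the Laplace transform yields ${\mathfrak C}_{0,\mu}[f]=\ell\,{\mathcal K}^1_\mu\ast f$, which is the claim with $f=\dot\delta$.

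The routine algebra above hides two points that require care, and these are where I expect the real work to lie. The first is the legitimacy of the Laplace transform: one must guarantee that the solution furnished by Theorem \ref{theoexist} (or its half-line analogue, Proposition \ref{existence-presc}) grows at most exponentially in time, so that $\hat\zeta,\hat q$ are well defined and holomorphic on some half-plane $\mathbb{C}_{\eta_0}$; an energy estimate in the spirit of Proposition \ref{prior-conservation-case} should suffice. The second, more delicate, is the selection of the decaying branch: I must show $\mathfrak{Re}(\lambda)>0$ for every $s\in\mathbb{C}_0$, so that $e^{-\lambda(x-\ell)}$ is indeed the admissible ($L^2$ in $x$) solution and so that the trace integral converges. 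This follows from a connectedness argument: $\lambda$ is holomorphic and nonvanishing on the connected set $\mathbb{C}_0$, it is real and positive for real $s>0$, and it can never be purely imaginary there, since $\lambda=it$ with $t\in\RR$ forces $s^2=-t^2/(1+\kappa^2 t^2)\le 0$, i.e. $s$ purely imaginary, which is excluded; hence $\mathfrak{Re}(\lambda)$ keeps a constant (positive) sign on $\mathbb{C}_0$. Once these two points are secured, the identification of transforms and the inversion via \eqref{propLB} close the argument.
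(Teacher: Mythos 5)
Your proposal is correct and follows essentially the same route as the paper's proof: Laplace transform in time, selection of the decaying mode of the resulting constant-coefficient ODE (Lemma \ref{lemma-jump-linear}), evaluation of the $R_1$ trace by the explicit exponential kernel, and inversion via \eqref{propLB}. Your extra verification that $\mathfrak{Re}\big(s/\sqrt{1+\kappa^2 s^2}\big)>0$ on $\mathbb{C}_0$ is a welcome refinement of a point the paper leaves implicit, but it does not change the argument.
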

\begin{proof}
Applying the Laplace transform to the equations \eqref{linear} and \eqref{CBlinear}, which is possible since all the functions are continuous and bounded in time (as a consequence of Proposition \ref{prior-conservation-case}), and taking into account that $\zeta_{\vert_{t=0}}=q_{\vert_{t=0}}=0$, this yields
\begin{equation}\label{Boussi-linear}
\begin{cases}
s \hat \zeta +\partial_x \hat q =0 
\\
(1-\kappa^2 \partial_x^2) s \hat q + \partial_x \hat \zeta
= 0,
\end{cases}
\quad\mbox{ and }\quad \widehat{q}_{\vert_{x=\ell}}=-\ell \widehat{\dot \delta}.
\end{equation}
This is an ODE for $(\widehat{\zeta},\widehat{q})$ on the half-line $(\ell,\infty)$ that can be explicitly solved in terms $\widehat{\dot\delta}$ (note that a representation of the solution in terms of the Laplace transform in space is also possible \cite{Johnston} but not adapted to our purpose here; see also \cite{Audiard} for other types of linear dispersive equations); the formula of the lemma below provides  "right-going" solutions to the linear Boussinesq equations and it is therefore no surprise that the relationship between $\widehat{\zeta}$ and $\widehat{q}$ is the same as the one that arises when imposing transparent boundary conditions as in \cite{KazakovaNoble}.
\begin{lemma}\label{lemma-jump-linear}
There is one and only one solution $(\widehat{\zeta},\widehat{q})$ to \eqref{Boussi-linear} that does not grow exponentially at infinity; it is given by 
$$
\begin{cases}
\hat{q}(s,x) = \displaystyle - \ell  \,  \widehat{\dot\delta}(s) e^{- \frac{s}{\sqrt{1+ \kappa^2s^2}}(x-\ell)},
\\
\displaystyle \widehat\zeta(s,x)=  \frac{1 }{\sqrt{1+ \kappa^2s^2}} \widehat{q}(s,x),
  \end{cases}
$$
where the square root is taken in order to have positive real part. 
\end{lemma}

\begin{proof} [Proof of the lemma]
From  \eqref{Boussi-linear}, one deduces
\begin{equation}\label{Boussi-lin-Laplace}
\partial_x^2 \hat{q}(s)-\frac{s^2}{1+ \kappa^2s^2} \hat{q}(s)=0,
\end{equation}
and there are therefore two constants $A(s)$ and $B(s)$ such that
\begin{equation*}
\displaystyle
\hat{q}(s,x) = A(s) e^{-\frac{s}{\sqrt{1+ \kappa^2s^2}}x} + B(s) e^{\frac{s}{\sqrt{1+ \kappa^2s^2}}x}.
\end{equation*}
Since exponentially increasing functions are not allowed, we have $B(s)=0$ and thus
\begin{equation*}
\hat{q}(s,x) = A(s) e^{-\frac{s}{\sqrt{1+ \kappa^2s^2}}x}.
\end{equation*}
Then using the boundary condition on $\widehat{q}$ at $x=\ell$, we find the expected formula for $\hat q$. By using the first equation of  \eqref{Boussi-linear}, we get
the formula for $\widehat{\zeta}$. 
\end{proof}

Let us now remark that for all $f\in L^2(\cE^+)$, one has
$$
(R_1 f)_{\vert_{x= \ell}}=\kappa^{-1} \int_{\cE^+} e^{-\kappa^{-1} ( x - \ell)} f(x){\rm d}x
$$
so that, using the lemma,
\begin{align*}
\widehat{{\mathfrak C}_{0,\mu}[\dot\delta]}(s)&= -(R_1 \widehat{\zeta})_{\vert_{x=\ell}}\\
&= \displaystyle \frac{\ell \widehat{\dot\delta}(s) }{\kappa \sqrt{1+ \kappa^2s^2}}  \,  \int_{\cE^+} e^{- \big( \frac{1}{\kappa} + \frac{s}{\sqrt{1+ \kappa^2s^2}} \big) (x-\ell)}{\rm d}x.
\end{align*}
It follows that
$$
\widehat{{\mathfrak C}_{0,\mu}[\dot\delta]}(s)= \frac{\ell}{\sqrt{1+\kappa^2 s^2} + \kappa s} \, \widehat{\dot\delta}(s).
$$
Using \eqref{propLB}, this yields
$$
 {\mathfrak C}_{0,\mu}[\dot\delta]= \ell {\mathcal K}^1_\mu* \dot\delta;
$$
note also for future use that we also get from the lemma that
\begin{align*}
 \zeta(t,x)&={\mathcal K}^0_\mu*q.
 \end{align*}
 \end{proof}

As in Corollary \ref{corocoro} in the non dispersive case, it is possible to determine the motion of the solid by the resolution of a single scalar equation on $\delta$; due to the presence of the dispersive terms however, this equation is no longer an ordinary differential equation but an integro-differential equation.
 \begin{corollary}\label{corocorolin}
The motion of the floating object for the problem \eqref{linear}-\eqref{ODElinear} can be found directly by solving the linear second order integro-differential equation,
\begin{equation}\label{ODESW2}
\big(\tau^2_\mu+\ell\kappa \big)\ddot\delta+\ell {\mathcal K}^1_\mu*\dot\delta+\delta =0,
\end{equation}
with initial conditions $\delta(0)=\delta_0$ and $\dot\delta(0)=0$.
 \end{corollary}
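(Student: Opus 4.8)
The plan is to obtain \eqref{ODESW2} by direct substitution, since the genuinely analytic work has already been carried out in Theorem \ref{theoCumminslinear}. First I would recall that, specializing Corollary \ref{corotransm-ret} to the linear case $\eps=0$, any symmetric solution of \eqref{linear}--\eqref{ODElinear} has its vertical displacement $\delta$ governed by the reduced ODE \eqref{ODElinear}, namely $(\tau_\mu^2+\ell\kappa)\ddot\delta+\delta+{\mathfrak C}_{0,\mu}[\dot\delta]=0$, together with the initial data \eqref{CIRE}, which here read $\delta(0)=\delta_0$, $\dot\delta(0)=0$. At this stage the equation is compact but implicit, since ${\mathfrak C}_{0,\mu}[\dot\delta]$ is defined only through the resolution of the exterior problem.

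Next I would invoke Theorem \ref{theoCumminslinear}, which provides the explicit representation ${\mathfrak C}_{0,\mu}[\dot\delta]=\ell\,{\mathcal K}^1_\mu*\dot\delta$ of the Cummins operator as a causal convolution against the kernel ${\mathcal K}^1_\mu(t)=\frac{1}{t}J_1(\frac{t}{\kappa})$ from \eqref{defK0K1}. Substituting this identity into \eqref{ODElinear} yields at once the integro-differential equation \eqref{ODESW2}. It is worth noting in passing that the convolution is well defined for continuous $\dot\delta$: since $J_1(r)\sim r/2$ as $r\to 0$, the kernel ${\mathcal K}^1_\mu$ extends continuously to $t=0$ with value $(2\kappa)^{-1}$, so ${\mathcal K}^1_\mu*\dot\delta$ makes classical sense.

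The only point requiring justification is the decoupling asserted by the corollary, namely that $\delta$ can be found directly, without simultaneously solving for the wave field $(\zeta,q)$. The forward direction is automatic: the $\delta$-component of the solution furnished by Theorem \ref{theoexist} satisfies \eqref{ODElinear}, hence \eqref{ODESW2}. For the converse I would take the solution $\delta$ of the scalar equation \eqref{ODESW2}, set $f=\dot\delta$, and solve the exterior initial boundary value problem \eqref{IBVP_ext2} with $\eps=0$, boundary datum $q_{\vert_{x=\ell}}=-\ell f$ and zero initial data, as granted by Proposition \ref{existence-presc}. By the very definition of the Cummins operator, the trace $-{\mathfrak H}(\zeta,q)_{\vert_{x=\ell}}$ produced by this solution equals ${\mathfrak C}_{0,\mu}[\dot\delta]$, which by Theorem \ref{theoCumminslinear} coincides with $\ell\,{\mathcal K}^1_\mu*\dot\delta$; hence the reconstructed triple $(\zeta,q,\delta)$ satisfies \eqref{ODElinear} and therefore the full system \eqref{linear}--\eqref{ODElinear}, and the uniqueness statement of Theorem \ref{theoexist} identifies it with the solution of the coupled problem.

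Since no delicate estimate is involved beyond the representation already established, I do not anticipate a genuine obstacle here; the only conceptual step is recognizing that the explicit convolution form of ${\mathfrak C}_{0,\mu}$ renders the equation for $\delta$ self-contained, turning an implicit transmission problem into a closed linear integro-differential equation.
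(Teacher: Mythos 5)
Your proposal is correct and follows the same route as the paper: the corollary is an immediate consequence of substituting the explicit representation ${\mathfrak C}_{0,\mu}[\dot\delta]=\ell\,{\mathcal K}^1_\mu*\dot\delta$ from Theorem \ref{theoCumminslinear} into the reduced ODE \eqref{ODElinear}, which is why the paper states it without a separate proof. Your additional remarks on the continuity of ${\mathcal K}^1_\mu$ at $t=0$ and on reconstructing the exterior wave field from $\delta$ via Proposition \ref{existence-presc} are sound and only make explicit what the paper leaves implicit.
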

\begin{remark}
   In \cite{Tucsnak}, the authors consider the linearized shallow-water equations with some viscosity $\nu$. More precisely, they consider \eqref{linear} with $-\nu \partial_x^2 q$ instead of
$-\kappa^2 \partial_x^2 \partial_t q$ in the second equation and find the following Cummins equation
\begin{equation}\label{ODEvisc}
\tau_\mu^2 \ddot\delta + \sqrt{\nu} \ell \delta^{\left(\frac{3}{2}\right)} + \nu \dot\delta+\ell {\mathcal F}_\nu \ast \dot\delta + \delta=0
\quad \text{with} \quad
{\mathcal F}_\nu:= \mathcal{L}^{-1} \left[ \frac{1}{\sqrt{1+ \nu s}+ \sqrt{\nu s}} \right],
\end{equation}
and where $\delta^{(\frac{3}{2})}$ stands for the fractional derivative of order $3/2$ of $\delta$. This equation shares some similarities with \eqref{ODESW2}, in particular the convolution term, although with a different kernel (note that one gets $\widehat{\mathcal K}^1_\mu(s)$ by replacing $\nu s$ by $\kappa^2 s^2$ in $\widehat{{\mathcal F}}_\mu(s)$). One the contrary, there is  in \eqref{ODEvisc} a viscous damping term $\nu\dot\delta$ that has no equivalent in \eqref{ODESW2}. Note finally that the fractional derivative term $\sqrt{\nu} \ell \delta^{\left(\frac{3}{2}\right)} $ in \eqref{ODEvisc} can be related to the added mass term ${\ell\kappa \ddot\delta}$ in \eqref{ODESW2}. Indeed, in the analysis of \cite{Tucsnak}, this fractional derivative is the leading order term of a convolution term $\ell F*\dot\delta$ with $\widehat{F}(s)=\sqrt{1+\nu s}$. In the dispersive case, the same analysis would give a symbol $\sqrt{1+\kappa^2 s^2}$ and the leading order term of the same convolution would be the dispersive added mass term $\ell \kappa\ddot\delta$.
\end{remark}  

In the linear non dispersive case ($\eps=\mu=0$), Corollary \ref{corocoro} shows that the motion of the object is governed by the same equation as a damped harmonic oscillator; the return to equilibrium occurs therefore at an exponential rate. In the presence of dispersion, Corollary \ref{corocorolin} states that the motion of the solid is now governed by the integro-differential equation \eqref{ODESW2} and numerical simulations (see Figure \ref{fig2}) suggest that the decay gets slower as the dispersion parameter $\kappa=\sqrt{\mu/3}$ increases.
   \begin{figure}[h]
\begin{center}.
\includegraphics[width=0.9\linewidth]{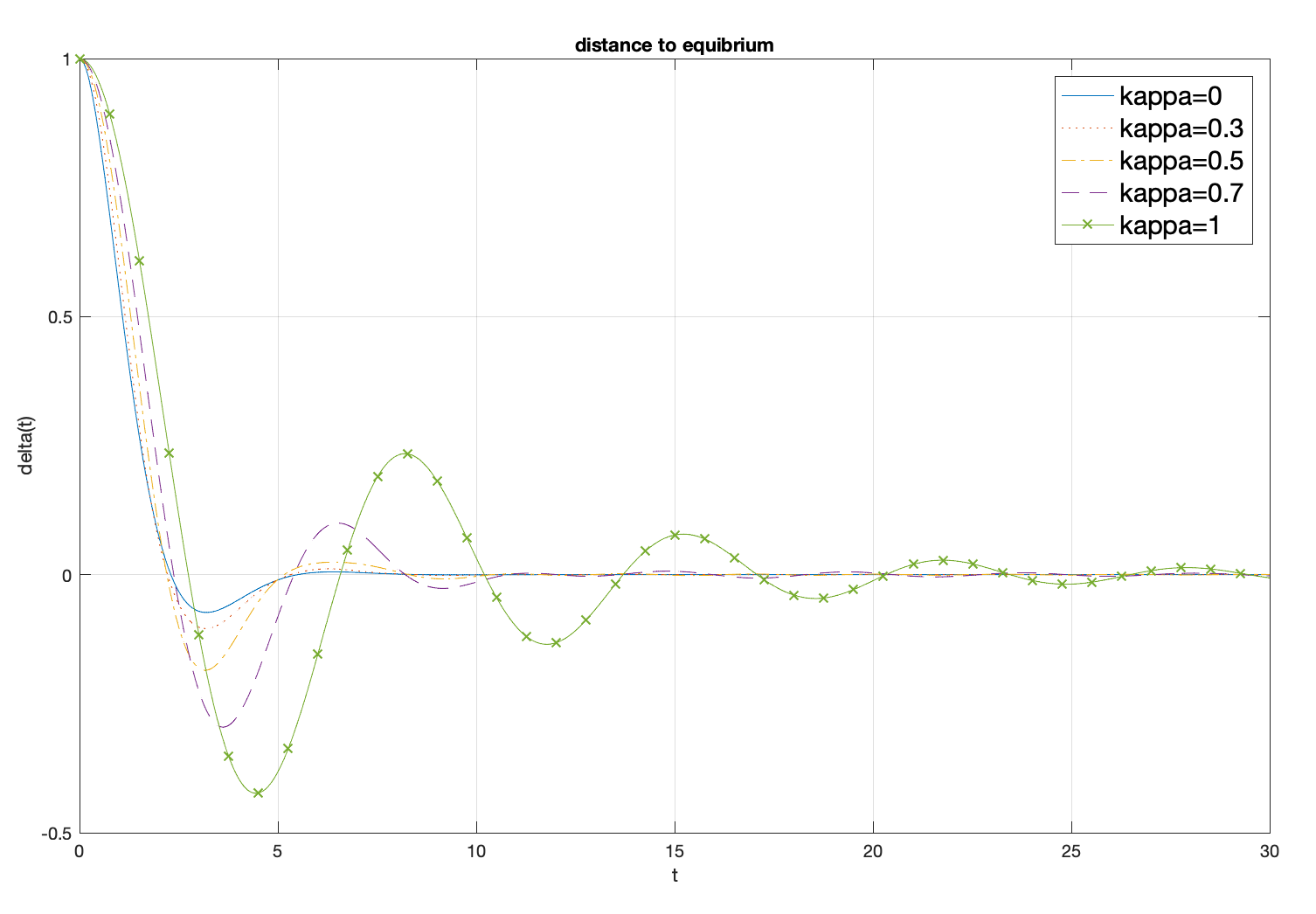}
\end{center}
\caption{Return to equilibrium: evolution of $\delta(t)$ with increasing value of $\kappa$ and $\delta_0=1,\tau_{\rm buoy}=1/6,h_{\rm eq}=1$.}\label{fig2}
\end{figure}
This issue is addressed in the following proposition. In particular, the fact that $\delta$ belongs to $H^2(\RR^+)$ implies that $\delta$ and $\dot\delta$ tend to zero at infinity, but the third point of the proposition shows that the decay cannot be stronger than $O(t^{-3/2})$ (as opposed to the exponential convergence rate in the linear non dispersive case), bringing a theoretical confirmation to the above numerical observations.
   \begin{proposition}\label{decay-polynom}
{\bf i.} There is a unique solution $\delta\in C^2(\RR^+)\cap W^{1,\infty}(\RR^+)$ to \eqref{ODESW2} with initial data $\delta(0)=\delta_0$ and $\dot\delta(0)=0$. \\
{\bf ii.} Moreover, $\delta\in H^2(\RR^+)$, but for $k\in\{0,1,2\}$, $t\delta^{(k)}\not\in L^2(\RR^+)$.\\
{\bf iii.}       For all $\alpha >0$ and $k\in\{0,1,2\}$ and for all $c>0$ and $T_0>0$, there exists $t>T_0$ such that
     $$
  |\delta^{(k)}(t)| > c t^{-\frac{3}{2} - \alpha}.
     $$
   \end{proposition}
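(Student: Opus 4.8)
The plan is to reduce everything to the explicit Laplace symbol of $\delta$. Taking the Laplace transform of \eqref{ODESW2} with $\delta(0)=\delta_0$, $\dot\delta(0)=0$ and using the rationalised form $\widehat{{\mathcal K}^1_\mu}(s)=\sqrt{1+\kappa^2s^2}-\kappa s$ of the symbol appearing in \eqref{propLB}, the two contributions proportional to $\ell\kappa s^2$ cancel and one is left with
\begin{equation}\label{planrepr}
\hat\delta(s)=\delta_0\,\frac{\tau_\mu^2\,s+\ell\sqrt{1+\kappa^2s^2}}{P(s)},\qquad P(s)=\tau_\mu^2\, s^2+\ell\, s\sqrt{1+\kappa^2s^2}+1 ,
\end{equation}
where $\sqrt{\,\cdot\,}$ denotes the determination with positive real part. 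For \textbf{i} I would not invert \eqref{planrepr} but construct $\delta$ directly: setting $v=\dot\delta$, \eqref{ODESW2} becomes a second order Volterra integro-differential system whose kernel ${\mathcal K}^1_\mu(t)=\frac1t J_1(t/\kappa)$ is bounded near $t=0$ (it tends to $\frac{1}{2\kappa}$), so Picard iteration in $C^1([0,T])$ provides a unique local $C^2$ solution. Multiplying \eqref{ODESW2} by $\dot\delta$ and using the positivity $\Re\widehat{{\mathcal K}^1_\mu}(i\omega)\ge0$ (equivalently the exact energy conservation of Proposition \ref{prior-conservation-case} when $\eps=0$) yields $\tfrac12(\tau_\mu^2+\ell\kappa)\dot\delta^2+\tfrac12\delta^2\le\tfrac12\delta_0^2$, so $\delta$ and $\dot\delta$ remain bounded, the solution is global, and $\delta\in C^2(\RR^+)\cap W^{1,\infty}(\RR^+)$.

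For \textbf{ii} and \textbf{iii} the idea is to read off the behaviour of $\delta$ from the boundary values of \eqref{planrepr} on $i\RR$ through the Paley--Wiener isomorphism for $\Hardy$. The key structural fact is that $P$ has no zero in $\overline{\mathbb C_0}$: for $\Re s>0$ one has $\Re\big(P(s)/s\big)=\tau_\mu^2\Re s+\ell\,\Re\sqrt{1+\kappa^2s^2}+\Re(1/s)>0$, the middle term being $\ge0$ because $1+\kappa^2s^2$ stays off the negative real half-axis when $\Re s>0$, while on $i\RR$ a direct inspection of the real and imaginary parts of $P(i\omega)$ shows $P(i\omega)\ne0$. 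Hence $\hat\delta$ is analytic on $\mathbb C_0$, and since a Taylor expansion of \eqref{planrepr} gives $\hat\delta(i\omega)=-i\delta_0/\omega+O(\omega^{-3})$, the leading terms in $\widehat{\dot\delta}(i\omega)=i\omega\hat\delta(i\omega)-\delta_0$ and $\widehat{\ddot\delta}(i\omega)=-\omega^2\hat\delta(i\omega)-i\omega\delta_0$ cancel, leaving all three symbols $O(1/\omega)$ at infinity and therefore in $L^2(\RR)$; this gives $\delta,\dot\delta,\ddot\delta\in L^2(\RR^+)$, i.e. $\delta\in H^2(\RR^+)$.

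The negative statements in \textbf{ii} come from the branch point of $\sqrt{1+\kappa^2s^2}$ at $s=\pm i/\kappa$. On the imaginary axis $\sqrt{1-\kappa^2\omega^2}$ is real for $|\omega|<1/\kappa$ and equals $i\,\mathrm{sgn}(\omega)\sqrt{\kappa^2\omega^2-1}$ for $|\omega|>1/\kappa$, so $\hat\delta(i\omega)$ is continuous across $|\omega|=1/\kappa$ (both one-sided limits equal $\delta_0(i\tau_\mu^2/\kappa)/(1-\tau_\mu^2/\kappa^2)$, with $1-\tau_\mu^2/\kappa^2\ne0$ since $\tau_\mu^2>\kappa^2$), but its $\omega$-derivative inherits the factor $\partial_\omega\sqrt{1-\kappa^2\omega^2}\sim(1/\kappa-|\omega|)^{-1/2}$. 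A short computation then gives $\partial_\omega\hat\delta(i\omega)\sim c_0(1/\kappa-|\omega|)^{-1/2}$ with $c_0\ne0$ when $\delta_0\ne0$; this is locally integrable but not square integrable near $\pm1/\kappa$, so $\hat\delta(i\cdot)\notin H^1(\RR)$. Since multiplication by $t$ corresponds to $\omega$-differentiation of the transform, this means $t\delta\notin L^2(\RR^+)$, and the same non-integrable singularity persists (with nonzero coefficient) in $\partial_\omega\widehat{\dot\delta}$ and $\partial_\omega\widehat{\ddot\delta}$, yielding $t\delta^{(k)}\notin L^2$ for $k=0,1,2$.

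Finally, \textbf{iii} follows from \textbf{ii} by contradiction: if $|\delta^{(k)}(t)|\le c\,t^{-3/2-\alpha}$ held for all $t>T_0$, then $\int_{T_0}^\infty |t\delta^{(k)}|^2\le c^2\int_{T_0}^\infty t^{-1-2\alpha}<\infty$, and as $\delta^{(k)}$ is bounded on $[0,T_0]$ this would force $t\delta^{(k)}\in L^2(\RR^+)$, contradicting \textbf{ii}. The main obstacle I anticipate is not the algebra leading to \eqref{planrepr} but the rigorous deployment of the $\Hardy$/Paley--Wiener machinery — certifying the absence of zeros of $P$ in $\overline{\mathbb C_0}$ and converting the pointwise behaviour $\partial_\omega\hat\delta(i\omega)\sim(1/\kappa-|\omega|)^{-1/2}$ into the weighted $L^2$ (non)membership statements underlying \textbf{ii}--\textbf{iii}.
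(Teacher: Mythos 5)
Your proposal is correct and follows essentially the same route as the paper: Laplace transform of \eqref{ODESW2} leading to the transfer function \eqref{transfert-delta}, absence of zeros of $P(s)=\tau_\mu^2 s^2+\ell s\sqrt{1+\kappa^2s^2}+1$ in the closed right half-plane, membership of $\hat\delta$, $\widehat{\dot\delta}$, $\widehat{\ddot\delta}$ in $\Hardy$ via Paley--Wiener for point~ii, the order $-1/2$ branch singularity of $\frac{d}{ds}\hat H_\mu$ at $\pm i\kappa^{-1}$ for the negative statements, and the identical contradiction argument for point~iii. The one place where you genuinely improve on the paper is the zero-free region: your observation that $\Re\big(P(s)/s\big)=\tau_\mu^2\Re s+\ell\,\Re\sqrt{1+\kappa^2s^2}+\Re(1/s)>0$ on $\mathbb{C}_0$ replaces in one line the paper's Steps 2--3 of Lemma \ref{poles-negative} (the sign table for $\Im P(s)$), leaving only the inspection of $P$ on $i\RR$, which both arguments still require. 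Your treatment of point~i (Picard iteration for the Volterra system plus the passivity bound $\Re\widehat{{\mathcal K}^1_\mu}(i\omega)\ge 0$, or equivalently Proposition \ref{prior-conservation-case} at $\eps=0$) is more detailed than the paper, which disposes of existence with the single remark that ${\mathcal K}^1_\mu\in L^1(\RR^+)$; both are adequate.
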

   \begin{remark}
   The dispersive delay (convolution)  term in \eqref{ODESW2} is responsible for the slow decay of the solution. Indeed, in the non dispersive limit case $\kappa=0$, the branching points $s=\pm i \kappa^{-1}$ disappear from the transfer function $\hat{H}_\mu$ derived in \eqref{transfert-delta} below, which then becomes
   $$
   {\hat{H}}_{0}(s):=
\frac{\tau^2_{0} s  + \ell } {{\tau^2_{0} \, s^2 + s \ell + 1}},
   $$
whose poles have a strictly negative real part, hence an exponential decay for $\delta$.
   \end{remark}

   \begin{proof}
Since the kernel ${\mathcal K}_1$ belongs to $L^1(\RR^+)$ (recall that the Bessel function $J_1(t)$ decays like $O(t^{-1/2})$), the proof of the first part of the proposition does not raise any particular problem. To prove the second part of the proposition, we need a careful analysis of the transfer function $\widehat{H}_\mu$ defined by the relation $\widehat{\delta}=\widehat{H}_\mu \delta_0$. 
Since $(\delta,\dot \delta) \in C\cap L^\infty(\mathbb{R}^+)$, the Laplace transform of $\delta$ and $\dot\delta$ are well defined on $\mathbb{C}_0$ an after remarking that
$$
 \kappa s + \frac{1}{ \sqrt{ 1+ \kappa^2s^2 } + \kappa s} = \sqrt{ 1+ \kappa^2s^2 },
$$
the Laplace transform of $\delta$ is, owing to  \eqref{ODESW2},
\begin{equation}\label{transfert-delta}
\hat{\delta} = {\hat{H}}_{\mu}(s) \delta_0
\quad \text{ where } \quad
{\hat{H}}_{\mu}(s):=
\frac{\tau^2_{\mu} s  + \ell \sqrt{ 1+ \kappa^2s^2 }}{\tau^2_{\mu} \, s^2 + s \ell \sqrt{ 1+ \kappa^2s^2 }+1 }.
%\quad \text{ and } \quad
%\hat{Y}_{\mu}(s) = \sqrt{ 1+ \kappa^2s^2 }
\end{equation}

\begin{lemma}\label{poles-negative}
The transfer function ${\hat{H}}_{\mu}$ defined in  \eqref{transfert-delta}  is holomorphic on $\mathbb{C}_0$ and admits only two branching points at $\pm i \kappa^{-1}$. 
%It can be extend on the imaginary axis by
%$$
%{\hat{H}}_{\mu}(i \omega)= 
%\begin{cases}
%\frac{\frak{m} i \omega  + \ell \sqrt{ 1- \kappa^2 \omega^2 }}{- \frak{m} \, \omega^2 + i \omega \ell \sqrt{ 1- \kappa^2 \omega^2 }+1 },
%& | \omega | \leq \kappa^{-1}
%\\
%\frac{\frak{m} i \omega  + i \ell \sqrt{  \kappa^2 \omega^2-1 }}{- \frak{m} \, \omega^2 - \omega \ell \sqrt{ \kappa^2 \omega^2 -1 }+1 },
%& | \omega | > \kappa^{-1}
%\end{cases}
%$$
Moreover all the zeros of the denominator in \eqref{transfert-delta} have strictly negative part.   
 \end{lemma}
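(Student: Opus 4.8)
The plan is to treat the two assertions of the lemma separately, reducing both to the analytic behaviour of the single multivalued factor $w(s):=\sqrt{1+\kappa^2 s^2}$, where the root is taken with positive real part. Note that $\hat{H}_\mu=N/D$ with numerator $N(s)=\tau_\mu^2 s+\ell\,w(s)$ and denominator $D(s)=\tau_\mu^2 s^2+\ell s\,w(s)+1$, so the only non-rational ingredient is $w$. For the holomorphy and the branch points, I would first observe that the positive-real-part convention makes $w$ holomorphic away from the locus where $1+\kappa^2 s^2$ is a nonpositive real number. A direct computation shows that $1+\kappa^2 s^2\in(-\infty,0]$ forces $s^2\in\RR$ with $\kappa^2 s^2\le-1$, hence $s=i\omega$ with $|\omega|\ge\kappa^{-1}$; this cut lies on the imaginary axis and avoids $\mathbb{C}_0$, so $w$, and therefore $N$ and $D$, are holomorphic on $\mathbb{C}_0$. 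The branch points are exactly the simple zeros of $1+\kappa^2 s^2$, namely $s=\pm i\kappa^{-1}$, and a short check that $N/D$ is not invariant under $w\mapsto-w$ near these points confirms they are genuine branch points and the only ones. Holomorphy of $\hat{H}_\mu$ on $\mathbb{C}_0$ then follows once $D$ is shown to be zero-free there, which is the second assertion.

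For the zeros of $D$, the key idea is a real-part argument. Since $D(0)=1\neq0$, any zero satisfies $s\neq0$, and dividing by $s$ gives the equivalent relation
$$
\tau_\mu^2 s+\frac{1}{s}+\ell\,w(s)=0 .
$$
Writing $s=\eta+i\omega$ and taking real parts yields
$$
\tau_\mu^2\,\eta+\frac{\eta}{|s|^2}+\ell\,\mathrm{Re}\,w(s)=0 .
$$
All three summands are nonnegative: $\tau_\mu^2>0$ and $\eta\ge0$ handle the first two, while $\ell>0$ together with the positive-real-part convention gives $\mathrm{Re}\,w(s)\ge0$. For $s\in\mathbb{C}_0$ one has $\eta>0$, so the first term alone is strictly positive and the identity is impossible. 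Hence $D$ has no zero in $\mathbb{C}_0$, which simultaneously establishes the absence of zeros there and, with the previous paragraph, the holomorphy of $\hat{H}_\mu$ on $\mathbb{C}_0$.

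It remains to upgrade this to strict negativity of the real part, i.e. to exclude zeros on the imaginary axis within the domain of holomorphy. Setting $\eta=0$ in the displayed identity forces $\mathrm{Re}\,w(i\omega)=0$; but on the segment $|\omega|<\kappa^{-1}$ one has $w(i\omega)=\sqrt{1-\kappa^2\omega^2}$ real and strictly positive, a contradiction. The endpoints $s=\pm i\kappa^{-1}$ are treated directly: there $w=0$ and $D(\pm i\kappa^{-1})=1-\tau_\mu^2/\kappa^2\neq0$, using $\tau_\mu^2>\kappa^2$, which is immediate from the definition of $\tau_\mu$ (e.g. via the term $\kappa^2/h_{\rm eq}(\ell)$ with $h_{\rm eq}(\ell)<1$). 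Finally the points $|\omega|>\kappa^{-1}$ lie on the branch cut, outside the domain where $D$ is defined as a holomorphic function. Thus every zero of $D$ has strictly negative real part. The only genuinely delicate point in this scheme is the bookkeeping of the square-root branch: every sign inequality hinges on $\mathrm{Re}\,w\ge0$, so one must keep $w$ on the principal branch throughout and treat the imaginary axis (the cut) as a limiting case rather than part of the holomorphy domain.
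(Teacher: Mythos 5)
Your argument for holomorphy and for the location of the branch points matches the paper's, and your treatment of the zeros in the open right half-plane is arguably cleaner than the paper's: dividing $D(s)$ by $s$ and taking real parts gives
$\tau_\mu^2\,\eta+\eta/|s|^2+\ell\,\Re\sqrt{1+\kappa^2s^2}=0$,
whose three terms are nonnegative and whose first term is strictly positive when $\eta>0$. The paper instead computes $\Im P(s)$ and runs a sign table over the four quadrants, plus a separate positivity check on $\RR^+$; your single real-part identity subsumes both steps and also disposes of the segment $0<|\omega|<\kappa^{-1}$ of the imaginary axis, where $\Re\sqrt{1-\kappa^2\omega^2}>0$. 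The endpoints $\pm i\kappa^{-1}$ are handled the same way in both proofs, via $\tau_\mu^2>\kappa^2$ (which indeed follows from $h_{\rm eq}<1$).

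There is, however, one genuine omission: the portion $|\omega|>\kappa^{-1}$ of the imaginary axis, which you dismiss as lying ``on the branch cut, outside the domain of holomorphy''. The paper does not: it extends the denominator continuously to the whole axis by $P^*(i\omega)=1-\tau_\mu^2\omega^2-|\omega|\,\ell\sqrt{\kappa^2\omega^2-1}$ for $|\omega|>\kappa^{-1}$, and its Step 1 (the quadratic in $\omega^2$ with discriminant $\Delta=\ell^4+4\ell^2(\kappa^2-\tau_\mu^2)<0$, using $\tau_\mu^2>\kappa^2+\ell^2/3$) shows this extension never vanishes. This is not a pedantic point: the lemma is consumed in the proof of Proposition \ref{decay-polynom}, where one needs the boundary trace $\hat H^*_\mu(i\omega)$ to be locally bounded on \emph{all} of $i\RR$ in order to conclude $\hat H_\mu\in\mathcal H^2(\mathbb C_0)$ and invoke Paley--Wiener; the $O(\omega^{-2})$ decay only controls the integral at infinity, so non-vanishing of the denominator on compact subsets of $\{|\omega|>\kappa^{-1}\}$ is genuinely required. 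The fix is one line in your framework: for $|\omega|>\kappa^{-1}$ one has $\tau_\mu^2\omega^2>\tau_\mu^2\kappa^{-2}>1$, so $1-\tau_\mu^2\omega^2<0$ while $-|\omega|\,\ell\sqrt{\kappa^2\omega^2-1}\le 0$, hence $P^*(i\omega)<0$. You should add this, and state explicitly that ``zeros of the denominator'' is to be read as zeros of the analytic continuation together with its continuous extension to the closed right half-plane, since that is the version of the statement the later argument uses.
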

 
  \begin{proof}[Proof of the lemma]
 We denote by $P$ the holomorphic function on $\mathbb{C}_0$
 $$
P(s):= \tau^2_{\mu} \, s^2 + s \ell \sqrt{ 1+ \kappa^2s^2 }+1, 
 $$
 where the square root stands for the square root with positive real part.  
 Since $P$ has only two singularities which are the branching points $\pm i \kappa^{-1}$, we can extend it analytically  on all the complex plane except on the cuts $i (-\infty, - \kappa^{-1})$ and $i (\kappa^{-1}, \infty)$, and extend it continuously by a function $P^*$ on the imaginary axis by
  $$
 P^*(i \omega):=
 \begin{cases}
 1 - \tau^2_{\mu} \, \omega^2 + i \omega \ell \sqrt{ 1 - \kappa^2 \omega^2} & | \omega | \leq \kappa^{-1}, \\
 1 - \tau^2_{\mu} \, \omega^2 - \abs{\omega} \ell \sqrt{ \kappa^2 \omega^2 -1} & | \omega | > \kappa^{-1}.
 \end{cases}
 $$
 %In the first three steps of the proof, we will show that all the complexe number $s$ such that $P(s)=0$ must satisfy $\Re(s) <0$.
We first show that $s$ cannot be purely imaginary, then  that if a zero is real,  it must be strictly negative, and finally that if a zero is not a real number,    it must satisfy $\Re(s) <0$.

 \noindent
 {\bf Step 1.} The zeros of $P$ cannot be purely imaginary. 
Indeed, if $\omega$ were we solution of $P^*(i\omega)=0$, then, from the expression of $P(i\omega)$ given above,  $\omega^2$ would be a real root of the second order polynomial
$$
(\tau_\mu^4-\ell^2\kappa^2)X^2+(\ell^2-2\tau^2_\mu)X+1.
$$
But the discriminant of this polynomial is $\Delta=\ell^4+4\ell^2(\kappa^2-\tau^2_\mu)<-\ell^2/3$ (since $\tau^2_\mu>\kappa^2+\ell^2/3$), which is negative, implying that the polynomial cannot have any real root.

  \noindent
 {\bf Step 2.} The zeros of $P$ cannot belong to $\RR^+$ from the simple observation that $P(\eta)>0$ for all $\eta\in \RR^+$.
 
  \noindent
 {\bf Step 3.} The zeros of $P$ cannot have positive real part. In order to prove this, we show here that $\Im(P(s))\neq 0$ for all $s=\eta + i \omega$ with $s \notin i \mathbb{R} \cup \mathbb{R}$ (the case $s\in \RR^+$ having been dealt with in Step 2).  The imaginary part of $P (s)$ is given by
  $$
 \Im [ P (s) ] = 2 \tau^2_{\mu} \, \eta \,  \omega 
  + \eta \, \ell \Im [ \sqrt{1+ \kappa^2 s^2} ]
  + \omega \, \ell \Re [ \sqrt{1+ \kappa^2 s^2} ].
  $$ 
  By definition of the square root, $\Re [ \sqrt{1+ \kappa^2 s^2} ] \geq 0$ and the sign of $\Im [ \sqrt{1+ \kappa^2 s^2} ]$ is the same as the sign of the product $\eta \, \omega$. The following table summarizes the sign of some quantities in different cases (where $+$ stands for strictly positive, $-$ stands for strictly negative and ind. signifies that the sign is indeterminate).
\begin{center}
\begin{tabular}{|c|c|c|c|c|c|}
  \hline
$ \eta$ & $\omega$ & $2 \tau^2 \, \eta \,  \omega$ & $\eta \, \Im [ \sqrt{1+ \kappa^2 s^2} ]$ & $\omega \Re [ \sqrt{1+ \kappa^2 s^2} $]  & $\Im [ P (s) ]$
\\
\hline
 - & - & + & - & -  & ind.
 \\
  - & + & - & + & +  & ind.
 \\
  + & - & - & - & -  & -
   \\
  + & + & + & + & +  & +
  \\
    \hline
\end{tabular}  
\end{center}
Therefore, if $\eta>0$ then  $\Im [ P (s) ]$ is either strictly positive or strictly negative, so that it does not vanish.

   \end{proof}
   
Using Lemma \ref{poles-negative}, we can extend continuously the transfer function ${\hat{H}}_{\mu}$ on the imaginary axis by
$$
{\hat{H}}^*_{\mu}(i \omega)= 
\begin{cases}
%(-1)^{n} \left( \frac{d}{d\omega} \right)^n  
\dsp \frac{\tau_\mu^2 i \omega  + \ell \sqrt{ 1- \kappa^2 \omega^2 }}{- \tau^2_\mu \, \omega^2 + i \omega \ell \sqrt{ 1- \kappa^2 \omega^2 }+1 } \delta_0,
& | \omega | \leq \kappa^{-1},
\\
%(-1)^{n} \left( \frac{d}{d\omega} \right)^n  
\dsp \frac{\tau^2_\mu i \omega  + i \ell {\rm sign}(\omega)\sqrt{  \kappa^2 \omega^2-1 }}{- \tau^2_\mu \, \omega^2 - \abs{\omega} \ell \sqrt{ \kappa^2 \omega^2 -1 }+1 } \delta_0,
& | \omega | > \kappa^{-1}.
\end{cases}
$$
Integrating $| {\hat{H}}^*_{\mu}(i \omega) |^2$ over $\mathbb{R}$ we get
$$
\displaystyle \int_{\mathbb{R}} | {\hat{H}}^*_{\mu}(i \omega) |^2 {\rm d} \omega
= \displaystyle \int_{-\kappa^{-1}}^{\kappa^{-1}} | {\hat{H}}^*_{\mu}(i \omega) |^2 {\rm d} \omega + \displaystyle \int_{ | \omega | > \kappa^{-1}} |{\hat{H}}^*_{\mu}(i \omega) |^2 {\rm d} \omega.
$$
The first integral is obviously finite (the denominator in \eqref{transfert-delta} does not vanish on $i\RR$). The second integral is also finite since $| {\hat{H}}^*_{\mu}(i \omega) |^2 { \underset{ |\omega | \to \infty}{\sim}} \omega^{-2}$.
%$$
%| U_0(i \omega) |^2 { \underset{ |\omega | \to \infty}{\sim}} \omega^{-2}
%\quad \text{and for $n>0$} \quad
%| U_n(i \omega) |^2 { \underset{ |\omega | \to \infty}{\sim}} \omega^{2 \left( \prod_{k=0}^{n-1}(2^{k+1}-1)-2^{n+1} \right)}.
%$$
Then  ${\hat{H}}_{\mu}$ belongs to the standard Hardy space $ \mathcal{H}^2( \mathbb{C}_0 )$ and by the Paley-Wiener theorem (see Theorem  \ref{Paley-Wiener} below), one has $\delta\in L^2(\RR^+)$.\\
%$$
%\widehat{\dot\delta}=(s\hat{H}_\mu-1)\delta_0\quad\mbox{ and }\quad \widehat{\ddot\delta}=s\widehat{\dot\delta}
%$$
The same reasoning can be applied to
$$
\widehat{\dot\delta}= \left( \frac{-1}{\tau^2_\mu \, s^2 + s \ell \sqrt{ 1+ \kappa^2s^2 }+1 } \right) \delta_0
$$
and
$$
\widehat{\ddot\delta}=\left( \frac{-s}{\tau^2_\mu \, s^2 + s \ell \sqrt{ 1+ \kappa^2s^2 }+1 } \right) \delta_0
$$
so that $\dot\delta$ and $\ddot\delta$ also belong to $L^2(\RR^+)$.\\
Let us now prove that $u(t):=t\delta(t)$ does {\it not} belong to $L^2(\RR^+)$. 
%    We need the generalization of the Payley-Wiener theorem explain in \cite{} which says that that for all $\eta \in \mathbb{R}$, $L^{2}_{\eta}(\mathbb{R}^+)$ and $\mathcal{H}_0( \mathbb{C}_{-\eta} )$ are isometric where
%    $$
%    L^{2}_{\eta}(\mathbb{R}^+) := \{ u \in L^2_{\text{loc}}(\mathbb{R}^+) \, | \, e^{\eta}  u \in L^2(\mathbb{R}^+) \}.
%    $$  
%    If we assume that $e^{\eta t} \delta^{(k)} \in L^2(\mathbb{R}^+)$ for one $\eta >0$ then (by Payley-Wiener theorem \ref{Payley-Wiener}) $\widehat{\delta^{(k)}} \in \mathcal{H}_0( \mathbb{C}_{-\eta} )$ and in particular it is holomorphic on $\mathbb{C}_{-\eta}$ which is impossible since the branching points $\pm i \kappa$ are on the imaginary axis.
    Denoting by $U$ the Laplace transform of $u$, one has 
   $$
   U(s):=(-1) \left( \frac{d}{ds} \right) H_{\mu}(s) \delta_0 \quad \mbox{ on }\quad {\mathbb C}_0,
   $$
  and the following extension to  the imaginary axis holds,
   $$
   U^*(i \omega)= 
\begin{cases}
\dsp (-1)\left( \frac{d}{d\omega} \right) 
\frac{\tau^2_\mu i \omega  + \ell \sqrt{ 1- \kappa^2 \omega^2 }}{- \tau^2_\mu \, \omega^2 + i \omega \ell \sqrt{ 1- \kappa^2 \omega^2 }+1 } \delta_0,
& | \omega | \leq \kappa^{-1},
\\
\dsp (-1) \left( \frac{d}{d\omega} \right) 
\frac{\tau^2_\mu i \omega  + i \ell {\rm sign}(\omega)\sqrt{  \kappa^2 \omega^2-1 }}{- \tau^2_\mu \, \omega^2 - \abs{\omega} \ell \sqrt{ \kappa^2 \omega^2 -1 }+1 } \delta_0,
& | \omega | > \kappa^{-1}.
\end{cases}
$$
But $U$ is no longer bounded on the imaginary axis as it contains two non isolated singularities (of order $-1/2$ is the Puiseux series expansion) at $\pm i \kappa^{-1}$; the integral 
$$
\displaystyle \int_{-\kappa^{-1}}^{\kappa^{-1}} | U^*(i \omega) |^2 d \omega
$$
is not finite and thus $U \notin \mathcal{H}^2(\mathbb{C}_0)$.  By the Paley-Wiener theorem, this implies that $u \notin L^2(\mathbb{R}^+)$. 
The same reasoning can be applied to
$$
V(s)= (-1) \left( \frac{d}{ds} \right) \left( \frac{-1}{\tau^2_\mu \, s^2 + s \ell \sqrt{ 1+ \kappa^2s^2 }+1 } \right) \delta_0
$$
and
$$
W(s) =(-1)\left( \frac{d}{ds} \right) \left( \frac{-s}{\tau^2_\mu \, s^2 + s \ell \sqrt{ 1+ \kappa^2s^2 }+1 } \right) \delta_0,
$$
which are respectively the Laplace transforms of $t \dot\delta$ and $t \ddot\delta$. This completes the proof of the second point of the proposition.\\
%   {\bf Step 1} :
%   For $k=0,1$ and $n \geq 1$, we have by Cauchy fundamental theorem
%      $$
%   t^n \delta^{(k)}(t) = \int_{0}^t \tau^n \delta^{(k+1)}(\tau) d \tau + n \int_{0}^t \tau^{n-1} \delta^{(k)}(\tau) d \tau
%   $$
%   and then by Young inequality
%   $$
%   | t^n \delta^{(k)}(t) |^2 \leq 2 || t^n\delta^{(k+1)} ||_{L^2(\mathbb{R}^+)} + 2 n^2 || t^{n-1}\delta^{(k)} ||_{L^2(\mathbb{R}^+)}.
%   $$
%   And the expected estimates follows from proposition \ref{decay-polynom}. \\
%   
%   {\bf Step 2} : We assume that there exists $\eta>0$ and $c >0$ such that $ |\delta^{(k)}(t)| \leq c e^{-\eta \, t}$. In that case, we have $e^{ \frac{\eta}{2} \, t}\delta^{(k)} \in L^2(\mathbb{R}^+)$ which is impossible (see proposition \ref{decay-polynom})
Let us now prove the third point by contradiction. Assuming that there exists $C >0$ such that for $t$ large enough
$$
| \delta^{(k)}(t) | \leq C t^{-\frac{3}{2} - \alpha},
$$ 
one gets
$$
|t \delta^{(k)}(t) |^2 \leq C t^{-1 - 2 \alpha}
$$
which implies $t \delta^{(k)} \in L^2(\mathbb{R}^+)$, which contradicts the second point. 
   \end{proof}
   
In the non dispersive case, we showed in Corollary \ref{coroCumminsSW} that once the motion of the object is known, it is possible to find $q$ in the exterior domain by solving an initial boundary value problem for a Burgers-type scalar equation. This remains true in the present dispersive linear case, but the initial boundary value problem one has to solve is now nonlocal in time. Note that as in Remark \ref{remforced}, the corollary can easily be generalized to describe the waves created by an object in forced motion.
 \begin{corollary}\label{coroCumminslinear}
The return to equilibrium problem for the linear Boussinesq equations \eqref{linear}-\eqref{ODElinear} with initial condition \eqref{CIRE} can be equivalently formulated as a scalar nonlocal initial boundary value problem on $q$
\begin{equation}\label{linWave}
\begin{cases}
\dx q+ {\mathcal K}^0_\mu \ast \partial_t q  &=0 \qquad (t>0,\quad x>\ell),\\
 q_{\vert_{t=0}}&=0,\\
  q_{\vert_{x=\ell}}&=-\ell \dot\delta,
\end{cases}
\end{equation}
where $ {\mathcal K}^0_\mu$ is defined in \eqref{defK0K1} while $\zeta$ is given in terms of $q$ by a convolution in time
\begin{equation}\label{zetaqlin}
\zeta={\mathcal K}^0_\mu * q,
\end{equation}
with $\delta$ furnished by Proposition \ref{decay-polynom}.
% \begin{equation}\label{cummins-linear}
%( {\mathfrak m}_\mu(0) +\ell \kappa) \ddot \delta+\delta + \ell \displaystyle \int_0^t k_\mu(t-\tau) \dot \delta(\tau) d \tau =0,
% \end{equation}
%  where 
% $$
%{\mathfrak m}_\mu(0)= \frac{m}{2\rho a R} \mu \eps +\fint_{-\ell}^\ell x^2 +\frac{1}{3}\mu
% \quad \text{and} \quad k_\mu = \mathcal{L}^{-1} \left[\frac{1}{\sqrt{1+ \kappa^2 s^2} + \kappa s } \right]
% $$
\end{corollary}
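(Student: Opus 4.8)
The plan is to build everything on the Laplace-transform representation already obtained in the proof of Theorem \ref{theoCumminslinear}, and in particular on Lemma \ref{lemma-jump-linear}. Since $\delta$ is provided by Proposition \ref{decay-polynom}, the boundary datum $-\ell\dot\delta$ is a known function of time, and by Theorem \ref{theoexist} the pair $(\zeta,q)$ is the smooth, time-bounded solution of \eqref{linear} with data \eqref{CIRE}, so all the Laplace transforms below are well defined on $\mathbb{C}_0$. The first point is to record the relation \eqref{zetaqlin}: Lemma \ref{lemma-jump-linear} gives $\widehat\zeta(s,x)=(1+\kappa^2s^2)^{-1/2}\widehat q(s,x)$, and since $\zeta_{\vert_{t=0}}=q_{\vert_{t=0}}=0$, the first identity in \eqref{propLB} together with the convolution theorem for causal Laplace transforms yields $\zeta={\mathcal K}^0_\mu*q$.

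Next I would derive the scalar equation of \eqref{linWave} directly in the time domain. Substituting \eqref{zetaqlin} into the mass-conservation equation $\dt\zeta+\dx q=0$ and using $q_{\vert_{t=0}}=0$, so that $\dt({\mathcal K}^0_\mu*q)={\mathcal K}^0_\mu*\dt q$, gives $\dx q+{\mathcal K}^0_\mu*\dt q=0$. The initial and boundary conditions in \eqref{linWave} are inherited verbatim from \eqref{CIRE} and \eqref{CBlinear}, which shows that any solution of \eqref{linear}-\eqref{ODElinear} produces a solution of \eqref{linWave}-\eqref{zetaqlin}.

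For the converse, I would start from a solution $q$ of \eqref{linWave}, set $\zeta:={\mathcal K}^0_\mu*q$, and check that $(\zeta,q)$ solves \eqref{linear} with the prescribed data. The first equation of \eqref{linear} is immediate from the same computation as above. For the second equation the cleanest route is again the Laplace transform in time: the scalar equation gives $\dx\widehat q=-s(1+\kappa^2s^2)^{-1/2}\widehat q$ (using $q_{\vert_{t=0}}=0$), hence $\dx^2\widehat q=\frac{s^2}{1+\kappa^2s^2}\widehat q$ and $(1-\kappa^2\dx^2)\widehat q=\frac{1}{1+\kappa^2s^2}\widehat q$. Consequently $\mathcal{L}[(1-\kappa^2\dx^2)\dt q]=s(1-\kappa^2\dx^2)\widehat q=\frac{s\widehat q}{1+\kappa^2s^2}$, while $\mathcal{L}[\dx\zeta]=(1+\kappa^2s^2)^{-1/2}\dx\widehat q=-\frac{s\widehat q}{1+\kappa^2s^2}$; their sum vanishes, and inverting the Laplace transform recovers $(1-\kappa^2\dx^2)\dt q+\dx\zeta=0$.

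The main obstacle is not algebraic but one of justification: one must ensure the Laplace-transform inversions and the causal-convolution identity $\dt({\mathcal K}^0_\mu*q)={\mathcal K}^0_\mu*\dt q$ are licit for the regularity at hand, and, for the converse direction, that the nonlocal problem \eqref{linWave} is itself well posed so that ``solving \eqref{linWave}'' is meaningful. This latter point does not follow from any standard theory and is precisely what is treated independently in Section \ref{sectNLPB}; here it suffices to invoke that analysis (or, equivalently, to note that the smooth solution furnished by Theorem \ref{theoexist} is the unique candidate), after which the two formulations coincide.
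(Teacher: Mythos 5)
Your proposal is correct and follows essentially the same route as the paper: the identity $\zeta={\mathcal K}^0_\mu*q$ is exactly the relation recorded ``for future use'' at the end of the proof of Theorem \ref{theoCumminslinear} (via Lemma \ref{lemma-jump-linear} and \eqref{propLB}), the scalar equation then follows by substitution into the mass-conservation equation, and the well-posedness of the nonlocal problem \eqref{linWave} needed for the converse is precisely what the paper delegates to Section \ref{sectNLPB}. Your explicit Laplace-side verification of the second Boussinesq equation in the converse direction is a harmless elaboration of the same machinery.
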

\begin{remark}
The nonlocal initial boundary value problem \eqref{linWave} is not standard. The most convenient way to handle it is to see it as an evolution equation with respect to $x$ rather than $t$; it then becomes  a particular case of the nonlocal initial boundary value problems considered in Section \ref{sectNLPB}. It is in particular a consequence of Theorem \ref{theo-pde-laplace} below that \eqref{linWave} admits a unique solution $q\in  C(\RR^+_x;H^{1}(\RR^+_x))\cap C^1(\RR^+_x;L^{2}(\RR^+_x))$. Moreover, Proposition \ref{propRL} and Corollary \ref{cororeg} imply that the solution if actually of class $C^2(\RR^+\times \RR^+)$ and infinitely regular with respect to time, showing that the dispersive terms induce a smoothing effect. Indeed, when $\kappa=0$,  the first equation in \eqref{linWave} becomes 
\begin{equation}\label{linWavetransport}
  \partial_t q + \partial_x q =0
\end{equation}
and the solution to the initial boundary value problem, explicitly given by
$$
q(x,t)= \begin{cases} 
- \ell \dot\delta(t-(x-\ell)) & \text{ for } t-(x-\ell) \geq 0
\\
0 & \text{ for } t-(x-\ell) < 0,
\end{cases}
$$
does not belong to $ C^1(\mathcal{E}_+ \times \mathbb{R}^+ )$  because $\ddot{\delta}(0)= -\frac{1}{ \tau^2_\mu}\neq 0$. 
\end{remark}

 \section{The initial boundary value  problem for a class of nonlocal transport equations}\label{sectNLPB}

As shown in the previous section, the analysis of the return to equilibrium problem in the linear dispersive case leads to a nonlocal generalization of the transport equation. The analysis of the initial value problem for such equations is not standard and we address it in this section. Since this subject is of interest in its own, we work here with more standard notations. More precisely, we consider an evolution with respect to the time variable and a nonlocal term with respect to the space variable (this is the reverse in \S \ref{sectlinear}). The domain of consideration is the quadrant $\{x\geq 0,t\geq 0\}$. The typical initial boundary value we shall consider is therefore of the form,
$$
\begin{cases}
\dt u +\cK*_x\dx u &=f,\\
u_{\vert_{x=0}}&=\uu,\\
u_{\vert_{t=0}}&=u^{\rm in}.
\end{cases}
$$
for some convolution kernel $\cK$ to be made precise later.

After presenting some technical material in \S \ref{sectFS} for the functional setting and the Laplace transform, we remind in \S \ref{sectbehav} some very classical facts on the initial and/or boundary value problems for the standard transport equation, making a distinction between the case of a positive and a negative velocity. The nonlocal generalizations of these transport problems, in which $\dx u$ is replaced by a nonlocal term $\cK*_x\dx u$, are addressed in \S \ref{sectNLT}; in particular similarities and differences (such as the presence of an additional compatibility condition and a smoothing effect) with their local counterparts are commented.

\medbreak

\noindent
{\bf NB.} To avoid confusions with the computations performed in \S \ref{sectlinear} where the Laplace transform $\widehat{u}$ was taken with respect to time (with dual variable $s$) we denote throughout this section by $\widetilde{u}$ the Laplace transform with respect to $x$ (with dual variable $p=\alpha+i\xi$). 

\subsection{Functional setting and a brief reminder on the Laplace transform}

We gather here some definition of functional spaces that play an important role in the analysis of initial boundary value problems, as well as some classical facts on the Laplace transform.

\subsubsection{Functional setting}\label{sectFS}

In the study of initial boundary value problems for hyperbolic systems of equations, the space ${\mathbb X}^n$ plays a central role; it is defined for all $n\in {\mathbb N}$ as
$$
{\mathbb X}^n=\bigcap_{j=0}^n C^j(\RR^+_t;H^{n-j}(\RR^+_x));
$$
in particular, for all $u\in {\mathbb X}^n$, one can define for all $t\geq 0$ the quantity
$$
||| u(t,\cdot)|||_n:= \sup_{j+k\leq n} \abs{\dt^j\dx^k u(t,\cdot)}_{L^2(\RR^+)}.
$$
Let us also define ${\mathbb Y}^n$ as
$$
{\mathbb Y}^n=\bigcap_{j=0}^n W^{j,1}_{\rm loc}(\RR^+_t;H^{n-j}(\RR^+_x)).
$$

When working with nonlocal transport equations, it is convenient to introduce weighted versions of these spaces. For any $a\in \RR$, and $k\in {\mathbb N}$, we introduce therefore
\begin{align*}
L^2_a(\RR^+)&:=\{u\in L^2_{\rm loc}(\RR^+), \, \vert u\vert_{L^2_a}:=\Big(\int_{\RR^+}e^{-2ax }\abs{u(x)}^2 {\rm d}x\Big)^{1/2}<\infty \},\\
H^k_a(\RR^+)&:=\{u\in L^2_a(\RR^+), \, \vert u\vert_{H^k_a}:= \sum_{l=0}^k \vert \dx^l u\vert_{L^2_a}<\infty\},
\end{align*}
and denote by ${\mathbb X}^n_a$ and ${\mathbb Y}^n_a$ the weighted version of the spaces  ${\mathbb X}^n$ and ${\mathbb Y}^n$ obtained by replacing all $L^2(\RR^+_x)$ based spaces by their $L^2_a(\RR^+_x)$ analogue: we also write
$$
||| u(t,\cdot)|||_{a,n}:= \sup_{j+k\leq n} \abs{\dt^j\dx^k u(t,\cdot)}_{L_a^2(\RR^+)}.
$$

\subsubsection{Some results on the Laplace transform}

For all $u\in L^1_{\rm loc}(\RR^+)$, the Laplace transform is defined by
$$
\widetilde{u}(p)=\int_0^\infty e^{-px}u(x){\rm d}x,
$$
for all $p=\alpha+i\xi \in {\mathbb C}$ such that this integral converges absolutely. Using for all $a\in \RR$ the notation
$$
{\mathbb C}_a=\{ p\in {\mathbb C}, \Re p>a\},
$$
we can define the Hardy space
  $$
        \mathcal{H}^2(\mathbb{C}_a) := \Big\{ U \mbox{\rm{ holomorphic on }}\mathbb{C}_a \, ; \, || U ||^2_{\mathcal{H}^2( \mathbb{C}_{a})}:= \sup_{\alpha > a}   \int_{\mathbb{R}} | U (\alpha + i \xi)|^{2} d \xi  < \infty \Big\}.
        $$
Every function $U\in  \mathcal{H}^2(\mathbb{C}_a) $ admits a boundary trace denoted $U^*$ on $a+i\RR$, that belongs to $L^2(a+i\RR)$, and $\mathcal{H}^2(\mathbb{C}_a) $ is a Hilbert space for the scalar product
$$
\langle F,G\rangle_{ \mathcal{H}^2(\mathbb{C}_a)}=\frac{1}{2\pi}\int_{-\infty}^\infty F^*(a+i\xi)\overline{G^*(a+i\xi)} {\rm d}\xi.
$$
Recalling that the weighted space $L^2_a(\RR^+)$ is defined in the previous section, we can state the well-known Paley-Wiener theorem.
\begin{theorem}\label{Paley-Wiener} 
        Let $a\in \RR$. The Laplace-transform 
        $$
        	\mathcal{L}: \begin{array}{lcl}
        L_a^2(\RR^+) \to \mathcal{H}^2(\mathbb{C}_a)\\
        u\mapsto \widetilde{u}
        \end{array}
        $$ 
        is an isometry between Hilbert spaces. 
        \end{theorem}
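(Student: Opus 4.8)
The plan is to prove Theorem \ref{Paley-Wiener} by establishing the two halves of the isometry separately: first that $\mathcal{L}$ maps $L^2_a(\RR^+)$ \emph{into} $\Hardya$ while preserving the norm (which also yields injectivity), and then that it is \emph{onto}. The central mechanism throughout is the identification of the Laplace transform restricted to a vertical line $\{\Re p=\alpha\}$ with an ordinary Fourier transform, which transfers Plancherel's identity from $\RR$ to the half-plane. I note at the outset that the norm for which $\mathcal{L}$ is isometric is the one induced by the scalar product $\langle\cdot,\cdot\rangle_{\Hardya}$, i.e. carrying the factor $\frac{1}{2\pi}$; the supremum in the definition of $\Hardya$ will turn out to equal $2\pi\abs{u}_{L^2_a}^2$.

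For the isometric inclusion I would fix $u\in L^2_a(\RR^+)$ and, for $\alpha>a$, set $g_\alpha(x)=e^{-\alpha x}u(x)$ for $x>0$ and $g_\alpha(x)=0$ for $x<0$. A Cauchy--Schwarz estimate gives $g_\alpha\in L^1(\RR)$ while domination by $\abs{u}^2$ gives $g_\alpha\in L^2(\RR)$, so that $\widetilde u(\alpha+i\xi)=\int_{\RR}e^{-i\xi x}g_\alpha(x)\,{\rm d}x=\widehat{g_\alpha}(\xi)$; holomorphy of $\widetilde u$ on $\mathbb{C}_a$ then follows from differentiation under the integral sign, justified by the same $L^1$ bound. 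Plancherel's theorem yields
\[
\int_{\RR}\abs{\widetilde u(\alpha+i\xi)}^2\,{\rm d}\xi=2\pi\int_0^\infty e^{-2\alpha x}\abs{u(x)}^2\,{\rm d}x,
\]
and monotone convergence as $\alpha\downarrow a$ gives $\sup_{\alpha>a}\int_{\RR}\abs{\widetilde u(\alpha+i\xi)}^2\,{\rm d}\xi=2\pi\abs{u}_{L^2_a}^2$. Hence $\widetilde u\in\Hardya$ and $\mathcal L$ is isometric.

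For surjectivity I would start from $U\in\Hardya$ and, for each $\alpha>a$, use $U(\alpha+i\cdot)\in L^2(\RR)$ to define $g_\alpha:=\mathcal F^{-1}[U(\alpha+i\cdot)]\in L^2(\RR)$, so $\widehat{g_\alpha}(\xi)=U(\alpha+i\xi)$, or equivalently
\[
e^{\alpha x}g_\alpha(x)=\frac{1}{2\pi i}\int_{\Re p=\alpha}U(p)e^{px}\,{\rm d}p.
\]
The key claim is that $u(x):=e^{\alpha x}g_\alpha(x)$ is independent of $\alpha$. Granting this, $g_\alpha(x)=e^{-\alpha x}u(x)$ for all $\alpha>a$, and since
\[
\int_{-\infty}^0 e^{-2\alpha x}\abs{u}^2=\|g_\alpha\|_{L^2(-\infty,0)}^2\le\frac{1}{2\pi}\sup_{\beta>a}\int_{\RR}\abs{U(\beta+i\xi)}^2\,{\rm d}\xi
\]
is bounded uniformly in $\alpha$, letting $\alpha\to+\infty$ forces $u=0$ a.e. on $(-\infty,0)$. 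Thus $u$ is supported in $\RR^+$, monotone convergence as $\alpha\downarrow a$ gives $u\in L^2_a(\RR^+)$, and by construction $\widetilde u(\alpha+i\xi)=\widehat{g_\alpha}(\xi)=U(\alpha+i\xi)$, i.e. $\mathcal L u=U$.

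The main obstacle is precisely the $\alpha$-independence of $e^{\alpha x}g_\alpha(x)$. I would obtain it from the Bromwich representation above by applying Cauchy's theorem to $p\mapsto U(p)e^{px}$, holomorphic on $\mathbb{C}_a$, over the rectangle with vertical sides $\Re p=\alpha_1,\alpha_2$ (for $a<\alpha_1<\alpha_2$) and horizontal sides $\Im p=\pm R$. The two horizontal contributions must be shown to vanish along a sequence $R_n\to\infty$; this is exactly where membership in $\Hardya$ enters, since by Fubini the function $\omega\mapsto\int_{\alpha_1}^{\alpha_2}\abs{U(\sigma+i\omega)}^2\,{\rm d}\sigma$ is integrable on $\RR$ (bounded by $(\alpha_2-\alpha_1)\|U\|_{\Hardya}^2$), hence tends to $0$ along some $R_n$, and Cauchy--Schwarz then controls the segment integrals. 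This yields $e^{\alpha_1 x}g_{\alpha_1}=e^{\alpha_2 x}g_{\alpha_2}$ a.e., the desired independence. The remaining verifications---the $L^1\cap L^2$ membership of $g_\alpha$, the interchange of limit and integral, and the final identification $\mathcal L u=U$---are routine.
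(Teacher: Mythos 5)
The paper does not prove this statement: it is quoted as the classical Paley--Wiener theorem for the Hardy space of a half-plane, so there is no in-paper argument to compare against. Your proposal is the standard textbook proof of that result and is correct: the isometric embedding via Plancherel on vertical lines together with monotone convergence as $\alpha\downarrow a$, and surjectivity via the contour-shift argument (with the horizontal sides of the rectangle killed along a subsequence $R_n\to\infty$ supplied by the $L^1$ bound from Fubini) followed by the $\alpha\to+\infty$ limit to force the support into $\RR^+$. You also correctly flag the only point worth being careful about, namely that the isometry holds for the norm carrying the $\frac{1}{2\pi}$ normalization built into the paper's inner product on $\Hardya$.
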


Recalling that
$$
\widetilde{\frac{du}{dx} }(p)=p\widetilde{u}(p)-u(0)
$$
(whenever these quantities make sense), we also have the following characterization of the weighted Sobolev spaces $H^k_a(\RR^+)$.
\begin{proposition}\label{lemcarSob}
Let $k\in {\mathbb N}$ and $(\uu_0,\dots,\uu_{k-1})\in \RR^k$. The following assertions are equivalent,
\begin{itemize}
\item[{\bf i.}] One has $u\in H^k_a(\RR^+)$ and for all $0\leq j \leq k-1$, $\lim_{x\to 0^+} \dx^j u(x)=\uu_j$.
\item[{\bf ii.}] For all $0\leq j \leq k$, the mapping $p\mapsto p^j\widetilde{u}(p)-\sum_{i=0}^{j-1}p^{j-1-i}\uu_i$ belongs to ${\mathcal H}^2({\mathbb C}_a)$ (with the sum taken to be zero if $j=0$).
\end{itemize}
Moreover, for all  $0\leq j \leq k$, one has $\widetilde{\dx^j u}=p^j\widetilde{u}(p)-\sum_{i=0}^{j-1}p^{j-1-i}\uu_i$.
\end{proposition}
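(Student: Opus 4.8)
The engine of the proof is the Paley--Wiener isometry of Theorem~\ref{Paley-Wiener}, which turns membership in $L^2_a(\RR^+)$ into membership of the Laplace transform in $\Hardya$; the whole strategy is to transfer differentiation and boundary traces through this isometry, differentiation becoming multiplication by $p$ modulo trace terms. Throughout I would reduce to the unweighted case $a=0$ by setting $v:=e^{-a\cdot}u$: indeed $v\in L^2(\RR^+)\Leftrightarrow u\in L^2_a(\RR^+)$, the boundary trace of $\widetilde u$ on $a+i\RR$ is exactly $\mathcal F\overline v$ (the Fourier transform of the extension of $v$ by zero to $\RR$), and the Leibniz rule shows $u\in H^k_a(\RR^+)\Leftrightarrow v\in H^k(\RR^+)$, the two families of traces $(\dx^j u(0))_j$ and $(\dx^j v(0))_j$ being related by an invertible triangular linear map. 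This bookkeeping is routine and I would dispatch it quickly.

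For the direction (i)$\Rightarrow$(ii), I would first record the integration-by-parts identity $\widetilde{\dx u}(p)=p\widetilde u(p)-u(0)$ for $u\in H^1_a(\RR^+)$. The only delicate point is the vanishing of the boundary term at infinity: since $u,\dx u\in L^2_a$, the function $v=e^{-a\cdot}u$ lies in $H^1(\RR^+)$, hence is continuous, tends to $0$ at infinity and has a trace $v(0)=u(0)$, so $e^{-px}u(x)=e^{-(p-a)x}v(x)\to0$ for $\Re p>a$. Applying this identity successively to $u,\dx u,\dots,\dx^{k-1}u$ (each of which lies in $H^1_a$) and substituting $\dx^j u(0)=\uu_j$, a one-line induction yields $\widetilde{\dx^j u}(p)=p^j\widetilde u(p)-\sum_{i=0}^{j-1}p^{j-1-i}\uu_i$ for $0\le j\le k$; since $\dx^j u\in L^2_a$, Paley--Wiener gives that this function lies in $\Hardya$. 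This proves (ii) together with the ``moreover'' formula.

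The direction (ii)$\Rightarrow$(i) is the substantive one. From the case $j=0$ we get $\widetilde u\in\Hardya$, hence $u\in L^2_a$ and $v\in L^2(\RR^+)$. I would then argue by induction on $j$, the core being the following claim in the unweighted variable: if $v\in L^2(\RR^+)$ and $p\widetilde v(p)-c\in\Hardy$ for some constant $c$, then $v\in H^1(\RR^+)$ with $v(0)=c$ and $\widetilde{\dx v}=p\widetilde v-c$. To prove the claim, let $w:=\mathcal L^{-1}(p\widetilde v-c)\in L^2(\RR^+)$ and extend $v,w$ by zero to $\overline v,\overline w$ on $\RR$. Evaluating boundary values on $i\RR$ identifies $\mathcal F\overline w(\xi)=i\xi\,\mathcal F\overline v(\xi)-c$, that is $\mathcal F\big((\overline v)'\big)=\mathcal F\overline w+c$, so that $(\overline v)'=\overline w+c\,\delta_0$ as tempered distributions on $\RR$. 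Reading off this single identity yields simultaneously that $\dx v=w\in L^2$ on $(0,\infty)$ (whence $v\in H^1(\RR^+)$) and that the jump of $\overline v$ at the origin equals $c$, i.e. $v(0^+)=c$. Feeding the $j$-th condition of (ii), rewritten as $p\,\widetilde{\dx^{j-1}v}(p)-(\text{trace})\in\Hardy$, into this claim upgrades $v\in H^{j-1}$ to $v\in H^{j}$ with the prescribed trace of $\dx^{j-1}v$, and after $k$ steps the weight reduction returns $u\in H^k_a(\RR^+)$ with $\dx^j u(0)=\uu_j$.

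The main obstacle is precisely this converse, and within it the rigorous recovery of both the $H^1$-regularity \emph{and} the pointwise trace from a purely algebraic relation on the Laplace transform. The naive route through a pointwise primitive $x\mapsto c+\int_0^x w$ fails because this antiderivative need not lie in $L^2_a$ when $a\le 0$; this is why I would route the argument through the zero-extension and the distributional identity $(\overline v)'=\overline w+c\,\delta_0$, in which the Dirac mass transparently encodes the boundary trace and circumvents any decay issue at infinity.
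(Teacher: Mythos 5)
The paper states this proposition without proof (it is invoked as a classical characterization), so there is no in-text argument to compare yours against; I can only assess your proof on its own merits, and it is correct and complete in its essentials. The forward direction via integration by parts and induction is standard and you justify the only delicate point (vanishing of the boundary term at infinity via $v=e^{-a\cdot}u\in H^1(\RR^+)$). The converse is indeed the substantive part, and your route through the zero-extensions and the distributional identity $(\overline v)'=\overline w+c\,\delta_0$ is the right one: it simultaneously delivers $\dx v=w\in L^2$ on $(0,\infty)$ and the trace $v(0^+)=c$, and the inductive step correctly rewrites the $j$-th condition of (ii) as $p\,\widetilde{\dx^{j-1}u}(p)-\uu_{j-1}\in\Hardya$ using the already-established formula at level $j-1$. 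Two points are glossed but are genuinely routine: first, identifying the boundary value of $p\widetilde v(p)-c$ on the critical line with $i\xi\,\mathcal F\overline v(\xi)-c$ requires a one-line limiting argument (convergence of $\widetilde v(\alpha+i\cdot)$ to $\mathcal F\overline v$ in $L^2$ as $\alpha\to0^+$, hence a.e.\ along a subsequence); second, the reduction of condition (ii) from level $a$ to level $0$ replaces the monomials $p^{j-1-i}$ by $(q+a)^{j-1-i}$, so the constants $\uu_i$ get transformed by an invertible triangular map — harmless, but worth a sentence since (ii) must be used for \emph{all} $0\le j\le k$ simultaneously for the triangularity to close. Neither point is a gap.
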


\subsection{Reminder on the standard transport equation}\label{sectbehav}

Let us start with some considerations on the standard initial boundary value problem for the transport equations
$\dt u +\dx u=f$ (referred to as right-going case) and $\dt u -\dx u=f$  (left-going case).
\subsubsection{The right-going case}
We consider here the following initial boundary value problem
\begin{equation}\label{IBVPtransp}
\begin{cases}
\dt u +\dx u &=f,\\
u_{\vert_{x=0}}&=\uu,\\
u_{\vert_{t=0}}&=u^{\rm in},
\end{cases}
\end{equation}
with $f\in {\mathbb Y}^1$, $u^{\rm in}\in H^1(\RR^+)$ and $\uu\in H^1_{\rm loc}(\RR^+)$. In order for \eqref{IBVPtransp} to admit a solution $u\in {\mathbb X}^1=C(\RR^+_t;H^1(\RR^+_x)) \cap C^1(\RR^+_t;L^2(\RR^+_x))$, and therefore continuous on $[0,\infty)\times [0,\infty)$, it is necessary that 
$$
\uu(t=0)=u^{\rm in}(x=0).
$$
This {\it compatibility condition} is actually sufficient to ensure the existence and uniqueness of such a solution. Even if the data are more regular, i.e. if $f\in {\mathbb Y}^n$, $u^{\rm in}\in H^n(\RR^+)$ and $\uu\in H^n_{\rm loc}(\RR^+)$ for some $n>1$, one cannot expect in general the solution to be in ${\mathbb X}^n$. It is a general feature of first order hyperbolic systems that such a regularity is achieved if and only if $n$ algebraic compatibility conditions are satisfied  (see for instance \cite{BenzoniSerre,Metivier2001,Metivier2012,IguchiLannes}). 
 Of course, the situation is the same if we choose to work in the weighted space ${\mathbb X}^n_a$ since the presence of the weight changes the integrability properties at infinity, but not local regularity.
 In the present case, this can easily be checked on the following explicit representation of the solution
\begin{equation}\label{soltransp}
u(t,x)=u^{\rm in}(x-t)+\uu(t-x)+\int_0^t f(t',x-t+t'){\rm d}t',
\end{equation}
where $u^{\rm in}$, $\uu$ and $f(t,\cdot)$ are extended by zero in order to be considered as functions defined on the full line $\RR$ instead of $\RR^+$.

\subsubsection{The left-going case}

It is well-known that an initial boundary value problem similar to \eqref{IBVPtransp} is ill-posed for the left-going transport equation $\dt u-\dx u=f$. Indeed, the initial value problem (without boundary condition)
\begin{equation}\label{IVPtransp}
\begin{cases}
\dt u -\dx u &=f,\\
u_{\vert_{t=0}}&=u^{\rm in},
\end{cases}
\end{equation}
is well posed, and the solution can be explicitly written as
\begin{equation}\label{left}
u(t,x)=u^{\rm in}(x+t)+\int_0^t f(t',x+t-t'){\rm d}t';
\end{equation}
in particular, the boundary value $\uu$ is given in terms of $u^{\rm in}$ and $f$ through the relation
$$
\uu(t)=u^{\rm in}(t)+\int_0^t f(t',t-t'){\rm d}t'
$$
and therefore cannot be freely prescribed. Note that using this relation in \eqref{left}, one can express the solution in terms of the boundary data instead of the initial data, namely,
\begin{equation}\label{left_bis}
u(t,x)=\uu(x+t)-\int_0^{x} f(x+t-x',x'){\rm d}x'.
\end{equation}
This proves in particular that the following boundary value problem (without initial condition) 
\begin{equation}\label{BVPtransp}
\begin{cases}
\dt u -\dx u &=f,\\
u_{\vert_{x=0}}&=\uu,
\end{cases}
\end{equation}
is also well-posed for the left-going transport equation. \\
We note finally that  for the initial value problem \eqref{IVPtransp} as well as for the boundary value problem \eqref{BVPtransp} (which are essentially the same by switching the variables $t$ and $x$) the solution $u$ belongs to $ {\mathbb X}^1$ if the data are smooth enough without having to impose any compatibility condition, contrary to what we saw for the right-going case.

\subsection{The nonlocal transport equation}\label{sectNLT}

The aim of this section is to investigate the behavior of nonlocal perturbations of the right-going and left-going transport equations respectively given by
\begin{equation}\label{NLT}
\dt u +\cK^0_\mu*_x\dx u =f\quad \mbox{ and }\quad \dt u -\cK^0_\mu*_x\dx u =f,
\end{equation}
where $*_x$ stands for the causal convolution with respect to the space variable, 
$$
\forall x\in \RR^+, \qquad f*_xg(x)=\int_0^x f(x-x')g(x'){\rm d} x'
$$
and with the Bessel kernel $\cK^0_\mu$ as in \eqref{defK0K1}; in particular, we recall that
$$
\widetilde{\cK^0_\mu}(p)=\frac{1}{\sqrt{1+\kappa^2p^2}} \qquad (\kappa^2=\mu/3).
$$
\begin{remark}\label{rempertt}
 Though we consider here the Bessel kernel $\cK^0_\mu$, the results of this section can easily be adapted to other kernels. 
 \end{remark}

 An important feature of the family $(\cK^0_\mu)_{\mu>0}$ is that it formally converges to the Dirac mass at $x=0$ as $\mu\to 0$, so that the nonlocal transport equations \eqref{NLT} formally converge to the standard right-going and left-going transport equations respectively, namely,
 $$
 \dt u +\dx u =f\quad \mbox{ and }\quad \dt u -\dx u =f.
$$
A natural question is therefore to ask whether the nonlocal initial and/or boundary value problems have a similar behavior to the behavior of their local counterpart described in \S \ref{sectbehav}.

\subsubsection{The right-going case}

We want to address in this section the same kind of initial boundary value problem as \eqref{IBVPtransp}, but where the space derivative is now replaced by a nonlocal term, namely, we consider
\begin{equation}\label{IBVPtranspNL}
\begin{cases}
\dt u +\cK^0_\mu*_x\dx u &=f,\\
u_{\vert_{x=0}}&=\uu,\\
u_{\vert_{t=0}}&=u^{\rm in}.
\end{cases}
\end{equation}
%with $f\in L^1_{\rm loc}(\RR^+_ t;H^1(\RR^+_x))$, $u^{\rm in}\in H^1(\RR_+)$ (or actually weighted versions of these spaces, as we shall see below), and $\uu\in H^1_{\rm loc}(\RR^+)$, and

As for \eqref{IBVPtransp}, if there exists a solution  $u\in {\mathbb X}^1$ (or more generally in the  weighted version ${\mathbb X}^1_a$ with $a\geq 0$) to \eqref{IBVPtranspNL}, then it is continuous at $x=t=0$ and the data must therefore satisfy the same compatibility condition 
\begin{equation}\label{CCT1}
\uu(t=0)=u^{\rm in}(x=0)
\end{equation}
 as for the standard transport equation.\\
 There is however a new compatibility condition that arises here. Indeed, since $\cK^0_\mu\in L^1_{\rm loc}(\RR^+)$, the trace of $\cK^0_\mu*_x \dx u$ at $x=0$ is well defined if $\dx u\in C(\RR^+_t;L^2_{\rm loc}(\RR^+_x))$, and it must be equal to zero by definition of the convolution. Taking the trace of the first equation in \eqref{IBVPtranspNL}, one therefore finds the following additional compatibility condition for the existence of solutions with the aforementioned regularity,
\begin{equation}\label{CCT2}
\forall t\in \RR^+,\qquad \dt \uu(t)= f(t,0).
\end{equation}
\begin{remark}
The similar procedure applied to the standard transport problem \eqref{IBVPtranspNL} yields the relation
$$
\dx u (t,0)=-\dt \uu(t)+f(t,0),
$$
which is not a compatibility condition but an information on the behavior of the trace of $\dx u$ at the boundary.
\end{remark}
If these two compatibility conditions are satisfied, the theorem below shows the well-posedness of the nonlocal initial boundary value problem \eqref{IBVPtranspNL}. 
We recall that the functional spaces have been defined in \S \ref{sectFS}; note also that we have to work in weighted spaces here  in order to compensate the slow decay of $\cK^0_\mu$ at infinity (which is of order $O(\abs{x}^{-1/2}$) and that more information on the regularity of the solution is given in Corollary \ref{cororeg} below.
 \begin{theorem}\label{theo-pde-laplace}
      Let $a>0$ and $f\in {\mathbb Y}^1_a$, $u^{\rm in}\in H_a^1(\RR_x^+)$ and $\uu \in W^{1,1}_{\rm loc}(\RR^+_t)$. Assume moreover that the compatibility conditions \eqref{CCT1} and \eqref{CCT2} hold.
      Then there exists a unique solution $u\in {\mathbb X}^1_a$ to the nonlocal initial boundary value problem \eqref{IBVPtranspNL}, 
 and there exists $c_a>0$ such that, for all $t\in \RR^+$,
$$
\vert u(t,\cdot ) \vert_{H_a^1(\RR^+_x)}\leq  e^{-c_a t}\vert u^{\rm in} \vert_{H_a^1(\RR^+_x)}+\int_0^t  e^{-c_a(t-t')}\big[ \abs{f(t',\cdot)}_{H_a^1(\RR^+_x)}
+\abs{{\mathcal K^0_\mu}}_{L_a^2} \abs{\uu(t')}\big]
{\rm d}t'.
 $$     
 If moreover $\uu=0$ then the result still holds with $a=c_a=0$.
        \end{theorem}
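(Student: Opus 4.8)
The plan is to turn the problem into a family of scalar ODEs in time by taking the Laplace transform in the space variable $x$, exactly as suggested by the reformulation of \S\ref{sectlinear}. Writing $\lambda(p)=p\,\widetilde{\cK^0_\mu}(p)=\dfrac{p}{\sqrt{1+\kappa^2p^2}}$ and recalling the convolution rule together with $\widetilde{\dx u}(p)=p\widetilde{u}(p)-\uu(t)$, applying $\widetilde{\phantom{u}}$ to the first equation of \eqref{IBVPtranspNL} gives, for each fixed $p\in{\mathbb C}_a$,
\[
\dt\widetilde{u}(t,p)+\lambda(p)\widetilde{u}(t,p)=\widetilde{f}(t,p)+\widetilde{\cK^0_\mu}(p)\,\uu(t),\qquad \widetilde{u}(0,p)=\widetilde{u^{\rm in}}(p).
\]
Duhamel's formula then yields the explicit candidate $\widetilde{u}(t,p)=e^{-\lambda(p)t}\widetilde{u^{\rm in}}(p)+\int_0^t e^{-\lambda(p)(t-t')}\big(\widetilde{f}(t',p)+\widetilde{\cK^0_\mu}(p)\uu(t')\big)\,{\rm d}t'$, and the whole proof amounts to showing that its inverse Laplace transform is well defined and lies in ${\mathbb X}^1_a$.

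The main analytic obstacle, playing here the role that strict hyperbolicity plays for the ordinary transport equation, is a spectral bound on the symbol: I would prove that there is $c_a>0$, depending only on $a$ and $\kappa$, such that $\Re\lambda(p)\geq c_a$ whenever $\Re p\geq a$. This makes the factor $\abs{e^{-\lambda(p)t}}=e^{-t\Re\lambda(p)}$ decay uniformly on the contour $\Re p=a$, and it is the source of the $e^{-c_a t}$ in the estimate; it encodes the right-going nature of \eqref{NLT} (for the left-going sign one would find $\Re\lambda<0$ and the roles of initial and boundary data would swap, cf.\ \S\ref{sectbehav}). I expect this to be the delicate step: one must control $\Re\big(p/\sqrt{1+\kappa^2p^2}\big)$ on $\{\Re p\geq a\}$, checking positivity away from the branch points $\pm i\kappa^{-1}$ and that $\lambda(p)\to\kappa^{-1}>0$ as $\abs{p}\to\infty$. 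The borderline $a=0$ only gives $\Re\lambda\geq 0$, which is exactly why the final statement requires $\uu=0$: since $\cK^0_\mu$ decays like $\abs{x}^{-1/2}$ it lies in $L^2_a$ for $a>0$ but not in $L^2_0$, so the forcing $\widetilde{\cK^0_\mu}\uu$ cannot be controlled in ${\mathcal H}^2({\mathbb C}_0)$ unless $\uu\equiv 0$.

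With the spectral bound in hand, regularity and the identification of the trace follow from the Paley--Wiener isometry (Theorem \ref{Paley-Wiener}) and the characterization of $H^1_a(\RR^+)$ in Proposition \ref{lemcarSob}. That $\widetilde{u}(t,\cdot)\in{\mathcal H}^2({\mathbb C}_a)$ is immediate from the Duhamel formula and $\abs{e^{-\lambda(p)t}}\leq e^{-c_at}$; the substantive point is $p\widetilde{u}(t,p)-\uu(t)\in{\mathcal H}^2({\mathbb C}_a)$, i.e.\ $\dx u(t,\cdot)\in L^2_a$ with the correct trace $\uu(t)$ at $x=0$. Here the two compatibility conditions enter precisely: as $\abs{p}\to\infty$ along $\Re p=a$ one has $\lambda(p)\to\kappa^{-1}$, $p\widetilde{\cK^0_\mu}(p)\to\kappa^{-1}$, $p\widetilde{u^{\rm in}}(p)\to u^{\rm in}(0)$ and $p\widetilde{f}(t',p)\to f(t',0)$, so the non-decaying part of $p\widetilde{u}(t,p)$ is the function $g$ solving $\dot g+\kappa^{-1}g=f(t,0)+\kappa^{-1}\uu(t)$ with $g(0)=u^{\rm in}(0)$. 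Conditions \eqref{CCT1} and \eqref{CCT2} say exactly that $\uu$ solves the same Cauchy problem, whence $g\equiv\uu$ by uniqueness and $p\widetilde{u}-\uu$ genuinely decays. The analogous computation for $\dt\widetilde{u}$ supplies the time regularity needed to conclude $u\in{\mathbb X}^1_a$, while uniqueness follows from injectivity of the Laplace transform.

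Finally, the quantitative estimate is read off directly from the Duhamel formula: taking moduli on $\Re p=a$, using $\abs{e^{-\lambda(p)t}}\leq e^{-c_a t}$, taking the ${\mathcal H}^2$-norm in $\xi$ with Minkowski's integral inequality, and translating back through Paley--Wiener (which identifies the $L^2_a$ norm with the ${\mathcal H}^2({\mathbb C}_a)$ norm) gives the $L^2_a$ bound; the boundary contribution appears through $\|\widetilde{\cK^0_\mu}(\cdot)\uu(t')\|_{{\mathcal H}^2({\mathbb C}_a)}=\abs{\cK^0_\mu}_{L^2_a}\abs{\uu(t')}$ since $\uu(t')$ is constant in $p$. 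Running the same argument on $p\widetilde{u}-\uu$ (equivalently, on the equation satisfied by $\dx u$) upgrades this to the stated $H^1_a$ estimate, and the $\uu=0$, $a=c_a=0$ case is then the limiting instance in which the weight is no longer needed.
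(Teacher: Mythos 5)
Your proposal follows the paper's proof essentially step for step: Laplace transform in $x$, Duhamel's formula, the key spectral bound $\Re\big(p\,\widetilde{\cK^0_\mu}(p)\big)\geq c_a>0$ on ${\mathbb C}_a$ (the paper's Lemma \ref{lemHardy}), and the Paley--Wiener identification, with the two compatibility conditions entering exactly as you describe to cancel the part of $p\widetilde{u}-\uu$ that fails to lie in ${\mathcal H}^2({\mathbb C}_a)$. Your repackaging of that cancellation as ``$\uu$ and the limit $g$ of $p\widetilde{u}$ solve the same Cauchy problem $\dot g+\kappa^{-1}g=f(\cdot,0)+\kappa^{-1}\uu$, $g(0)=u^{\rm in}(0)$'' is the same computation as the paper's explicit integration by parts in the boundary term, read off at the level of the limit $p\,\widetilde{\cK^0_\mu}(p)\to\kappa^{-1}$ (to make it rigorous one still needs, as the paper's exact decomposition provides automatically, that the $O(p^{-2})$ discrepancy between $p\,\widetilde{\cK^0_\mu}(p)$ and $\kappa^{-1}$ contributes an ${\mathcal H}^2$ remainder).
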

\begin{proof} {For the sake of clarity, we simply write $\cK$ instead of $\cK^0_\mu$.}
Taking the Laplace transform of \eqref{IBVPtranspNL} with respect to space, one gets that 
$$
\dt \widetilde{u}+\widetilde{{\mathcal K}}(p)(p\widetilde{u}-\uu)=\widetilde{f} \quad \mbox{ on }\quad \RR^+.
$$
Solving this ODE with initial condition $\widetilde{u}_{\vert_{t=0}}=\widetilde{u^{\rm in}}$, one gets the following expression for $\widetilde{u}$,
for all $p\in {\mathbb C}_a$ and $t\in \RR^+$,
\begin{align}
\nonumber
\widetilde{u}(t,p)&=e^{-p\widetilde{\cK}(p) t} \widetilde{u^{\rm in}}(p)+\int_0^t e^{-p\widetilde{\cK}(p) (t-t')}\widetilde{f}{\rm d}t'
+\int_0^t e^{-p\widetilde{\cK}(p) (t-t')} \widetilde{\cK}(p)\uu(t'){\rm d}t' \\
\label{exptu}
&=:  \widetilde{u_1}+ \widetilde{u_2}+ \widetilde{u_3}.
\end{align}
Since the  Paley-Wiener Theorem \ref{Paley-Wiener} states that the Laplace transform is an  isometry between $L_a^2(\RR^+)$ and ${\mathcal H}^2({\mathbb C}_a)$, the following lemma shows that both $u_1$ and $u_2$ belong to $C(\RR^+_t;L^2_a(\RR^+_x))$ if $u^{\rm in}\in L^2_a(\RR^x)$ and $f\in L^1_{\rm loc}(\RR^+_t;L^2_a(\RR^+_x))$.
\begin{lemma}\label{lemHardy}
Let $a\geq 0$. For all $U\in {\mathcal H}^2({\mathbb C}_a)$, the mapping 
$$
\begin{array}{lcl} 
\RR^+ &\to&  {\mathcal H}^2({\mathbb C}_a)\\
t & \mapsto &   \big(p \mapsto e^{-p\widetilde{\cK}(p) t} U(p) \big)
\end{array}
$$
is well defined and continuous, and for all $t\in \RR^+$, $\Vert  e^{-p\widetilde{\cK}(p) t} U \Vert_{{\mathcal H}^2({\mathbb C}_a)}\leq  \Vert  U \Vert_{{\mathcal H}^2({\mathbb C}_a)} $.\\
If moreover $a>0$ then there exists $c_a>0$ such that for all $t\in \RR^+$,
$$
\Vert  e^{-p\widetilde{\cK}(p) t} U \Vert_{{\mathcal H}^2({\mathbb C}_a)}\leq e^{-c_a t} \Vert  U \Vert_{{\mathcal H}^2({\mathbb C}_a)} .
$$
\end{lemma}
\begin{proof}[Proof of the lemma]
Except for the last assertion, we consider only the case $a=0$ since the case $a>0$ can easily be deduced from it. From the definition of $ {\mathcal H}^2({\mathbb C}_0)$ and Lebesgue's dominated convergence theorem, it is sufficient to prove that $e^{-p\widetilde{\cK}(p) t} $ is holomorphic and bounded on ${\mathbb C}_0$. The fact that it is holomorphic directly stems from  the explicit expression $\widetilde{\cK}(p)=(1+\kappa^2 p^2)^{-1/2}$. For the boundedness, this is a consequence of the fact that $\Re (p \widetilde{\cK}(p) ) \geq 0$ on  ${\mathbb C}_0$, as we now prove. For all $p= \alpha + i \xi \in \mathbb{C}_0$, one computes
\begin{equation}\label{exprsK}
\Re( p \hat{{\mathcal K}}(p)) =  \frac{ \alpha \, \Re ( {\sqrt{1+ \kappa^2p^2}}) + \xi \,  \Im ( {\sqrt{1+ \kappa^2p^2}})  }{| \sqrt{1+ \kappa^2p^2} |^2}.
\end{equation}
Since $\Re (\sqrt{1+ \kappa^2p^2})$ is positive (by definition of the square root) and the sign of $\Im ( {\sqrt{1+ \kappa^2p^2}})$ is the same as the sign  of the product $ \alpha \, \xi$, one gets the result. \\
Since we have proved that $\Re (p \widetilde{\cK}(p) ) \geq 0$ on  ${\mathbb C}_0$, the last assertion follows if we can prove that $\Re (p \widetilde{\cK}(p) )$ does not vanish on ${\mathbb C}_a$ if $a>0$. Since both terms in the numerator in \eqref{exprsK} are positive, both must vanish if $\Re( p \widetilde{{\mathcal K}}(s)) $ vanishes. Since $\alpha>0$ on ${\mathbb C}_0$, this implies that there should be $p=\alpha+i\xi \in {\mathbb C}_a$ such that $\Re(\sqrt{1+\kappa^2 p^2})=0$ and $ \xi \,  \Im ( {\sqrt{1+ \kappa^2p^2}})=0$, which is obviously not possible.
\end{proof}
Remarking that for any $a>0$, one has $\widehat{\mathcal K}\in {\mathcal H}^2({\mathbb C}_a)$, it is also a direct consequence of the lemma that there is $c_a>0$ such that
\begin{align*}
\Vert{\widetilde{u}_3}\Vert_{\Hardya}&\leq
 \Vert{\widetilde{\cK}}\Vert_{\Hardya} \int_0^t e^{-c_a (t-t')} \vert \uu(t')\vert {\rm d}t'.
 \end{align*}
Together with the results already proved on $\widetilde{u}_1$ and $\widetilde{u_2}$, we deduce (see the Paley-Wiener Theorem \ref{Paley-Wiener} below) that
$$
\vert u(t,\cdot ) \vert_{L_a^2(\RR^+_x)}\leq  e^{-c_a t}\vert u^{\rm in} \vert_{L_a^2(\RR^+_x)}+\int_0^t  e^{-c_a(t-t')}\big[ \abs{f(t',\cdot)}_{L_a^2(\RR^+_x)}
+\abs{{\mathcal K}}_{L_a^2} \abs{\uu(t')}\big]
{\rm d}t'.
 $$     
In order to conclude the proof of the theorem, we still need to control $\dx u$ and $\dt u$.
\begin{itemize}
\item Control of $\dx u$. We want to show that $\dx u\in C(\RR^+_t;L^2_a(\RR^+_x))$, or equivalently that $\widetilde{\dx u} \in C(\RR^+_t;{\mathcal H}^2({\mathbb C}_a))$. Since $\widetilde{\dx u}=p\widetilde{u}-\uu$, we consider
$$
p\widetilde{u}(t,p)=e^{-p\widetilde{\cK}(p) t} p\widetilde{u^{\rm in}}(p)+\int_0^t e^{-p\widetilde{\cK}(p) (t-t')}p\widetilde{f}{\rm d}t'
+\int_0^t e^{-p\widetilde{\cK}(p) (t-t')} p\widetilde{\cK}(p)\uu(t'){\rm d}t'.
$$
Writing $p\widetilde{u^{\rm in}}=\widetilde{\dx u^{\rm in}}+u^{\rm in}(0)$, $p\widetilde{f}(t,p)=\widetilde{\dx f}(t,p)+f(t,0)$, we can remark that
\begin{align*}
\int_0^t e^{-p\widetilde{\cK}(p) (t-t')} p\widetilde{\cK}(p)\uu(t'){\rm d}t'&=\int_0^t \partial_{t'}\big( e^{-p\widetilde{\cK}(p) (t-t')} \big)\uu(t'){\rm d}t' \\
&=\uu(t)-e^{-p\widetilde{\cK}(p)t}\uu(0)-\int_0^t  e^{-p\widetilde{\cK}(p) (t-t')} \dt\uu(t'){\rm d}t' ,
\end{align*}
from which we deduce that
\begin{align*}
\widetilde{\dx u}(t,p)=&
e^{-p\widetilde{\cK}(p) t} \widetilde{\dx u^{\rm in}}(p)+\int_0^t e^{-p\widetilde{\cK}(p) (t-t')}\widetilde{\dx f}{\rm d}t'\\
&+e^{-p\widetilde{\cK}(p) t}\big( u^{\rm in}(0)-\uu(0) \big)
+\int_0^t e^{-p\widetilde{\cK}(p) (t-t')} \big( f(t',0)- \dt \uu(t') \big) {\rm d}t'.
\end{align*}
While the first two components of the right-hand side belong to $C(\RR^+_t;{\mathcal H}^2({\mathbb C}_a))$ by Lemma \ref{lemHardy}, the last two ones do not, unless the compatibilty conditions given in the statement of the Theorem are satisfied, in which case these two components cancel and the result follows together with the upper bound
$$
\vert \dx u (t,\cdot)\vert_{L^2_a} \leq    \vert \dx u^{\rm in}\vert_{L^2_a}+\int_0^t e^{-c_a (t-t')}\vert \dx f(t',\cdot)\vert_{L^2_a}{\rm d}t'.
$$
\item Control of $\dt u$. Using the equations, one has
\begin{align*}
\abs{\dt u}_{L^2_a}&\leq \abs{\cK*_x \dx u}_{L^2_a}+\abs{f}_{L^2_a}\\
&\leq  \abs{\cK}_{L^1_a} \abs{\dx u}_{L^2_a}+\abs{f}_{L^2_a},
\end{align*}
with $L^1_a=L^1(\RR^+,e^{-ax}{\rm d}x)$, showing as needed that $\dt u \in C(\RR^+_t;L^2_a(\RR^+_x))$.
\end{itemize}
The theorem follows easily.
\end{proof}
\begin{remark}\label{remexplCC}
As explained above in Remark \ref{rempertt}, the initial boundary value problem \eqref{IBVPtranspNL} can be seen as a nonlocal perturbation of the standard transport problem \eqref{IBVPtransp} toward which it formally converges when $\mu\to 0$. There seems however to be some discrepancy because {\it two} compatibility conditions, namely, \eqref{CCT1} and \eqref{CCT2}, are needed to ensure the existence of solutions $u\in {\mathbb X}^1_a$ to \eqref{IBVPtranspNL}, while the sole compatibility condition \eqref{CCT1} is sufficient to get a similar result for the standard transport problem \eqref{IBVPtransp}. One should explain why the second compatibility condition \eqref{CCT2} disappears in the formal limit $\mu=0$.\\
The reason is that \eqref{CCT2} is here to ensure continuity of the solution at the boundary $x=0$. Indeed, by the initial value theorem, we know that $\lim_{x\to 0^+} u(t,x)=\lim_{p\in {\mathbb C}_a, \abs{p}\to \infty} p \widetilde{u}(t,p)$, and we therefore get from the Laplace representation formula \eqref{exptu} that
$$
\lim_{x\to 0^+}u(t,x)=e^{-\frac{t}{\kappa}}u^{\rm in}(0)+\int_0^t e^{-\frac{t-t'}{\kappa}}f(t',0){\rm d}t'+\int_0^t e^{-\frac{t-t'}{\kappa}}\frac{1}{\kappa}\uu(t'){\rm d}t'
$$
where we  used the fact that $\lim_{p\in {\mathbb C}_a, \abs{p}\to \infty} p\widetilde{\cK}(p)=\kappa^{-1}$; after an integration by parts, the right-hand side can be written
$$
 \uu(t)+e^{-\frac{t}{\kappa}}\big(u^{\rm in}(0)-\uu(0)\big)+\int_0^t e^{-\frac{t-t'}{\kappa}}\big(f(t',0)-\dt\uu(t')\big) {\rm d}t',
$$
so that, if the first compatibility condition \eqref{CCT1} is satisfied, one has
$$
\lim_{x\to 0^+}u(t,x)-\uu(t)=\int_0^t e^{-\frac{t-t'}{\kappa}}\big(f(t',0)-\dt\uu(t')\big) {\rm d}t',
$$
which is nonzero if the second compatibility condition is not satisfied, hence a lack of continuity at $x=0$ (there would therefore be a Dirac mass at $x=0$ is the expression for $\dx u(t,\cdot)$ that would therefore not be in $L^2_a(\RR^+_x)$ as seen in the proof). However, one readily observes that
$$
\lim_{\mu \to 0}\int_0^t e^{-\frac{t-t'}{\kappa}}\big(f(t',0)-\dt\uu(t')\big) {\rm d}t'=0\qquad (\kappa^2=\mu/3),
$$
so that this discontinuity shrinks to zero in the limit $\mu\to 0$, explaining why the second compatibility condition is no longer necessary in the endpoint case $\mu=0$.
\end{remark}

Before going further, we recall that there are two possibilities to define fractional derivatives of order $\alpha\in (0,1)$ on $\RR^+$ using the convolution kernel ${\mathfrak K}_\alpha(x)=x^{-\alpha}/{\Gamma(1-\alpha)}$ with $\alpha \in (0,1)$ and $\Gamma$ the Euler Gamma function, namely, the Riemann-Liouville and Caputo derivatives, defined respectively as
$$
D^\alpha_{\rm RL} u= \dx \big( {\mathfrak K}_\alpha *_x u)
\quad\mbox{ and }\quad
D^\alpha_{\rm C} u=  {\mathfrak K}_\alpha *_x \dx u.
$$
In the nonlocal initial boundary value problem \eqref{IBVPtranspNL}, the space derivative $\dx u$ in the standard transport equation has been replaced by the nonlocal term ${\mathcal K}^0_\mu*\dx u$ which can be considered as a generalized derivative {\it of Caputo type}, with  the kernel ${\mathfrak K}_\alpha$ replaced by the Bessel kernel $\cK^0_\mu$. It is noteworthy that working with the Riemann-Liouville version of this operator, namely $\dx\big( \cK^0_\mu*_x u)$, the situation is drastically different. Indeed, as shown in the following proposition, it is not possible to impose a boundary data anymore since the knowledge of the initial data suffices to fully determine the solution; in other words, the initial value problem
\begin{equation}\label{IBVPtranspNLRL}
\begin{cases}
\dt u +\dx\big( \cK^0_\mu*_x  u) &=f,\\
u_{\vert_{t=0}}&=u^{\rm in},
\end{cases}
\end{equation}
is well posed on $\RR^+_t\times \RR^+_x$. In particular, the trace of the solution at the boundary $x=0$ is determined by $f$ and $u^{\rm in}$ and therefore cannot be imposed. We also show that if the data $u^{\rm in}$ and $f$ are smoother, then the solution is in ${\mathbb X}_a^2$, but generally not in ${\mathbb X}_a^3$ or higher in the absence of additional compatibility condition (but we show however that the regularity in time can be higher).
\begin{proposition}\label{propRL}
     Let  $a>0$, $n=1$ or $2$, and $f\in {\mathbb Y}^n_a$ and $u^{\rm in}\in H_a^n(\RR_x^+)$.       Then there exists a unique solution $u\in {\mathbb X}_a^n$ to the nonlocal initial boundary value problem \eqref{IBVPtranspNLRL}.
Moreover, one has $u(t,\cdot)_{\vert_{x=0}}=\uu(t)$ for all $t\in \RR^+$, with $\uu(t)$ given by
$$
\uu(t)=e^{-\frac{t}{\kappa}}u^{\rm in}(0)+\int_0^t e^{-\frac{t-t'}{\kappa}}f(t',0){\rm d}t'.
$$
If in addition $f\in C^q(\RR^+_t; H^n_a(\RR^+_x) )$ for some $q\in {\mathbb N}$ then one also has $u\in C^{q+1}(\RR^+_t; H^n_a(\RR^+_x) )$.
\end{proposition}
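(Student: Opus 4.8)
The plan is to follow the Laplace-transform strategy already used for the Caputo problem in Theorem~\ref{theo-pde-laplace}, exploiting the structural simplification produced by the Riemann--Liouville form of the nonlocal term. Writing $\cK$ for $\cK^0_\mu$ and taking the Laplace transform in $x$ of \eqref{IBVPtranspNLRL}, I would first observe that the causal convolution $\cK*_x u$ vanishes at $x=0$, so that $\widetilde{\dx(\cK*_x u)}(p)=p\widetilde{\cK}(p)\widetilde{u}(p)$ carries \emph{no} boundary contribution (contrast with the term $\widetilde{\cK}(p)\uu$ appearing in \eqref{exptu}). For each $p\in\mathbb{C}_a$ this yields the scalar ODE $\dt\widetilde{u}+p\widetilde{\cK}(p)\widetilde{u}=\widetilde{f}$, whose Duhamel solution with datum $\widetilde{u^{\rm in}}$ is
\[
\widetilde{u}(t,p)=e^{-p\widetilde{\cK}(p)t}\widetilde{u^{\rm in}}(p)+\int_0^t e^{-p\widetilde{\cK}(p)(t-t')}\widetilde{f}(t',p)\,{\rm d}t'.
\]
The fact that no term proportional to a boundary value survives is exactly why no boundary datum can be prescribed, why the trace at $x=0$ is instead \emph{determined} by the data, and ultimately why \eqref{IBVPtranspNLRL} requires no compatibility condition. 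From here the membership $u\in C(\RR^+_t;L^2_a)$ and the stated exponential-decay bound would follow verbatim from the Paley--Wiener Theorem~\ref{Paley-Wiener} and Lemma~\ref{lemHardy} applied to the two Duhamel terms. To identify the trace I would use the initial-value theorem as in Remark~\ref{remexplCC}: since $p\widetilde{\cK}(p)=p/\sqrt{1+\kappa^2p^2}\to\kappa^{-1}$ and $\widetilde{\dx u^{\rm in}}(p),\widetilde{\dx f}(t',p)\to0$ as $\abs{p}\to\infty$ in $\mathbb{C}_a$, letting $\abs{p}\to\infty$ in $p\widetilde{u}$ gives $u(t,0)=\uu(t)$ with the announced formula.

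The main work, and the principal obstacle, is the control of the spatial derivatives, and this is where the restriction $n\le2$ enters. For $n=1$ I would write $\widetilde{\dx u}=p\widetilde{u}-\uu(t)$; inserting the Duhamel formula and splitting $p\widetilde{u^{\rm in}}=\widetilde{\dx u^{\rm in}}+u^{\rm in}(0)$ and $p\widetilde{f}(t',p)=\widetilde{\dx f}(t',p)+f(t',0)$, the genuinely differentiated pieces lie in $\Hardya$ by Lemma~\ref{lemHardy}, while the constant-in-$p$ pieces recombine with $\uu(t)$ into the differences $\big(e^{-p\widetilde{\cK}(p)t}-e^{-t/\kappa}\big)u^{\rm in}(0)$ and $\int_0^t\big(e^{-p\widetilde{\cK}(p)(t-t')}-e^{-(t-t')/\kappa}\big)f(t',0)\,{\rm d}t'$. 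The decisive estimate is the asymptotic expansion $p\widetilde{\cK}(p)-\kappa^{-1}=O(\abs{p}^{-2})$ as $\abs{p}\to\infty$, which forces these differences to decay like $O(\abs{p}^{-2})$ and hence to belong to $\Hardya$ (the branch points $\pm i\kappa^{-1}$ staying off $\mathbb{C}_a$ because $a>0$). This is precisely the mechanism replacing the second compatibility condition \eqref{CCT2} of the Caputo case. For $n=2$ one iterates, extracting in addition the trace $\dx u(t,0)=\lim_{\abs{p}\to\infty}p\widetilde{\dx u}(t,p)$; the same expansion now produces a remainder of size $O(\abs{p}^{n-3})=O(\abs{p}^{-1})$, still square-integrable on vertical lines, whereas for $n\ge3$ it would be $O(1)$ and an extra compatibility condition would be needed — this is the analytic origin of the $n\le2$ threshold.

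Finally, the time regularity would be obtained from the ODE $\dt\widetilde{u}=-p\widetilde{\cK}(p)\widetilde{u}+\widetilde{f}$ by noting that $m(p):=p\widetilde{\cK}(p)$ is a bounded holomorphic multiplier on $\mathbb{C}_a$ with $m(p)-\kappa^{-1}=O(\abs{p}^{-2})$; via the Laplace characterization of Proposition~\ref{lemcarSob}, multiplication by $m$ maps $H^n_a$ into itself for $n\le2$ (the $O(\abs{p}^{-2})$ correction absorbing the trace mismatches exactly as in the previous paragraph), so that $\dt u=f-\dx(\cK*_x u)$ inherits the spatial regularity of $u$ and $f$. Here one uses $\cK\in L^1_a\cap L^2_a$ for $a>0$, compensating the $O(\abs{x}^{-1/2})$ decay of $J_0$, to make sense of the convolution terms. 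Differentiating the ODE in time and iterating then propagates $f\in C^q(\RR^+_t;H^n_a)$ to $u\in C^{q+1}(\RR^+_t;H^n_a)$, the convolution supplying the claimed smoothing. Uniqueness is immediate, the Laplace representation being forced by linearity and injectivity of the transform; the only genuinely delicate point throughout is the $O(\abs{p}^{-2})$ expansion and its consequences on vertical-line integrability, everything else being careful but routine bookkeeping of traces.
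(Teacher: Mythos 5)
Your proposal is correct and follows essentially the same route as the paper: Laplace transform in $x$, the Duhamel representation with no boundary contribution, the initial value theorem for the trace, and the decisive observation that $p^{j-1}\bigl(p\widetilde{\cK}(p)-\kappa^{-1}\bigr)=O(\abs{p}^{j-3})$ is square-integrable on vertical lines exactly when $j\le 2$, which is precisely how the paper obtains the $n\le 2$ threshold via Proposition~\ref{lemcarSob}. The only cosmetic differences are that the paper tracks space and time derivatives simultaneously through a single formula for $p^j\widetilde{\dt^l u}$ rather than treating time regularity by differentiating the ODE afterwards, and that Proposition~\ref{propRL} does not actually assert the exponential-decay bound you mention (that belongs to Theorem~\ref{theo-pde-laplace}).
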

\begin{remark}
Comparing the representation of the solution given in \eqref{exptu2} below to the representation of the solution to the initial boundary value problem \eqref{IBVPtranspNL} given in \eqref{exptu}, one can check that they are both the same if $\uu=0$, which is not surprising since one can compute
$$
\dx(\cK^0_\mu*_x u)(t,x)=(\cK^0_\mu *_x \dx u)(t,x) +\cK^0_\mu(x)\uu(t),
$$
so that the Caputo and Riemann-Liouville nonlocal initial boundary value problem coincide when $\uu=0$.
\end{remark}
\begin{proof}
{As previously done, we simply write $\cK=\cK_\mu^0$}. Taking the Laplace transform of \eqref{IBVPtranspNLRL} one readily gets
\begin{equation}\label{exptu2}
\widetilde{u}(t,p)=e^{-p\widetilde{\cK}(p) t}\widetilde{u^{\rm in}}(p)+\int_0^t e^{-p\widetilde{\cK}(p) (t-t')}\widetilde{f}(t',p){\rm d}t';
\end{equation}
by the initial value theorem, one gets that $\lim_{x\to 0^+}u(t,x)=\uu(t)$, with $\uu$ as in the statement of the theorem. \\
For all $j$ and $l$, one deduces from the above formula for $\widetilde{u}$ that
$$
p^j \widetilde{\dt^l u}=(-p\widehat{\cK}(p))^l \big[e^{-p\widehat{\cK}(p)t}p^j \widetilde{u^{\rm in}}
+\int_0^t e^{-p\widetilde\cK(p)(t-t')}p^j\widetilde{f} \big]
+\sum_{m=1}^{l}(-p\widehat{\cK}(p))^{l-m}p^j\widetilde{\dt^{m-1} f}.
$$
Replacing in this expression
$$
p^j\widetilde{v}=\widetilde{\dx^j v}+\sum_{i=0}^{j-1}p^{j-1-i}(\dx^i v)_{\vert_{x=0}}
$$
for $v=u^{\rm in}$, $\widetilde{f}$, $\widetilde{\dt^{m-1} f}$, we obtain
$$
p^j \widetilde{\dt^l u}=\sum_{i=0}^{j-1}p^{j-1-i} U_{li}(p)+F_{lj}(t,p)
$$
(using the convention that the summation is zero if $j-1<0$) with
\begin{align*}
U_{li}(p):= &(-p\widetilde{\cK}(p))^l \big(\dx^i u^{\rm in}(0) +\int_0^t e^{-p\widetilde\cK(p)(t-t')}(\dx^i f)(t',0){\rm d}t' \big)\\
&+\sum_{m=1}^l (-p\widetilde{\cK}(p))^{l-m}(\dt^{m-1}\dx^i f)_{\vert_{x=0}}
\end{align*}
and
$$
F_{lj}(t,p):=(-p\widehat{\cK}(p))^l \big[\widetilde{\dx^ju^{\rm in}}
+\int_0^t e^{-p\widetilde\cK(p)(t-t')}\widetilde{\dx^jf}  \big]+\sum_{m=1}^{l}(-p\widehat{\cK}(p))^{l-m}\widetilde{\dt^{m-1}\dx^j f}.
$$
Remarking that $\lim_{\abs{p}\to \infty} p \widehat{\cK}(p)=\kappa^{-1}$, and introducing $\uu_{li}=\lim_{\abs{p}\to\infty}U_{li}(p)$, namely,
$$
\uu_{li}=(-\kappa)^{-l} \big(\dx^i u^{\rm in}(0) +\int_0^t e^{-\frac{t-t'}{\kappa}}(\dx^i f)(t',0){\rm d}t' \big)\\
+\sum_{m=1}^l (-\kappa)^{-l+m}(\dt^{m-1}\dx^i f)_{\vert_{x=0}}
$$
(of course, $\uu_{00}=\uu$), we can write
$$
p^j \widetilde{\dt^l u}-\sum_{i=0}^{j-1}p^{j-1-i} \uu_{li}(p)=\sum_{i=0}^{j-1}p^{j-1-i} \big(U_{li}(p)-\uu_{li}\big)+F_{lj}(t,p).
$$
From Proposition \ref{lemcarSob}, we can deduce that $\dx^j\dt^l u$ belongs to $C(\RR^+_t;L^2_a(\RR^+_x))$ if the right-hand side of the above equality is in $C(\RR^+_t;{\mathcal H}^2({\mathbb C}_a))$. This is obvious for $F_{lj}$ under the assumptions made in the statement of the proposition (see Lemma \ref{lemHardy}); for the summation, the problem reduces to determine whether the mapping $p\mapsto p^{j-1}\big(p\widehat{\cK}(p)-\kappa^{-1}\big)$ belongs to
${\mathcal H}^2({\mathbb C}_a)$ or not. This mapping being holomorphic on ${\mathbb C}_a$, we just need to check that it is square integrable on $a+i\RR$. Recalling that $\widehat{\cK}(p)=\frac{1}{\sqrt{1+\kappa^2 p^2}}$, and using the fact that for all $p\in {\mathbb C}_a$ one has $\sqrt{p^2}=p$, one has $p^{j-1}\big(p\widehat{\cK}(p)-\kappa^{-1}\big)\sim -\frac{1}{2\kappa^3}p^{j-3}$ at infinity; the mapping is therefore square integrable on $a+i\RR$ if and only if $j\leq 2$, hence the results.
\end{proof}

As a corollary, we can exhibit a smoothing effect for the nonlocal transport problem \eqref{IBVPtranspNL} that does not exist for the standard transport problem \eqref{IBVPtransp}. Indeed, as one can easily check on the explicit expression \eqref{soltransp}, even if the data $u^{\rm in}$, $\uu$ and $f$ are very smooth, the solution is not $C^1(\RR^+\times \RR^+)$ if the additional compatibility condition $\dt \uu (0)=-\dx u^{\rm in}(0)+f(0,0)$ is not imposed. There is a smoothing effect for the nonlocal problem in the sense that the solution constructed in Theorem \ref{theo-pde-laplace} actually belongs to ${\mathbb X}_a^3 \subset C^2(\RR^+\times \RR^+)$ without any additional compatibility condition if the data are smooth enough. Note that using the last statement of Proposition \ref{propRL},  the proof shows that additional regularity in time on $\dx f$ would yield additional regularity in time on $\dx u$.
  \begin{corollary}\label{cororeg}
   Under the assumptions of Theorem   \ref{theo-pde-laplace}, if moreover  $f\in {\mathbb Y}^n_a$, $u^{\rm in}\in H_a^n(\RR_x^+)$ and $\uu \in W^{n,1}_{\rm loc}(\RR^+_t)$ for $n=2$ or $3$, then the solution $u$ provided by the theorem belongs to ${\mathbb X}^n_a$. 
     \end{corollary}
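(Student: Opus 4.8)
The plan is to differentiate the equation once in space and recognize that $v:=\dx u$ solves the Riemann--Liouville problem \eqref{IBVPtranspNLRL}, to which Proposition \ref{propRL} applies directly. Indeed, the proof of Theorem \ref{theo-pde-laplace} produces an explicit Laplace representation of $\widetilde{\dx u}$, and under the two compatibility conditions \eqref{CCT1}--\eqref{CCT2} (which are part of the hypotheses) the two ``boundary-layer'' contributions, proportional to $u^{\rm in}(0)-\uu(0)$ and to $f(t',0)-\dt\uu(t')$, cancel, leaving exactly
$$
\widetilde{\dx u}(t,p)=e^{-p\widetilde{\cK^0_\mu}(p)t}\widetilde{\dx u^{\rm in}}(p)+\int_0^t e^{-p\widetilde{\cK^0_\mu}(p)(t-t')}\widetilde{\dx f}(t',p)\,{\rm d}t'.
$$
This is precisely the representation \eqref{exptu2} of the solution of \eqref{IBVPtranspNLRL} with initial datum $\dx u^{\rm in}$ and source $\dx f$; by uniqueness, $v=\dx u$ is that solution.

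First I would verify the regularity of the auxiliary data: since one loses exactly one space derivative in each component of the scale, $u^{\rm in}\in H^n_a(\RR^+_x)$ gives $\dx u^{\rm in}\in H^{n-1}_a(\RR^+_x)$ and $f\in{\mathbb Y}^n_a$ gives $\dx f\in{\mathbb Y}^{n-1}_a$. As $n\in\{2,3\}$, we have $n-1\in\{1,2\}$, which is exactly the range covered by Proposition \ref{propRL}; that proposition therefore yields $v=\dx u\in{\mathbb X}^{n-1}_a$. This immediately controls every derivative carrying at least one spatial derivative, namely $\dt^j\dx^k u=\dt^j\dx^{k-1}v\in C(\RR^+_t;L^2_a)$ for $k\ge1$ and $j+k\le n$.

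It then remains to control the pure time derivatives $\dt^j u$ ($0\le j\le n$) with their correct space regularity. The base case $u\in C(\RR^+_t;L^2_a)$ comes from the solution $u\in{\mathbb X}^1_a$ already furnished by Theorem \ref{theo-pde-laplace}; for $j\ge1$ I would differentiate \eqref{IBVPtranspNL}, writing $\dt^j u=\dt^{j-1}f-\cK^0_\mu*_x\dt^{j-1}v$. Here $\dt^{j-1}v\in C(\RR^+_t;H^{n-j}_a)$ by $v\in{\mathbb X}^{n-1}_a$, and convolution with the Bessel kernel is bounded on $H^{n-j}_a(\RR^+_x)$ (using $\cK^0_\mu\in L^1_a$ together with its smoothness), so $\cK^0_\mu*_x\dt^{j-1}v\in C(\RR^+_t;H^{n-j}_a)$. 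The only delicate point is the time-continuity of $\dt^{j-1}f$: the $j$-th component of ${\mathbb Y}^n_a$ gives $f\in W^{j,1}_{\rm loc}(\RR^+_t;H^{n-j}_a)$, whence $\dt^{j-1}f\in W^{1,1}_{\rm loc}(\RR^+_t;H^{n-j}_a)\hookrightarrow C(\RR^+_t;H^{n-j}_a)$. Collecting the mixed derivatives from $v$ and the pure time derivatives from the equation gives $\dt^j\dx^k u\in C(\RR^+_t;L^2_a)$ for all $j+k\le n$, that is $u\in{\mathbb X}^n_a$.

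The main obstacle is the reduction step itself. The whole gain of one derivative --- hence the smoothing effect allowing ${\mathbb X}^3_a\subset C^2(\RR^+\times\RR^+)$, one order beyond what Proposition \ref{propRL} gives for \eqref{IBVPtranspNLRL} directly --- hinges on the fact that the second compatibility condition \eqref{CCT2} removes the singular boundary term that would otherwise produce a Dirac mass in $\dx u$ at $x=0$, making $\dx u$ as regular as a genuine solution of the Riemann--Liouville problem. Once this is in place, the final assertion of the statement (extra time regularity of $\dx f$ buying extra time regularity of $\dx u$) follows by transferring the last conclusion of Proposition \ref{propRL} to the auxiliary problem solved by $v$.
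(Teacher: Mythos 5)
Your proof is correct and follows essentially the same route as the paper: identify $v=\dx u$ as the solution of the Riemann--Liouville problem \eqref{IBVPtranspNLRL} with data $\dx u^{\rm in}$ and $\dx f$, apply Proposition \ref{propRL} to get $v\in{\mathbb X}^{n-1}_a$, and then recover the pure time derivatives from the equation $\dt^j u=\dt^{j-1}f-\cK^0_\mu*_x\dt^{j-1}\dx u$ using $\cK^0_\mu\in L^1_a$. The only cosmetic difference is that you justify the reduction through the Laplace representation and the cancellation of the boundary-layer terms under \eqref{CCT1}--\eqref{CCT2}, whereas the paper simply differentiates the PDE in space; both yield the same identification.
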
    
\begin{proof}
Taking the space derivative of the nonlocal transport equation in \eqref{IBVPtranspNL}, it is easy to see that $v=\dx u$ solves the initial boundary value problem
$$
\begin{cases}
\dt v+\dx \big(\cK*_x v)=\dx f,\\
v_{\vert_{t=0}}=\dx u^{\rm in}.
\end{cases}
$$
It follows therefore from Proposition \ref{propRL} that $\dx u\in {\mathbb X}^{n-1}$. We are therefore left to prove that $\dt^j u\in C(\RR_t^+;L^2_a(\RR^+_x))$ for $1\leq j\leq n$; this easily follows from the observation that
$$
\dt^j u=-\cK*_x \dt^{j-1}\dx u +\dt^{j-1}f
$$
and from the fact that $\cK\in L^1_a(\RR^+)$.
\end{proof}

\subsubsection{The left-going case}

As for the right-going case in the previous section, we want to consider a nonlocal perturbation of the standard transport problem in which the
space derivative $\dx$ is replaced by a nonlocal term $\cK^0_\mu * \dx$. As recalled in \S \ref{sectbehav}, for the standard left-going transport equation, one has to consider either the initial value problem or the boundary value problem. While both cases are symmetric in the case of the standard transport equation, this is no longer the case and, as we shall see, the boundary value problem leads simpler expressions. We therefore consider here its nonlocal analogue (see Remark \ref{remamm} below for the nonlocal analogue of the initial value problem),
\begin{equation}\label{IBVPtranspNL_left}
\begin{cases}
\dt u-\cK^0_\mu*_x\dx u &=f,\\
u_{\vert_{x=0}}&=\uu.
\end{cases}
\end{equation}
%with $f\in L^1_{\rm loc}(\RR^+_ t;H^1(\RR^+_x))$, $u^{\rm in}\in H^1(\RR_+)$ (or actually weighted versions of these spaces, as we shall see below), and $\uu\in H^1_{\rm loc}(\RR^+)$, and
As for the boundary value problem \eqref{BVPtransp} for the standard left-going transport equation, there is no compatibility condition like \eqref{CCT1} since $u^{\rm in}$ is not prescribed. On the other hand, the analysis leading to the second compatibility condition \eqref{CCT2} remains valid, and it is still necessary to have 
\begin{equation}\label{CCT2left}
\forall t\in \RR^+,\qquad \dt \uu(t)= f(t,0)
\end{equation}
in order to expect a solution $u$ that belongs to ${\mathbb X}^1$. In the statement below, we use the notation
$$
H^1_a(\RR^+_t\times \RR^+_x):=H^1(\RR^+_t;L^2_a(\RR^+_x))\cap L^2(\RR^+_t; H^1_a(\RR^+_x))
$$
(note that the assumptions on the time dependence of $f$ and $\uu$ are chosen in order to ensure the convergence of the integral term over the range $(t,+\infty)$ and that they could easily be weakened).
 \begin{theorem}\label{theo-pde-laplace_left}
      Let $a>0$, $f\in H^1_a(\RR^+_t\times \RR^+_x)$,  and $\uu \in H^1(\RR^+_t)$. Assume moreover that the compatibility condition \eqref{CCT2left} holds.
      Then there exists a unique solution $u\in {\mathbb X}^1_a$ to the nonlocal boundary value problem \eqref{IBVPtranspNL_left}, 
 and there exists $c_a>0$ such that, for all $t\in \RR^+$,
$$
\vert u(t,\cdot ) \vert_{H_a^1(\RR^+_x)}\leq  \int_t^\infty  e^{c_a(t-t')}\big[ \abs{f(t',\cdot)}_{H_a^1(\RR^+_x)}
+\abs{{\mathcal K^0_\mu}}_{L_a^2} \abs{\uu(t')}\big]
{\rm d}t'.
 $$     
        \end{theorem}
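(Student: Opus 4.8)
The plan is to mirror closely the proof of Theorem \ref{theo-pde-laplace}, the only structural novelty being that, exactly as in the passage from the initial value problem \eqref{IVPtransp} to the boundary value problem \eqref{BVPtransp} for the standard left-going transport equation, the evolution must now be integrated \emph{backwards} in time. Writing $\cK=\cK^0_\mu$ and taking the Laplace transform in space of \eqref{IBVPtranspNL_left}, one is led to the scalar ODE
$$
\dt\widetilde u-p\widetilde\cK(p)\widetilde u=\widetilde f(t,p)-\widetilde\cK(p)\uu(t)\qquad(p\in\mathbb{C}_a).
$$
Contrary to the right-going case, the homogeneous flow $e^{p\widetilde\cK(p)t}$ now \emph{grows} forward in time because $\Re(p\widetilde\cK(p))\ge c_a>0$ on $\mathbb{C}_a$ (this positivity, and the lower bound $c_a$, were established in the proof of Lemma \ref{lemHardy}). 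The relevant solution is therefore the one obtained by integrating from $t'=+\infty$,
$$
\widetilde u(t,p)=-\int_t^\infty e^{p\widetilde\cK(p)(t-t')}\big(\widetilde f(t',p)-\widetilde\cK(p)\uu(t')\big)\,{\rm d}t',
$$
the improper integral being convergent thanks to the decay $\abs{e^{p\widetilde\cK(p)(t-t')}}\le e^{c_a(t-t')}$ for $t'\ge t$ and to the time integrability of $f$ and $\uu$ encoded in the assumptions $f\in H^1_a(\RR^+_t\times\RR^+_x)$ and $\uu\in H^1(\RR^+_t)$. Uniqueness in $\mathbb{X}^1_a$ then follows, as in Theorem \ref{theo-pde-laplace}, from the injectivity of the Laplace transform together with this selection of the non-growing solution.

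The $L^2_a$ estimate is obtained exactly as for \eqref{exptu}: by the Paley--Wiener isometry (Theorem \ref{Paley-Wiener}), the bound of Lemma \ref{lemHardy}, and Minkowski's inequality, one gets
$$
\abs{u(t,\cdot)}_{L^2_a}\le\int_t^\infty e^{c_a(t-t')}\big(\abs{f(t',\cdot)}_{L^2_a}+\abs{\cK}_{L^2_a}\abs{\uu(t')}\big)\,{\rm d}t'.
$$
To upgrade this to the $H^1_a$ bound stated in the theorem, I would control $\widetilde{\dx u}=p\widetilde u-\uu(t)$. Writing $p\widetilde f=\widetilde{\dx f}+f(t,0)$ and integrating by parts in $t'$ the term carrying $\uu$ — using $\partial_{t'}e^{p\widetilde\cK(p)(t-t')}=-p\widetilde\cK(p)e^{p\widetilde\cK(p)(t-t')}$ and the vanishing of the boundary contribution at $t'=+\infty$ — produces exactly the boundary value $\uu(t)$ (which cancels the $-\uu(t)$ above) plus an integral whose integrand is $\dt\uu(t')-f(t',0)$. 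This is where the compatibility condition \eqref{CCT2left} is decisive: it makes that integrand vanish, leaving
$$
\widetilde{\dx u}(t,p)=-\int_t^\infty e^{p\widetilde\cK(p)(t-t')}\widetilde{\dx f}(t',p)\,{\rm d}t',
$$
which belongs to $C(\RR^+_t;\Hardya)$ and obeys the analogous $L^2_a$ bound. The time derivative is then controlled directly from the equation, $\abs{\dt u}_{L^2_a}\le\abs{\cK}_{L^1_a}\abs{\dx u}_{L^2_a}+\abs{f}_{L^2_a}$, so that $u\in\mathbb{X}^1_a$ and the announced estimate holds.

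The main obstacle — and the point where this genuinely differs from Theorem \ref{theo-pde-laplace} — is the treatment of the improper integral at $t'=+\infty$ and the correct identification of the solution in the absence of an initial condition. One must verify that the backward integral converges, that the boundary term at infinity in the integration by parts indeed vanishes (both resting on $\Re(p\widetilde\cK(p))\ge c_a>0$ together with the decay in time of $\uu$ and $f$), and that this selection yields the unique $\mathbb{X}^1_a$ solution. Once these delicate points are settled, all remaining steps (product and trace estimates, continuity in $t$) are routine adaptations of the right-going case already treated above.
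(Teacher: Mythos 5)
Your proposal is correct and follows essentially the same route as the paper: the paper's (omitted) proof rests on exactly the backward-in-time Laplace representation $\widetilde u(t,p)=-\int_t^\infty e^{p\widetilde\cK(p)(t-t')}\big(\widetilde f(t',p)-\widetilde\cK(p)\uu(t')\big)\,{\rm d}t'$ that you derive, with the remaining steps declared an easy adaptation of Theorem \ref{theo-pde-laplace}. Your filling-in of those steps — the bound $\Re(p\widetilde\cK(p))\ge c_a$ from Lemma \ref{lemmaineq}'s companion Lemma \ref{lemHardy}, the integration by parts in $t'$ that makes the compatibility condition \eqref{CCT2left} cancel the non-Hardy terms in $\widetilde{\dx u}$, and the control of $\dt u$ from the equation — is exactly the intended adaptation.
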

\begin{proof}
Still denoting $\cK=\cK_\mu^0$ and following the same procedure as for the proof of Theorem \ref{theo-pde-laplace}, one readily finds that
$$
\widetilde{u}(t,p)=-\int_t^\infty e^{p\widetilde{\cK}(p)(t-t')}\big( \widetilde{f}(t',p)-\widetilde{\cK}(p)\uu(t') \big){\rm d}t';
$$
as for the right-going case, one can check that the compatibility condition \eqref{CCT2left} is necessary for the continuity of the solution at $x=0$. We omit the proof which is an easy adaptation of the proof of Theorem \ref{theo-pde-laplace}.
\end{proof}
\begin{remark}
As for the standard boundary transport problem \eqref{BVPtransp}, the initial data is determined in terms of the source term $f$ and the boundary data $\uu$ by evaluating the Laplace representation formula given in the proof at $t=0$, namely,
\begin{equation}\label{CICBleft}
\widetilde{u^{\rm in}}(p)=-\int_0^\infty e^{-p\widetilde{\cK_\mu^0}(p)t'}\big( \widetilde{f}(t',p)-\widetilde{\cK_\mu^0}(p)\uu(t') \big){\rm d}t'.
\end{equation}
In the limit case $\mu=0$ (and therefore $\widehat{\cK}_\mu(p)=1$), one can check that the representation formula of the proof is equivalent to \eqref{left_bis}; the additional compatibility condition \eqref{CCT2left} that is not necessary for \eqref{BVPtransp} also disappears at the limit along a mechanism similar to the one described in Remark \ref{remexplCC}.
\end{remark}
\begin{remark}\label{remamm}
For the standard left-going transport equation, the initial value problem \eqref{left} and the boundary value problem \eqref{left_bis} can be treated in a totally symmetric case by switching the variables $t$ and $x$. The presence of the nonlocal term breaks this symmetry, and the nonlocal initial value problem would be more delicate to deal with than the boundary value problem addressed above. In particular, one would need to find $\uu$ in terms of $f$ and $u^{\rm in}$ by solving the nonlocal equation \eqref{CICBleft}.
\end{remark}

\appendix

\section{Non dimensionalization of the equations}\label{appND}

We show here how to derive the dimensionless equations of motion used throughout this paper. To begin with, the Boussinesq-Abbott system describing the propagation of weakly nonlinear waves in a fluid of mean depth $h_0$ and with a pressure $P_{\rm atm}+\underline{P}$ exerted at the surface 
($P_{\rm atm}$ is a constant reference value for the atmospheric pressure)
is given  by 
\begin{equation}\label{AbbottD}
\begin{cases}
\dt \zeta + \dx q=0,\\
(1-\frac{h_0^2}{3}\dx^2 )\dt q +\dx \big( \frac{1}{h}q^2 \big)+gh \dx \zeta=-\frac{h}{\rho}\dx \underline{P}\qquad (h=h_0+\zeta).
\end{cases}
\end{equation}
\begin{remark}
Introducing the {\it hydrodynamic pressure} $\Pi$ as
\begin{equation}\label{defHP}
\Pi=\underline{P}+\rho g\zeta,
\end{equation}
and alternative formulation of \eqref{AbbottD} is 
\begin{equation}\label{AbbottDbis}
\begin{cases}
\dt \zeta + \dx q=0,\\
(1-\frac{h_0^2}{3}\dx^2 )\dt q +\dx \big( \frac{1}{h}q^2 \big)=-\frac{h}{\rho}\dx \Pi;
\end{cases}
\end{equation}
we shall sometimes use this alternative formulation under the floating object.
\end{remark}

Let us now consider the equations for the solid. We recall that we consider here a floating object with vertical lateral walls located  at $x=\pm \ell$ ($\ell>0$) and allowed to move only vertically (heave motion). There is therefore only one degree of freedom for the motion of the solid which can be fully deduced from the signed distance $\delta(t)$ between the center of mass $G=\big(x_G,z_{G}(t)\big)$ and its equilibrium position $G_{\rm eq}=(x_G,z_{G,{\rm eq}})$, namely, $\delta=z_{G}(t)-z_{G, {\rm eq}}$.\\
Let us also assume that the water depth below the object is given at equilibrium by a nonnegative single valued function $x\mapsto h_{\rm eq}(x)$;  the part of the bottom of the object in contact with the water (the wetted surface) is therefore given at all time $t$ by the graph of the function ${\zeta}_{\rm w}$ defined as
\begin{equation}\label{eqzwD}
\zeta_{\rm w}(t,x)=\delta(t)+h_{\rm eq}(x)-h_0.
\end{equation}
Newton's equation for a body of mass $m$ that only moves vertically and subject to gravity and hydrodynamic forces is given by
\begin{equation}\label{Newton-z_GD}
m  \ddot{\delta} + mg = \displaystyle \int_{-\ell}^\ell \Pint(t,x){\rm d}x,
\end{equation}
where $\Pint(t,x)$ is the pressure exerted by the fluid on the object at the point $(x,\zeta_{\rm w}(t,x))$.
Note that at equilibrium, the pressure is hydrostatic, $\underline{P}_{\rm i}=-\rho g (h_{\rm eq}-h_0)$,  so that 
$$
m=\rho\int_{-\ell}^\ell (h_0-h_{\rm eq}(x)){\rm d}x\qquad \mbox{(Archimedes' principle)},
$$
and we can rewrite Newton's equation under the form
$$
m  \ddot{\delta} = \displaystyle \int_{-\ell}^\ell \big(\Pint(t,x)  + \rho g (h_{\rm eq}-h_0) \big)   {\rm d}x.
$$
By definition of the hydrodynamic pressure, its value $\Pi_{\rm i}$ in the interior domain $(-\ell,\ell)$ is given by
$$
\Pi_{\rm i}=\underline{P}_{\rm i}+\rho g \zeta_{\rm w},
$$
from which we infer,  using \eqref{eqzwD},
\begin{equation}\label{Newton-z_GD2}
\tau_{\rm buoy}^2  \ddot{\delta} + \delta= \displaystyle \frac{1}{2\rho g\ell} \int_{-\ell}^\ell \Pi_{\rm i}(t,x)   {\rm d}x,
\end{equation}
where $2\pi \tau_{\rm buoy}$ is the {\it buoyancy} period defined through
$$
\tau_{\rm buoy}^2=\frac{m}{2\ell \rho g}.
$$
\medbreak

We now proceed to derive dimensionless versions of \eqref{AbbottD}, \eqref{eqzwD}, \eqref{Newton-z_GD}. We recall that $h_0$ denotes the water depth at rest, and also denote by $a$ and $L$ the typical amplitude of the waves and a typical horizontal scale respectively. For the Boussinesq-Abbott equations \eqref{AbbottD}, we use the following scalings
$$
\widetilde{x}=\frac{x}{L}, \quad \widetilde{z}=\frac{z}{h_0}, \quad \widetilde{t}=\frac{t}{L/\sqrt{gh_0}}, \quad \widetilde{\zeta}=\frac{\zeta}{a}, \quad \widetilde{q}=\frac{q}{a\sqrt{gh_0}},\quad\widetilde{\underline{P}}=\frac{\underline{P}}{\rho g h_0}
$$
and consequently $\widetilde{h}=1+\eps\widetilde{\zeta}$.  We also introduce the nonlinearity and shallowness parameters $\eps$ and $\mu$ as
$$
\eps=\frac{a}{h_0},\qquad \mu=\frac{h_0^2}{L^2}.
$$
For the sake of clarity the tildes used to denote dimensionless quantities are omitted throughout this paper. The system \eqref{AbbottD} thus becomes 
\begin{equation}\label{AbbottDL}
\begin{cases}
\dt \zeta + \dx q=0,\\
(1-\frac{1}{3}\mu \dx^2 )\dt q +\eps \dx \big( \frac{1}{h}q^2 \big)+h \dx \zeta=-\frac{1}{\eps}h\dx \underline{P}\qquad (h=1+\eps\zeta).
\end{cases}
\end{equation}
\begin{remark}
The dimensionless form of the hydrodynamic pressure is naturally
$$
 \widetilde{\Pi}=\frac{\Pi}{\rho g h_0}=\widetilde{\underline{P}}+{\eps}\widetilde{\zeta},
$$
so that the dimensionless version of the alternative formulation \eqref{AbbottDbis} is (omitting the tildes)
\begin{equation}\label{AbbottDbisL}
\begin{cases}
\dt \zeta + \dx q=0,\\
(1-\frac{\mu}{3}\dx^2 )\dt q +\eps \dx \big( \frac{1}{h}q^2 \big)=-\frac{h}{\eps}\dx \Pi;
\end{cases}
\end{equation}
\end{remark}

In order to derive the dimensionless versions of \eqref{eqzwD}, \eqref{Newton-z_GD} and \eqref{Newton-z_GD}, we also need the following scalings 
$$
\widetilde{\zeta}_{\rm w}=\frac{\zeta_{\rm w}}{a},\qquad \widetilde{\delta}=\frac{\delta}{a},\qquad \widetilde{h}_{\rm eq}=\frac{h_{\rm eq}}{h_0},
\qquad \widetilde{m}=\frac{m}{2\ell\rho h_0},\qquad \widetilde{\tau}_{\rm buoy}=\frac{\tau_{\rm buoy}}{L/\sqrt{gh_0}},\qquad \widetilde{\ell}=\frac{\ell}{L}
$$
so that, omitting again the tildes for the sake of readability,  we can rewrite \eqref{eqzwD} and \eqref{Newton-z_GD} as
\begin{equation}\label{eqzwDL}
\zeta_{\rm w}(t,x)=\delta(t)+ \frac{1}{\eps} \big( h_{\rm eq}(x)-1\big).
\end{equation}
and
\begin{equation}\label{Newton-z_G-DL0}
\tau_{\rm buoy}^2  \ddot{\delta} + \frac{1}{\eps}m= \displaystyle \frac{1}{\eps}\frac{1}{2\ell}\int_{-\ell}^\ell \underline{P}_{\rm i}(t,x){\rm d}x;
\end{equation} 
note that in these dimensionless coordinates, the coordinates of the vertical sides of the object are $x=\pm \ell$ and that Archimedes' principle reads in dimensionless form as
$$
 m=\frac{1}{2\ell}\int_{-\ell}^\ell (1-h_{\rm eq}).
$$
Finally, the dimensionless version of \ref{Newton-z_GD2} is
$$
\tau_{\rm buoy}^2  \ddot{\delta} + \delta= \displaystyle \frac{1}{\eps}\frac{1}{2\ell} \int_{-\ell}^\ell \Pi_{\rm i}(t,x)   {\rm d}x.
$$

\end{document}